\newtheorem{theorem}{Theorem}[section]
\newtheorem{proposition}[theorem]{Proposition}
\newtheorem{corollary}[theorem]{Corollary}
\newtheorem{lemma}[theorem]{Lemma}
\newtheorem{definition}[theorem]{Definition}
\newtheorem{remark}[theorem]{Remark}
\numberwithin{equation}{section}
\numberwithin{figure}{section}
\newcommand{\bR}{{\mathbb R}}
\newcommand{\bN}{{\mathbb N}}
\newcommand{\bC}{{\mathbb C}}
\newcommand{\bZ}{{\mathbb Z}}
\newcommand{\bP}{{\mathbb P}}
\newcommand{\cF}{{\mathcal{F}}}
\newcommand{\cN}{{\mathcal{N}}}
\newcommand{\cM}{{\mathcal{M}}}
\newcommand{\cH}{{\mathcal{H}}}
\begin{document}

\title[Focusing NLW with random initial data]{The focusing energy-critical nonlinear wave equation with random initial data}
\author[C. Kenig]{Carlos Kenig}
\address{Department of Mathematics \\ University of Chicago \\ 5734 S. University Ave \\ Chicago, IL 60637}
\email{cek@math.uchicago.edu}
\author[D. Mendelson]{Dana Mendelson}
\address{Department of Mathematics \\ University of Chicago \\ 5734 S. University Ave \\ Chicago, IL 60637}
\email{dana@math.uchicago.edu}
\date{}

\begin{abstract}
We consider the focusing energy-critical quintic nonlinear wave equation in three dimensional Euclidean space. It is known that this equation admits a one-parameter family of radial stationary solutions, called solitons, which can be viewed as a curve in $ \dot H^s_x(\bR^3) \times   H^{s-1}_x(\bR^3)$, for any $s > 1/2$. By randomizing radial initial data in $ \dot H^s_x(\bR^3) \times   H^{s-1}_x(\bR^3)$ for $s > 5/6$, which also satisfy a certain weighted Sobolev condition, we produce with high probability a family of radial perturbations of the soliton which give rise to global forward-in-time solutions of the focusing nonlinear wave equation that scatter after subtracting a dynamically modulated soliton. Our proof relies on a new randomization procedure using distorted Fourier projections associated to the linearized operator around a fixed soliton. To our knowledge, this is the first long-time random data existence result for a focusing wave or dispersive equation on Euclidean space outside the small data regime.
\end{abstract}

\thanks{C. Kenig gratefully acknowledges  support from NSF grants DMS-1265249, DMS-1463746 and DMS-1800082. D.~Mendelson gratefully acknowledges support from NSF grant DMS-1800697.} 

\maketitle

\section{Introduction}
We consider the Cauchy problem for the focusing quintic nonlinear wave equation with radial initial data in three space dimensions
\begin{equation} \label{equ:ivp}
 \left\{ \begin{aligned}
  -\partial_t^2 \psi + \Delta \psi &= - \psi^5 , \\
  (\psi, \partial_t \psi)|_{t=0} &= (\psi_0, \psi_1) \in  \dot H^s_x(\bR^3) \times   H^{s-1}_x(\bR^3) =:  \cH^s(\bR^3).
 \end{aligned} \right.
\end{equation}
where $ \dot H^s_x(\bR^3)$ and $H^{s-1}(\bR^3)$ are the usual homogeneous and inhomogeneous Sobolev spaces, respectively. For initial data in $\dot H^1(\bR^3) \times L^2(\bR^3)$, the conserved energy for (sufficiently regular) solutions of \eqref{equ:ivp} is given by
\[
E(\psi) = \frac{1}{2} \int |\nabla \psi|^2 + \frac{1}{2} \int |\partial_t \psi|^2 - \frac{1}{6} \int |\psi|^6.
\]
The energy is invariant under the scaling symmetry of the equation
\[
\psi \mapsto \psi_\lambda(x,t) = \sqrt{\lambda} \psi(\lambda x, \lambda t),
\]
and hence \eqref{equ:ivp} is the so-called energy critical nonlinear wave equation in three dimensions. 

Throughout this work, we will largely follow the notation of \cite{Beceanu, KS14}. The equation \eqref{equ:ivp} admits a one-parameter family of radial stationary solutions, called solitons, which solve the elliptic equation
\begin{align}
\Delta \phi_a + \phi_a^5 = 0, \qquad a >0,
\end{align}
which were identified in \cite{Talenti} as extremizers of the Sobolev embedding $\dot H^1(\bR^3) \hookrightarrow L^6(\bR^3)$, and given explicitly by the formula
\[
\phi_a := \phi(x,a) = (3a)^{1/4}( 1 + a |x|^2)^{-1/2}.
\]
In particular, for every $a > 0$, 
\[
\phi_a  \in \bigcap_{s > 1/2} \dot H^s(\bR^3) \setminus L^2(\bR^3).
\]
In \cite{Beceanu, KS14}, the long-time dynamics of sufficiently smooth symmetric perturbations of $\phi_a$ under the evolution \eqref{equ:ivp} were studied. We also refer to \cite{KNS15} for the construction of a center-stable manifold for \eqref{equ:ivp} without additional symmetry assumptions on the perturbations, however we focus on \cite{Beceanu, KS14} since our methods relate more closely to these works.  A key component of the analysis in these works is understanding the linearized operators
\begin{align}\label{equ:lin_hamil}
H_a = -\Delta -5 \phi_a^4 =: -\Delta + V_a, \quad a > 0.
\end{align}
In dimension three, these operators have the following spectral properties: the absolutely continuous spectrum of the operator $H_a$ is $[0, \infty)$, its point spectrum contains a single negative eigenvalue $-\kappa_a^2$, with corresponding eigenfunction $Y_a$ which is a radially symmetric, smooth and exponentially decreasing, it has three eigenvectors at zero, given by $\partial_{x_j} \phi_a(x)$, $1 \leq j \leq 3$ and a zero resonance given by $\partial_a \phi_a(x) = : \varphi_a(x)$. Recall that in dimension three, a zero resonance is a distributional solution $f$ to the equation
\[
-\Delta f + V f = 0, \quad f \in \bigcap_{\sigma <- \frac{1}{2}} L^{2, \sigma} \setminus L^2,
\]
where the space $L^{2,\sigma}$ is defined to be the closure of Schwartz functions under the norm
\[
\|f\|_{L^{2,\sigma}} =  \|\langle x \rangle^{\sigma} f\|_{L^{2}} .
\]
with an analogous definition for $\dot H^{s, \sigma}$. Restricting to the subspace of radial functions, $H_a$ has only one radial negative eigenfunction $Y_a$, a resonance $\varphi_a$ at zero and absolutely continuous spectrum $[0, \infty)$. In what follows, we set $\phi = \phi_1$, $Y = Y_1$, $\varphi = \varphi_1$, $\kappa = \kappa_1$, $V = - 5 \phi_1^4$ and $H = H_1$. We note that the presence of a negative eigenvalue implies linear instability of the solutions $\phi_a$.

In the work of Krieger and Schlag \cite{KS14}, the following is proved: there exists $\delta > 0$ such that for any radial  $(f_0, f_1) \in B_{\delta}(0) \subseteq  H^3 \times  H^2$ with certain compact support assumptions, which the authors comment can be replaced with sufficient decay assumptions, satisfying the orthogonality condition
\[
\langle \kappa f_0 + f_1, Y \rangle = 0,
\]
there exists a unique Lipschitz function $h$, which is quadratically small, such that \eqref{equ:ivp} with initial data
\[
(\phi + f_0 + hY, f_1)
\]
has a unique solution for all positive times which is the sum of a dynamically rescaled soliton and a dispersive term. In particular, there exists a codimension one Lipschitz submanifold of initial data with well-defined long time asymptotics. Their proof is based on dispersive estimates for the linearized evolution, and a key difficulty in their work is the fact that the presence of a resonance at the bottom of the absolutely continuous spectrum of $H_a$ implies that the standard dispersive estimates do not hold. Nonetheless, the authors show that by subtracting an appropriate rank-one operator, which is formally a projection, but not literally since the resonance does not belong to $L^2$, the standard dispersive decay (in this case $t^{-1}$) can be recovered.

This problem was revisited in Beceanu in \cite{Beceanu}, where perturbations were considered in the topology 
\begin{align}\label{bec_space}
\langle x \rangle^{-1} \dot H^1 \times \langle x \rangle^{-1} L^2 =: \{ (f_0, f_1) \in \dot H^1 \times L^2 \,:\, \|\langle x \rangle f_0 \|_{\dot H^1} + \| \langle x \rangle f_1 \|_{L^2} < \infty \}.  
\end{align}
in the symmetry class of functions satisfying $f(x) = f(-x)$. The operator $H_a$ restricted to the subspace of functions with reflection symmetry exhibits the same spectral properties as when it is restricted to the subspace of radial functions. Beceanu mentions that the space \eqref{bec_space} is not optimal in terms of weights, for which the optimal power is one half, although he further mentions that he restricts to this setting for clarity. Beceanu also speculates that \eqref{bec_space} may be optimal in terms of derivatives, see \cite[Remark 1.7]{Beceanu}. A key technical ingredient in the work \cite{Beceanu} is the Wiener space framework, used originally in \cite{Bec11,BeGo12, Goldberg12} to establish a family of reversed Strichartz estimates for the linearized flow.

Since $\phi_a \in \dot H^{s}$ for any $s > 1/2$, it is natural to ask about stability of the curve $(\phi_a, 0)$ under suitable perturbations in these rougher topologies. In particular, one may ask whether there exists a family of perturbations in $\dot H^s \times  H^{s-1}$ for some $1/2 < s < 1$ for which the stability of the stationary solution is still true. One difficulty is that the equation \eqref{equ:ivp} is known to be ill-posed for initial data in $\dot H^s \times \dot H^{s-1}$ for $s<1$, see for instance Christ-Colliander-Tao~\cite{CCT}. The current situation is more subtle due to the fact that we consider perturbations of the soliton in certain weighted spaces (to which the soliton does not belong), and we are allowing for quadratically small corrections in the direction of $Y$, see Remark \ref{equ:ill-posedness} for more details.

\medskip
In recent years, a probabilistic approach has been used in a variety of settings where deterministic results are known to break down. We adopt this perspective in the current work and our principal aim is to extend the investigations of  \cite{Beceanu, KS14} to rough and random (in particular infinite energy) perturbations of $\phi_a$. 

The study of dispersive PDEs via a probabilistic approach was initiated by Bourgain~\cite{B94, B96} for the periodic nonlinear Schr\"odinger equation in one and two space dimensions, building upon the constructions of invariant measures by Glimm-Jaffe~\cite{GlimmJaffe} and Lebowitz-Rose-Speer~\cite{LRS}. Such questions were further explored by Burq-Tzvetkov~\cite{BT1, BT2} in the context of the cubic nonlinear wave equation on a three-dimensional compact Riemannian manifold. There has since been a vast body of research where probabilistic tools are used to study many nonlinear dispersive or hyperbolic equations in scaling super-critical regimes, see for example~\cite{CO, NORS, D1, BT4, Bourgain_Bulut1, Bourgain_Bulut2, Suzzoni1, NPS, NS, LM1, BOP2, Pocovnicu, DLuM1, DLuM2} and references therein.  

Certain global-in-time random data results in the compact setting which rely on invariant measures work equivalently in the focusing and defocusing cases, see for instance \cite{B94}. However, in the absence of an invariant measure, to the best of our knowledge existing large data probabilistic results treat only the \textit{defocusing} nonlinear Schr\"odinger and wave equations, see \cite{LM1} for a treatment of the probabilistic small data theory for \eqref{equ:ivp}. Thus, in particular, there are no long-time large data probabilistic results for the corresponding \textit{focusing} nonlinear wave or Schr\"odinger equations on Euclidean space, even in perturbative regimes around the soliton. Hence, a more general goal of the current work is to demonstrate that probabilistic techniques can be used to treat focusing problems, and more specifically, used to address questions concerning the stability of stationary or special solutions of dispersive equations in the focusing setting.

\subsection{Statement of the main theorem and discussion of methods}
We consider the stationary solutions $\phi_{a}$ and we make the following ansatz:
\[
\psi(x,t) = u(x,t) + \phi_{a(t)}(x), \qquad a(0) = 1.
\]
We take the scaling parameter $a(t)$ to be time dependent, and as remarked in \cite{KS07}, this leads to analysis similar to modulation theory, but distinct since resonances are not associated with $L^2$ projections. The resonance 
\[
\varphi_{a(t)} := \partial_a \phi_{a(t)}
\]
satisfies
\[
H_{a(t)} \varphi_{a(t)}(x) := (- \Delta + V)  \varphi_{a(t)}(x) = 0, \qquad V_{a(t)} := - 5 \phi_{a(t)}^4.
\]
If $\psi$ solves \eqref{equ:ivp} with initial data $(\psi_0, \psi_1)$, then $u$ solves
\begin{equation} \label{equ:ivp_ansatz2}
 \left\{ \begin{aligned}
  -\partial_t^2 u - H_{a(t)} u &=  \partial_t^2 \phi_{a(t)} + N(u, \phi_{a(t)}) , \\
  (u, \partial_t u)|_{t=0} &= (\psi_0 - \phi_{a(0)} , \psi_1 - \dot a(0) \partial_a \phi_{a(0)}) \in  \cH^s(\bR^3),
 \end{aligned} \right.
\end{equation}
where the nonlinearity is given by
\begin{align}\label{equ:nonlin}
N (x,y) = 10 y^3 x^2 + 10 y^2 x^3 + 5 y x^4 + x^5.
\end{align}
We can write \eqref{equ:ivp_ansatz2} equivalently as 
\begin{equation} \label{equ:ivp_ansatz}
 \left\{ \begin{aligned}
  -\partial_t^2 u - H_{a(0)} u &=  \partial_t^2 \phi_{a(t)} + (V_{a(0)} - V_{a(t)}) u + N(u, \phi_{a(t)})   \\
  (u, \partial_t u)|_{t=0} &=  (\psi_0 - \phi_{a(0)} , \psi_1 - \dot a(0) \partial_a \phi_{a(0)}) \in  \cH^s(\bR^3).
 \end{aligned} \right.
\end{equation}

\medskip
To state our main theorem, we need to introduce some notation. We let $P_0$ denote a (smooth) distorted Fourier projection to low frequencies, whose precise definition we postpone to Section \ref{sec:randomization}, and let $f_{lo}$ and $f_{hi}$ denote smooth truncations to low and high regular Fourier frequencies, respectively, also defined in Section \ref{sec:randomization}. We let $P_{ac}$ denote the projection onto the absolutely continuous spectrum of $H$, see \eqref{p_ac}. We define two norms: for $0 < s < 1$, we let
\[
\| f\|_{\widetilde{X}_s} = \|f\|_{\cH^s} + \|P_{ac} f_{lo} + P_0 f_{hi}\|_{|\nabla|^{-s} L^{3/2,1} \times \langle \nabla\rangle^{-s + 1}  L^{3/2,1}}.
\]
and for $0 < s < 1$ and $s_1 > 3 \nu > 0$ we define
\[
\| f\|_{X_s} = \|f\|_{\cH^s} + \|\langle x \rangle^{1-\nu}|\nabla|^{s_1} f_0\|_{L^2_x} + \|\langle x \rangle^{1-\nu}\langle \nabla\rangle ^{s_1-1} f_1\|_{L^2_x}.
\]

\begin{remark}
For $s_1 = s$ and $\nu < 1/2$, one has $X_s \subseteq \widetilde{X}_s$, however we will retain the parameters $s_1$ and $\nu$ explicitly to highlight that we have some freedom in the $X_s$ norm. We note that we are considering inhomogeneous norms in the second coordinate since we do not want to necessitate stronger conditions on the low frequency component of $f_1$ than required in \cite{Beceanu}. 
\end{remark}

\begin{remark}
As one can see from the proof of Lemma \ref{lem:low_freq_bds}, the low frequency projection $P_0$ is bounded on $|\nabla|^{-s} L^{3/2,1} \times \langle \nabla\rangle^{-s + 1}  L^{3/2,1}$, and similarly for the smooth Fourier truncations. Hence, from the exponential decay of $Y$, and the fact that
\[
P_{ac} f = f - \langle f , Y \rangle Y
\]
from the definition of $P_{ac}$, see \eqref{p_ac} below, we see that the condition in the $\widetilde{X}_s$ norm is weaker than requiring the same bounds for the function $f$ itself. Moreover, this definition highlights the fact that the Lorentz condition will only be required for the low-frequency component, see \eqref{equ:phi_bds}.
\end{remark}

Let $s \in \bR$ and define
\[
\cM_s = \bigl \{ f \in \cH^s(\bR^3) \, :  \langle \kappa f_0 + f_1, Y \rangle = 0,\,\,\| f \|_{\widetilde{X}_s} < \varepsilon, \,\, \|f\|_{X_s} < \sqrt{ c \varepsilon^2/ \log C} \bigr \},
\]
for some $\varepsilon < \varepsilon_0$,  where $\varepsilon_0, C, c > 0$ are determined in Theorem \ref{codim_1} below.

\begin{remark}
In the case of finite energy initial data, the orthogonality condition
\[
 \langle \kappa \psi_0 + \psi_1, Y \rangle
\]
ensures that the second variation of the energy restricted to a certain codimension one Lipschitz submanifold is non-negative at $\phi_a$, see the discussion following \cite[Theorem 1]{KS07}. Although this does not apply to our (infinite energy) initial data, more concretely, we use this condition to avoid growth when solving an ODE for the pure point component of the solution, see \eqref{h_eq}.
\end{remark}

\begin{remark}\label{equ:ill-posedness}
Before stating our main theorem, we note that the ill-posedness results of \cite{CCT} do not apply directly to this setting since the soliton belongs to neither $L^2$ nor the weighted spaces defining $X_s$, and hence it is not known what the optimal well-posedness for \eqref{equ:ivp_ansatz} with small initial data in $\cM_s$ should be. In particular, the question of whether there exists a norm-inflation result analogous to \cite[Theorem 3]{CCT} for \eqref{equ:ivp_ansatz} with data in $\cM_s$ for $s < 1$ appears to be open.
\end{remark}

Let $(\Omega, \mathcal{A}, \mathbb{P})$ be a probability space.  We postpone the precise definition of our randomization to Definition \ref{def:randomization} below, but for the purposes of stating the main theorem, we denote the randomized data by $f^\omega = (f_0^\omega, f_1^\omega)$. As we will see below, for $f \in X_s$, we have $f^\omega \in X_s$ almost surely, see the discussion which concludes Section~\ref{sec:randomization} and Remark~\ref{rem:reg}.

\begin{theorem}\label{codim_1}
There exist absolute constants $C, c , \varepsilon_0> 0$ so that the following holds:  Let $s > 5/6$, $0< \varepsilon < \varepsilon_0$, and fix $f = (f_0, f_1) \in \cM_s$ and define its randomization $f^\omega = (f_0^\omega, f_1^\omega)$ according to Definition \ref{def:randomization}. Then there exists a subset $\Omega_{\varepsilon} \subseteq \Omega$ depending on $(f_0, f_1)$ satisfying
\[
\mathbb{P}(\Omega_{\varepsilon}) \geq 1 - C e^{-c \varepsilon^2 / \|f\|_{X_s}^2 },
\]
and a unique function $h = h_{(f_0, f_1)}: \Omega_{\varepsilon} \to \bR$ such that for every $\omega \in \Omega_{\varepsilon}$ we have $|h| \lesssim \varepsilon^2$ and there exists a unique solution $\psi(t)$ for $t \geq 0$ to the Cauchy problem
\begin{equation} \label{equ:ivp_cor}
 \left\{ \begin{aligned}
  -\partial_t^2 \psi + \Delta \psi &= - \psi^5 , \\
  (\psi, \partial_t \psi)|_{t=0} &=  (\phi + f_0^\omega + h_{(f_0, f_1)}(\omega) Y, f_1^\omega +  h_{(f_0, f_1)}(\omega)  \kappa Y),
 \end{aligned} \right.
\end{equation}
which is the sum of a modulated soliton $\phi_{a(t)}$, with $a(0) = 1$ and $\dot a \in L^1_t \cap L^\infty_t$, and a function in $L^8_{t,x}$, with norm $\lesssim \varepsilon$.
\end{theorem}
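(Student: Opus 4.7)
The plan is to combine a Lyapunov-Perron codimension-one fixed point argument, adapted from the deterministic analyses of Krieger-Schlag \cite{KS14} and Beceanu \cite{Beceanu}, with the probabilistic gain afforded by the distorted Fourier randomization. Working with the ansatz $\psi = u + \phi_{a(t)}$, I decompose $u(t) = \alpha(t) Y + w(t)$ with $w = P_{ac} u$, separating the unstable eigen-mode along $Y$ from the dispersive piece. Substituted in \eqref{equ:ivp_ansatz}, this gives (i) a scalar ODE $\ddot\alpha - \kappa^2 \alpha = G_\alpha(u,a,\dot a)$ for the unstable coefficient, (ii) a modulation equation for $\dot a(t)$ obtained by imposing the condition that projects out the resonance/scaling direction $\varphi_{a(t)} = \partial_a \phi_{a(t)}$, to be solved in $L^1_t \cap L^\infty_t$ by contraction as in \cite{KS14}, and (iii) a wave equation with potential $(\partial_t^2 + H) w = P_{ac}\bigl[\partial_t^2 \phi_{a(t)} + (V_{a(0)}-V_{a(t)})u + N(u,\phi_{a(t)})\bigr]$ for the dispersive component.

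To handle the instability, I apply the forward Duhamel formula to the scalar ODE; the unique solution that stays bounded as $t \to +\infty$ is selected by demanding the coefficient of $e^{\kappa t}$ to vanish, i.e.\ $\kappa \alpha(0) + \dot\alpha(0) = \int_0^\infty e^{-\kappa s} G_\alpha(s)\, ds$. Using the initial data in \eqref{equ:ivp_cor}, the left-hand side equals $\|Y\|_{L^2}^{-2}\langle \kappa f_0^\omega + f_1^\omega, Y\rangle + 2\kappa h$. Because the randomization will be arranged so that the $Y$-direction randomizes independently of $P_{ac}$, the hypothesis $\langle \kappa f_0 + f_1, Y\rangle = 0$ transfers to $f^\omega$, and the consistency condition reduces to an implicit equation $h = \Phi(\alpha,w,a)(\omega)$ of size $O(\varepsilon^2)$, to be solved jointly with the other fixed points.

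The main analytic step is the Duhamel contraction for $w$, using the propagators $\cos(t\sqrt H)P_{ac}$ and $\sin(t\sqrt H)\sqrt{H}^{-1} P_{ac}$. This invokes the dispersive and reversed Strichartz estimates of Beceanu which, due to the zero resonance, require the low-frequency Lorentz-space control built into the $\widetilde X_s$ norm via $P_{ac} f_{lo} + P_0 f_{hi}$. The quintic nonlinearity $N(u,\phi_{a(t)})$ is controlled in $L^8_{t,x}$ after splitting $u$ into the free distorted-Fourier evolution $F^\omega(t)$ of the randomized data, the $hY$ piece, and the nonlinear Duhamel remainder. The probabilistic gain is a Khintchine/Gaussian large-deviation estimate showing that on an event $\Omega_\varepsilon$ of probability at least $1 - Ce^{-c\varepsilon^2/\|f\|_{X_s}^2}$, the random free evolution $F^\omega$ satisfies space-time bounds improving over the deterministic scaling by $1 - s$ derivatives, supplying precisely the margin $5(1-s) < 1$ required to close the quintic estimate for $s > 5/6$.

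Putting the pieces together, I solve the coupled system $(w,\alpha,\dot a,h)$ as a contraction in a product ball of radius $\sim\varepsilon$ on the event $\Omega_\varepsilon$, obtaining the claimed decomposition $\psi = \phi_{a(t)} + (\text{function in } L^8_{t,x})$ of norm $O(\varepsilon)$. The principal obstacle I anticipate is the interaction between the distorted Fourier randomization — tailored to the spectrum of $H$ rather than $-\Delta$ — and the Lyapunov-Perron selection of $h$: one must verify uniformly in $\omega$ that $G_\alpha$ is integrable against the weight $e^{-\kappa s}$ and that $h$ depends Lipschitz-continuously on the other fixed-point variables, which is precisely where the spectral adaptation of the randomization and the dispersive estimates on $P_{ac}$ are designed to pay off.
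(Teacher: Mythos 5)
Your high-level architecture matches the paper's: a Lyapunov--Perron selection of $h$ to kill the $e^{\kappa t}$ instability, a modulation equation for $a(t)$ obtained by projecting out the scaling direction, a Duhamel contraction for the dispersive component $P_{ac}u$ recentered around the linearized free evolution $W(t)f^\omega_{\geq k_0}$, and a probabilistic improvement on the Strichartz bounds for that free evolution supplying the margin to close at $s>5/6$. The paper parametrizes the unstable directions via the Riesz projections $x_\pm(t)=\mp\langle U, J\mathbf{Y}_\mp\rangle$ rather than a second-order scalar ODE for a coefficient $\alpha(t)$, but this is just a choice of coordinates.

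The genuine gap is that your argument is built entirely around the quintic nonlinearity, and you explain the restriction $s>5/6$ by a scaling heuristic ``$5(1-s)<1$.'' That is not the true bottleneck. The estimate that actually drives both the weighted condition in the $X_s$ norm and the hardest part of the proof is the \emph{linear} term $(V-V_{a(t)})W(t)f^\omega_{\geq k_0}$ appearing in the contraction, which must be placed in $L^{3/2,1}_x L^1_t$. Since $V-V_{a(t)}$ has only polynomial decay and the free evolution has no global-in-time $L^1_t$ integrability in unweighted norms, this forces a negatively weighted estimate of the form $\|\langle x\rangle^{-1-\theta} W(t)f^\omega_{\geq k_0}\|_{L^2_x L^1_t}$, which is Proposition~\ref{prop:delicate}. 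This is explicitly flagged in the introduction as the most delicate estimate, it is the reason $\cM_s$ imposes the weighted Sobolev bounds $\|\langle x\rangle^{1-\nu}|\nabla|^{s_1}f_0\|_{L^2}<\infty$ and the constraint $s_1>3\nu$, and its proof requires the kernel bounds of Proposition~\ref{prop:kernel_ests} (decay inside the light cone) rather than just Bernstein-plus-large-deviation. Your proposal records $\|f\|_{X_s}$ in the probability bound but never says where the weight is used, so as written the contraction would fail on exactly this linear term. You should also note that the modulation equation for $a(t)$ is not merely ``projecting out the resonance'': it comes from the representation formula of Theorem~\ref{thm:rep}, which expresses $\sin(t\sqrt H)H^{-1/2}P_{ac}$ and $\cos(t\sqrt H)P_{ac}$ as a resonance-singular rank-one piece $\varphi\otimes V\varphi$ plus Strichartz-bounded remainders $\mathcal S(t),\mathcal C(t)$, and then demanding that the $\varphi$-coefficients cancel for all $t$; this cancellation is what produces the ODE for $\dot a$ and what lets you do the contraction in the $\mathcal S,\mathcal C$ norms. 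Finally, the precise source of $s>5/6$ is the reversed-Strichartz pair $(r,q)=(9,3)$ at regularity $5/6$ used in the $L^{9,2}_x L^3_t$ bound of Proposition~\ref{prop:improved_strichartz2} to put $F^5\in L^{3/2,1}_x L^1_t$, not a derivative-counting heuristic on the quintic.
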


We will actually prove a slightly more general theorem, allowing a component of the initial data for \eqref{equ:ivp_cor} to lie in 
\begin{align}\label{n1}
\mathcal{N}_1 = \bigl \{ \psi \in \dot \cH^1(\bR^3) \, :  \langle \kappa \psi_0 + \psi_1, Y \rangle = 0,\,\, \|(\psi_0 - \phi, \psi_1) \|_{\widetilde{X}_1} < \varepsilon \bigr \},
\end{align}
see \eqref{equ:forced} and Proposition \ref{prop:fixed} below. The function $h$ is constructed as part of the contraction mapping argument in Proposition \ref{prop:fixed}. We note that in \cite{KS07}, initial data of the form
\[
 (\phi + f_0 + \widetilde{h}_{(f_0, f_1)}Y, f_1)
\]
for a quadratically small funtion $\widetilde{h}$, is considered. In this work, we use the same convention for the form of the initial data as in \cite{Beceanu}.

\begin{remark}[Uniqueness]
In the statement of Theorem \ref{codim_1}, uniqueness comes from the contraction mapping argument in Proposition \ref{prop:fixed}. In particular, uniqueness holds in the following sense: let $f \in  \cM_s$ and let $f^\omega$ be the randomization defined in Definition \ref{def:randomization}, and let $X$ be a complete metric space, the precise definition of which we record in \eqref{banach_x}.  We write the solution $\psi$ of \eqref{equ:ivp_cor} as
 \begin{align}
\psi(t) = \phi_{a(t)} +  u(t), \qquad u(t) = v(t) + W(t)  f_{\geq k_0}^\omega,
 \end{align}
 where $ f_{\geq k_0}^\omega$ is a certain high (in both distorted and standard Fourier) frequency component of the randomization, and
 \[
W(t)(f_0, f_1) = \cos(t \sqrt{|H|}) f_0 + \frac{\sin(t \sqrt{|H|}) }{\sqrt{|H|}} f_1.
 \]
  Then there exists a unique Lipschitz function $h_F$ of $(\overline{v}, \overline{a}) \in B_{\varepsilon}(0,1) \subseteq X$, for $\varepsilon > 0$ sufficiently small, and a unique solution  defined for all positive times
 \[
v \in C \bigl( \bR_+ ;  \dot{H}^1(\bR^3) \cap |\nabla|^{-1} L^{3/2,1}  \bigr), \quad (v,a) \in  X \quad (\textup{in }B_{\varepsilon}(0,1))
 \]
to the system of forced equations in \eqref{equ:x_eqn} -  \eqref{equ:a} with forcing term $F = W(t)  f_{\geq k_0}^\omega$ and $(\psi_0, \psi_1)$ replaced by
\[
(\phi + \gamma_0 + h_F(v,a) Y, \gamma_1+ h_F(v,a) \kappa Y),
\]
for $(\gamma_0, \gamma_1)$ certain low (distorted and standard) frequency components of the initial data. Furthermore, for fixed $(f_0, f_1)$ and $\omega$, the function $h$ is Lipschitz in the variables $(\overline{v},\overline{a})$, in a sufficiently small ball around $(0,1) \in X$.  See Proposition~\ref{prop:fixed} for more details
\end{remark}

We now make some comments regarding the interpretation of Theorem \ref{codim_1} within the known deterministic results for \eqref{equ:ivp}. The dichotomy result of Kenig and Merle \cite{KM08acta} addresses the dynamics of solutions to \eqref{equ:ivp} for initial data with energy below the energy of the soliton. In the discussion following \cite[Theorem 1]{KS07}, the authors comment that the orthogonality condition yields a codimension one manifold of initial data with energy \textit{larger} than the energy of the soliton, and in particular, the dichotomy result of  \cite{KM06} does not apply. In our current setting, the random initial data for \eqref{equ:ivp_cor} has infinite energy and hence the dichotomy result of \cite{KM06} also yields no information on the dynamics of the corresponding solutions. Finally, we refer to \cite{KNS13} for additional work on \eqref{equ:ivp} in a neighborhood of the soliton, without symmetry assumptions.

\begin{remark}
The small data type requirement on the function $f$ is reminiscent of the restriction in the small-data random-data global existence and scattering result of \cite[Theorem 1.6]{LM1}, and stems from the requirement that the subset $\Omega_{\varepsilon} \subseteq \Omega$ is required to have positive probability. This condition on $f$ is characteristic of perturbative random data arguments.
\end{remark}

\begin{remark}
Since it is unclear what ``optimal regularity'' means precisely for random data problems, we do not strive to obtain very low regularities in Theorem \ref{codim_1}. Rather, we wish to provide a proof of concept that random data methods are applicable in the focusing setting.
\end{remark}

\medskip
We will now discuss some aspects of the proof. Many of the previous random data results for nonlinear wave equations on Euclidean space rely on improved Strichartz estimates for the standard free propagator
\[
(f_0, f_1) \mapsto \cos(t \sqrt{-\Delta})f_0 + \frac{\sin(t \sqrt{-\Delta})}{\sqrt{-\Delta}} f_1 .
\]
To achieve this, either one randomizes with respect to an orthonormal eigenbasis of the Laplacian in the compact manifold setting, or, in previous works on Euclidean space, the typical randomization consists of randomizing with respect to unit-scale Fourier multipliers, see for instance \cite{ZF, LM1, LM2} and \cite{BOP1}. We also refer to the recent work \cite{Bringmann} where the initial data was randomized with respect to annular Fourier multipliers and \cite{Bringmann2} where a ``microlocal randomization'' is used. A key property these randomizations exploit is that the free wave propagator acts diagonally on Fourier space. Hence, when the randomization decouples the estimates into unit-scale (or annular) pieces, one can exploit the restricted Fourier support of these pieces to improve corresponding Strichartz estimates for the free evolution.

We will refer to solutions of 
\begin{align}\label{equ:linearized}
  -\partial_t^2 u - H_{a(0)} u = 0
\end{align}
as linearized solutions. In the current work, compared to previous works, we will need to obtain improved estimates for certain components of the linearized flow of the random initial data. To introduce a suitable randomization for this problem, we will randomize our initial data according to annular projections in \textit{distorted} frequency space, that is, in the frequency space associated with the distorted Fourier transform of the operator $H$. We postpone the precise definition of these projections until Section~\ref{sec:randomization}.

Many of the difficulties we encounter in adapting probabilistic arguments to our current setting are ultimately technical in nature, stemming from a lack of available harmonic analysis tools for the distorted Fourier transform related to the Schr\"odinger operator $H_a$. In particular, a key difficulty we encounter is the lack of an explicit expression for the generalized eigenfunctions associated to the operator $H_a$. These functions are the  distorted Fourier theory analogues of the characters $\{e^{\pm i x \cdot \xi}\}$ from standard Fourier theory, see Section \ref{sec:dist_four} for more details. Proving direct kernel estimates for distorted Fourier multipliers involves working with certain implicit equations for the generalized eigenfunctions, see \eqref{equ:radial_e} and \eqref{jost}. Furthermore, as we will see, the precise properties of the generalized eigenfunctions depend on the decay at infinity of the potential $V$. Moreover, our proofs rely crucially on the fact that the functions we consider are radial. We establish estimates for these functions and certain explicit kernels associated to distorted Fourier multipliers in Appendix \ref{a:kernel}. In addition to these harmonic analysis tools, we rely on the Wiener space machinery used in \cite{Beceanu}, see Section~\ref{sec:improved_strich} for more details.

With these tools in hand, one then proceeds to establish improved probabilistic estimates for the linearized flow. Some of these arguments are somewhat similar to previous works, largely relying on a combination of large deviation and Bernstein estimates, albeit with some additional complications in the current setting. The most delicate estimate, however, is a (negatively) weighted $L^2_x L^1_t$ estimate which arises due to a linear term in the contraction mapping argument. This estimate is treated in Proposition \ref{prop:delicate}, and is the reason we require the initial data to lie in weighted spaces. In certain previous works on almost sure global wellposedness \cite{Pocovnicu} and almost sure scattering \cite{Bringmann2} for the defocusing energy critical equations in four dimensions, an $L^1_tL^\infty_x$ estimate arises when controlling the energy of a certain nonlinear component of the solution via Gronwall's inequality. In \cite{Pocovnicu}, this estimate is obtained locally in time, while in \cite{Bringmann2} a microlocal randomization is used to control a component of the free evolution projected onto frequency $N$ in $L^1_t L^\infty_x([N^{1+\theta}, \infty) \times \bR^4)$ for some $\theta > 0$. Here we require global in time bounds, but the key difference between those settings and the current one is that while the dimension three case is a priori more difficult due to reduced dispersion for the free evolution, we allow the random data to enjoy weighted Sobolev bounds, as opposed to only imposing Sobolev bounds. We also refer to Remark \ref{rem:delicate} for further discussion of this estimate.

Upon establishing improved probabilistic estimates for the relevant component of the linearized evolution of the random data, we will solve for both $u(x,t)$ and $a(t)$ via a fixed point argument in a suitable complete metric space. To carry out the rest of our argument, we use the functional framework of \cite{Beceanu}, see Section \ref{sec:contraction_setup}. Since our randomization will almost surely not regularize in Sobolev spaces, we will need to proceed as in previous works on random data theory and recenter the solution around the linearized evolution of the random data. A key difference in this setting, however, is that we will first need to treat an equation for the projection of the solution onto the negative eigenvalue. As in \cite{DLuM1, DLuM2}, we will work with the framework of a generalized forced nonlinear wave equation, but in this case we will only use this framework for the component of the solution projected onto the absolutely continuous subspace of the linearized operator $H_a$, see Section \ref{sec:contraction} as well as the system of equations \eqref{equ:x_eqn} -  \eqref{equ:a} for more details.

We conclude this discussion by elaborating more on the function $h$ in the statement of Theorem~\ref{codim_1} below. For a sufficiently small $\varepsilon > 0$, and $f\in \cM_s$ the set $\Omega_\epsilon \subset \Omega$ in the probability space (which depends, in particular, on $f$) is such that for $\omega \in \Omega_\epsilon$, we have that
\[
F:=W(t)  f_{\geq k_0}^\omega \in  Z
\]
where $Z$ is the Banach space of improved spacetime norms required for the contraction mapping argument, and is based on Strichartz spaces and certain weighted spactime norms. On the $\varepsilon$-ball around $(0,1)$ in $X$, the proof of Proposition \ref{prop:fixed} yields a unique function $h$, which depends on $F$, defined for $(\overline{v}, \overline{a})$ in this $\varepsilon$-ball, and which is Lipschitz in $(\overline{v}, \overline{a})$. We then solve the corresponding  nonlinear problem by fixed point in $(v,a)$, which is how we choose the final function $h$. Ultimately, this yields a solution to the Cauchy problem \eqref{equ:ivp_cor}. We emphasize that in particular, the final choice for $h$, depends on $f^\omega$.

\subsection*{Organization of paper}
In Section~\ref{sec:prelim} we record some definitions and preliminary facts about the stationary solution, resonance, and potential. In Section~\ref{sec:dist_four} we establish some facts about the distorted Fourier transform for a class of Schr\"odinger operators $H= - \Delta + V$, which include our linearized operator. In Section~\ref{sec:randomization} we introduce the randomization procedure. In Section~\ref{sec:improved_strich} we establish certain deterministic estimates for the linearized flow including stating our key kernel estimates. In Section~\ref{prob:strichartz} we establish improved probabilistic estimates for the linearized flow. In Section~\ref{sec:contraction} we perform the set-up and estimates for our main contraction mapping argument and prove Theorem~\ref{codim_1}. The Appendix is dedicated to the proof of the kernel estimates.

\subsection*{Notation}
We let $\mathcal{S}(\bR^d)$ denote Schwartz functions on $\bR^d$. We denote by $C > 0$ an absolute constant which only depends on fixed parameters and whose value may change from line to line. We write $X \lesssim Y$ to indicate that $X \leq C Y$ and we use the notation $X \sim Y$ if $X \lesssim Y \lesssim X$. Moreover, we write $X \lesssim_\nu Y$ to indicate that the implicit constant depends on a parameter $\nu$ and we write $X \ll Y$ if the implicit constant should be regarded as small. We will write $c+$ to denote $c+ \varepsilon$ for any $\varepsilon > 0$. We also use the notation $\langle x \rangle := (1+x^2)^{1/2}$.

\section{Preliminaries}\label{sec:prelim}
First we record the definition of certain weighted Sobolev spaces which were already introduced in the introduction.

\begin{definition} We let $L^{2,\sigma}$ denote the closure of Schwartz functions under the norm
\begin{align}\label{equ:weighted}
\|f\|_{L^{2,\sigma}} =  \|\langle x \rangle^{\sigma} f\|_{L^{2}} .
\end{align}
We similary define $\dot H^{s,\sigma}$ as the closure of Schwartz functions under the norm
\begin{align}
\|f\|_{\dot H^{s,\sigma}} =  \|\langle x \rangle^{\sigma}|\nabla|^s f\|_{L^{2}} .
\end{align}
with an analogous definition for the inhomogeneous space $H^{s,\sigma}$.
\end{definition}

\medskip
Next, we recall the definition of the Lorentz spaces and several of their properties.

\begin{definition}[Lorentz spaces]
For a function $f : \bR^n \to \mathbb{C}$, we let
\[
\mu_f(\lambda) = |\{x \,:\, |f(x)| > \lambda\}|.
\]
Then the $L^{p,q}(\bR^n)$ quasi-norm for $f$ is defined by
\[
\|f\|_{L^{p,q}} := \biggl( p\int_0^\infty (\lambda \mu_f(\lambda)^{\frac{1}{p}})^q \frac{d\lambda}{\lambda} \biggr)^{\frac{1}{q}}.
\]
\end{definition}
\begin{remark}
For $q = \infty$ the previous definition corresponds to the weak $L^p$ space, while $L^{p,p} = L^p$. 
\end{remark}
The following lemma summarizes some of the key real interpolation properties of the Lorentz spaces.
\begin{lemma}[\protect{Cf. \cite[Theorem 5.3.1]{Ber-Lof}}]\label{lem:interp}
For $1 < p < \infty$, we have
\[
L^{p,q} = [L^1, L^\infty]_{\theta, q}, \qquad \frac{1}{p} = 1 - \theta.
\]
Moreover $L^{p, q_1} \subset L^{p, q_2}$ for $0<q_1 \leq q_2 \leq \infty$. 
\end{lemma}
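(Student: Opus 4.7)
The plan is to reduce both parts of the lemma to the classical real interpolation theory of Peetre applied to the couple $(L^1, L^\infty)$. As a preliminary step, I would show that the distribution-function quasi-norm defined in the paper is equivalent, via the substitution $\lambda = f^*(t)$ where $f^*$ denotes the non-increasing rearrangement of $|f|$, to the standard rearrangement form
\[
\|f\|_{L^{p,q}} \sim \Bigl( \int_0^\infty \bigl(t^{1/p} f^*(t)\bigr)^q \, \frac{dt}{t}\Bigr)^{1/q},
\]
with the obvious modification for $q = \infty$. This is a routine change-of-variables argument using the identity $\mu_f(f^*(t)) = t$ at continuity points of $\mu_f$ and the equimeasurability of $f$ and $f^*$.

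For the identification $L^{p,q} = (L^1, L^\infty)_{\theta, q}$, the main ingredient is the Peetre formula
\[
K(t, f; L^1, L^\infty) = \int_0^t f^*(s)\, ds,
\]
which I would prove by a layer-cake decomposition: the infimum in the definition of the $K$-functional is realized, up to an absolute constant, by splitting $f$ at the level $f^*(t)$ into its truncation (which lies in $L^\infty$ with norm $\le f^*(t)$) and its remainder (which lies in $L^1$ with norm comparable to $\int_0^t f^*(s)\,ds$). Combining this with Hardy's inequality gives
\[
\Bigl(\int_0^\infty (t^{-\theta} K(t,f;L^1,L^\infty))^q\,\frac{dt}{t}\Bigr)^{1/q} \sim \Bigl(\int_0^\infty (t^{1-\theta} f^*(t))^q \, \frac{dt}{t}\Bigr)^{1/q},
\]
so setting $1-\theta = 1/p$ matches the right-hand side with $\|f\|_{L^{p,q}}$ by the first paragraph. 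This is precisely Theorem 5.3.1 of \cite{Ber-Lof}, which I would then cite to avoid reproducing the standard interpolation machinery.

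For the nesting $L^{p, q_1} \subset L^{p, q_2}$ with $0 < q_1 \le q_2 \le \infty$, I would argue directly from the rearrangement form. Since $f^*$ is non-increasing, one has the weak-$L^p$ bound $\sup_t t^{1/p} f^*(t) \lesssim \|f\|_{L^{p,q_1}}$ (for $q_1 < \infty$ this follows by integrating a lower bound on $(s^{1/p}f^*(s))^{q_1}$ over $s \in [t/2, t]$, since $f^*$ is non-increasing), and then
\[
(t^{1/p} f^*(t))^{q_2} \le \Bigl( \sup_s s^{1/p} f^*(s) \Bigr)^{q_2 - q_1} (t^{1/p} f^*(t))^{q_1}.
\]
Integrating against $dt/t$ yields $\|f\|_{L^{p,q_2}} \lesssim \|f\|_{L^{p,q_1}}^{q_1/q_2} \|f\|_{L^{p,q_1}}^{1-q_1/q_2} = \|f\|_{L^{p,q_1}}$; the cases involving $q_2 = \infty$ follow by an even simpler version of the same argument.

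There is no genuine obstacle here: the content of the lemma is entirely standard and my role is essentially organizational, namely to (i) reconcile the distribution-function definition used in the paper with the rearrangement form that is most convenient for interpolation, and (ii) identify the $K$-functional of the couple $(L^1, L^\infty)$. The only mildly subtle point is the non-trivial constant in the Peetre formula, which requires the layer-cake splitting above rather than an abstract argument.
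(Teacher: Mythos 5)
Your proposal is correct, but there is nothing in the paper to compare it against: the paper states this lemma as a known fact and simply cites Theorem 5.3.1 of Bergh--L\"ofstr\"om, giving no proof. Your outline is the standard argument underlying that theorem (equivalence of the distribution-function and rearrangement forms of the Lorentz quasi-norm, the Peetre formula $K(t,f;L^1,L^\infty)=\int_0^t f^*$, Hardy's inequality, and the elementary nesting argument from monotonicity of $f^*$), so it is sound; note only that the two forms of the quasi-norm are in fact \emph{equal} (not merely comparable) by a Tonelli computation, and that in your final display for the $K$-functional part the equivalence $\int_0^\infty (t^{-\theta}K(t,f))^q\,dt/t \sim \int_0^\infty (t^{1-\theta}f^*(t))^q\,dt/t$ uses $\theta<1$, which holds exactly because $p>1$.
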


\begin{remark}
In particular, we have $L^{p,1} \subset L^p \subset L^{p,\infty}$.
\end{remark}

\begin{lemma}[H\"older's inequality in Lorentz spaces, \protect{ \cite[Theorem 3.4]{Oneil}}]\label{lem:lorentz_holder}
For $1< p_1,p_2,p_3 < \infty$, and $0 < q_1, q_2, q_3  \leq \infty$, with
\[
\frac{1}{p_1} = \frac{1}{p_2} + \frac{1}{p_3} , \qquad \frac{1}{q_1} = \frac{1}{q_2} + \frac{1}{q_3} ,
\]
one has
\begin{align}
\|f g\|_{L^{p_1, q_1}} \lesssim \|f\|_{L^{p_2, q_2}} \| g\|_{L^{p_3, q_3}}.
\end{align}
\end{lemma}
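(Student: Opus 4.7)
The plan is to reduce H\"older's inequality in Lorentz spaces to the ordinary H\"older inequality on $L^q((0,\infty), dt/t)$ by passing to the decreasing rearrangement $f^*(t) := \inf\{\lambda \geq 0 : \mu_f(\lambda) \leq t\}$. The first ingredient I need is the identity $\|f\|_{L^{p,q}}^q = \int_0^\infty (t^{1/p} f^*(t))^q \frac{dt}{t}$, which I would derive directly from the distribution-function definition of the quasi-norm given in the paper: writing $f^*(t)^q = q\int_0^{f^*(t)} \lambda^{q-1}\,d\lambda$, interchanging integrals by Fubini, and then using the identity $|\{t > 0 : f^*(t) > \lambda\}| = \mu_f(\lambda)$ turns the $dt$ integral into the $d\lambda$ integral of the paper's definition. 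This puts the problem on $(0,\infty)$ with the weight $dt/t$, where the scalar H\"older inequality is directly applicable.

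The second ingredient is the pointwise rearrangement inequality for products: the set inclusion $\{|fg| > \lambda\mu\} \subseteq \{|f| > \lambda\}\cup\{|g|>\mu\}$ yields $\mu_{fg}(\lambda\mu) \leq \mu_f(\lambda) + \mu_g(\mu)$, and taking infima over $\lambda,\mu$ at fixed level parameters gives $(fg)^*(t_1 + t_2) \leq f^*(t_1)\,g^*(t_2)$; specializing to $t_1 = t_2 = t/2$ yields the pointwise bound $(fg)^*(t) \leq f^*(t/2)\,g^*(t/2)$. Plugging this bound into the step-one representation of $\|fg\|_{L^{p_1,q_1}}^{q_1}$, changing variable $s = t/2$, and splitting $s^{1/p_1} = s^{1/p_2} \cdot s^{1/p_3}$ via the hypothesis $1/p_1 = 1/p_2 + 1/p_3$, the integrand factors as $\bigl(s^{1/p_2} f^*(s)\bigr)^{q_1}\bigl(s^{1/p_3} g^*(s)\bigr)^{q_1}$ against $ds/s$. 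Ordinary H\"older in $s$ with conjugate exponents $q_2/q_1$ and $q_3/q_1$ --- conjugate precisely because $1/q_1 = 1/q_2 + 1/q_3$ --- separates the two factors and, upon reverting to the step-one representation, produces $\|f\|_{L^{p_2, q_2}}^{q_1}\|g\|_{L^{p_3, q_3}}^{q_1}$, which is the asserted inequality after taking $q_1$-th roots.

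The main subtlety, and it is a minor one, is endpoint-related: if some $q_j = \infty$, ordinary H\"older in $s$ must be applied by first pulling the $L^\infty(ds/s)$-supremum outside the integral, and the case $q_1 = \infty$ has to be written out separately since in that case the $L^{p_1,q_1}$ quasi-norm is defined as $\sup_t t^{1/p_1}(fg)^*(t)$ rather than by an integral; the step-two pointwise bound combined with the scaling relation still gives the result directly. A secondary technical point is the distinction between strict and non-strict inequalities in the definition of $f^*$, but this only perturbs sets of Lebesgue measure zero and therefore does not affect any of the integrals. Beyond these endpoint checks, the proof is essentially a two-line combination of the rearrangement product inequality and scalar H\"older, and I would not expect any serious obstacle.
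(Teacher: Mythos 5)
Your proof is correct. The paper does not actually prove this lemma; it simply cites O'Neil's Theorem~3.4, so there is no argument in the text to compare against, and what you have written is the standard rearrangement proof that one finds in (or can reconstruct from) that reference. Each of the three ingredients checks out: the identity $\|f\|_{L^{p,q}}^q = \int_0^\infty (t^{1/p}f^*(t))^q\,dt/t$ does follow from the paper's distribution-function definition by the Fubini computation you sketch (using $|\{t: f^*(t)>\lambda\}|=\mu_f(\lambda)$ and $\int_0^{\mu_f(\lambda)} t^{q/p-1}\,dt = \tfrac{p}{q}\mu_f(\lambda)^{q/p}$); the product rearrangement bound $(fg)^*(t_1+t_2)\le f^*(t_1)g^*(t_2)$ is the correct pointwise substitute; and the change of variables plus scalar H\"older on $(0,\infty)$ with $ds/s$ produces the stated inequality with the explicit constant $2^{1/p_1}$. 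Your remarks on the $q_j=\infty$ endpoints and on the measure-zero discrepancy in the definition of $f^*$ are the right ones to make. One thing worth being aware of, purely for context: O'Neil's own development (and Lemma~\ref{lem:lorentz_young} on Young's inequality, which the paper also takes from that source) is organized around the averaged rearrangement $f^{**}(t)=\tfrac{1}{t}\int_0^t f^*$, which is needed for the convolution estimates because $(f*g)^*$ has no pointwise bound as clean as the product one; for the pointwise-product H\"older inequality, the direct $f^*$ route you take is simpler and entirely sufficient.
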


\begin{lemma}[Young's inequality in Lorentz spaces,\protect{ \cite[Theorem 3.1]{Oneil}}]\label{lem:lorentz_young}
For $1< p_1,p_2,p_3 < \infty$, and $0 < q_1, q_2, q_3  \leq \infty$, with
\[
\frac{1}{p_1} + 1 = \frac{1}{p_2} + \frac{1}{p_3} , \qquad \frac{1}{q_1} \leq \frac{1}{q_2} + \frac{1}{q_3} ,
\]
one has
\begin{align}
\|f * g\|_{L^{p_1, q_1}} \lesssim \|f\|_{L^{p_2, q_2}} \| g\|_{L^{p_3, q_3}}.
\end{align}
\end{lemma}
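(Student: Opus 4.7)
The plan is to deduce this from the classical (Lebesgue) Young inequality by a bilinear real interpolation argument. By Lemma~\ref{lem:interp}, for $1 < p < \infty$ one has $L^{p,q} = [L^1, L^\infty]_{\theta, q}$ with $1/p = 1 - \theta$, so every Lorentz space appearing in the statement arises as a real interpolation space between $L^1$ and $L^\infty$. The input to the interpolation is the classical inequality
\[
\|f * g\|_{L^{\tilde p_1}} \leq \|f\|_{L^{\tilde p_2}} \|g\|_{L^{\tilde p_3}}, \qquad 1 + \tfrac{1}{\tilde p_1} = \tfrac{1}{\tilde p_2} + \tfrac{1}{\tilde p_3},
\]
valid for all $1 \leq \tilde p_i \leq \infty$, which follows from Minkowski's integral inequality and H\"older.

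First I would fix $g$ and view $T_g f := f * g$ as a linear operator in $f$. Choosing two triples $(\tilde p_1, \tilde p_2, p_3)$ with a common $p_3$ that straddle the desired $p_2$, and applying the real interpolation functor $[\,\cdot\,,\,\cdot\,]_{\theta, q_2}$ to both the domain and the codomain, Marcinkiewicz interpolation yields
\[
\|T_g f\|_{L^{p_1, q_2}} \lesssim \|g\|_{L^{p_3}} \|f\|_{L^{p_2, q_2}}.
\]
Next, freezing $f \in L^{p_2, q_2}$ and running the same real interpolation in the $g$-variable would upgrade $\|g\|_{L^{p_3}}$ to $\|g\|_{L^{p_3, q_3}}$ and simultaneously adjust the output secondary index from $q_2$ to any $q_1$ with $1/q_1 \leq 1/q_2 + 1/q_3$; this is precisely the output of the bilinear version of the Marcinkiewicz theorem (e.g., Bergh--L\"ofstr\"om, Theorem 3.13.5).

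The main subtlety is that, in contrast to complex interpolation, the real method does not preserve the secondary parameter identically, and the relaxed constraint $1/q_1 \leq 1/q_2 + 1/q_3$ is intrinsic to the bilinear real functor. A more elementary and ultimately equivalent route, which is O'Neil's original argument, is based on the pointwise rearrangement bound
\[
(f * g)^{**}(t) \leq t\, f^{**}(t)\, g^{**}(t) + \int_t^\infty f^*(s) g^*(s)\,\ud s,
\]
where $h^*$ is the nonincreasing rearrangement and $h^{**}(t) := t^{-1} \int_0^t h^*(s)\,\ud s$. This inequality is proved by splitting the convolution at the level set where $f$ exceeds $f^*(t)$ and applying the layer-cake representation together with Hardy--Littlewood. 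Inserting this pointwise bound into the definition of the $L^{p,q}$ quasi-norm and then applying the standard Hardy inequalities on $(0,\infty)$ in the variable $t$ produces the stated estimate with explicit constants, bypassing the bilinear interpolation machinery altogether; this is the route I would actually take for a self-contained proof.
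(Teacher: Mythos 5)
The paper does not prove Lemma~\ref{lem:lorentz_young}; it simply cites O'Neil's Theorem~3.1. Your second route --- the pointwise rearrangement bound
\[
(f * g)^{**}(t) \leq t\, f^{**}(t)\, g^{**}(t) + \int_t^\infty f^*(s)\, g^*(s)\, \ud s,
\]
followed by Hardy's inequalities on $(0,\infty)$ --- is exactly the argument in the cited reference, so on that score your proposal matches the paper's source. Your first route via bilinear real interpolation is a legitimate alternative, but one caveat is worth making explicit: the ``one variable at a time'' description is heuristically appealing but not literally valid, because iterated one-parameter interpolation would force the output secondary index to equal $q_3$ (recall $(L^{p_0,q_0},L^{p_1,q_1})_{\theta,q}=L^{p,q}$ with the chosen $q$, regardless of $q_0,q_1$), whereas the statement allows the full range $1/q_1\le 1/q_2+1/q_3$. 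You acknowledge this by invoking the genuine bilinear real interpolation theorem, which is the right fix; just be aware that the two-step heuristic and the bilinear theorem are not the same thing, and only the latter yields the stated constraint on the secondary indices. Either route establishes the lemma.
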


We will need some properties of the stationary solution
\begin{align}\label{equ:form_stat}
\phi_a =  (3a)^{1/4}( 1 + a |x|^2)^{-1/2},
\end{align}
the resonance $\varphi_a = \partial_a \phi_a$, and the potential $V_a = - 5 \phi_a^4$. We record the following result from \cite{KS07}, with a proof for completeness.
\begin{lemma} \label{lem:symb}
Let $a$ be a function with $a(s) \in (1/2, 3/2)$, $a(0) = 1$ and $\dot a \in L^1$. Then the resonance $\varphi$ satisfies
\[
\varphi_{a(s)} = a(s)^{- 5/4} \varphi_{a(0)} + (a(s) - a(0)) \mathcal{O}(\langle x \rangle^{-3}), 
\]
where $\mathcal{O}$ satisfies symbol type estimates. Consequently
\[
\|\dot a(s) ( \varphi_{a(s)} - a(s)^{-5/4} \varphi_{a(0)}) \|_{L^1_t |\nabla|^{-1} L^{3/2,1}_x \cap L^1_t \dot H^1_x \cap L^\infty_t |\nabla|^{-1} L^{3/2,1}_x \cap L^\infty_t \dot H^1_x} \lesssim \|\dot a\|_{L_t^1} \|\dot a\|_{L_t^1 \cap L^\infty_t}.
\]
\end{lemma}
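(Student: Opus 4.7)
The plan is to reduce everything to a pointwise symbol estimate on $\varphi_a(x) - a^{-5/4}\varphi_1(x)$, then plug the resulting uniform bounds into each of the four spacetime norms using only that $|a(s) - 1| \leq \|\dot a\|_{L^1_t}$. The starting point is the scaling identity $\phi_a(x) = a^{1/4}\phi_1(\sqrt{a}\,x)$, which after differentiating in $a$ yields
\[
\varphi_a(x) = \partial_a\phi_a(x) = a^{-3/4}\varphi_1(\sqrt{a}\,x).
\]
Consequently
\[
\varphi_{a(s)}(x) - a(s)^{-5/4}\varphi_1(x) = a(s)^{-5/4}\bigl[a(s)^{1/2}\varphi_1(\sqrt{a(s)}\,x) - \varphi_1(x)\bigr],
\]
so the first assertion is precisely that the bracket equals $(a-1)$ times a symbol of order $-3$.

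I would establish this via the fundamental theorem of calculus. Writing
\[
a^{1/2}\varphi_1(\sqrt{a}x) - \varphi_1(x) = \int_1^a \tfrac{1}{2\sqrt{b}}\,\chi(\sqrt{b}\,x)\,db, \qquad \chi(y) := \varphi_1(y) + y\cdot\nabla\varphi_1(y),
\]
and noting that $\varphi_1$ is radial so $\chi(y) = \partial_r(r\varphi_1(r))|_{r=|y|}$, the explicit formula $r\varphi_1(r) = 3^{1/4}r(1-r^2)/(4(1+r^2)^{3/2}) = -\tfrac{3^{1/4}}{4} + \tfrac{5\cdot 3^{1/4}}{8r^2} + O(r^{-4})$ shows that $r\varphi_1(r)$ approaches a constant at infinity at rate $O(r^{-2})$ (that is, the power $-5/4$ was chosen precisely so that the $|y|^{-1}$ tails of the two terms in the bracket cancel). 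Hence $\chi$ is smooth with $|\partial_y^\alpha\chi(y)| \lesssim_\alpha \langle y\rangle^{-3-|\alpha|}$. Since $a(s) \in (1/2, 3/2)$ forces $\langle \sqrt{b}x\rangle \sim \langle x\rangle$ uniformly in $b \in [1/2, 3/2]$, integrating in $b$ yields
\[
\varphi_{a(s)}(x) - a(s)^{-5/4}\varphi_1(x) = (a(s)-1)\,G(a(s),x), \qquad |\partial_x^\alpha G(a,x)| \lesssim_\alpha \langle x\rangle^{-3-|\alpha|},
\]
which is the first assertion of the lemma with uniform symbol bounds in $a \in (1/2, 3/2)$.

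For the norm estimate I would factor out the scalar. From the above
\[
\bigl\| \dot a(s)\bigl(\varphi_{a(s)} - a(s)^{-5/4}\varphi_1\bigr) \bigr\|_Y \leq |\dot a(s)|\,|a(s)-1|\cdot\sup_{a\in(1/2,3/2)}\|G(a,\cdot)\|_Y
\]
for each $Y \in \{\dot H^1_x,\, |\nabla|^{-1}L^{3/2,1}_x\}$. The symbol estimate $|\nabla G|\lesssim \langle x\rangle^{-4}$ puts $\nabla G$ into $L^2(\bR^3)$ directly; for the Lorentz norm a layer-cake computation places $\langle x\rangle^{-4}$ in $L^{3/2,1}(\bR^3)$, and boundedness of Riesz transforms on $L^{3/2,1}$ (real interpolation, Lemma \ref{lem:interp}) gives $\||\nabla|G\|_{L^{3/2,1}} \lesssim \|\nabla G\|_{L^{3/2,1}} < \infty$ uniformly in $a$. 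Finally $|a(s)-1| = |\int_0^s \dot a(\tau)\,d\tau| \leq \|\dot a\|_{L^1_t}$, so
\[
\int |\dot a(s)|\,|a(s)-1|\,ds \leq \|\dot a\|_{L^1_t}^2, \qquad \sup_s |\dot a(s)|\,|a(s)-1| \leq \|\dot a\|_{L^\infty_t}\|\dot a\|_{L^1_t},
\]
both of which are bounded by $\|\dot a\|_{L^1_t}\|\dot a\|_{L^1_t \cap L^\infty_t}$, handling all four spacetime norms at once. The only slightly non-routine point I anticipate is the Lorentz/Riesz verification, but it is standard given the interpolation identity $L^{p,q} = [L^1, L^\infty]_{\theta, q}$ recorded in Lemma \ref{lem:interp}; the symbol structure of $G$ does all the real work.
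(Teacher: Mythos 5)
Your proposal is correct and takes essentially the same approach as the paper, which also reads off the asymptotics of $\varphi_a$ from the explicit formula and observes that the $a^{-5/4}$ factor cancels the $|x|^{-1}$ tail. Your route through the scaling identity $\varphi_a(x) = a^{-3/4}\varphi_1(\sqrt{a}\,x)$ and the fundamental theorem of calculus in $a$ is a slightly more systematic way to extract the $(a-1)$ factor and uniform symbol bounds, but it is the same computation.
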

\begin{proof}
By differentiating \eqref{equ:form_stat} in $a$, we have 
\begin{align}\label{equ:res_description}
\varphi_a = \frac{1}{4}3^{1/4} a^{-3/4}( 1 + a |x|^2)^{-1/2} - \frac{1}{2} |x|^2 (3a)^{1/4} ( 1 + a |x|^2)^{-3/2},
\end{align}
which for $|x|$ large yields
\[
\varphi_a = -\frac{1}{4}3^{1/4} a^{-5/4}|x|^{-1} + \mathcal{O} (\langle x \rangle^{-3}),
\]
where we use the $\mathcal{O}$ notation to denote a smooth remainder term satisfying symbol-type estimates, which depends on $a$ but is uniformly bounded for $a$ in the specified range. We can similarly check in \eqref{equ:res_description} that
\[
\varphi_{a_1} = \left(a_2/a_1 \right)^{-5/4} \varphi_{a_2} + (a_1 - a_2) \mathcal{O} (\langle x \rangle^{-3}),
\]
which yields the first claim. The second claim is then immediate.
\end{proof}
The following result, which proceeds by direct computation, will be used in conjunction with Proposition \ref{prop:delicate} to prove an estimate for a linear term in the contraction mapping argument of Section \ref{sec:contraction}.

\begin{lemma}\label{v_bds}
Let $a$ be a function with $a(s) \in (1/2, 3/2)$, $a(0) = 1$ and $\dot a \in L^1$. Then for any $1 < p, q \leq \infty$, $0 <  \theta <  3 - \frac{3}{p}$, we have
\begin{align}
\|\langle x \rangle^{1 + \theta} (V -V_{a(t)} )\|_{L^{p,q}_x L_t^\infty} &\lesssim \|\dot a\|_{L^1}.
\end{align}
\end{lemma}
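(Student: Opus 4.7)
The statement is purely pointwise in time plus a Lorentz-space estimate in $x$, with no cancellation structure to exploit, so I will proceed by direct computation exactly as the lemma's preamble suggests.

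\textbf{Step 1: pointwise bound.} From $\phi_a = (3a)^{1/4}(1+a|x|^2)^{-1/2}$ we get
\[
V_a = -5\phi_a^4 = -45\, a\,(1+a|x|^2)^{-2},
\]
so differentiation gives
\[
\partial_a V_a = -45(1+a|x|^2)^{-2} + 90\,a\,|x|^2(1+a|x|^2)^{-3},
\]
which for $a \in (1/2, 3/2)$ is bounded pointwise by a constant multiple of $\langle x \rangle^{-4}$, uniformly in $a$. By the fundamental theorem of calculus,
\[
V - V_{a(t)} = -\int_{1}^{a(t)} \partial_a V_a\, da,
\]
so $|V(x) - V_{a(t)}(x)| \lesssim |a(t)-1|\,\langle x \rangle^{-4}$.

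\textbf{Step 2: absorb the time dependence.} Since $a(0)=1$ and $\dot a \in L^1_t$, we have $|a(t)-1| \le \|\dot a\|_{L^1_t}$ for every $t\ge 0$, and therefore
\[
\bigl\| V - V_{a(t)}\bigr\|_{L^\infty_t} \lesssim \|\dot a\|_{L^1_t}\,\langle x \rangle^{-4}
\]
pointwise in $x$. Consequently
\[
\bigl\|\langle x \rangle^{1+\theta}(V - V_{a(t)})\bigr\|_{L^{p,q}_x L^\infty_t} \lesssim \|\dot a\|_{L^1_t}\,\bigl\|\langle x\rangle^{-(3-\theta)}\bigr\|_{L^{p,q}_x}.
\]

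\textbf{Step 3: check the Lorentz norm is finite.} For $\alpha>0$, the distribution function of $\langle x\rangle^{-\alpha}$ satisfies $\mu(\lambda)\sim \lambda^{-3/\alpha}$ for $\lambda\in(0,1]$ and vanishes for $\lambda>1$, so a direct computation of the Lorentz quasi-norm shows $\langle x\rangle^{-\alpha}\in L^{p,q}(\bR^3)$ for \emph{every} $q\in(0,\infty]$ precisely when $\alpha p>3$. With $\alpha = 3-\theta$ this becomes $\theta < 3 - 3/p$, which is exactly the hypothesis. Combining this with Step~2 closes the estimate.

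\textbf{Main obstacle.} There is essentially no obstacle: the statement is a one-shot pointwise estimate on the difference of two explicit rational functions, followed by a standard Lorentz-space computation for a radial power. The only mild point is remembering that $L^{p,q}$ membership of $\langle x\rangle^{-\alpha}$ is determined entirely by $\alpha p$ versus $3$ (it is insensitive to the secondary index $q$ because the function is bounded near the origin), which is what allows all $q \in (0,\infty]$ to appear uniformly in the hypothesis.
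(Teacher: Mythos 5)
Your proof is correct and follows the same route as the paper's: a pointwise bound $|\partial_a V_a|\lesssim\langle x\rangle^{-4}$ uniformly in $a\in(1/2,3/2)$, the fundamental theorem of calculus together with $|a(t)-1|\le\|\dot a\|_{L^1}$, and the observation that $\langle x\rangle^{\theta-3}\in L^{p,q}(\bR^3)$ exactly when $\theta<3-3/p$. (Minor arithmetic slip: $V_a=-15a(1+a|x|^2)^{-2}$, not $-45a(\cdots)$, since $(3a)^{1/4\cdot 4}=3a$; this does not affect the estimate.)
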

\begin{proof}
By direct computation, we have the following point-wise decay bounds
\[
|\partial_a V_a(x)| \lesssim  \langle \sqrt{a} x \rangle^{-4}.
\]
By the fundamental theorem of calculus
\[
|V - V_{a(t)}| \leq  \int_{1}^{a(t)} |\partial_a V_{a}| da  \leq  \int_{1}^{1 + \|\dot a\|_{L^1}} |\partial_a V_{a}| da  .
\]
Hence, integrating in $a$ after a change of variables, we obtain
\begin{align}
\langle x \rangle^{1+\theta} |V - V_{a(t)}| \leq \biggl| \int_{1}^{1 + \|\dot a\|_{L^1}} \langle x \rangle^{1+\theta} \langle \sqrt{a}x \rangle^{-4} da \biggr|  \lesssim \|\dot a\|_{L^1} \langle x \rangle^{1 + \theta - 4},
\end{align}
which yields the result.
\end{proof}

Arguing similarly, we have the following result
\begin{corollary}\label{cor:phi_bds}
Under the assumptions of Lemma \ref{v_bds}, for any $3 < p,q \leq \infty$, $0 <  \alpha <  1 - \frac{3}{p}$, we have
\begin{align}
\|\langle x \rangle^{\alpha} \phi_{a(t)} \|_{L^{p,q}_x L_t^\infty} &\lesssim (\|\dot a\|_{L^1} + 1)
\end{align}
\end{corollary}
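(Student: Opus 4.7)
The strategy mirrors the proof of Lemma \ref{v_bds}: reduce to a pointwise decay estimate that is uniform in the parameter $a$, then verify the resulting weighted function lies in the target Lorentz space. Since the statement is phrased as a corollary, I would decompose
\[
\phi_{a(t)} = \phi + (\phi_{a(t)} - \phi),
\]
so that the ``$+1$'' on the right-hand side will come from the fixed soliton $\phi = \phi_1$, and the $\|\dot a\|_{L^1}$ contribution will come from the difference, handled exactly as in Lemma \ref{v_bds} via the fundamental theorem of calculus.

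First, from the explicit formula $\phi_a(x) = (3a)^{1/4}(1+a|x|^2)^{-1/2}$ and $a(s) \in (1/2, 3/2)$ one gets the uniform bound $|\phi_a(x)| \lesssim \langle x \rangle^{-1}$. Second, by the fundamental theorem of calculus and Lemma \ref{lem:symb} (or by direct computation from \eqref{equ:res_description}), we have $|\partial_a \phi_a(x)| = |\varphi_a(x)| \lesssim \langle \sqrt{a}\, x \rangle^{-1} \lesssim \langle x \rangle^{-1}$ uniformly in $a \in (1/2, 3/2)$, hence
\[
|\phi_{a(t)}(x) - \phi(x)| \leq \int_1^{a(t)} |\varphi_a(x)|\, da \lesssim \|\dot a\|_{L^1_t} \langle x \rangle^{-1}.
\]
Taking the $L^\infty_t$ norm yields
\[
\bigl\| \langle x \rangle^\alpha \phi_{a(t)} \bigr\|_{L^\infty_t} \lesssim (1 + \|\dot a\|_{L^1_t}) \langle x \rangle^{\alpha - 1}.
\]

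The only remaining point is to verify $\langle x \rangle^{\alpha-1} \in L^{p,q}_x(\bR^3)$ under the hypotheses $p > 3$ and $\alpha < 1 - 3/p$. The function is bounded by $1$ near the origin and decays like $|x|^{\alpha - 1}$ at infinity with $(1-\alpha) p > 3$, so the distribution function $\mu(\lambda)$ decays like $\lambda^{-p/(1-\alpha)\cdot \ldots}$ — concretely, $\mu(\lambda) \lesssim \lambda^{-p/(1-\alpha)}$ for small $\lambda$ and vanishes for $\lambda > 1$, which places $\langle x \rangle^{\alpha-1}$ in $L^{p',\infty} \subset L^{p,q}$ for any $p' < p/(1-\alpha)\cdot\ldots$; more directly, since $(1-\alpha)p > 3$ the function is integrable with all Lorentz norms $\|\cdot\|_{L^{p,q}}$ finite for every $q \in [1,\infty]$. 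I would just write this out using Lemma \ref{lem:interp} or by computing the distribution function directly. No step is genuinely an obstacle here — the mild subtlety is simply to keep track of the condition $\alpha < 1 - 3/p$ which is precisely what guarantees the Lorentz-norm finiteness — and the proof concludes by combining this with the pointwise bound above.
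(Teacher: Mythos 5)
Your proof is correct and follows essentially the same route as the paper: decompose $\phi_{a(t)} = \phi + (\phi_{a(t)} - \phi)$, bound the difference via the fundamental theorem of calculus as in Lemma \ref{v_bds} using $|\varphi_a(x)| \lesssim \langle x\rangle^{-1}$ uniformly for $a \in (1/2,3/2)$, and use $\phi \lesssim \langle x\rangle^{-1}$ for the fixed soliton term, reducing everything to $\langle x\rangle^{\alpha-1} \in L^{p,q}_x(\bR^3)$, which holds precisely because $(1-\alpha)p > 3$.
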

\begin{proof}
We write
\[
\|\langle x \rangle^{\alpha} \phi_{a(t)} \|_{L^{p,q}_x L_t^\infty} = \|\langle x \rangle^{\alpha} ( \phi - \phi_{a(t)}) \|_{L^{p,q}_x L_t^\infty} + \|\langle x \rangle^{\alpha} \phi \|_{L^{p,q}_x L_t^\infty} 
\]
and use the same proof as Lemma \ref{v_bds}, and the fact that $\phi_{a} \lesssim \langle x \rangle^{-1}$.
\end{proof}

\section{Distorted Fourier theory} \label{sec:dist_four}

In this section, we will collect the necessary preliminaries on distorted Fourier theory for the linearized operator $-\Delta - 5 \phi^4$, stated for  slightly more general operators $H = -\Delta + V$, where $V$ is a real-valued potential which satisfies certain decay assumptions which we make explicit shortly.  Many of the results we state can be found in the work of Agmon \cite{Agmon}, and we refer to \cite{Agmon} work and references therein for further details. 

Before beginning our discussion, we set the usual notation for the resolvent set of $H$,
\begin{align}
\rho(H) &= \{ z \in \mathbb{C} \,: (H- z): H^2 \to L^2 \textup{ is bijective and } (H - z)^{-1} \textup{ is bounded}\}, 
\end{align}
and we define the spectrum of $H$
\begin{align}
\sigma(H) &= \mathbb{C} \setminus \rho(H).
\end{align}
We recall that the essential spectrum of an operator $T$, denoted $\sigma_{\textup{ess}}(T)$, is the set of all $\lambda \in \bC$ such that $\lambda I - T$ is not a Fredholm operator. When $T$ is self-adjoint, we may equivalently define $\sigma_{\textup{ess}}(T) := \sigma(T) \setminus \sigma_{\textup{disc}}(T)$, for $\sigma_{\textup{disc}}(T)$ the set of all isolated points of $\sigma(T)$, see \cite{ReedSimon}.

Let $H_0$ denote the self-adjoint realization of $-\Delta$ with domain $ \mathcal{D}(-\Delta) = H^2(\bR^n)$. For real-valued potentials $V$ which are $H_0$-compact, which we recall means that multiplication by $V$ is compact  as an operator from $\mathcal{D}(-\Delta) \to L^2$,  the operator $H = -\Delta +V$ has a unique self-adjoint realization on $L^2(\bR^n)$ with domain $\mathcal{D}(H) =  \mathcal{D}(-\Delta)  =  H^2(\bR^n)$. Furthermore, 
\[
\sigma_{\textup{ess}}(H) = \sigma_{\textup{ess}}(-\Delta) = \bR_+,
\]
 see for instance \cite[\S XIII.4]{ReedSimonIV}. In this setting, the spectral theorem guarantees the existence of a unique projection valued spectral measure $E_{\lambda}$ such that for bounded Borel functions $g$, we have
\[
g(H) = \int_{\sigma(H)} g(\lambda) d E_{\lambda}.
\]
This measure enjoys a Lebesgue decomposition into singular, pure point, and absolutely continuous component, see \cite[p230]{ReedSimon}, and we will denote the absolutely continuous component of the spectral measure by $E_{\lambda, ac}$.

For many of the results in \cite{Agmon},  the potential $V$ is assumed to satisfy
\begin{align}\label{equ:decay}
\sup_{x \in \bR^n} \left[ ( 1 + |x|)^{2 + 2 \varepsilon}  \int_{|x-y| \leq 1} |V(y)|^2 |x - y|^{-n + \mu} \right] < \infty
\end{align}
for some $\varepsilon > 0$ and $0 < \mu < 4$. It is further noted that \eqref{equ:decay} holds in particular for potentials which satisfy 
\begin{align}\label{decay2}
V(x) = O(|x|^{-1-\varepsilon}), \quad \textup{as  }|x| \to \infty \qquad \textup{and} \qquad V \in L^p_{loc}(\bR^n)\textup{ where }
\begin{cases}
p=2 & n \leq 3 \\
p > n/2 & n \geq 4.
\end{cases}
\end{align}
For potentials satisfying \eqref{equ:decay}, it is known (see for instance \cite[Theorem 6.1]{Agmon}) that
\begin{align}\label{orthog_decomp}
L^2(\bR^n) =L^2_{ac}(\bR^n) \oplus L^2_p(\bR^n),
\end{align}
where $L_p^2(\bR^n)$ denotes the closed span of the eigenfunctions of $H$, and where $L^2_{ac}$ is the absolutely continuous subspace of $L^2(\bR^n)$ for $H$, that is, the set of all vectors in $L^2$ whose corresponding spectral measures are absolutely continuous, see, for instance \cite[Theorem VII.4]{ReedSimon} for a general definition of this subspace and more details. In particular, we have
\[
\sigma_{ac}(H) := \sigma(H \bigl|_{L^2_{ac}}) = \bR_+.
\]
We let $P_{ac}$ denote the orthogonal projection of $L^2(\bR^n)$ onto $L^2_{ac}(\bR^n)$.

\medskip
In dimension three, we use the following definition of resonance, which was stated in the introduction but which we recall here:

\begin{definition}\label{def:res}
A zero resonance is a distributional solution $f$ to the equation
\[
-\Delta f + V f = 0, \quad f \in \bigcap_{\sigma <- \frac{1}{2}} L^{2, \sigma} \setminus L^2,
\]
where the space $L^{2,\sigma}$ is defined in \eqref{equ:weighted}. We may analogously define a $\lambda$-resonance to be a distributional solution $f$ to the equation
\[
-\Delta f + V f = \lambda f, \quad f \in \bigcap_{\sigma <- \frac{1}{2}} L^{2, \sigma} \setminus L^2,
\]

\end{definition}

\begin{remark}\label{no_eig}
Let $H = - \Delta + V$, where $V$ satisfies \eqref{equ:decay}. By \cite[Theorem C.4.2]{SimonBAMS}, if there exists a non-zero, distributional solution $f$ to
\[
H f = \lambda f
\]
such that $f$ is polynomially bounded but not in $L^2$, then $\lambda \in \sigma_{\textup{ess}}(H) = \bR_+$, and hence there are no $\lambda$ resonances for $\lambda < 0$. Furthermore, by \cite[\S3]{Agmon} there are no solutions
\[
-\Delta f + V f = \lambda f, \quad f \in \bigcap_{\sigma < -\frac{1}{2}} L^{2, \sigma} \setminus L^2,
\]
for $\lambda > 0$ for potentials $V$ satisfying \eqref{equ:decay}, and thus there are no $\lambda$ resonances for $\lambda > 0$ either. Hence, the set of non-zero $\lambda \in \bR$ such that there exists a non-zero, polynomially bounded, distributional solution $f$ to
\[
H f = \lambda f
\]
is precisely the set of non-zero eigenvalues of $H$. Of course, this discussion does not preclude the existence of a resonance at zero, which will be the case in our setting.
\end{remark}

We let $e_+(H)$ denote the set of positive eigenvalues of $H$ and we let
\[
\mathcal{N}(H) = \{\xi \,:\, \xi \in \bR^n, |\xi|^2 \in e_+(H)\} \cup \{0\}.
\] 

In the sequel we will work with rapidly decaying, smooth and radial potentials $V$. To streamline our exposition, we define the following:

\medskip
\noindent \textbf{Assumption (A1):} $V$ is real-valued, radial, smooth and satisfies
\[
|V(x)| \lesssim \langle x\rangle^{-4},   \qquad |\nabla V(x) | \lesssim \langle x\rangle^{-5}.
\]

\medskip

In the majority of our results below, we will restrict our attention to Schr\"odinger operators $H = - \Delta + V$ with radial potentials satisfying Assumption (A1). In particular, for such potentials we have  $V \in L^{2,\sigma}$ with $\sigma > 1/2$, which is the assumption typically used in \cite{Agmon}, $V \in L^{3/2,1}$ which is the assumption used in \cite{BS16}, as well as $V \in L^\infty$. We emphasize that in the results of this section, Assumption (A1) is far from optimal, and we refer to \cite{BS16} for some discussion of the optimal assumptions on the potential in three dimensions.

Under Assumption (A1), the Schr\"odinger operator $H$ satisfies the following properties:
\begin{enumerate}
\item[(i)] $\mathcal{N}(H) = \{0\}$,
\item[(ii)] $\sigma_{ac}(H) = \bR_+$,
\item[(iii)] no positive or negative resonances, but possibly a resonance at zero,
\item[(iv)] finite dimensional kernel,
\item[(v)] $e_{+}(H) = \emptyset$ and there are finitely many negative eigenvalues with exponentially decaying negative eigenfunctions, $\{Y_j\}_{j=1}^m$.
\end{enumerate}
See for instance \cite{Kato59}, \cite[Section 8]{DaviesBook} and \cite{Meshkov}. If we let $\{Y_j\}_{j=1}^m$ denote the set of eigenfunctions of $H$ associated to the negative eigenvalues $\{-\kappa_j^2\}_{j=1}^m$, then
\begin{align}\label{p_ac}
P_{ac} f = f - \sum_{j = 1}^m \langle Y_j, f \rangle Y_j.
\end{align}
Furthermore, $P_{ac}$ is bounded on $\dot H^s \cap L^p$ for $s \in \bR$ and $1 \leq p \leq \infty$.

By definition of $\sigma(H)$, for $z \in \mathbb{C} \setminus \sigma(H)$ the resolvent operator
\[
R_V(z):= (-\Delta + V - z)^{-1}
\]
is a bounded operator from $L^2 \to H^2$. For $\lambda \in \mathbb{R} \setminus e_+(H)$, the limits
\begin{align}\label{equ:lim_res}
R^{\pm}_V(\lambda^2) := \lim_{ \varepsilon \to 0^+} (-\Delta + V - (\lambda \pm i \varepsilon)^2)^{-1},
\end{align}
can be defined via the limiting absorption principle in certain spaces, see for instance \cite[Theorem 4.1]{Agmon} which demonstrates that this limit holds in $\mathcal{B}(L^{2,\sigma}, H^{s,-\sigma})$ for $\sigma > 1/2$ equipped with the usual operator topology, see also \cite{RodTao15, GoldbergSchlag04} and references therein for other works on the limiting absorption principle. It will be useful in the sequel to note that 
\begin{align}\label{equ:resolvent_complex_conj}
R^{+}_V(\lambda^2) = \overline{R^{-}_V(\lambda^2)}.
\end{align}

In the case that $V = 0$, we obtain the free resolvent, which in three dimensions is given explicitly for $\textup{Im}\: \lambda \neq 0 $ by the kernel
\[
R_0(\lambda^2)(x,y) = \frac{e^{ i \lambda |x-y|}}{|x-y|}.
\]
The free resolvent is analytic in $\mathbb{C} \setminus [0, \infty)$ and discontinuous along $[0, \infty)$.
 
 \medskip
Distorted Fourier theory for the operator $H$ is based on the construction of generalized eigenfunctions $\{e_{\pm}(x,\xi)\}$ which play the role of the characters $\{e^{\pm ix \cdot \xi}\}$ in the Fourier theory of the Laplacian.  The existence of these generalized eigenfunctions and many of their properties were established by Agmon \cite{Agmon} under the condition \eqref{equ:decay}. More precisely, to establish the existence of a family of generalized eigenfunctions, i.e. functions 
\[
e(x,\xi):  \bR^n \times \mathbb{R}^n \setminus \mathcal{N}(H) \to \bC
\]
which satisfy
 \begin{align}\label{efunction_equation}
( -\Delta + V(x) - |\xi|^2) e(x,\xi) = 0,
 \end{align}
we consider the ansatz
\[
e_{\pm}(x,\xi) = e^{\pm i x \cdot \xi} + v_{\pm}(x,\xi).
\]
From \eqref{efunction_equation}, one derives that the functions $v_{\pm}$ satisfy
 \begin{align}\label{eval1}
 (-\Delta + V(x) - |\xi|^2) v_{\pm}(x,\xi) = - V(x) e^{ \pm i x \cdot \xi}.
 \end{align}
For potentials $V \in L^{2, \sigma}$ for $\sigma > 1/2$, the limiting operators $R_V^{\pm}(|\xi|^2)$ exist in $\mathcal{B}(L^{2,\sigma}, H^{2,-\sigma})$ by \cite[Theorem 4.1]{Agmon}, and solutions $\{e_{\pm}(x, \xi)\}$ of the so-called Lippman-Schwinger equation
\begin{align}\label{equ:lippman_schwinger}
e_\pm(\cdot, \xi) = e_{\pm,\xi} - R_V^{\mp}(|\xi|^2) [V e_{\pm,\xi}]  , \qquad e_{\pm,\xi}(x) = e^{\pm ix \cdot \xi}
\end{align}
are solutions to \eqref{efunction_equation}.

\medskip
We state the following theorem for the generalized eigenfunction expansion due to \cite{Agmon}, see also \cite{KatoKuroda70} for an earlier result with more restrictive assumptions on the potential.

 \begin{theorem}[Cf. \protect{\cite[Theorem 5.1]{Agmon}}]\label{thm:agmon}
Let $H = -\Delta + V$ with $V \in L^{2, \sigma}$ for $\sigma < - 1/2$. For every $\xi \in \bR^n \setminus \mathcal{N}(H)$, let
\begin{align}\label{equ:lippman_schwinger_thm}
e_\pm(\cdot, \xi) = e_{\pm,\xi} - R_V^{\mp}(|\xi|^2) [V e_{\pm,\xi}]  , \qquad e_{\pm,\xi}(x) = e^{\pm ix \cdot \xi}.
\end{align}
Then the families $e_\pm(x,\xi)$ have the following properties:
\begin{enumerate}
\item[a)] As a function of $x$ and $\xi$, $e_\pm(x, \xi)$ is a measurable function with
\[
e_\pm(x, \xi) \in L^2_{loc}(\mathbb{R}^n \times (\mathbb{R}^n \setminus  \mathcal{N}(H))).
\]
\item[b)] For every fixed $\xi$, the function $e_\pm(x, \xi) \in C(\mathbb{R}^n) \cap H^2_{loc}(\mathbb{R}^n)$ and satisfies the differential equation
\[
(-\Delta +  V(x) -|\xi|^2) e_\pm(x, \xi) = 0.
\]
\end{enumerate}
\end{theorem}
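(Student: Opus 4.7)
The plan is to verify parts (a) and (b) directly from the Lippmann--Schwinger identity \eqref{equ:lippman_schwinger_thm}, treating Agmon's limiting absorption principle (the cited \cite[Theorem 4.1]{Agmon}) as the black-box analytic input. Specifically, I will use three facts: for $\sigma > 1/2$ and $\xi \in \bR^n \setminus \mathcal{N}(H)$, the limit $R_V^\mp(|\xi|^2)$ exists as a bounded operator from $L^{2,\sigma}$ to $H^{2,-\sigma}$; the assignment $\xi \mapsto R_V^\mp(|\xi|^2)$ is continuous in the operator-norm topology on $\bR^n \setminus \mathcal{N}(H)$; and $(-\Delta + V - |\xi|^2) R_V^\mp(|\xi|^2) g = g$ distributionally for every $g \in L^{2,\sigma}$.

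For part (b), fix $\xi \in \bR^n \setminus \mathcal{N}(H)$ and set $v_\pm(\cdot, \xi) := R_V^\mp(|\xi|^2)[V e_{\pm,\xi}]$. Since $e_{\pm,\xi}$ is bounded and $V \in L^{2,\sigma}$, we have $V e_{\pm,\xi} \in L^{2,\sigma}$, so $v_\pm(\cdot, \xi) \in H^{2,-\sigma} \subset H^2_{\mathrm{loc}}$. Applying $(-\Delta + V - |\xi|^2)$ to the decomposition $e_\pm = e_{\pm,\xi} + v_\pm$, combining the classical identity $(-\Delta - |\xi|^2) e_{\pm,\xi} = 0$ with the distributional identity $(-\Delta + V - |\xi|^2) v_\pm = - V e_{\pm,\xi}$, yields the eigenfunction equation. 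Continuity of $e_\pm(\cdot, \xi)$ in $x$ then follows from the Sobolev embedding $H^2_{\mathrm{loc}}(\bR^n) \hookrightarrow C(\bR^n)$ in dimensions $n \le 3$ together with smoothness of $e_{\pm,\xi}$; a bootstrap using the equation and the $L^p_{\mathrm{loc}}$ hypothesis \eqref{decay2} on $V$ handles higher dimensions.

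For part (a), the first step is to show that $\xi \mapsto v_\pm(\cdot, \xi)$ is continuous from $\bR^n \setminus \mathcal{N}(H)$ into $L^{2,-\sigma}$. This decomposes into (i) continuity of $\xi \mapsto V e_{\pm,\xi} \in L^{2,\sigma}$, established by dominated convergence using the pointwise envelope $|V e_{\pm,\xi}(x)| \le |V(x)|$; and (ii) the operator-norm continuity of $\xi \mapsto R_V^\mp(|\xi|^2) \in \mathcal{B}(L^{2,\sigma}, L^{2,-\sigma})$ supplied by the limiting absorption principle. Composition gives the claim. Joint measurability of $(x,\xi) \mapsto e_\pm(x,\xi)$ then follows from the continuous $L^2_{\mathrm{loc}}$-valued assignment $\xi \mapsto e_\pm(\cdot, \xi)$ by a standard simple-function approximation, and local square integrability follows from the Fubini estimate
\[
\int_{K'} \int_K |e_\pm(x,\xi)|^2 \, dx \, d\xi \lesssim_{K, K', \sigma} |K||K'| + \int_{K'} \|v_\pm(\cdot, \xi)\|_{L^{2,-\sigma}}^2 \, d\xi < \infty
\]
for compact $K \subset \bR^n$ and $K' \subset \bR^n \setminus \mathcal{N}(H)$, using the local boundedness of $\|v_\pm(\cdot, \xi)\|_{L^{2,-\sigma}}$ implied by continuity.

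The only real (and modest) obstacle is organizing the continuity and boundedness of the limiting resolvent uniformly for $\xi$ in compact subsets of $\bR^n \setminus \mathcal{N}(H)$; once Agmon's Theorem 4.1 is taken as input, both assertions reduce to direct verification from the defining formula.
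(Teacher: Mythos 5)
The paper does not prove this theorem; it cites it directly from Agmon's work (as indicated by the attribution "Cf.~\cite[Theorem~5.1]{Agmon}" and the surrounding remark that the result is "due to~\cite{Agmon}"). There is therefore no internal proof to compare against, and citing the result is the intended move. That said, your sketch of how to \emph{derive} it from the limiting absorption principle is essentially a condensed version of Agmon's own argument, and the overall structure is sound: part~(b) by applying $(-\Delta + V - |\xi|^2)$ to the Lippmann--Schwinger decomposition and using the distributional resolvent identity plus elliptic regularity, and part~(a) via norm-continuity of $\xi \mapsto R_V^\mp(|\xi|^2)$ in $\mathcal{B}(L^{2,\sigma}, L^{2,-\sigma})$ together with dominated convergence for $\xi \mapsto Ve_{\pm,\xi}$ and then a Fubini argument on compact sets. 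You also correctly read the hypothesis as $V \in L^{2,\sigma}$ with $\sigma > 1/2$ (the "$\sigma < -1/2$" in the paper's statement is a typo, at odds with the displayed weighted-space conventions and with the LAP input).

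One small bookkeeping slip worth fixing: with your definition $v_\pm := R_V^\mp(|\xi|^2)[Ve_{\pm,\xi}]$, the decomposition from \eqref{equ:lippman_schwinger_thm} is $e_\pm = e_{\pm,\xi} - v_\pm$ (not $+ v_\pm$), and the correct distributional identity is $(-\Delta + V - |\xi|^2)v_\pm = Ve_{\pm,\xi}$ (not $-Ve_{\pm,\xi}$). You flipped both signs, so the cancellation still comes out right, but as written the intermediate steps are inconsistent with the stated formula. Otherwise the argument holds; the only genuine work beyond Agmon's LAP is the uniform-on-compacta control you flag at the end, which is indeed the point where one must invoke operator-norm continuity rather than just pointwise existence of the boundary values $R_V^\mp(|\xi|^2)$.
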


Now that we have defined the generalized eigenfunctions for $H$, we record a definition and some properties of the distorted Fourier transform.  The version we state is due to Agmon, but see for instance \cite{Kato70,KatoKuroda70} and references therein for earlier results with different assumptions on the potential.

\begin{theorem}[Cf. \protect{\cite[Theorem 6.2]{Agmon}}] \label{distorted}
Let $H = -\Delta + V$ for a real-valued potential $V \in L^{2, \sigma}$ for $\sigma < - 1/2$. Let $e_{\pm}(x,\xi)$ denote the two families of generalized eigenfunctions for $H$. Then there exist bounded linear operators 
\[
\cF_{V,\pm} : L^2(\bR^n) \to L^2(\bR^n),
\]
satisfying the following properties:
\begin{enumerate}
\item[(i)] The restriction of $\cF_{V,\pm}$ to $L^2_{ac}$ is a unitary operator to $L^2(\bR^n)$ and
\begin{align}\label{equ:iso}
\langle P_{ac} f ,P_{ac} g \rangle = \langle F_{V, \pm} f , F_{V, \pm} g \rangle, \qquad f, g \in L^2(\bR^n).
\end{align}
In particular, $\textup{Ker}(\mathcal{F}_{V,\pm}) = L^2_{p}$.
\item[(ii)] For any $f \in L^2(\bR^n)$, the maps $\cF_{V, \pm}$ take the form
\begin{align}\label{distorted_limit}
(\cF_{V,\pm} f)(\xi) = (2\pi)^{-n/2}  \lim_{N \to \infty} \int_{|x| < N} \overline{e_\pm (x, \xi)} f(x) dx, 
\end{align}
and
\[
(\cF_{V,\pm}^* f)(x) = (2\pi)^{-n/2} \lim_{j \to \infty} \int_{K_j} e_\pm (x, \xi) f(\xi) d\xi, 
\]
where $K_j$ is any increasing sequence of compact sets with $\cup_{j=1}^\infty K_j = \mathbb{R}^n \setminus \mathcal{N}(H)$.
\end{enumerate}
\end{theorem}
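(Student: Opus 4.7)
\textbf{Proof plan for Theorem \ref{distorted}.} The strategy, following Agmon, is to construct $\cF_{V,\pm}$ on a dense subspace via the explicit integral formula, establish a Plancherel-type identity using Stone's formula, and then extend by continuity. First I would check that for $f$ in a suitable dense class (say Schwartz functions, or more convenient here $f \in L^{2,\sigma}$ with $\sigma > 1/2$), the truncated integrals
\[
(\cF_{V,\pm}^{(N)} f)(\xi) := (2\pi)^{-n/2} \int_{|x|<N} \overline{e_\pm(x,\xi)} f(x)\, dx
\]
are well-defined measurable functions of $\xi \in \bR^n \setminus \cN(H)$ by Theorem \ref{thm:agmon}(a). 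Using the Lippmann-Schwinger representation \eqref{equ:lippman_schwinger_thm}, one has
\[
\overline{e_\pm(x,\xi)} = e^{\mp i x\cdot\xi} - \bigl(R_V^{\pm}(|\xi|^2)[V e_{\pm,\xi}]\bigr)(x),
\]
(using \eqref{equ:resolvent_complex_conj}) which splits $\cF_{V,\pm}^{(N)} f$ into a standard Fourier piece plus a correction term controlled by the limiting-absorption bound $R_V^{\pm}(|\xi|^2) \in \mathcal B(L^{2,\sigma},H^{2,-\sigma})$.

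The heart of the argument is the Plancherel identity
\[
\langle \cF_{V,\pm} f, \cF_{V,\pm} g\rangle_{L^2_\xi} = \langle P_{ac} f, P_{ac} g\rangle_{L^2_x}.
\]
To prove this, I would begin with Stone's formula for the absolutely continuous spectral measure of $H$,
\[
\langle E_{ac}((a,b)) f, g\rangle \;=\; \frac{1}{2\pi i} \int_a^b \bigl\langle [R_V^+(\lambda) - R_V^-(\lambda)] f, g\bigr\rangle\, d\lambda,
\]
apply it on intervals $(a,b) \subset (0,\infty)$ avoiding $e_+(H)$, and change variables $\lambda = |\xi|^2$ in radial coordinates. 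Using the Lippmann-Schwinger equation one then rewrites the jump of the resolvent in terms of the generalized eigenfunctions $e_\pm$: schematically,
\[
\frac{1}{2\pi i}[R_V^+(|\xi|^2) - R_V^-(|\xi|^2)](x,y) \;=\; c_n\, |\xi|^{n-2}\, e_\pm(x,\xi) \overline{e_\pm(y,\xi)}\, d\sigma(\xi/|\xi|),
\]
after which integration in $\xi$ yields exactly the claimed isometry. This is the step I expect to require the most care: one must justify passing Fubini through the limiting-absorption limits and use that the jump of the free resolvent has an explicit kernel expressible via $e^{\pm ix\cdot\xi}$, then combine with the Lippmann-Schwinger identity to get the product $e_\pm(x,\xi)\overline{e_\pm(y,\xi)}$. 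The cancellation of lower-order terms uses the fact that $V$ is real and the symmetry \eqref{equ:resolvent_complex_conj}.

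Given the Plancherel identity on a dense subspace, $\cF_{V,\pm}$ extends uniquely to a bounded operator on $L^2(\bR^n)$, and since the identity factors through $P_{ac}$ with $P_{ac}$ an orthogonal projection, the extension satisfies $\|\cF_{V,\pm} f\|_{L^2} = \|P_{ac} f\|_{L^2}$, giving both the $L^2$-boundedness and $\mathrm{Ker}(\cF_{V,\pm}) = L^2_p$. Surjectivity onto $L^2(\bR^n)$ (hence unitarity of $\cF_{V,\pm}|_{L^2_{ac}}$) follows from checking that the adjoint formula for $\cF_{V,\pm}^*$ satisfies $\cF_{V,\pm} \cF_{V,\pm}^* = \mathrm{Id}$ on $L^2(\bR^n)$, again via a Stone's-formula / resolvent computation applied to the range. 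Finally, the convergence of the integral representations as $N \to \infty$ and over exhausting compact sets $K_j$ in \eqref{distorted_limit} is obtained by the usual Lebesgue-type argument: one verifies convergence on the dense class (where the integrals converge absolutely after using the Lippmann-Schwinger decomposition and the limiting-absorption bounds), together with the uniform $L^2$-bound coming from the Plancherel identity. The exclusion of $\cN(H)$ in the domain of $e_\pm(\cdot,\xi)$ is harmless since $\cN(H)$ has measure zero under our assumptions (Remark \ref{no_eig}).
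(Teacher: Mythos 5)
This statement is not proved in the paper: Theorem~\ref{distorted} is cited verbatim from Agmon (the ``Cf.'' in the theorem header indicates an external reference, and no proof appears in the text). So there is no in-paper argument to compare your proposal against. That said, your sketch is a faithful outline of the standard argument from scattering theory, and it matches the structure of Agmon's proof closely: construct the transform on a dense class via the truncated integrals, prove the Plancherel identity by applying Stone's formula to the absolutely continuous part of the spectral measure, rewrite the resolvent jump in terms of the generalized eigenfunctions through the Lippmann--Schwinger identity, and extend by density. You correctly identify the delicate point --- justifying Fubini across the limiting-absorption limits when rewriting $\frac{1}{2\pi i}[R_V^+ - R_V^-]$ as a ``diagonal'' kernel in the generalized eigenfunctions --- which is where most of the technical work in Agmon's paper lives. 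One small remark: the hypothesis ``$V\in L^{2,\sigma}$ for $\sigma<-1/2$'' in the theorem statement is a sign typo carried over from the paper (it should read $\sigma>1/2$, as is used consistently everywhere else for the limiting-absorption bound $R_V^{\pm}(|\xi|^2)\in\mathcal{B}(L^{2,\sigma},H^{2,-\sigma})$); you implicitly corrected this by taking your dense class in $L^{2,\sigma}$ with $\sigma>1/2$, which is the right convention.
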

\begin{remark}
From $(i)$ in the previous theorem, $\mathcal{F}_{V, \pm}(Y_j) = 0$, hence using the orthogonal decomposition \eqref{orthog_decomp}, it holds that 
\[
\mathcal{F}_{V, \pm}P_{ac} = \mathcal{F}_{V, \pm}.
\]
In \cite[Theorem 6.2]{Agmon}, Agmon in fact proves that $\cF_{V,\pm}$ restricted to $L^2_{ac}$ is a unitary operator onto $L^2(\bR^n)$, but we will not need this fact. 
\end{remark}

\begin{remark}\label{fin_supp}
If $\textup{supp }f(\xi) \subseteq \mathbb{R}^n \setminus \mathcal{N}(H)$ is compact, then
\[
(\cF_{V,\pm}^* f)(x) = (2\pi)^{-n/2} \int_{\bR^n} e_\pm (x, \xi) f(\xi) d\xi.
\]
\end{remark}

\medskip
We now return to the setting of dimension $n= 3$. In previous works on random data Cauchy theory, a key ingredient in the improved probabilistic estimates for the free evolution was a unit-scale Bernstein estimate for certain Fourier projections, see for instance \cite{LM1, LM2}. Analogously, in this work, we will crucially rely on a Bernstein type estimate for certain distorted Fourier projections to obtain improved probabilistic estimates for the linearized evolution of the random initial data. In the proof of these estimates, we will need to show that for sufficiently smooth and decaying potentials, $e_{\pm}(x, \xi) \in L^\infty_x$. We now turn to establishing these bounds.

By the resolvent identity, 
\begin{align}\label{res_identity}
R_V(z) =  R_0(z) - R_0(z) V R_V(z) = R_0(z) -  R_V(z) V R_0(z), \quad z \in \mathbb{C} \setminus \bigl(\bR_+ \cup \sigma(H) \bigr)
\end{align}
which holds in the sense of bounded operators on $L^2$, or equivalently
\begin{align}\label{res_formula_2}
(I + R_0(z) V)^{-1} = I - R_V(z ) V.
\end{align}

We need to use this identity in the limit as $\textup{Im }z \to 0$. By \cite[Lemma 3.2]{BS16}, for $V \in L^{3/2,1}$ the operator $(I + R_0^{\mp}(|\xi|^2) V)$ is invertible in $\mathcal{B}(L^\infty, L^\infty)$, and furthermore, this holds if and only if $R_V^{\mp}(|\xi|^2)$ is bounded from $L^{3/2,1}$ to $L^\infty$, see \cite[(3.3)]{BS16}. Consequently, for such potentials, $R_V^{\mp}(|\xi|^2) V$ is a bounded operator on $L^\infty$, and we may use \eqref{res_formula_2} to rewrite the expression \eqref{equ:lippman_schwinger} for $e_{\pm}(x,\xi)$ as
\begin{align}\label{eq:gen_four}
e_{\pm} (\cdot, \xi) = \bigl(I - R_V^{\mp}(|\xi|^2) V \bigr) \, e_{\pm,\xi} = (I + R_0^{\mp}(|\xi|^2) V)^{-1} e_{\pm,\xi}.
\end{align}

Thus, to establish bounds for $e_{\pm}(x, \xi)$, we prove a bound for the operator $(I + R_0^{\pm}(\lambda^2)V)^{-1}$. The proof of a similar fact is discussed in \cite[Lemma 2.4]{Bec14} but we include here the precise statement we need and a proof for completeness.
 
 \begin{lemma}\label{lem:lim_ap}
 Let $V$ be a potential satisfying Assumption (A1). Then for every $\lambda_0 > 0$, one has
 \begin{align}\label{equ:lim_ap}
\sup_{\lambda \geq \lambda_0} \|(I + R_0^{\pm}(\lambda^2)V)^{-1} \|_{L^\infty \to L^\infty} \leq C(\lambda_0, V).
 \end{align}
 \end{lemma}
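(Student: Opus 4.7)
The plan is to pass to the perturbed resolvent via the second resolvent identity, reducing the desired uniform bound to one on $\|R_V^\pm(\lambda^2)\|_{L^{3/2,1} \to L^\infty}$, and then to establish the latter by combining pointwise existence from \cite[Lemma~3.2]{BS16}, norm continuity in $\lambda$ on compact subintervals, and a compactness-plus-contradiction argument in the high-energy regime, which crucially uses the absence of positive eigenvalues and resonances for $H$ recorded in properties (i)--(v) and Remark~\ref{no_eig}.

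First I would use the second resolvent identity to write $(I + R_0^\pm(\lambda^2)V)^{-1} = I - R_V^\pm(\lambda^2)V$ whenever the left-hand side exists. Since $V \in L^{3/2,1}$ by Assumption (A1), multiplication by $V$ maps $L^\infty \to L^{3/2,1}$ boundedly, so
\[
\|(I + R_0^\pm(\lambda^2)V)^{-1}\|_{L^\infty \to L^\infty} \leq 1 + \|V\|_{L^{3/2,1}}\,\|R_V^\pm(\lambda^2)\|_{L^{3/2,1} \to L^\infty},
\]
and pointwise finiteness of the last factor for each $\lambda \geq \lambda_0$ is precisely \cite[Lemma~3.2]{BS16}. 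To promote this to uniformity on compact intervals I would note that the free resolvent kernel $\frac{e^{\pm i\lambda|x-y|}}{4\pi|x-y|}$ is dominated by $\frac{1}{|x-y|} \in L^{3,\infty}$, so Young's inequality in Lorentz spaces (Lemma~\ref{lem:lorentz_young}) yields that $R_0^\pm(\lambda^2): L^{3/2,1} \to L^\infty$ is uniformly bounded and continuous in $\lambda$ by dominated convergence. Continuity of operator inversion for $(I + R_0^\pm(\lambda^2) V)$, together with $R_V^\pm = (I + R_0^\pm V)^{-1} R_0^\pm$, transfers norm continuity to $R_V^\pm$, yielding a uniform bound on each $[\lambda_0, M]$.

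The crux is the high-energy regime, which I would handle by contradiction. Suppose $\lambda_n \to \infty$ and $f_n$ satisfies $\|f_n\|_{L^{3/2,1}} = 1$ but $M_n := \|R_V^\pm(\lambda_n^2) f_n\|_{L^\infty} \to \infty$. Setting $v_n := R_V^\pm(\lambda_n^2) f_n / M_n$ gives $\|v_n\|_\infty = 1$ and, via the resolvent identity,
\[
v_n + R_0^\pm(\lambda_n^2)(V v_n) = R_0^\pm(\lambda_n^2)(f_n/M_n) \to 0 \quad \text{in } L^\infty.
\]
The decay $|V| \lesssim \langle y\rangle^{-4}$ places $\{V v_n\}$ uniformly in $L^{2,\sigma}$ for some $\sigma > 1/2$; the classical high-energy Agmon estimate $\|R_0^\pm(\lambda^2)\|_{L^{2,\sigma} \to L^{2,-\sigma}} \lesssim \lambda^{-1}$ then gives $R_0^\pm(\lambda_n^2)(V v_n) \to 0$ in $L^{2,-\sigma}$, and hence $v_n \to 0$ in $L^2_{\mathrm{loc}}$. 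A direct kernel estimate for $R_0^\pm(\lambda_n^2)(V v_n)$, splitting $|y| \lessgtr |x|/2$ and using $|V|\lesssim \langle y\rangle^{-4}$ with $\|v_n\|_\infty = 1$, yields the uniform tail bound $|v_n(x)| \lesssim \langle x\rangle^{-1}$. Equicontinuity of $\{v_n\}$ on compact sets, extracted from the elliptic identity $(-\Delta + V - \lambda_n^2)v_n = f_n/M_n$ (using $|\nabla V|\lesssim \langle y\rangle^{-5}$ and a mollification/integration by parts to smooth $V v_n$), then upgrades $L^2_{\mathrm{loc}}$-convergence to $L^\infty_{\mathrm{loc}}$-convergence. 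Combined with the uniform tail decay, an Arzelà--Ascoli extraction forces $v_n \to 0$ in $L^\infty$, contradicting $\|v_n\|_\infty = 1$.

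\textbf{Main obstacle.} The delicate point is the high-energy extraction: the naive kernel bound on $R_0^\pm(\lambda^2) V : L^\infty \to L^\infty$ is $\lambda$-independent, so decay must be harvested in the weighted $L^2$-scale (via Agmon's $\lambda^{-1}$ gain) and then transferred back to $L^\infty$ by exploiting the regularity and off-diagonal decay of the kernel made available by Assumption (A1); making this transfer rigorous (i.e., establishing uniform equicontinuity of $\{v_n\}$ despite the oscillatory phase $e^{\pm i\lambda_n r}$) is the technical heart of the argument and the reason Assumption (A1) controls $|\nabla V|$ pointwise, not just $|V|$.
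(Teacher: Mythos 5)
Your compact-interval argument is essentially the paper's (invertibility and norm-continuity in $\lambda$ from \cite[Lemma~3.2]{BS16}, then compactness), but your high-energy argument diverges and there contains a genuine gap. The paper quotes \cite[Lemma~3.3]{BS16}, which gives $\|(R_0^+(\lambda^2)V)^2\|_{L^\infty\to L^\infty}\to 0$ as $\lambda\to\infty$, and then sums a Neumann series through the factored inverse $(I-R_0^+V)\bigl[I-(R_0^+V)^2\bigr]^{-1}$; you instead try to rederive high-energy smallness by a compactness/contradiction argument. The problematic step is \emph{``Equicontinuity of $\{v_n\}$ on compact sets\ldots upgrades $L^2_{\mathrm{loc}}$-convergence to $L^\infty_{\mathrm{loc}}$-convergence.''} This cannot hold as stated: in the integral representation $v_n = R_0^\pm(\lambda_n^2)(f_n/M_n) - R_0^\pm(\lambda_n^2)(Vv_n)$, the kernel $e^{\pm i\lambda_n|x-y|}/(4\pi|x-y|)$ has modulus of continuity in $x$ of order $\lambda_n|x-x'|$, so even with $|Vv_n|\lesssim\langle y\rangle^{-4}$ uniformly, the images $R_0^\pm(\lambda_n^2)(Vv_n)$ acquire Lipschitz constants of size $\lambda_n\to\infty$ and $\{v_n\}$ is \emph{not} equicontinuous. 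The same problem is visible on the PDE side: $-\Delta v_n = (\lambda_n^2-V)v_n + f_n/M_n$ has a zeroth-order coefficient unbounded in $n$, so interior elliptic estimates degrade with $\lambda_n$ and Arzel\`a--Ascoli is unavailable. Your ``main obstacle'' paragraph names exactly this tension but does not resolve it.

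The rest of your scaffolding is sound and can be repaired without equicontinuity. From the $L^{2,-\sigma}$-decay you already established via Agmon's $O(\lambda^{-1})$ bound, pass to a subsequence with $v_n\to 0$ a.e.; then $Vv_n\to 0$ a.e.\ with $|Vv_n|\le|V|\in L^{3/2,1}$, and dominated convergence (valid in $L^{3/2,1}$, which has absolutely continuous norm) gives $\|Vv_n\|_{L^{3/2,1}}\to 0$. Since the free resolvent kernel is dominated by $1/(4\pi|x-y|)\in L^{3,\infty}$ uniformly in $\lambda$, duality of Lorentz spaces yields $\|R_0^\pm(\lambda_n^2)(Vv_n)\|_{L^\infty}\lesssim\|Vv_n\|_{L^{3/2,1}}\to 0$, hence $v_n\to 0$ in $L^\infty$, contradicting $\|v_n\|_\infty=1$. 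This closes the gap and makes the uniform tail bound and the equicontinuity machinery unnecessary; it is, however, still longer than the paper's two-line Neumann series argument once \cite[Lemma~3.3]{BS16} is taken as given.
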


\begin{remark}
Recently, it was shown in \cite[Lemma 3.1]{BS16} that for $V \in L^{3/2,1}$ the following definition is equivalent to Definition \ref{def:res}: We say that $H = -\Delta +V$ has a resonance at zero if there exists $f \in L^\infty \setminus L^2$, $f \neq 0$ which satisfies
\[
f = -R_0^{\pm}(0) V f.
\]
Thus, the operator $(I + R_0^{\pm}(0) V)$ is invertible on $L^\infty$ if and only if zero is neither a resonance nor an eigenvalue. Hence, in the current setting, the condition $\lambda_0 > 0$ in the previous lemma is unavoidable.
\end{remark}

\begin{proof}[Proof of Lemma \ref{lem:lim_ap}]
We only prove the statement for $R_0^{+}$ as the result for $R_0^{-}$ follows by complex conjugation, see \eqref{equ:resolvent_complex_conj}. We write
\[
(I + R_0^+(\lambda^2)V)^{-1} = (I - R_0^+(\lambda^2)V)\bigl[I - (R_0^+(\lambda^2)V)^2\bigr]^{-1}.
\]
By \cite[Lemma 3.3]{BS16}, for $V \in L^{3/2,1}(\bR^3)$ we have
\[
\|(R_0^+(\lambda^2)V)^{2}\|_{L^\infty \to L^\infty} \longrightarrow 0, \quad\textup{as } \lambda \to \infty.
\]
Hence there exists $\lambda_1 \gg1$ such that
\[
\|(R_0^+(\lambda^2)V)^{2}\|_{L^\infty \to L^\infty} < \frac{1}{2}, \qquad \lambda \geq \lambda_1,
\]
and so for $\lambda \geq \lambda_1$, we can define $\bigl[I - (R_0^+(\lambda^2)V)^2\bigr]^{-1}$ via a uniformly convergent Neumann series in $\mathcal{B}(L^\infty, L^\infty)$, satisfying
\[
\|\bigl[I - (R_0^+(\lambda^2)V)^2\bigr]^{-1}\|_{L^\infty \to L^\infty} \leq \sum_{\ell = 0}^\infty \left( \frac{1}{2}\right)^{\ell} \leq 1, \qquad \lambda \geq \lambda_1.
\]
By direct computation (see \cite[p8]{BS16}), we have
\[
\|R_0^+(\lambda^2)V\|_{L^\infty \to L^\infty} = \|R_0^+(0)V\|_{L^\infty \to L^\infty},
\]
which is independent of $\lambda$.  Hence,
\begin{align}
\| (I + R_0^+(\lambda^2)V)^{-1} \|_{L^\infty \to L^\infty} &\leq (1 + \|R_0^+(\lambda^2)V\|_{L^\infty \to L^\infty})\| \bigl[I - (R_0^+(\lambda^2)V)^2\bigr]^{-1} \|_{L^\infty \to L^\infty} \\
&\leq C(V, \lambda_1), \qquad \lambda \geq \lambda_1.
\end{align}
Thus, it suffices to prove that $(I + R_0^+(\lambda^2)V)^{-1}$ remains uniformly bounded in $\lambda$ for $\lambda_0 \leq \lambda \leq \lambda_1$. We will show the stronger statement that 
\[
\lambda \to (I + R_0^+(\lambda^2)V)^{-1}
\]
is a continuous function in $\lambda > 0$ with values in $\mathcal{B}(L^\infty, L^\infty)$ on this set. To see this, we first note that if $(I + A_1)$ and $(I+ A_2)$ are invertible operators on a normed space, which satisfy, in the operator norm,
\begin{align}
\|(I+ A_1)\| = M_1, \quad \|(I+ A_1)^{-1}\| = \widetilde{M}_1, \quad \|A_1 - A_2\| \leq \varepsilon_0,
\end{align}
for $\varepsilon_0 > 0$, small, then
\begin{align}
\|I - (I+A_1)^{-1}(I+A_2) \| = \|(I+A_1)^{-1}[(I + A_1) - (I+A_2) ]\| \leq \widetilde{M}_1 \varepsilon_0,
\end{align}
and hence, using a Neumann series,
\[
\|(I+ A_2)^{-1}(I+ A_1)\| \leq 1 + 2 \widetilde{M}_1 \varepsilon_0.
\]
But then
\begin{align}
\|(I+ A_2)^{-1}\| = \|(I+ A_2)^{-1} (I+A_1)(I+A_1)^{-1}\| \leq [ 1 + 2 \widetilde{M}_1 \varepsilon_0] \widetilde{M_1} =: \widetilde{M}_2,
\end{align}
and hence
\begin{align}
\|(I+ A_2)^{-1} - (I+ A_1)^{-1}\| &= \|(I+ A_2)^{-1}[I - (I+A_2)(I+A_1)^{-1}] \| \\
&\leq \widetilde{M}_2\|I - (I+A_2)(I+A_1)^{-1} \| \\
&= \widetilde{M}_2\|\bigl[ (I + A_1) - (I+A_2)\bigr](I+A_1)^{-1} \| \\
&\leq \widetilde{M}_2\widetilde{M}_1 \varepsilon_0.
\end{align}
By \cite[Lemma 3.2]{BS16}, the operator $I + R_0^+(\lambda^2) V$ is invertible in $\mathcal{B}(L^\infty, L^\infty)$ for any $\lambda \neq 0$ and the map
\[
\lambda \to I + R_0^+(\lambda^2)V
\]
is continuous in $\mathcal{B}(L^\infty, L^\infty)$ for such $\lambda$, hence the assertion follows.
\end{proof}

\medskip
\begin{corollary}\label{cor:lim_ap}
Fix $c > 0$. There exists $C \equiv C(c, V)$ such that
\[
\|e_{\pm}(\cdot, \xi) \|_{L^\infty} \leq C, \qquad |\xi| \geq c.
\]
\end{corollary}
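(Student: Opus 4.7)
The plan is to reduce the claim directly to Lemma \ref{lem:lim_ap} via the representation formula \eqref{eq:gen_four} for the generalized eigenfunctions. Recall that under Assumption (A1), the potential $V$ lies in $L^{3/2,1}$, so by \cite[Lemma 3.2]{BS16} the operator $I + R_0^{\mp}(|\xi|^2) V$ is invertible on $L^\infty$ for every $\xi \neq 0$, and in this setting the Lippman-Schwinger equation \eqref{equ:lippman_schwinger} can be rewritten as
\[
e_{\pm}(\cdot, \xi) = \bigl(I + R_0^{\mp}(|\xi|^2) V\bigr)^{-1} e_{\pm, \xi}, \qquad e_{\pm,\xi}(x) = e^{\pm i x \cdot \xi}.
\]

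First, I observe that the right-hand side involves a unimodular function $e_{\pm, \xi}$, so $\|e_{\pm, \xi}\|_{L^\infty} = 1$ uniformly in $\xi$. The task therefore reduces to controlling the operator norm $\|(I + R_0^{\mp}(|\xi|^2) V)^{-1}\|_{L^\infty \to L^\infty}$ uniformly for $|\xi| \geq c$.

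Next, I apply Lemma \ref{lem:lim_ap} with the choice $\lambda_0 = c > 0$: this gives a constant $C(c, V)$ such that
\[
\sup_{\lambda \geq c} \bigl\|(I + R_0^{\pm}(\lambda^2) V)^{-1}\bigr\|_{L^\infty \to L^\infty} \leq C(c, V).
\]
Setting $\lambda = |\xi|$ and combining with the representation above yields
\[
\|e_{\pm}(\cdot, \xi)\|_{L^\infty} \leq \bigl\|(I + R_0^{\mp}(|\xi|^2) V)^{-1}\bigr\|_{L^\infty \to L^\infty} \, \|e_{\pm, \xi}\|_{L^\infty} \leq C(c, V)
\]
for all $|\xi| \geq c$, which is exactly the claim.

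There is essentially no obstacle here since all the hard work has already been done in Lemma \ref{lem:lim_ap} (which establishes uniform invertibility of $I + R_0^{\pm}(\lambda^2) V$ on $L^\infty$ bounded away from $\lambda = 0$ using the Neumann series argument for large $\lambda$ plus continuity for intermediate $\lambda$) and in the identity \eqref{eq:gen_four} (which rewrites the solution of the Lippman-Schwinger equation in terms of this inverse). The corollary is simply the observation that these two facts combine cleanly; the hypothesis $|\xi| \geq c > 0$ is exactly what is required to avoid the threshold $\lambda = 0$ where the resonance at zero would obstruct the invertibility.
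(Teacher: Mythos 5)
Your proof is correct and follows exactly the paper's own argument: you invoke the identity $e_{\pm}(\cdot,\xi) = (I + R_0^{\mp}(|\xi|^2)V)^{-1}e_{\pm,\xi}$ from \eqref{eq:gen_four} and apply Lemma~\ref{lem:lim_ap} with $\lambda_0 = c$. The only difference is that you spell out the trivial step $\|e_{\pm,\xi}\|_{L^\infty}=1$, which the paper leaves implicit.
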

\begin{proof}
We have
\[
e_{\pm}(\cdot ,\xi) =  (I + R_0^{\mp}(|\xi|^2) V)^{-1} e_{\pm,\xi},
\]
and the result follows by Lemma \ref{lem:lim_ap}.
\end{proof}

 \medskip
We may now use Corollary \ref{cor:lim_ap} to establish an $L_{x}^1\to L_{\xi}^\infty$ boundedness result for the distorted Fourier transform. 

\begin{lemma}\label{lem:l1linfty}
Let $V$ be a potential satisfying Assumption (A1). Let $f \in L^1 \cap L^2$ and $c > 0$. Then 
\[
\mathcal{F}_{V,\pm} f \in  L^\infty_\xi(|\xi| \geq c)
\]
and we have the estimate
\[
\|\mathcal{F}_{V,\pm} f \|_{L^\infty_\xi(|\xi| \geq c) } \leq C \|f \|_{L^1_x}
\]
where $C\equiv C(c, V)$.
\end{lemma}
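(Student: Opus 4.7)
\medskip
\noindent\textbf{Proof proposal for Lemma \ref{lem:l1linfty}.} The plan is to invoke the explicit integral formula \eqref{distorted_limit} for $\mathcal{F}_{V,\pm}$ from Theorem \ref{distorted}(ii) together with the uniform $L^\infty_x$ bound on the generalized eigenfunctions $e_\pm(\cdot,\xi)$ for $|\xi|\geq c$ established in Corollary \ref{cor:lim_ap}. Once these two facts are combined, the result is essentially a pointwise H\"older estimate. The only genuine issue is that the formula in Theorem \ref{distorted}(ii) is stated as an $L^2_\xi$-limit, so we must upgrade it to a pointwise almost-everywhere formula on the region $\{|\xi|\geq c\}$ before estimating.

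First I would reduce to the compactly supported case: for $f \in L^1 \cap L^2$, set $f_N := f \mathbf{1}_{|x|<N}$. Each $f_N$ is compactly supported (hence in $L^1 \cap L^2$) and converges to $f$ in $L^2$, so by the boundedness of $\mathcal{F}_{V,\pm}$ on $L^2$ one has $\mathcal{F}_{V,\pm} f_N \to \mathcal{F}_{V,\pm} f$ in $L^2_\xi$, and some subsequence converges pointwise a.e. For compactly supported $f_N$, Remark \ref{fin_supp} (equivalently, the integrability of $\overline{e_\pm(x,\xi)} f_N(x)$ in $x$ for each $\xi$) tells us
\[
(\mathcal{F}_{V,\pm} f_N)(\xi) = (2\pi)^{-3/2} \int_{|x|<N} \overline{e_\pm(x,\xi)} f(x)\, dx
\]
holds as an honest pointwise equality for every $\xi \in \bR^3 \setminus \mathcal{N}(H)$, which under Assumption (A1) (property (i) above) is all of $\bR^3 \setminus \{0\}$.

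Next, fix $\xi$ with $|\xi|\geq c$. By Corollary \ref{cor:lim_ap}, $|\overline{e_\pm(x,\xi)} f(x)| \leq C(c,V) |f(x)|$, which is dominated by the $L^1$ function $C(c,V)|f|$. Dominated convergence then gives
\[
\lim_{N\to\infty} \int_{|x|<N} \overline{e_\pm(x,\xi)} f(x)\, dx = \int_{\bR^3} \overline{e_\pm(x,\xi)} f(x)\, dx
\]
for every such $\xi$, and the absolute value of the limit is bounded by $C(c,V)\|f\|_{L^1}$. Matching this pointwise limit (on $\{|\xi|\geq c\}$) against the subsequential a.e.\ limit identifies $\mathcal{F}_{V,\pm}f$ a.e.\ on $\{|\xi|\geq c\}$ with the absolutely convergent integral $(2\pi)^{-3/2}\int \overline{e_\pm(x,\xi)} f(x)\, dx$, and taking the essential supremum over $|\xi|\geq c$ yields the claimed bound $\|\mathcal{F}_{V,\pm} f\|_{L^\infty_\xi(|\xi|\geq c)} \leq C\|f\|_{L^1_x}$ with $C \equiv C(c,V)$.

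I do not expect any serious obstacle: the entire content of the lemma is packaged in the uniform bound of Corollary \ref{cor:lim_ap}, which is where the spectral hypotheses (no positive eigenvalues, no positive resonances, and the mild decay of $V$) were already used. The only mild subtlety is the identification step above, which is just standard reconciliation of an $L^2$-definition of a transform with its pointwise integral representation on a set where the kernel is uniformly bounded.
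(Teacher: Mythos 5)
Your proof is correct and takes essentially the same approach as the paper: both combine the integral formula \eqref{distorted_limit} from Theorem \ref{distorted}(ii) with the uniform $L^\infty$ bound of Corollary \ref{cor:lim_ap}. The only difference is that you explicitly spell out the reconciliation of the $L^2_\xi$-limit in \eqref{distorted_limit} with the pointwise a.e.\ identity on $\{|\xi|\geq c\}$ (via truncation, dominated convergence, and subsequential a.e.\ convergence), a step the paper leaves implicit by bounding the truncated integrals uniformly in $N$ and passing to the limit without comment.
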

\begin{proof}
Using Theorem \ref{distorted}, we write
\begin{align}
|\mathcal{F}_{V,\pm} f(\xi)| &= (2\pi)^{-n/2}  \lim_{N \to \infty} \int_{|x| < N} \overline{e_\pm (x, \xi)} f(x) dx.
\end{align}
Fix $N \gg 1$, and estimate
\begin{align}
 \left| \int_{|x| < N} \overline{e_\pm (x, \xi)} f(x) dx \right| \leq \|e_{\pm}(\cdot ,\xi)\|_{L^\infty_x}\int_{|x| < N} | f | dx \leq \|e_{\pm}(\cdot ,\xi)\|_{L^\infty_x} \|f\|_{L^1_x} 
\end{align}
Hence
\begin{align}
|\mathcal{F}_{V,\pm} f(\xi) | &\leq \|e_{\pm}(\cdot ,\xi)\|_{L^\infty_x} \|f\|_{L^1_x}
\end{align}
and the result follows from Corollary \ref{cor:lim_ap}.
\end{proof}

\subsection{Distorted Fourier Multipliers}\label{ssec:dist_four}
First we begin with a Bernstein result for general distorted Fourier multipliers. This result will play a key role in many of our improved probabilistic estimates in Section~\ref{prob:strichartz}. Before proceeding, we note that for a bounded Borel function, the multiplier operator
\[
T_m^{\pm} (f) = \mathcal{F}_{V, \pm}^* m(\cdot)  \mathcal{F}_{V, \pm} f.
\]
is $L^2$ bounded by Theorem \ref{distorted}.

\begin{proposition}[Bernstein inequality]\label{prop:bernstein}
Let $V$  potential satisfying Assumption (A1). Consider $f \in L^2 \cap L^\infty$, and let $m : \bR^n \to \bC$ be a Borel measurable function satisfying $\|m\|_{L^\infty} \leq M$, with $\textup{supp } m \subset E \subset \mathbb{R}^n$, for $E$ a bounded, measurable subset with $|E| < \infty$ and $ \textup{dist}(E, 0) = d > 0$. Let 
\[
T_m^{\pm} (f) = \mathcal{F}_{V, \pm}^* m(\cdot)  \mathcal{F}_{V, \pm} f.
\]
Assume further that there exists $2 \leq r \leq \infty$ such that $\|T_m^{\pm} \|_{L^r \to L^r} \leq A$. 

Then for $0 \leq \theta \leq 1$ and any $r \leq q \leq \frac{r}{\theta}$ we have $T_m^{\pm} f \in L^q$ and there exists $C \equiv C(\theta,r, q, d, V, M,A)$ such that
\[
\|T_m^{\pm} f\|_{L^q_x} \leq C |E|^{\frac{r -(r-2)\theta}{2r} - \frac{1}{q}}  \|f\|_{L^{\frac{2r}{r-(r-2)\theta}}_x}.
\]
\end{proposition}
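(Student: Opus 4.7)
The plan is to reduce the assertion to two elementary endpoint bounds for $T_m^\pm$ and then invoke Riesz--Thorin interpolation against the hypothesis $\|T_m^\pm\|_{L^r \to L^r} \leq A$. First, Plancherel's identity for the distorted Fourier transform (Theorem \ref{distorted}(i)) immediately gives the $L^2 \to L^2$ bound $\|T_m^\pm f\|_{L^2} \leq M\|f\|_{L^2}$, since $\|m\|_{L^\infty} \leq M$ and $\mathcal{F}_{V,\pm}$ is an isometry on $L^2_{ac}$.

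The crucial endpoint is $\|T_m^\pm f\|_{L^\infty_x} \leq C|E|^{1/2}\|f\|_{L^2}$. To establish it, I would exploit that the symbol $m\,\mathcal{F}_{V,\pm} f$ has compact support contained in $E \subset \{|\xi| \geq d\}$, hence away from $\mathcal{N}(H) = \{0\}$ (using that Assumption (A1) ensures $e_+(H) = \emptyset$, so $\mathcal{N}(H)$ collapses to the origin). This places us squarely in the setting of Remark \ref{fin_supp}, yielding the pointwise representation
\[
T_m^\pm f(x) = (2\pi)^{-n/2} \int_E e_\pm(x,\xi)\, m(\xi)\, (\mathcal{F}_{V,\pm} f)(\xi)\, d\xi.
\]
Applying Cauchy--Schwarz in $\xi$, using $\|m\|_\infty \leq M$, and bounding $\sup_{x \in \bR^n,\,\xi \in E}|e_\pm(x,\xi)|$ by a constant $C(d, V)$ via Corollary \ref{cor:lim_ap} (whose conclusion is uniform over $\{|\xi| \geq d\}$), together with $\|\mathcal{F}_{V,\pm} f\|_{L^2_\xi} = \|P_{ac} f\|_{L^2} \leq \|f\|_{L^2}$, gives the claimed $L^2 \to L^\infty$ estimate with constant depending only on $d$, $V$, and $M$.

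With these endpoints in hand, the full family of bounds follows from Riesz--Thorin. Interpolating the $L^2 \to L^\infty$ bound against the $L^r \to L^r$ hypothesis produces, for each $\theta \in [0,1]$, the endpoint estimate $\|T_m^\pm f\|_{L^{r/\theta}} \leq C|E|^{(1-\theta)/2}\|f\|_{L^{p(\theta)}}$ with $1/p(\theta) = (1-\theta)/2 + \theta/r$, matching the case $q = r/\theta$ in the statement. In parallel, first interpolating the $L^2 \to L^2$ and $L^2 \to L^\infty$ bounds yields $L^2 \to L^r$ with norm $\lesssim |E|^{1/2 - 1/r}$, and then interpolating this against $L^r \to L^r$ recovers the other endpoint $q = r$ with the same domain exponent $p(\theta)$. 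Since the two endpoint bounds share the common input exponent $p(\theta)$, one final Riesz--Thorin interpolation fills in every $q \in [r, r/\theta]$, and a short arithmetic check confirms that $1/p(\theta) - 1/q = (1-\theta)(1/2 - \tau/r)$ for the appropriate interpolation parameter $\tau$, reproducing exactly the power $|E|^{1/p - 1/q}$ asserted.

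I expect the only genuine subtlety to be the $L^\infty_x$ endpoint: one must verify that the integral representation from Remark \ref{fin_supp} legitimately applies (which hinges on $\mathcal{N}(H) = \{0\}$ under Assumption (A1)) and that Corollary \ref{cor:lim_ap} gives the uniform bound on $e_\pm(x,\xi)$ across the whole of $E$, not merely pointwise in $\xi$. Once that is settled, the remainder of the argument is bookkeeping: three standard Riesz--Thorin interpolations producing a constant of the required form $C(\theta, r, q, d, V, M, A)$.
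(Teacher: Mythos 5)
Your proof is correct and follows essentially the same route as the paper's: the only substantive step, the $L^2\to L^\infty$ endpoint $\|T_m^\pm f\|_{L^\infty}\lesssim |E|^{1/2}\|f\|_{L^2}$, is obtained exactly as the paper does, via Remark~\ref{fin_supp}, Cauchy--Schwarz in $\xi$, and the uniform eigenfunction bound of Corollary~\ref{cor:lim_ap} (which, as you note, is valid on all of $E$ since $\mathrm{dist}(E,0)=d>0$ and $\mathcal{N}(H)=\{0\}$ under Assumption (A1)). The remaining interpolation bookkeeping is arranged a little differently --- the paper first builds the one-parameter family $L^2\to L^p$, $2\le p\le\infty$, and then for a given $\theta$ and $q$ chooses $p$ so that a single Riesz--Thorin interpolation of $L^2\to L^p$ against $L^r\to L^r$ lands on $L^{p_\theta}\to L^q$; you instead compute both endpoints $L^{p_\theta}\to L^{r/\theta}$ and $L^{p_\theta}\to L^r$ and interpolate between them. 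Both arrangements yield the same exponents (your arithmetic check $1/p_\theta-1/q=(1-\theta)(1/2-1/p)$ is exactly what makes the paper's shortcut work), so the extra interpolation round in your version is harmless, just slightly less economical.
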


\begin{proof}
Since $\mathcal{N}(H) = \{0\}$, under the assumptions on the support of $m$, we obtain using Remark~\ref{fin_supp} that
\begin{align}
|T_m^{\pm} f(x) | = |\cF_{V,\pm}^*m(\cdot) \cF_{V,\pm} f| &= \left| \int e_{\pm} (x, \xi) m(\xi) \mathcal{F}_{V,\pm} f (\xi)d\xi \right| \\&\leq M  \int_E | (1 + R_0^{\pm}(|\xi|^2) V)^{-1} e_{\pm, \xi}|| \mathcal{F}_{V,\pm} f (\xi)|d\xi
\end{align}
By Lemma \ref{lem:lim_ap} and Theorem \ref{distorted},
\begin{align}
\int_E | (1 + R_0^{\pm}(|\xi|^2) V)^{-1} e_{\pm, \xi} || \mathcal{F}_{V,\pm} f (\xi)| & \leq C |E|^{\frac{1}{2}} \|f\|_{L^2_x},
\end{align}
and hence
\[
\|T_m^{\pm} f\|_{L^\infty_x} \leq C |E|^\frac{1}{2}  \|f\|_{L^2_x}.
\]
Since $\|T_m^{\pm} f\|_{L^2_x} \leq C \|f\|_{L^2_x}$, by the Riesz-Thorin interpolation theorem we obtain
\begin{align}\label{interp1}
\|T_m^{\pm} f\|_{L^p_x} \leq C |E|^{\frac{1}{2} - \frac{1}{p}}  \|f\|_{L^2_x}, \qquad 2 \leq p \leq \infty.
\end{align}
Since by assumption 
\begin{align}\label{interp2}
\|T_m^{\pm} f\|_{L^r_x} \leq A \|f \|_{L^r_x},
\end{align}
and for any $0 \leq \theta \leq 1$ and $r \leq q \leq \frac{r}{\theta}$, there exists $r \leq p \leq \infty$ such that 
\[
\frac{1}{q} = \frac{(1-\theta)}{p} + \frac{\theta}{r},
\]
the result then follows by interpolating between \eqref{interp1} and \eqref{interp2} using the Riesz-Thorin interpolation theorem.
\end{proof}

\begin{remark}
In our applications, we will take $r = 3 -$ in the previous proposition.
\end{remark}

We will only use $\mathcal{F}_{V, -}$ and $e_-(x,\xi)$ and hence we drop the $\pm$ notation. In the sequel, we will consider radial multipliers, in which case there is an equivalent formulation of distorted Fourier multipliers as spectral multipliers. We elaborate a bit on this equivalence. For the self-adjoint realization of $H = -\Delta + V$ on $L^2$, it holds (cf. \cite[Theorem VIII.5]{ReedSimon}) that for any bounded Borel function $m$, the spectral multiplier $m(H)$ is defined via
 \begin{align}\label{spec_mult}
\langle m(H) f, f \rangle = \int_{-\infty}^\infty m(\lambda) d \langle E_\lambda f, f \rangle,
 \end{align}
and the polarization identity, where once again, we recall that $E_\lambda = E_{(-\infty, \lambda)}$ denotes the projection valued spectral measure for $H$. In this case, the multiplier $m(H)$ is a bounded operator on $L^2$ which satisfies
 \[
 \|m(H)\|_{2 \to 2} \leq \|m\|_{L^\infty}.
 \]
For unbounded Borel functions, we let
 \[
\mathcal{D}(m) = \bigl  \{ f \in L^2 \,:\, \int_{- \infty}^\infty |m (\lambda)|^2 d \langle E_\lambda f , f \rangle < \infty  \bigr\},
 \]
and we note that $\mathcal{D}(m)$ is dense in $L^2$ provided $m$ is finite on the discrete spectrum of $H$ and $m$ is almost everywhere finite on the absolutely continuous spectrum of $H$. Then for  $f \in \mathcal{D}(m)$ we may similarly define
  \begin{align}\label{spec_mult_unbdd}
\langle m(H) f, f \rangle = \int_{-\infty}^\infty m(\lambda) d \langle E_\lambda f, f \rangle.
 \end{align}
 In both the unbounded and bounded cases, we will write
 \[
 m(H) = \int_{-\infty}^\infty m(\lambda) d  E_\lambda,
 \]
 where this operator is understood as an operator on $L^2 \cap \mathcal{D}(m)$ via \eqref{spec_mult_unbdd} and the polarization identity.
 Before proceeding, we note for later that Stone's formula for the spectral measure implies (see for instance \cite[Theorem V11.13 and remarks on p264]{ReedSimon}):
\[
\frac{1}{2} \left(E_{[a,b]}  + E_{(a,b)} \right) = \lim_{\varepsilon \to 0} \frac{1}{2\pi i} \int_a^b \bigl[(H - \lambda - i\varepsilon)^{-1} - (H - \lambda + i\varepsilon)^{-1} \bigr]d\lambda,
\]
Consequently (see for instance \cite[\S1.4.3]{YafaevBook}) 
\begin{align}\label{equ:meas_prop}
\frac{\langle dE_{\lambda, ac} f, f \rangle}{d\lambda}  =  \lim_{\varepsilon \to 0}   \pi^{-1}  \langle \bigl[(H - \lambda - i\varepsilon)^{-1} - (H - \lambda + i\varepsilon)^{-1} \bigr] f, f \rangle, \quad \textup{a.e. } \lambda \in \bR
\end{align}
for a set of full measure in \eqref{equ:meas_prop} which depends on the function $f$, where we recall that $dE_{\lambda, ac}$ is the absolutely continuous component (in the sense of Lebesgue decomposition) of the spectral measure, and further
\[
\frac{\langle dE_{\lambda, ac} f, f \rangle}{d\lambda} \in L^1(\bR).
\]
Thus, for an operator $-\Delta + V$ with potential satisfying Assumption (A1), and for any bounded Borel measurable function $m : \bR \to \bC$, we have
 \begin{align}\label{stone}
 m(H) P_{ac} f = \frac{1}{\pi} \int_{0}^\infty m(\lambda) [R^{+}_V(\lambda) - R^{-}_V(\lambda)] d\lambda f
  \end{align}
for $f \in L^2$. We note that in our setting $P_{ac} f = \chi_{\bR_+}(H) f$.
 
Now, by the proof of \cite[Theorem 6.2]{Agmon} and the polarization identity, it holds that
\begin{align}\label{borel_equiv}
\langle \chi_{\mathcal{O}}(H) f, g \rangle = \int_{|\xi|^2 \in \mathcal{O}} (\cF_{V} f)(\xi) \overline{(\mathcal{F}_{V} g)(\xi)} d\xi,
\end{align}
for any open set $\mathcal{O}$ in $\bR_+ \setminus e_+(H)$ and any $f, g \in L^2$, and in our setting, $e_+(H) = \emptyset$. Consequently, for a bounded Borel measurable function $m: \bR \to \bC$, we may define the distorted Fourier multiplier
\begin{align}
&m_H : L^2(\bR^n) \to L^2(\bR^n)\\
&m_H f = \mathcal{F}_{V}^* \, m(|\cdot|^2) \mathcal{F}_{V} f, \qquad f \in L^2(\bR^n),
\end{align}
and by Theorem \ref{distorted}, we similarly have
\[
 \|m_H\|_{2 \to 2} \leq \|m\|_{L^\infty}.
 \]
By \cite[Theorem VIII.6]{ReedSimon},
\[
m(H) P_{ac} = m_H = m_H P_{ac}
\]
for bounded Borel functions $m$, hence we will use the notation $m(H)$. Furthermore, if $H g = \lambda_0 g$, then $m(H) g = m(\lambda_0) g$, and
\[
HP_{ac}  f = \cF_{V,\pm}^* |\cdot|^2 \cF_{V,\pm} f, \quad f \in \mathcal{D}(H),
\]
In particular, the right-hand side is a well-defined element of $L^2$ for $f \in \mathcal{D}(H)$, see for instance \cite[Theorem 6.2]{Agmon} or \cite[\S VII]{ReedSimon}.

\medskip
Next, we record the following result of Jensen and Nakamura \cite{JN95, JN94} which will be used to control certain low distorted frequency components of the initial data.

\begin{theorem}[\protect{\cite[Theorem 2.1]{JN94}}]\label{thm:jn}
Let $H= - \Delta + V$ with $V$ a potential satisfying Assumption (A1), let $1 \leq p \leq \infty$, and let $g \in C_0^\infty(\bR)$. Then there exists $C > 0$ such that for any $0 < \theta \leq1$,
\[
\|g(\theta H) \|_{L^p \to L^p} \leq C.
\]
Moreover, the estimate is uniform for $g$ in a bounded subset $G \subseteq C_0^\infty(\bR)$, i.e. if there is $R > 0$ and a sequence $\{C_\alpha\}$ such that for any $g \in G$, 
\[
\textup{supp}\, g \subset [-R, R]  \quad \textup{ and } \quad |\partial_x^\alpha g| \leq C_{\alpha}.
\]
\end{theorem}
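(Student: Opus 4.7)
The strategy is to combine a rescaling, the Helffer--Sj\"ostrand functional calculus, and a resolvent expansion against the free Laplacian.

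First I would rescale so as to convert $g(\theta H)$ into $g$ of an operator whose potential lies in a uniform class. Let $U_\sigma f(x) = \sigma^{n/2} f(\sigma x)$ be the $L^2$-unitary dilation. A direct computation yields $U_\sigma^{-1} H U_\sigma = \sigma^2(-\Delta) + V(\sigma^{-1}\cdot)$, so taking $\sigma = \theta^{-1/2}$ produces
\[
U_\sigma^{-1}(\theta H) U_\sigma = H_\theta := -\Delta + V_\theta, \qquad V_\theta(x) := \theta V(\theta^{1/2} x).
\]
Functional calculus commutes with this unitary, hence $g(\theta H) = U_\sigma g(H_\theta) U_\sigma^{-1}$, and since $\|U_\sigma\|_{L^p\to L^p}\|U_\sigma^{-1}\|_{L^p\to L^p}=1$, the problem reduces to a $\theta$-uniform $L^p$ bound on $g(H_\theta)$. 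Under Assumption (A1), a change of variables shows $V_\theta$ is uniformly bounded for $\theta\in(0,1]$ in $L^\infty$ and in $L^{3/2}$ (and in every $L^q$ with $q\geq n/2$).

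For the rescaled problem, choose a compactly supported almost analytic extension $\tilde g\in C_c^\infty(\bC)$ of $g$ with $|\bar\partial \tilde g(z)|\leq C_N|\textup{Im}\,z|^N$ for every $N\geq 1$, and use the Helffer--Sj\"ostrand representation
\[
g(H_\theta) = -\frac{1}{\pi} \int_\bC \bar\partial \tilde g(z)\, (H_\theta - z)^{-1}\,dL(z).
\]
Iterate the resolvent identity $(H_\theta - z)^{-1} = R_0(z) - R_0(z) V_\theta (H_\theta - z)^{-1}$, with $R_0(z)=(-\Delta-z)^{-1}$, to get
\[
(H_\theta-z)^{-1} = \sum_{k=0}^{N-1}(-R_0(z)V_\theta)^k R_0(z) + (-R_0(z)V_\theta)^N (H_\theta-z)^{-1}.
\]
Substitution produces a leading term $g(-\Delta)$, whose convolution kernel is the Schwartz function $\widehat{g(|\cdot|^2)}$ and is therefore $L^p\to L^p$ bounded for every $p$; correction terms of the form $\int \bar\partial\tilde g(z)\, R_0(z)(V_\theta R_0(z))^k\,dL(z)$; and a remainder containing one factor of $(H_\theta-z)^{-1}$. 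Each $R_0(z)$ contributes $|\textup{Im}\,z|^{-1}$ in operator norm, each $V_\theta$ is a uniformly bounded $L^p$ multiplier, and the rapid vanishing of $\bar\partial\tilde g$ at the real axis makes every integral absolutely convergent once enough iterations $N$ are taken.

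The main obstacle will be obtaining the $L^1$ and $L^\infty$ resolvent bounds with $\theta$-uniform constants, since the spectral theorem only provides $L^2$ control. The plan is to pass through the heat semigroup: for $\textup{Re}\,z<0$ write $(H_\theta-z)^{-1}=-\int_0^\infty e^{tz}e^{-tH_\theta}\,dt$ and invoke Gaussian heat kernel bounds $|e^{-tH_\theta}(x,y)|\lesssim t^{-n/2}e^{-|x-y|^2/(Ct)}e^{C't}$, which hold uniformly in $\theta\in(0,1]$ by a Davies perturbation argument based on the uniform $L^\infty$ bound on $V_\theta$; the $e^{C't}$ factor (which accounts for the negative eigenvalue of $H$) is harmless because $\textup{supp}(\tilde g)$ is a fixed compact set. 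The required bounds then extend to $\{\textup{Im}\,z\neq 0\}\cap \textup{supp}(\tilde g)$ by analytic continuation, yielding $\|(H_\theta-z)^{-1}\|_{L^p\to L^p}\lesssim |\textup{Im}\,z|^{-1}$ uniformly in $\theta$. Tracking the constants through all steps shows the final bound depends on $g$ only through $\textup{supp}(g)$ and finitely many sup-norms of its derivatives, giving the claimed uniformity for bounded subsets of $C_0^\infty(\bR)$.
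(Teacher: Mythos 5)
The paper does not prove this theorem itself; it is quoted verbatim from Jensen--Nakamura~\cite{JN94} and used as a black box, so there is no internal proof to compare against. Your plan follows the general Helffer--Sj\"ostrand philosophy that underlies the original result, though the specific route (unitary rescaling to $H_\theta = -\Delta + V_\theta$, combined with Gaussian heat kernel bounds) differs from the Jensen--Nakamura argument, which works directly with $\theta H$ and controls the Born series via explicit kernel estimates on the scaled free resolvent $(\theta H_0 - z)^{-1}$ together with weighted Schur-type bounds. Your rescaling step is correct ($U_\sigma$ has $L^p$-operator-norm $\sigma^{n(1/2 - 1/p)}$, so the product of $U_\sigma$ and $U_\sigma^{-1}$ norms is $1$, and $V_\theta$ is uniformly bounded in $L^\infty \cap L^{3/2}$ for $\theta \in (0,1]$), and the reduction to a $\theta$-uniform heat kernel bound is sound. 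However, two points deserve attention. First, the statement that "each $R_0(z)$ contributes $|\textup{Im}\,z|^{-1}$ in operator norm" on $L^p$ is not correct for $p \neq 2$; the $L^1 \to L^1$ norm of the free resolvent in $\bR^3$ is proportional to $(\textup{Im}\sqrt{z})^{-2}$, which for $z$ in a compact set near the real axis is $\sim |\textup{Im}\,z|^{-2}$; this is still polynomial and thus compatible with the rapid vanishing of $\bar\partial\tilde g$, so the conclusion survives, but the stated rate is wrong. Second, and more substantively, the phrase "extend by analytic continuation" for obtaining $\|(H_\theta - z)^{-1}\|_{L^p \to L^p} \lesssim |\textup{Im}\,z|^{-1}$ on $\{\textup{Im}\,z \neq 0\} \cap \textup{supp}(\tilde g)$ glosses over the key difficulty: the Laplace-transform representation converges only for $\textup{Re}\,z$ sufficiently negative, and analyticity in $z$ as a $\mathcal{B}(L^2)$-valued map does not automatically propagate a $\mathcal{B}(L^1)$-bound to a new region. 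What actually works is that the Gaussian bounds for $e^{-t(H_\theta + M)}$ extend (by Phragm\'en--Lindel\"of, cf. Davies) to complex times in sectors $|\arg\,\zeta| \leq \pi/2 - \epsilon$ with constants depending polynomially on $\epsilon^{-1}$, and a rotated contour $\zeta = t e^{i\phi}$ with $\phi$ depending on $\arg(z + M)$ then yields $\|(H_\theta - z)^{-1}\|_{L^p \to L^p} \lesssim |\textup{Im}\,z|^{-C}$ for some $C = C(n) > 1$, uniformly in $\theta$; this larger power is still absorbed by $\bar\partial\tilde g$. Once this is in place, note also that your resolvent Born series becomes logically redundant: the $L^p$-resolvent bound alone suffices to close the Helffer--Sj\"ostrand argument without splitting off $g(-\Delta)$. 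In summary, the strategy is correct, but the analytic continuation step should be replaced by the explicit rotated-contour argument with tracked constants, and the claimed $|\textup{Im}\,z|^{-1}$ rate should be weakened to $|\textup{Im}\,z|^{-C}$.
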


\begin{corollary}\label{cor:low}
Let $\psi \in C_0^\infty(\bR)$ be a smooth bump function supported on $|x| \leq c$, and let $H= -\Delta + V$ for $V$ a  potential satisfying Assumption (A1). Then for $1 \leq  p \leq \infty $, there exits $C \equiv C(c, \psi)$ such that for $ 0 \leq s \leq 1$,
\[
\| |\nabla|^s \psi (H) \|_{L^p \to L^p} \leq C
\]
\end{corollary}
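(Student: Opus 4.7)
The plan is to exploit Theorem \ref{thm:jn} via the operator factorization
\[
|\nabla|^s \psi(H) = \bigl[|\nabla|^s (1-\Delta)^{-1}\bigr] \cdot \bigl[(1-\Delta)\psi(H)\bigr],
\]
and show each factor is $L^p$-bounded uniformly in $1 \leq p \leq \infty$ and $0 \leq s \leq 1$.

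For the right factor, the operator identity $1 - \Delta = 1 + H - V$ gives
\[
(1-\Delta)\psi(H) = \tilde\psi(H) - V\,\psi(H), \qquad \tilde\psi(\lambda) := (1+\lambda)\psi(\lambda) \in C_0^\infty(\bR).
\]
Applying Theorem \ref{thm:jn} with $\theta = 1$ to both $\psi$ and $\tilde\psi$ bounds $\psi(H)$ and $\tilde\psi(H)$ on $L^p$ for every $p \in [1, \infty]$, while Assumption (A1) supplies $V \in L^\infty$. Hence $(1-\Delta)\psi(H)$ is $L^p$-bounded with constant depending on $\psi$ and the fixed potential $V$.

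For the left factor, $|\nabla|^s(1-\Delta)^{-1}$ is convolution with the radial kernel $K_s := \mathcal{F}^{-1}(m_s)$, where $m_s(\xi) := |\xi|^s/(1+|\xi|^2)$. I would verify directly that $K_s \in L^1(\bR^3)$ for $0 \leq s < 2$ via a smooth low/high frequency split $m_s = \eta\, m_s + (1-\eta)\, m_s$ with $\eta$ supported near $\xi = 0$: the low-frequency piece controls the decay of $K_s$ at infinity, giving $|K_s(x)| \lesssim \langle x \rangle^{-3-s}$ (from the H\"older regularity of $|\xi|^s$ at the origin), while the high-frequency piece, a smooth symbol of order $s-2$, controls the local behavior near $x=0$, giving $|K_s(x)| \lesssim |x|^{-1-s}$. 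Both bounds are integrable in $\bR^3$ for $s < 2$, so Young's convolution inequality yields $L^p \to L^p$ boundedness for all $p \in [1, \infty]$. The main delicate point is precisely the endpoint $p \in \{1, \infty\}$ boundedness of $|\nabla|^s(1-\Delta)^{-1}$, where Mikhlin's multiplier theorem is unavailable; the explicit $L^1$ kernel bound above sidesteps this. Composing the two factor bounds completes the proof with constant $C \equiv C(c, \psi)$.
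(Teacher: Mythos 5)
Your factorization takes a genuinely different route than the paper's proof, which runs a Stein complex-interpolation argument on the analytic operator family $e^{z^2}(-\Delta)^z\psi(H)$: the $\textup{Re}\,z=0$ line is handled via the $L^p$-boundedness of the imaginary powers $(-\Delta)^{i\gamma}$ (with the polynomial growth in $\gamma$ cancelled by the $e^{z^2}$ factor), and the $\textup{Re}\,z=1$ line by writing $(-\Delta)\psi(H)=H\psi(H)-V\psi(H)$, essentially the same manipulation you apply to $(1-\Delta)\psi(H)$. Your route replaces the analytic-family machinery with an explicit kernel bound for the scalar multiplier $|\nabla|^s(1-\Delta)^{-1}$, which is more elementary, and it even has an advantage at the endpoints: the imaginary-power bound the paper quotes holds only for $1<p<\infty$, so the paper's interpolation argument, read literally, does not reach $p=1$ or $p=\infty$ (these are not actually needed in the paper's later applications), whereas a genuine $L^1$-kernel estimate does give the full stated range.

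The place your argument is incomplete is precisely that kernel estimate, and specifically its uniformity in $s$. The corollary asserts a single constant $C=C(c,\psi)$ valid for all $s\in[0,1]$. If one reads your bound $|K_s(x)|\lesssim\langle x\rangle^{-3-s}$ with an $s$-independent implied constant, then $\int_{|x|>1}\langle x\rangle^{-3-s}\,dx\sim s^{-1}$ as $s\to0^+$, and Young's inequality would produce a constant that blows up --- contradicting the fact that $K_0=\mathcal{F}^{-1}[(1+|\xi|^2)^{-1}]$ is Schwartz. The cancellation that rescues the bound is that the coefficient of the $|x|^{-3-s}$ tail coming from the low-frequency singularity of $|\xi|^s$ vanishes linearly as $s\to0$ (it is governed by $\Gamma(-s/2)^{-1}$), and this is not visible from the order-of-singularity heuristic you sketch. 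To close the gap, either track the $s$-dependence of the low-frequency kernel estimate explicitly, or factor once more as $|\nabla|^s(1-\Delta)^{-1}=\bigl[(-\Delta)^{s/2}(1-\Delta)^{-s/2}\bigr]\bigl[(1-\Delta)^{-(1-s/2)}\bigr]$: the second factor is convolution by a Bessel kernel $G_{2-s}$ with $\|G_{2-s}\|_{L^1}=1$, and the first is convolution by a finite measure of uniformly bounded total variation by the classical comparison of Riesz and Bessel potentials, see \cite[\S V.3]{Stein_book}.
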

\begin{proof}
We argue using analytic interpolation with the family of operators 
\[
e^{z^2} (-\Delta)^z \psi(H),
\] 
The exponential factor is required to cancel with the bound
\begin{align}\label{complex_bd}
\|(-\Delta)^{i\gamma} \|_{L^p \to L^p} \leq A (1 + |\textup{Im}\, z|)^N,
\end{align}
for $N \equiv N(d,p)$ and constant $A$, see \cite{Fefferman_stein}. For $f, g \in \mathcal{S}$ with $\textup{supp } \widehat{f} \subseteq B(0, R)$ for some $R > 0$, we consider
\[
F(z) = \langle f, e^{z^2} (-\Delta)^z \psi(H) g \rangle.
\]
We note that such functions $f$ and $g$ are dense in $L^{p'}$ and $L^p$ respectively and for such functions $g$ we have $\psi(H) g \in H^2$ since $\psi(H) g \in L^2$ and  $H\psi(H) g \in L^2$. Thus, for this class of $f,g$ the function $F(z)$ is well-defined. Furthermore, $F(z)$ is continuous on 
\[
S = \{z \,:\, 0 \leq \textup{Re z} \leq 1/2\} \subseteq \mathbb{C}
\]
and analytic on the interior of $S$.  We then have from \eqref{complex_bd} and Theorem \ref{thm:jn} that
\[
|F(i \gamma)| \leq A \|f\|_{L^{p'}} \|g\|_{L^p}.
\]
Next, we claim that
\[
\| (-\Delta) \psi (H) f \|_{L^p} \lesssim_c \|  f \|_{L^p}.
\] 
Indeed, we write
\begin{align}
\| (-\Delta) \psi (H)  f \|_{L^p}& \leq \| H \psi (H)  f \|_{L^p} + \| V \psi (H)  f \|_{L^p} \\
&\leq \| H\psi (H) f \|_{L^p} + \| V\|_{L^\infty_x} \| \psi (H) f \|_{L^p},
\end{align}
and the claim follows from Theorem \ref{thm:jn} with $g(x) = x \psi(x)$ or $g(x) = \psi(x)$. Hence we similarly have that
\[
|F(1+i \gamma)| \leq A \|f\|_{L^{p'}} \|g\|_{L^p}.
\]
and we apply the three lines theorem and the density of the chosen class of $f$ to conclude that
\[
e^{z^2} (-\Delta)^z \psi(H) g \in L^p,
\]
and then the density of the chosen class of $g$ in $L^p$ to conclude.
\end{proof}

The following inhomogeneous Sobolev-type estimate for the operator $H$ follows directly from \cite[Theorem B.2.1]{SimonBAMS}.

\begin{proposition}[cf. \protect{\cite[Theorem B.2.1]{SimonBAMS}}]\label{h_sobolev}
Let $H = -\Delta +V$ with $V$ a potential satisfying Assumption (A1). Let $M > - \inf \sigma(H)$. Then for $1 < p \leq q < \infty$ and $\beta \in \bR$ satisfying
\[
\beta > \frac{3}{2} \left( \frac{1}{p} - \frac{1}{q} \right),
\]
$(H + M)^{-\beta}$ is a bounded operator from $L^p \to L^q$. Consequently, for such $p ,q , \beta$ we have the inhomogenous Sobolev embedding estimate:
\[
\| f\|_{L^q} \leq \|(H + M)^{\beta} f \|_{L^p}.
\]
\end{proposition}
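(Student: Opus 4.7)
The plan is to deduce the mapping property $(H+M)^{-\beta}: L^p \to L^q$ from heat kernel estimates for the semigroup $e^{-tH}$ via the subordination formula
\[
(H+M)^{-\beta} = \frac{1}{\Gamma(\beta)} \int_0^\infty t^{\beta - 1} e^{-t(H+M)} \, dt,
\]
valid for $\beta > 0$ since $H + M$ is self-adjoint with strictly positive spectrum (by the hypothesis $M > - \inf \sigma(H)$, so $\inf \sigma(H+M) = M + \inf \sigma(H) =: \delta > 0$).

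The first step is to establish the $L^p$--$L^q$ decay
\[
\|e^{-tH}\|_{L^p \to L^q} \leq C t^{-\frac{3}{2}\left(\frac{1}{p} - \frac{1}{q}\right)} e^{\|V\|_{L^\infty} t}, \qquad 1 \leq p \leq q \leq \infty, \quad t > 0.
\]
Under Assumption (A1) the potential is bounded, so $H$ is a bounded perturbation of $-\Delta$ and the classical Gaussian upper bound for the free heat kernel,
\[
e^{t\Delta}(x,y) = (4\pi t)^{-3/2} e^{-|x-y|^2/(4t)},
\]
combined with the Feynman--Kac representation (or equivalently the Duhamel expansion and positivity arguments) yields the pointwise bound $|e^{-tH}(x,y)| \leq e^{\|V\|_{L^\infty} t}\, e^{t\Delta}(x,y)$, from which the $L^p\to L^q$ estimate follows by Young's inequality.

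The second step is to upgrade to exponential decay on $L^p \to L^q$ for the shifted semigroup $e^{-t(H+M)}$ when $t$ is large. Since $H+M$ is self-adjoint with $\inf \sigma(H+M) = \delta > 0$, we have $\|e^{-t(H+M)}\|_{L^2 \to L^2} \leq e^{-\delta t}$. Splitting $e^{-t(H+M)} = e^{-(t/2)(H+M)} \circ e^{-(t/2)(H+M)}$ and applying the short-time estimate to factor out $L^p \to L^2$ and $L^2 \to L^q$ bounds, one obtains, for $t \geq 1$ say,
\[
\|e^{-t(H+M)}\|_{L^p \to L^q} \leq C e^{-\delta' t}
\]
for some $0 < \delta' < \delta$, absorbing the polynomial $t$-factors.

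Finally, inserting both regimes into the subordination integral gives
\[
\|(H+M)^{-\beta} f\|_{L^q} \lesssim \|f\|_{L^p} \left( \int_0^1 t^{\beta - 1 - \frac{3}{2}(1/p - 1/q)} \, dt + \int_1^\infty t^{\beta - 1} e^{-\delta' t} \, dt \right).
\]
The large-$t$ integral converges for every $\beta \in \mathbb{R}$, and the small-$t$ integral converges precisely when $\beta > \frac{3}{2}(1/p - 1/q)$, which is the hypothesis. The main technical point—and the only genuine obstacle—is the short-time heat kernel bound: once the Gaussian upper bound for $e^{-tH}$ is in hand, the rest is bookkeeping. Under Assumption (A1) this comes essentially for free from $V \in L^\infty$, so the result follows as stated in \cite[Theorem B.2.1]{SimonBAMS}.
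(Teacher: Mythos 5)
Your overall strategy — subordination plus heat-kernel estimates — is the standard route behind the result being cited, and the paper itself offers no proof, only the reference to Simon. The short-time ingredient is correct: Feynman--Kac domination gives $e^{-tH}(x,y) \leq e^{\|V\|_{\infty} t}\, e^{t\Delta}(x,y)$, hence $\|e^{-tH}\|_{L^p \to L^q} \lesssim t^{-\frac{3}{2}(1/p - 1/q)} e^{\|V\|_{\infty} t}$ for $1 \leq p \leq q \leq \infty$.

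The long-time step, however, has a genuine gap. Factoring $e^{-t(H+M)} = e^{-\frac{1}{2}(H+M)}\, e^{-(t-1)(H+M)}\, e^{-\frac{1}{2}(H+M)}$ and extracting the $L^2 \to L^2$ spectral decay from the middle requires $p \leq 2 \leq q$: the heat semigroup only raises integrability, so the mapping $e^{-\frac{1}{2}(H+M)} : L^p \to L^2$ is simply not available when $p > 2$. The proposition is stated for all $1 < p \leq q < \infty$ and is invoked in the paper precisely in regimes your argument misses, for instance $p = 6 - \varepsilon$, $q = 6$ in Proposition~\ref{improved_strichartz1} and $p = 6$, $q = 36/5$ in Proposition~\ref{prop:improved_strichartz2}. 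Riesz--Thorin does not rescue the argument either: interpolating the $L^2 \to L^2$ decay against the trivial bound $\|e^{-t(H+M)}\|_{L^\infty \to L^\infty} \leq e^{(\|V\|_{\infty} - M)t}$ yields decay on $L^p \to L^p$ only for $p$ in a neighborhood of $2$, because $M > -\inf\sigma(H)$ does not force $M > \|V\|_{\infty}$.

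The missing ingredient is the $p$-independence of the type (growth bound) of the Schr\"odinger semigroup: for $V$ in the Kato class (which Assumption (A1) guarantees), $e^{-tH}$ has the same exponential growth bound on every $L^p$, $1 \leq p \leq \infty$, equal to $-\inf\sigma_{L^2}(H)$. This is nontrivial and is exactly the machinery behind Simon's Theorem B.2.1 (see Theorem B.5.2 in the same reference, or Davies' heat kernel book). With it in hand, $\|e^{-t(H+M)}\|_{L^p\to L^p} \lesssim_\varepsilon e^{-(\delta-\varepsilon) t}$ for every $p$, and writing $e^{-t(H+M)} = e^{-\frac{1}{2}(H+M)}\, e^{-(t-\frac{1}{2})(H+M)}$, with the first factor doing the $L^p\to L^q$ smoothing and the second doing the $L^q\to L^q$ decay, closes the argument for the full range of exponents.
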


In the sequel, we will also need the following weighted multiplier estimates, the proof of which we postpone until Appendix \ref{a:kernel}.

 \begin{lemma}\label{lem:weighted_ests}
Let $H = -\Delta + V$ with $V$ a potential satisfying Assumption (A1). Let $0 < \alpha < 1$, and let $\varphi \in C^\infty \cap L^\infty$ be supported on $x \geq c > 0$, then there exists $N = N(\alpha ) > 0$ so that the following holds: suppose that
\[
|\partial_x^\ell \varphi(x)| \lesssim \frac{1}{x} ,\quad  \ell = 1, \ldots N(\alpha).
\]
Then for radial functions $f \in L^{2,\alpha}$, we have
\[
\|\langle r \rangle^{\alpha} \varphi(H) f \|_{L^2_x} \lesssim \||\langle r \rangle^{\alpha}   f \|_{L^2_x},
\]
where the implicit constant depends on semi-norms of $\varphi$ up to order $N(\alpha)$.
 \end{lemma}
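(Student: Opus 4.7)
Since $f$ is radial and $H$ preserves radiality, $\varphi(H)f$ is also radial, and since the support condition $\mathrm{supp}(\varphi) \subseteq \{x \geq c\}$ places $\varphi$ away from the negative eigenvalues of $H$, we have $\varphi(H) = \varphi(H) P_{ac}$. My first step would be to write, using the distorted Fourier representation from Theorem \ref{distorted} together with radial symmetry and Stone's formula,
\[
 \varphi(H) f(r) = \int_0^\infty K(r, r') f(r')\, r'^2\, dr',
\]
where, after reducing to the one-dimensional radial Sturm--Liouville problem, the kernel has the representation
\[
 K(r, r') = c \int_0^\infty \varphi(\lambda^2)\, e(r, \lambda) \overline{e(r', \lambda)}\, \lambda^2\, d\lambda
\]
in terms of the radial parts of the generalized eigenfunctions $e_-(x,\xi)$ (equivalently, in terms of the Jost solutions associated to the radial reduction of $H$). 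The weighted bound then becomes the $L^2$-boundedness of the integral operator with weighted kernel $\tfrac{\langle r\rangle^\alpha}{\langle r'\rangle^\alpha} K(r,r')$, which I would establish by a Schur test on $L^2(r^2\,dr)$.

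The core analytical step is pointwise control of $K(r,r')$. I would split the $\lambda$-integral into a bounded-frequency piece $\varphi_0$ (with compact support away from $0$) and a high-frequency tail $\varphi_\infty$, using a smooth dyadic cutoff. For $\varphi_0(H)$, the kernel is bounded uniformly and rapidly decaying via Theorem \ref{thm:jn} combined with a direct estimate using the known behavior of the Jost functions on a compact spectral interval. For $\varphi_\infty(H)$, I would integrate by parts $N(\alpha)$ times in $\lambda$: the hypothesis $|\partial_x^\ell \varphi(x)| \lesssim 1/x$ translates, via $x = \lambda^2$, into $|\partial_\lambda^\ell \varphi(\lambda^2)| \lesssim \lambda^{-\ell}$, which combined with the leading oscillatory behavior $e(r,\lambda) \sim (\lambda r)^{-1} e^{\pm i\lambda r}$ of the radial Jost solutions (and symbol-type bounds on their $\lambda$-derivatives) extracts a gain of $(r+r')^{-1}\lambda^{-1}$ at each integration by parts, producing off-diagonal decay of the form
\[
 |K(r,r')| \lesssim \frac{1}{r r'}\,\frac{1}{(1 + |r - r'|)^{N-2}}
\]
once $N$ is chosen sufficiently large.

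With such pointwise bounds, the weighted Schur condition reduces to verifying
\[
 \sup_r \int_0^\infty |K(r,r')|\, \frac{\langle r\rangle^\alpha}{\langle r'\rangle^\alpha}\, r'^2\, dr' < \infty
\]
and its symmetric counterpart. Splitting into the regions $r' \lesssim r$ and $r' \gtrsim r$ and using the elementary bound $\langle r\rangle^\alpha \lesssim \langle r'\rangle^\alpha + |r-r'|^\alpha$ valid for $0 < \alpha < 1$, the off-diagonal decay above yields a convergent integral provided $N = N(\alpha)$ is chosen large enough (growing as $\alpha \to 1$). This delivers the weighted estimate.

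\textbf{Main obstacle.} The delicate point is the high-frequency kernel analysis: one needs sharp symbol-type control on the $\lambda$-derivatives of the radial Jost functions $e(r,\lambda)$, uniformly in $r$ and $\lambda$, so that repeated integration by parts produces clean off-diagonal decay without destroying the balance between the oscillatory phase, the Jost normalization, and the weight. These are precisely the kernel estimates the paper defers to Appendix \ref{a:kernel}; once they are in hand the Schur argument is largely bookkeeping. The restriction $0 < \alpha < 1$ is natural because it is exactly the range in which $|\langle r\rangle^\alpha - \langle r'\rangle^\alpha| \lesssim |r-r'|^\alpha$, so that the weight can be absorbed into the off-diagonal decay without requiring any further smoothness or vanishing properties of $\varphi$ beyond the stated symbol condition.
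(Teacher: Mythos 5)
There is a genuine gap at the heart of your plan: the pointwise kernel bound you want,
\[
|K(r,r')| \lesssim \frac{1}{rr'}\,\frac{1}{(1+|r-r'|)^{N-2}},
\]
cannot hold for the full kernel, because the symbol $\varphi$ is only assumed to be a \emph{bounded} symbol (supported in $\{x\geq c\}$ and satisfying $|\partial_x^\ell\varphi|\lesssim x^{-1}$), not compactly supported in frequency. After expanding $\widetilde{e}(r,\rho)$ via the Jost function $m(r,\rho)$, the leading contribution to the kernel is the free piece $\int_0^\infty e^{i(c_1 r - c_2 r')\rho}\varphi(\rho^2)\,d\rho$. For the sign choice that makes the phase $\pm(r-r')$, this is exactly the (one-dimensional) Fourier transform of a Mikhlin-type symbol, hence a Calder\'on--Zygmund singular kernel, not an $L^1$ kernel in $r'$: integration by parts gains $|r-r'|^{-1}$ per step, but leaves a genuine singularity on the diagonal (and for $\varphi$ not vanishing at infinity, there is in fact a distributional part). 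Consequently the operator $\varphi(H)$ is \emph{not} given by a bounded, integrable kernel, and a Schur test applied to the full kernel cannot succeed. Your ``once the kernel bounds are in hand, the Schur argument is bookkeeping'' therefore overstates the content of the kernel estimates.

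The paper avoids this by first splitting (see \eqref{mult_formula}) the kernel into a correction piece $M(r,r')$ built from $m(r,\rho)\overline{m(r',\rho)}-1$, and the free multiplier pieces $\varphi_{c_1,c_2}(\rho)$. The correction kernel $M$ inherits extra decay in $r,r'$ from the decay of $V$, so much so that $\langle r\rangle^\alpha M(r,r')$ is Hilbert--Schmidt on $L^2(0,\infty)$ for $0<\alpha<1$ (see \eqref{equ:m_ker_bds}); no Schur test is needed. The free multiplier piece is \emph{not} handled by a kernel argument at all; the paper instead proves weighted $L^2$ boundedness for a pseudodifferential operator with a bounded symbol directly, by reducing via interpolation and duality to even integer weights $\alpha=-2m$ and computing with $(1-\Delta_\rho)^m$ and the Leibniz rule (this is where the hypothesis on the first $N(\alpha)$ derivatives of $\varphi$ is used). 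To salvage your approach you would need to perform this same splitting: your integration-by-parts estimates do apply to $M(r,r')$ (indeed that is how the paper gets \eqref{mbds2}--\eqref{mbds3}), but the free multiplier part requires a different tool, either the interpolation argument the paper uses or (since $\langle r\rangle^{2\alpha}$ is an $A_2$ weight on $\bR$ for $|\alpha|<1/2$) the weighted $L^2$ theory of singular integrals; a pointwise Schur bound on the full kernel is not available.
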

 The proof of the previous lemma will also yield the following fact:
 
 \begin{lemma}\label{lem:weighted_ests_free}
Let $\alpha > 0$, and let $\varphi \in C^\infty$ be a bounded function. Then for $f \in L^{2,\alpha}$ we have
\[
\|\langle r \rangle^{\alpha} \varphi(-\Delta) f \|_{L^2_x} \lesssim \|\langle r \rangle^{\alpha}  f \|_{L^2_x},
\]
where the implicit constant depends on semi-norms of $\varphi$ up to order $N \equiv N(\alpha)$.
 \end{lemma}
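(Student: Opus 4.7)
Since the author explicitly states that Lemma \ref{lem:weighted_ests_free} follows from the proof of Lemma \ref{lem:weighted_ests}, my plan is first to describe the specialization of that argument to $V \equiv 0$, and then to sketch the direct Fourier-analytic route that works cleanly in the free case. For $V \equiv 0$ there are no bound states and no zero resonance, so the restriction in Lemma \ref{lem:weighted_ests} that $\varphi$ be supported away from the origin is no longer needed: low distorted frequencies for $-\Delta$ are simply low standard frequencies, which cause no difficulty. The spectral representation of $\varphi(-\Delta)$ then reduces to multiplication by $\varphi(|\xi|^2)$ on the Fourier side, and the generalized eigenfunctions $e_\pm(x,\xi)$ from Section \ref{sec:dist_four} become the plane waves $e^{\pm i x\cdot\xi}$, so one can bypass the Lippmann--Schwinger/Jost-function machinery altogether.

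Concretely, I would write $\varphi(-\Delta)$ as a convolution with kernel
\[
K(x) = (2\pi)^{-3}\int_{\bR^3} e^{i x\cdot\xi}\,\varphi(|\xi|^2)\,d\xi,
\]
interpreted as a tempered distribution. Using the elementary inequality $\langle x\rangle^\alpha \lesssim_\alpha \langle x-y\rangle^\alpha + \langle y\rangle^\alpha$, I would split
\[
\langle x\rangle^\alpha \varphi(-\Delta) f(x) \;=\; \int \langle x\rangle^\alpha K(x-y) f(y)\,dy \;=\; T_1 f(x) + T_2 f(x),
\]
where $T_1, T_2$ are the integral operators with kernels bounded by $\langle y\rangle^\alpha |K(x-y)|$ and $\langle x-y\rangle^\alpha |K(x-y)|$ respectively.

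For $T_1$, I would simply note that it is dominated by $\varphi(-\Delta)\bigl(\langle\cdot\rangle^\alpha f\bigr)$, whose $L^2$ norm is controlled by $\|\varphi\|_{L^\infty}\,\|\langle\cdot\rangle^\alpha f\|_{L^2}$ by Plancherel. For $T_2$, which is convolution with the kernel $\langle x\rangle^\alpha K(x)$, I would reduce the $L^2$-boundedness to showing that the symbol $\widehat{\langle\cdot\rangle^\alpha K}(\xi)$ lies in $L^\infty_\xi$, again by Plancherel. For even integer $\alpha = 2k$ this symbol equals (up to constants) $(I-\Delta_\xi)^k\varphi(|\xi|^2)$, whose $L^\infty_\xi$ bound follows from the chain rule applied to $\varphi(|\xi|^2)$ and a finite number of semi-norms of $\varphi$; for general $\alpha > 0$, one reduces to even integer exponents by complex interpolation, or equivalently writes $\langle x\rangle^\alpha = (1+|x|^2)^{\alpha/2}$ as a Bessel potential and applies the corresponding symbol calculus.

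The main obstacle will be making the symbol bound for $(I-\Delta_\xi)^{\alpha/2}\varphi(|\xi|^2)$ match the precise hypothesis on $\varphi$ quantified by $N(\alpha)$ in the statement. The chain rule produces factors of $\xi$ from differentiating $|\xi|^2$, and one must check that these are compensated by a sufficient number of bounded derivatives of $\varphi$. Taking $N(\alpha)$ to be the smallest integer exceeding $\alpha$ (or $2\lceil \alpha/2\rceil$ if one wants to avoid fractional Bessel potentials) and unwinding the Leibniz/chain expansion yields exactly the semi-norm dependence claimed. Since setting $V=0$ eliminates the potential-theoretic corrections that dominate the Appendix's proof of Lemma \ref{lem:weighted_ests}, all that remains is this free-kernel estimate, which is why the author remarks that the same proof yields Lemma \ref{lem:weighted_ests_free}.
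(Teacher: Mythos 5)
Your identification of the key simplification ($V=0$ removes the Jost-function and $M(r,r')$-kernel machinery and drops the support restriction on $\varphi$) is correct; this is precisely why the paper reduces the free lemma to a one-line remark. However, the decomposition step has a genuine flaw. You split $\langle x\rangle^\alpha \varphi(-\Delta)f = T_1 f + T_2 f$ where the kernels of $T_1$, $T_2$ are only \emph{bounded in modulus} by $\langle y\rangle^\alpha|K(x-y)|$ and $\langle x-y\rangle^\alpha|K(x-y)|$. A modulus bound on the kernel transfers to an $L^2$ operator bound only via a positive-kernel criterion such as Schur's test, which requires some integrability of $|K|$; but $\varphi\in L^\infty$ gives no control on $\|K\|_{L^1}$ (for $\varphi\equiv 1$, $K=\delta_0$), and the cancellation inside $K$ that makes $\varphi(-\Delta)$ bounded is exactly what the absolute value destroys. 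In particular, ``$T_1$ is dominated by $\varphi(-\Delta)(\langle\cdot\rangle^\alpha f)$'' is not a pointwise domination of outputs, and $T_2$ is not in fact convolution with $\langle x\rangle^\alpha K(x)$ --- its kernel is only dominated in absolute value by $\langle x-y\rangle^\alpha|K(x-y)|$ --- so the reduction to the boundedness of $\widehat{\langle\cdot\rangle^\alpha K}$ does not go through.

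The paper never splits the weight additively. It conjugates: in the reduced radial variable it studies the operator $\langle r\rangle^\alpha T_a \langle r\rangle^{-\alpha}$ on $L^2$, reduces by duality and interpolation to $\alpha = -2m$ with $m\in\bN$, uses $((1+r^2)^m f)^{\wedge} = (1-\Delta_\rho)^m\hat f$, and integrates by parts to transfer $(1-\Delta_\rho)^m$ onto $e^{ir\rho}a(\rho)(1+r^2)^{-m}$; the Leibniz rule then produces a finite sum of Fourier multipliers whose symbols are controlled by semi-norms of $a$, with no absolute values taken anywhere. Your observation that for even integer exponents the relevant symbol is $(I-\Delta_\xi)^{\alpha/2}\varphi(|\xi|^2)$ is correct and is essentially this Leibniz computation in disguise; the fix is to present it as $L^2$-boundedness of the conjugated operator $\langle x\rangle^{2k}\varphi(-\Delta)\langle x\rangle^{-2k}$ via Plancherel (then interpolate/dualize), rather than as a bound on the Fourier transform of $\langle x\rangle^{2k}K(x)$ obtained from a kernel split that has discarded the oscillation.
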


\subsection{Coercivity results}
We record some coercivity results for the operator $H = -\Delta + V$ in the case that $H$ has a resonance at zero, which is our current setting. Due to the zero-resonance, we will restrict to functions supported away from zero in distorted Fourier space in some of our results. In what follows, it will be useful to note the following facts: consider the set
\[
\mathcal{C} = \{ g \in L^2 \,:\, \chi_{[\varepsilon, R]}(H) g = g \, \textup{ for some } \varepsilon, R > 0 \},
\]
then $\mathcal{C}$ is dense in $L^2_{ac}$. Indeed, we have that
\[
 \|P_{ac} g - \chi_{[\varepsilon , R]}(H) P_{ac} g \|_{L^2} \to 0
\]
by the isometry property of Theorem \ref{distorted}. We further note that if $Y$ is an eigenfunction associated to an eigenvalue $-\kappa^2$, then
\[
\chi_{(-\infty, R]}(H) Y = \chi_{(-\infty, R]}(-\kappa^2) Y = Y,
\]
hence for $g \in L^2$, functions of the form $\chi_{(-\infty, R]}(H) g$ are dense in $L^2$.

Now, for $g \in \mathcal{C}$ and $s \in \bR$, the operator $|H|^s := H^s P_{ac}$ can be defined either by the functional calculus or via distorted Fourier multipliers, and furthermore these definitions agree by the above discussion since 
\[
|H|^s g = |H|^s \chi_{[\varepsilon, R]}(H) g \qquad \textup{for some } \varepsilon, R > 0,
\]
and $m(\lambda) = \lambda^{s} \chi_{[\varepsilon, R]}(\lambda)$ is a bounded function. Now let $g \in L^2$ and suppose
\[
\int |\xi|^{2s} |\mathcal{F}_V g(\xi)|^2 < \infty \qquad \textup{and} \qquad \int_{0}^\infty |\lambda|^{2s} d \langle E_\lambda g , g \rangle < \infty.
\]
For such functions, $|H|^s \,\chi_{[\varepsilon , R]}(H) g\in L^2$ uniformly in $R, \varepsilon$.

\begin{lemma}\label{lem:norm bound}
Let  $H = - \Delta + V$ with $V$ a potential satisfying Assumption (A1). Let $f \in L^2$, then the following estimates hold:
\begin{enumerate}
\item[(i)] Let $0 \leq s \leq 1/2$. Then
\begin{align}\label{coercive1}
\| |H|^s  (-\Delta)^{-s} f \|_{L^2} &\lesssim \| f \|_{L^2}.
\end{align}
\item[(ii)] Let $0 \leq s \leq 1$. If $\varphi \in C^\infty(\bR) \cap L^\infty(\bR)$ is a bounded function supported on $x \geq c > 0$, then
\begin{align}\label{coercive1b}
\| (-\Delta)^s  |H|^{-s} \varphi(H) f \|_{L^2} &\lesssim \| f \|_{L^2}.
\end{align}
\item[(iii)] Let $-1 \leq s \leq 1$. If $M >  - \inf \sigma(H)$, then
\begin{align}\label{coercive2b}
\|  (H + M)^{s} (-\Delta + M)^{-s}  f \|_{L^2} &\lesssim \| f \|_{L^2}.
\end{align}
\end{enumerate}
\end{lemma}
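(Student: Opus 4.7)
The strategy is to establish each bound at the endpoints of the relevant range of $s$ and then interpolate. For (i), I would invoke the Löwner–Heinz (Kato–Heinz) inequality to reduce matters to the case $s = 1/2$. For (ii) and (iii), I would use Stein's complex interpolation theorem applied to the analytic families $T_z := (-\Delta)^z |H|^{-z}\varphi(H)$ and $T_z := (H+M)^z(-\Delta + M)^{-z}$, respectively.

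For (i) at $s = 1/2$: on the dense class of $f \in L^2$ with $\widehat{f}$ supported in a compact annulus away from the origin, the function $g := (-\Delta)^{-1/2}f$ lies in $L^2 \cap \dot H^1$, so $P_{ac}g \in H^1 = \mathcal{D}(H^{1/2})$ and
\[
\||H|^{1/2}(-\Delta)^{-1/2}f\|_{L^2}^2 = \langle H P_{ac}g,\, P_{ac}g\rangle = \|\nabla P_{ac}g\|_{L^2}^2 + \int V |P_{ac}g|^2\, dx.
\]
Under Assumption (A1), $V \in L^{3/2}(\bR^3)$, so Hölder together with the Sobolev embedding $\dot H^1 \hookrightarrow L^6$ bounds the second term by $\|V\|_{L^{3/2}}\|\nabla P_{ac} g\|_{L^2}^2$. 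Writing $P_{ac}g = g - \sum_j \langle g, Y_j\rangle Y_j$, the fact that each $Y_j$ is smooth with rapidly decaying Fourier transform yields $Y_j \in \dot H^{-1}(\bR^3)$, and duality gives $|\langle g, Y_j\rangle| \leq \|g\|_{\dot H^1}\|Y_j\|_{\dot H^{-1}} \lesssim \|f\|_{L^2}$. Hence $\|\nabla P_{ac}g\|_{L^2} \lesssim \|f\|_{L^2}$, and so $\||H|^{1/2}(-\Delta)^{-1/2} f\|_{L^2} \lesssim \|f\|_{L^2}$. The Löwner–Heinz inequality applied to the non-negative self-adjoint operators $A := HP_{ac}$ and $B := -\Delta$ then extends this to all $s \in [0, 1/2]$.

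For (ii) and (iii), it suffices to verify uniform $L^2 \to L^2$ bounds for the analytic families on the vertical lines bounding the relevant strip. For $T_z$ in (ii): on $\mathrm{Re}\,z = 0$, $(-\Delta)^{it}$ is an $L^2$-isometry and $\lambda^{-it}\varphi(\lambda)$ is a bounded symbol since $\varphi$ vanishes near zero. On $\mathrm{Re}\,z = 1$, factoring $(-\Delta)^{1+it} = (-\Delta)^{it}(-\Delta)$ and writing $-\Delta = H - V$ gives
\[
(-\Delta)|H|^{-(1+it)}\varphi(H) = |H|^{-it}\varphi(H) - V|H|^{-(1+it)}\varphi(H),
\]
with both summands $L^2$-bounded (using $V \in L^\infty$). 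For $T_z$ in (iii): on the imaginary axis both $(H+M)^{it}$ and $(-\Delta + M)^{-it}$ are $L^2$-isometries, since $M > -\inf\sigma(H)$ makes $H+M$ and $-\Delta+M$ both strictly positive. For $\mathrm{Re}\,z = 1$, factor
\[
T_{1+it} = (H+M)^{it}\bigl[(H+M)(-\Delta+M)^{-1}\bigr](-\Delta+M)^{-it}
\]
and use $(H+M)(-\Delta+M)^{-1} = I + V(-\Delta+M)^{-1}$, bounded on $L^2$ since $V \in L^\infty$ and $(-\Delta+M)^{-1}: L^2 \to H^2$. The analogous factorization on $\mathrm{Re}\,z = -1$ uses $(H+M)^{-1}(-\Delta+M) = I - (H+M)^{-1}V$. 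Stein's theorem then yields the conclusions.

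The main obstacles are the density/limiting argument in (i), which is needed because $(-\Delta)^{-1/2} f$ is generically not in $L^2$ so the quadratic form identity requires a preliminary approximation step, and the verification that $T_z f$ is holomorphic in $z$ on a suitable dense subclass of $f$ so that Stein's interpolation theorem applies with the correct constants.
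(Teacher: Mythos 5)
Your proof is correct. For parts (ii) and (iii) you use the same method as the paper: Stein complex interpolation with identical endpoint factorizations, $(-\Delta) = H - V$ for (ii) and $(H+M)(-\Delta+M)^{-1} = I + V(-\Delta+M)^{-1}$ (with the adjoint relation for $\mathrm{Re}\,z = -1$) for (iii), together with $V \in L^\infty$. For part (i), however, you take a genuinely different route. Both you and the paper compute the same endpoint bound $\||H|^{1/2}(-\Delta)^{-1/2}f\|_{L^2} \lesssim \|f\|_{L^2}$ via the quadratic form of $H$ (your $Y_j \in \dot{H}^{-1}$ duality pairing is a harmless variant of the paper's $Y_j \in L^{6/5}$, $(-\Delta)^{-1/2}f \in L^6$ H\"older pairing), but the paper then runs a three-lines argument on the analytic family $|H|^z(-\Delta)^{-z}$ over $0 \le \mathrm{Re}\,z \le 1/2$, whereas you invoke L\"owner--Heinz operator monotonicity: $|H| \le C(-\Delta)$ in the form sense implies $|H|^{2s} \le C^{2s}(-\Delta)^{2s}$ for $0 \le 2s \le 1$. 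The L\"owner--Heinz route is cleaner for (i), since it dispenses with the $\mathrm{Re}\,z = 0$ endpoint (boundedness of imaginary powers of $|H|$ and $-\Delta$) and with the verification that $z \mapsto |H|^z(-\Delta)^{-z}f$ is holomorphic on a suitable dense class. Its rigidity is exactly why you cannot reuse it for (ii), where the auxiliary spectral cutoff $\varphi(H)$ breaks the $A \le B$ structure, or for (iii), where $s$ ranges over $[-1,1]$, and you correctly revert to Stein interpolation there. One point worth making explicit: L\"owner--Heinz requires the form inequality $\||H|^{1/2}\psi\| \lesssim \|(-\Delta)^{1/2}\psi\|$ on all of $\mathcal{D}((-\Delta)^{1/2})$, not merely on your dense annular-support class, but this follows since that class is a core for $(-\Delta)^{1/2}$ and $|H|^{1/2}$ is closed.
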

\begin{proof}
We argue using interpolation for analytic families of operators. Let $f, g \in \mathcal{S}$ with 
\[
\textup{supp }\widehat{f} \subseteq \mathbb{R}^3 \setminus B(0,r),\quad \textup{and} \quad \chi_{(-\infty, R]}(H) g = g, \quad r, R > 0, 
\]
and recall that such functions are dense in $L^2$. Consider
\[
F(z) = \langle |H|^z (-\Delta)^{-z} f, g \rangle_{L^2} = \langle  (-\Delta)^{-i\, \textup{Im}\: z} (-\Delta)^{-\textup{Re} \:z} f, |H|^{-i\,\textup{Im} \:z}|H|^{\textup{Re}\: z}  g \rangle_{L^2} , \qquad \textup{Re z} \geq 0.
\]
By the assumptions on $f$ and $g$, the function $F(z)$ is well-defined, since 
\[
H^{-i \,\textup{Im} \:z}  |H|^{\textup{Re}\: z}  g \in L^2\quad \textup{and} \quad(-\Delta)^{-i\, \textup{Im}\: z} (-\Delta)^{-\textup{Re} \:z} f \in L^2.
\]
Further, $F(z)$ is continuous on 
\[
S = \{z \,:\, 0 \leq \textup{Re z} \leq 1/2\} \subseteq \mathbb{C}
\]
and analytic on the interior of $S$.  We claim that
\[
\| |H|^{\frac{1}{2}} (-\Delta)^{-\frac{1}{2}} f \|_{L^2} \lesssim \| f \|_{L^2}.
\]
First observe that $\mathcal{D}(|H|^{\frac{1}{2}}) = \dot H^1$, since $\mathcal{D}(|H|^{\frac{1}{2}})$ is equal to the form-domain of $HP_{ac}$, that is, the largest dense subspace of $L^2_{ac}$ for which the quadratic form
\[
\psi \mapsto \langle \psi, HP_{ac} \psi \rangle
\]
is bounded. We write
\begin{align}
\| |H|^{\frac{1}{2}} (-\Delta)^{-\frac{1}{2}} f \|_{L^2}^2 &=  \langle  |H|^{\frac{1}{2}} (-\Delta)^{-\frac{1}{2}} f,  |H|^{\frac{1}{2}}  (-\Delta)^{-\frac{1}{2}} f\rangle \\
&=  \langle  H P_{ac} (-\Delta)^{-\frac{1}{2}} f, P_{ac} (-\Delta)^{-\frac{1}{2}} f\rangle,
\end{align}
and we can use the definition of $P_{ac}$ from \eqref{p_ac} to expand this expression as
\begin{align}
 &\langle  H P_{ac} (-\Delta)^{-\frac{1}{2}} f,  P_{ac}(-\Delta)^{-\frac{1}{2}} f\rangle \\
 &=  \langle  H (-\Delta)^{-\frac{1}{2}} f,  (-\Delta)^{-\frac{1}{2}} f\rangle - 2 \sum_{j = 1}^m (-\kappa_j)^2 | \langle Y_j, (-\Delta)^{-\frac{1}{2}} f\rangle |^2  + \sum_{j = 1}^m |\kappa_j|^4 | \langle Y_j, (-\Delta)^{-\frac{1}{2}} f\rangle|^2,
 \end{align}
 where $-\kappa_j^2$ is the eigenvalue associated to $Y_j$. Since $Y_j \in L^{6/5}$ and $(-\Delta)^{-1/2} f \in L^6$, the second and third terms are bounded. For the first term, we write
\begin{align}
\langle H (-\Delta)^{-\frac{1}{2}}  f ,  (-\Delta)^{-\frac{1}{2}}  f\rangle
& = \langle (I + (-\Delta)^{- \frac{1}{2}} V (-\Delta)^{-\frac{1}{2}} ) f , f\rangle \\
&=  \langle f, f \rangle + \langle V (-\Delta)^{-\frac{1}{2}} f ,  (-\Delta)^{-\frac{1}{2}}  f\rangle,
\end{align}
The first term yields exactly $\|f\|^2_{L^2}$, while the second term can be bounded using the Sobolev embedding $\dot H^{1}(\bR^3) \subset L^6(\bR^3)$ together with the fact the $V \in L^{3/2,1}$. 

Using that, by the spectral theorem, imaginary powers of $(-\Delta)$ and $|H|$ are $L^2$-bounded,
\begin{align}
|F(i\gamma)| &\leq \||H|^{i\gamma} (-\Delta)^{-i\gamma} f\|_{L^2} \| g\|_{L^{2}} \leq \| f\|_{L^{2}} \| g\|_{L^{2}}\\
|F(1/2 + i\gamma)| &\leq \||H|^{i\gamma}  |H|^{\frac{1}{2}}  (-\Delta)^{-\frac{1}{2} } (-\Delta)^{- i\gamma} f\|_{L^2} \| g\|_{L^{2}} \lesssim \| f\|_{L^{2}} \| g\|_{L^{2}}, 
\end{align}
and we apply the three lines theorem, the density of the chosen class of $g$ to obtain $(i)$ for a dense set, and density of the chosen class of $f$ to conclude. 

For $(ii)$, we argue similarly: let $f, g \in \mathcal{S}$ be such that $\textup{supp }\widehat{g} \subseteq B(0,R)$ for some $R > 0$ and again note that such functions are dense in $L^2$. We set
\[
F(z) = \langle (-\Delta)^z |H|^{-z} \varphi(H) f, g \rangle_{L^2}, \qquad \textup{Re }z \geq 0.
\]
and again have that $F(z)$ is well-defined. The rest of the argument proceeds similarly, noting that
\[
\| (-\Delta) |H|^{-1}  \varphi(H) f \|_{L^2} \leq \| (-\Delta + V) |H|^{-1}  \varphi(H) f \|_{L^2} + \| V |H|^{-1}  \varphi(H) f \|_{L^2} \leq \|f\|_{L^2},
\]
where we are using in the second term that $V\in L^\infty$, that $|H|^{-1}  \varphi(H)$ is an $L^2$ bounded operator because of the support of $\varphi$, and that $(-\Delta + V) |H|^{-1} = H |H|^{-1} = P_{ac}$.

For $(iii)$, let $f , g \in \mathcal{S}$ with 
\[
\textup{supp }\widehat{f} \subseteq B(0,R_1) \setminus B(0,r), \quad \textup{and} \quad  \chi_{(-\infty, R_2)}(H) g = g, \quad r, R_1, R_2 > 0,
\]
(which are again dense in $L^2$) and let
\[
F(z) =  \langle  (H + M)^{z} (-\Delta + M)^{-z}  f, g \rangle_{L^2} ,
\]
For $M > - \inf \sigma(H)$, the operator $(H+M)^{-1}$ is bounded on $L^2$, hence $F(z)$ is well-defined.  Furthermore, $F(z)$ is continuous on  
\[
S = \{z \,:\, -1 \leq \textup{Re z} \leq 1\}
\]
 and analytic on the interior of $S$.  Since $V \in L^\infty$, we may estimate
\begin{align}
&\|(H + M)^{-1} (-\Delta + M)f\|_{L^2} \\
&\leq \|  (H + M )^{-1} (H + M) f\|_{L^2} +  \| (H + M)^{-1} V f\|_{L^2}   \\
&\lesssim \| f\|_{L^2}.
\end{align}
Similarly, using that since $M > 0$, $(-\Delta + M)^{-1}$ is $L^2$ bounded, we obtain
\begin{align}
&\|(H + M) (-\Delta + M)^{-1} f\|_{L^2} \\
&\leq \| (-\Delta + M) (-\Delta + M)^{-1} f\|_{L^2} +  \| V (-\Delta + M)^{-1} f\|_{L^2}   \\
&\lesssim \| f\|_{L^2},
\end{align}
Finally, using that $H+M$ is a positive operator, $(H+M)^{i\gamma}$ is $L^2$ bounded by the spectral theorem. Hence, we conclude as above using the three lines theorem. 
\end{proof}

\begin{corollary}\label{cor:equivalence}
Let $H = -\Delta + V$ with $V$ a potential satisfying Assumption (A1). Then for $f \in H^2 $, we have
\begin{enumerate}
\item[(i)] For $0 \leq s \leq 1/2$, we have
\begin{align}
\||H|^{s} f\|_{L^{2}} &\lesssim \|(-\Delta)^{s} f\|_{L^{2}},
\end{align}
and provide $f \in L^2_{ac}$, we have
\begin{align}
\|(-\Delta)^{-s} f\|_{L^{2}} &\lesssim \||H|^{-s} f\|_{L^{2}}.
\end{align}
\item[(ii)] For $0 \leq s \leq 1$, and for $\varphi \in C^\infty(\bR)$ supported on $x \geq c > 0$ with $\|\varphi\|_{L^\infty} \leq 1$,
\begin{align}
\|(-\Delta)^{s} \varphi(H) f\|_{L^{2}} &\lesssim \| |H|^{s} f\|_{L^{2}}. \\
\||H|^{-s} \varphi(H) f\|_{L^{2}} &\lesssim \|(-\Delta)^{-s} f\|_{L^{2}}.
\end{align}
\item[(iii)] For $-1 \leq s \leq 1$ and for $M > -\inf \sigma(H)$,
\begin{align}
\|(H+M)^{s}  f\|_{L^{2}} &\sim \|(-\Delta +M)^{s} f\|_{L^{2}}.
\end{align}
\end{enumerate}
\end{corollary}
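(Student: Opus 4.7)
The plan is to deduce all three claims of the corollary directly from the corresponding estimates in Lemma \ref{lem:norm bound} via suitable substitutions and a short duality argument. Throughout, I would work on the dense subclasses of Schwartz functions used in the proof of the lemma (with Fourier support bounded away from zero for $f$, and with $\chi_{(-\infty,R]}(H) g = g$ for $g$) and extend to general $f \in H^2$ by density. The key functional-calculus identities to keep straight are $|H|^s |H|^{-s} = P_{ac}$ (on suitable dense subsets of $L^2_{ac}$) and, whenever $\varphi$ is supported on $\{x \geq c\}$, $\varphi(H) = \varphi(H) P_{ac}$ since the discrete spectrum of $H$ is strictly negative.

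For (i), the first bound is immediate by substituting $f \mapsto (-\Delta)^s f$ into \eqref{coercive1}: $\||H|^s f\|_{L^2} = \||H|^s(-\Delta)^{-s}[(-\Delta)^s f]\|_{L^2} \lesssim \|(-\Delta)^s f\|_{L^2}$. For the reverse bound, assuming $f \in L^2_{ac}$, I would use duality together with the identity $f = |H|^s |H|^{-s} f$ and self-adjointness,
\begin{align}
|\langle (-\Delta)^{-s} f, g\rangle| = |\langle |H|^{-s} f,\; |H|^s (-\Delta)^{-s} g\rangle| \leq \||H|^{-s} f\|_{L^2} \||H|^s (-\Delta)^{-s} g\|_{L^2},
\end{align}
and then invoke \eqref{coercive1} once more to bound the second factor by $\|g\|_{L^2}$. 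Taking a supremum over unit $g$ completes the bound.

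Part (ii) is analogous with \eqref{coercive1b} in place of \eqref{coercive1}. Substituting $f \mapsto |H|^s f$ in \eqref{coercive1b} and commuting $|H|^{-s}$ past $\varphi(H)$ gives $\|(-\Delta)^s \varphi(H) |H|^{-s}|H|^s f\|_{L^2} \lesssim \||H|^s f\|_{L^2}$, and then the identities $|H|^{-s}|H|^s = P_{ac}$ and $\varphi(H) P_{ac} = \varphi(H)$ collapse the left-hand side to $\|(-\Delta)^s \varphi(H) f\|_{L^2}$. The dual estimate $\||H|^{-s}\varphi(H) f\|_{L^2} \lesssim \|(-\Delta)^{-s} f\|_{L^2}$ again follows by duality: writing
\[
|\langle |H|^{-s} \varphi(H) f, g\rangle| = |\langle (-\Delta)^{-s} f,\; (-\Delta)^s \varphi(H)|H|^{-s} g\rangle|,
\]
Cauchy--Schwarz and \eqref{coercive1b} applied to $g$ give the bound.

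Part (iii) is the cleanest, because \eqref{coercive2b} already holds on the symmetric range $-1 \leq s \leq 1$. Substituting $f \mapsto (-\Delta + M)^s f$ into \eqref{coercive2b} immediately yields $\|(H+M)^s f\|_{L^2} \lesssim \|(-\Delta + M)^s f\|_{L^2}$, while applying the same estimate at exponent $-s$ and substituting $f \mapsto (H+M)^s f$ yields the reverse inequality, completing the equivalence. I do not anticipate any real obstacle here: the only point requiring care is the justification of the functional-calculus manipulations on an appropriate dense subset, which is exactly the bookkeeping already carried out in the proof of Lemma \ref{lem:norm bound}.
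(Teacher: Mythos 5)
Your argument is essentially the paper's proof: all three parts are deduced from Lemma~\ref{lem:norm bound} by the same substitutions for the forward bounds and by duality for the reverse bounds, on the same dense classes of functions, and both (i) and (ii) are handled correctly.

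There is, however, one step in (iii) that does not work as literally stated. Applying \eqref{coercive2b} at exponent $-s$ gives $\|(H+M)^{-s}(-\Delta+M)^{s} f\|_{L^{2}} \lesssim \|f\|_{L^{2}}$, and substituting $f \mapsto (H+M)^{s} f$ into that estimate produces
\[
\|(H+M)^{-s}(-\Delta+M)^{s}(H+M)^{s} f\|_{L^{2}} \lesssim \|(H+M)^{s} f\|_{L^{2}},
\]
whose left-hand side is \emph{not} $\|(-\Delta+M)^{s} f\|_{L^{2}}$, since $(-\Delta+M)^{s}$ and $(H+M)^{s}$ do not commute. To obtain the reverse inequality you need the same duality (equivalently, adjoint) step you correctly used in (i) and (ii): since $(H+M)^{-s}(-\Delta+M)^{s}$ is $L^{2}$-bounded, so is its adjoint $(-\Delta+M)^{s}(H+M)^{-s}$, and substituting $f \mapsto (H+M)^{s} f$ into \emph{that} bound gives $\|(-\Delta+M)^{s} f\|_{L^{2}} \lesssim \|(H+M)^{s} f\|_{L^{2}}$, completing the equivalence. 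This is a small slip rather than a missing idea, but the step as written would fail.
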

\begin{proof}
To prove the first inequality in $(i)$ we use Lemma \ref{lem:norm bound} $(i)$ for $(-\Delta)^{s} f$ with $f \in \mathcal{S}$ as in the proof of $(i)$, while for the second inequality, we have for $f \in \mathcal{C}$ and $g \in \mathcal{S}$ with $\textup{supp }\widehat{g} \subseteq \mathbb{R}^3 \setminus B_r$ that
\begin{align}
|\langle (-\Delta)^{-s} f , g \rangle| = |\langle(-\Delta)^{-s} |H|^s |H|^{-s}  f , g \rangle| \leq \| |H|^{-s} f \|_{L^2} \| |H|^s (-\Delta)^{-s}  g\|_{L^2} \leq \| |H|^{-s} f \|_{L^2}  \|g\|_{L^2}
\end{align}
using Lemma \ref{lem:norm bound} $(i)$, and we use density to conclude. For $(ii)$, we use Lemma \ref{lem:norm bound} $(ii)$ for $|H|^{s} f$ for $f \in \mathcal{C}$ and density, and for the second second inequality, we have for $g \in \mathcal{C}$ and $f \in \mathcal{S}$ with $\textup{supp } \widehat{f} \subseteq \mathbb{R}^3 \setminus B_r$ that
\begin{align}
|\langle |H|^{-s} \varphi(H) f, g \rangle| = |\langle |H|^{-s} \varphi(H) (-\Delta)^s(-\Delta)^{-s}  f, g \rangle| \leq  \|(-\Delta)^{-s}  f\|_{L^2} \|   (-\Delta)^s |H|^{-s} \varphi(H) g \|_{L^2}
\end{align}
and we use Lemma \ref{lem:norm bound} $(ii)$ and density to conclude. Inequality $(iii)$ follows analogously.
\end{proof}

\begin{remark}\label{equ:inhomog_equiv}
Let $\varphi \in C^\infty(\bR)$ be supported on $x \geq c > 0$ with $\|\varphi\|_{L^\infty} \leq 1$. Then for any $M > 0 $, and $s \in \bR$,
\[
\|(H+M)^{s} \varphi(H) f\|_{L^{2}}  \sim_c \||H|^{s}   \varphi(H) f\|_{L^{2}} .
\]
Indeed, for $f \in \mathcal{C}$
\begin{align}
\|(H+M)^{s} \varphi(H) f\|_{L^{2}}   &= \|(H+M)^{s} |H|^s |H|^{-s}\varphi(H) f\|_{L^{2}}   \\
&=\|(H+M)^{s} |H|^s \widetilde{\varphi}(H) |H|^{-s}\varphi(H) f\|_{L^{2}} ,
\end{align}
for $\widetilde{\varphi}$ a slight ``enlargement'' of $\varphi$, and we note that the spectral multiplier
\[
m(\lambda) = \frac{(\lambda^2 + M)^s}{\lambda^{2s}} \widetilde{\varphi}(\lambda)
\]
is bounded from above and below on the support of $\varphi$.
\end{remark}

We will also need some coercivity estimates in weighted spaces. We will argue similarly to the proof of Lemma \ref{lem:norm bound}, using Lemma \ref{lem:weighted_ests} and Lemma \ref{lem:weighted_ests_free}.
\begin{lemma}\label{lem:weighted_coercive}
Let $H = -\Delta + V$ with $V$ a potential satisfying Assumption (A1). Let  $0 < \alpha < 1$ and $-1 \leq s \leq 1$. Let $\varphi \in C^\infty(\bR)$ satisfy the hypotheses of Lemma \ref{lem:weighted_ests}.  Then,
\begin{align}
\|\langle x \rangle^{\alpha} |H|^s \varphi(H) (-\Delta)^{-s} \varphi(-\Delta) \langle x \rangle^{-\alpha} g \|_{L^2_x} \lesssim \|g\|_{L^2_x}.
\end{align}
\end{lemma}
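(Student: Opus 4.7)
The plan is to establish the bound by analytic interpolation of the family
$$T_z := \langle x\rangle^{\alpha}\, |H|^{z}\varphi(H)\,(-\Delta)^{-z}\varphi(-\Delta)\,\langle x\rangle^{-\alpha}, \qquad 0 \leq \mathrm{Re}\,z \leq 1,$$
reducing matters to controlling $T_{i\gamma}$ and $T_{1+i\gamma}$ as operators on $L^2$ with at most polynomial growth in $|\gamma|$. The range $s\in[-1,0]$ will be handled by an analogous interpolation of the family $|H|^{-z}\varphi(H)(-\Delta)^{z}\varphi(-\Delta)$, using the symmetric commutator identity $[-\Delta,\varphi(H)] = -[V,\varphi(H)]$ to absorb the derivative loss onto the $V$ term at the endpoint. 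As in the proof of Lemma~\ref{lem:norm bound}, for $f,g$ chosen from suitable dense subclasses of $L^2$ (for instance $f\in\mathcal{S}$ with $\widehat f$ supported away from the origin, and $g=\chi_{[\varepsilon,R]}(H)\tilde g$), the function $F(z)=\langle T_z f,g\rangle$ is analytic on the strip interior and continuous up to its boundary.

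At $\mathrm{Re}\,z=0$, I would factor $T_{i\gamma}=A_\gamma B_\gamma$, where
$$A_\gamma := \langle x\rangle^\alpha |H|^{i\gamma}\varphi(H)\langle x\rangle^{-\alpha}, \qquad B_\gamma := \langle x\rangle^\alpha(-\Delta)^{-i\gamma}\varphi(-\Delta)\langle x\rangle^{-\alpha},$$
and note that the symbol $\lambda^{i\gamma}\varphi(\lambda)$ satisfies $|\partial_\lambda^\ell[\lambda^{i\gamma}\varphi(\lambda)]| \lesssim_\ell (1+|\gamma|)^\ell \lambda^{-1}$ on $\mathrm{supp}\,\varphi$, so that $(1+|\gamma|)^{-N}\lambda^{i\gamma}\varphi(\lambda)$ meets the hypotheses of Lemma~\ref{lem:weighted_ests}; this yields $\|A_\gamma\|_{L^2\to L^2}\lesssim (1+|\gamma|)^N$. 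An analogous computation combined with Lemma~\ref{lem:weighted_ests_free} gives $\|B_\gamma\|_{L^2\to L^2}\lesssim (1+|\gamma|)^N$, hence $\|T_{i\gamma}\|_{L^2\to L^2}\lesssim (1+|\gamma|)^{2N}$.

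The main obstacle will be the endpoint $\mathrm{Re}\,z=1$, since one cannot apply $(-\Delta)$ directly to a weighted $L^2$ function. To handle it, I would use the identity $|H|^{1+i\gamma}\varphi(H)=(-\Delta+V)\,|H|^{i\gamma}\varphi(H)$ to split $T_{1+i\gamma}$ into a $V$-piece and a $(-\Delta)$-piece. For the $V$-piece, the decay $|V(x)|\lesssim\langle x\rangle^{-4}$ from Assumption (A1) implies that $\langle x\rangle^\alpha V$ is a bounded multiplication operator (since $\alpha<1$), while the remaining spectral multiplier $|H|^{i\gamma}\varphi(H)(-\Delta)^{-1-i\gamma}\varphi(-\Delta)$ is a bounded Borel multiplier whose $L^2\to L^2$ norm is controlled uniformly in $\gamma$. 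For the $(-\Delta)$-piece, the commutator identity
$$(-\Delta)\,|H|^{i\gamma}\varphi(H)\,(-\Delta)^{-1} = |H|^{i\gamma}\varphi(H) - \bigl[V,\,|H|^{i\gamma}\varphi(H)\bigr](-\Delta)^{-1},$$
following from $[-\Delta,n(H)]=[H-V,n(H)]=-[V,n(H)]$ with $n(\lambda)=\lambda^{i\gamma}\varphi(\lambda)$, reduces the $(-\Delta)$-piece to the already controlled $T_{i\gamma}$ plus a commutator term. The commutator term again contains $V$ as a factor, and is absorbed using the decay of $V$ together with Lemma~\ref{lem:weighted_ests} whenever the weight $\langle x\rangle^\alpha$ has to be pushed past $|H|^{i\gamma}\varphi(H)$. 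Altogether this gives $\|T_{1+i\gamma}\|_{L^2\to L^2}\lesssim (1+|\gamma|)^M$ for some $M$.

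With polynomial growth on both boundary lines, the three lines theorem for analytic families yields $\|T_s\|_{L^2\to L^2}\lesssim 1$ for $s\in[0,1]$, and a standard density argument completes the proof in this range; the case $s\in[-1,0]$ follows by the dual interpolation sketched above. The hardest step is this commutator trick at $\mathrm{Re}\,z=1$: the composition $(-\Delta)\,|H|^{i\gamma}\varphi(H)\,(-\Delta)^{-1}$ cannot be estimated head-on in weighted $L^2$, but commuting $(-\Delta)$ through $|H|^{i\gamma}\varphi(H)$ produces only a commutator with $V$, whose rapid decay is precisely what is needed to absorb the weight.
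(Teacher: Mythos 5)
Your proposal follows the paper's overall strategy---Stein interpolation of the analytic family, with Lemmas~\ref{lem:weighted_ests} and \ref{lem:weighted_ests_free} controlling the boundary lines---and the $\mathrm{Re}\,z=0$ estimate is essentially identical. The difference is at the endpoint $\mathrm{Re}\,z=1$, where you introduce a commutator that the paper avoids entirely. You write $|H|^{1+i\gamma}\varphi(H)=(-\Delta+V)\,|H|^{i\gamma}\varphi(H)$ with $(-\Delta+V)$ on the \emph{left}, which forces you to commute $(-\Delta)$ past $|H|^{i\gamma}\varphi(H)$ via $[-\Delta,n(H)]=-[V,n(H)]$ and then control the commutator term. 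That argument is correct (the commutator carries a factor of $V$ on one side or the other, and in either case the weight can be pushed through using Lemma~\ref{lem:weighted_ests} together with the boundedness of $\langle x\rangle^{\alpha}V$), but it is more work than necessary. Since $H$ commutes with any Borel function of $H$, one can equally well place it on the \emph{right}:
$$|H|^{1+i\gamma}\varphi(H)(-\Delta)^{-1-i\gamma}\varphi(-\Delta)=|H|^{i\gamma}\varphi(H)\,(-\Delta+V)(-\Delta)^{-1}\,(-\Delta)^{-i\gamma}\varphi(-\Delta)=|H|^{i\gamma}\varphi(H)\bigl(I+V(-\Delta)^{-1}\bigr)(-\Delta)^{-i\gamma}\varphi(-\Delta),$$
and then, after pushing the weight through $|H|^{i\gamma}\varphi(H)$ by Lemma~\ref{lem:weighted_ests}, the $I$-piece is handled by Lemma~\ref{lem:weighted_ests_free} and the $V(-\Delta)^{-1}$-piece by the decay $\langle x\rangle^{\alpha}V\in L^\infty$. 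This is exactly what the paper does; no commutator appears, and the ``hardest step'' you single out dissolves. Your proof still works---the commutator route buys nothing except extra terms---so this is a stylistic rather than a substantive gap. The same remark applies to your sketch for $s\in[-1,0]$: the paper simply writes $(-\Delta)=(-\Delta+V)-V$ inside the operator string and uses $|H|^{-1}\varphi(H)(-\Delta+V)=\varphi(H)$, again sidestepping any commutator.
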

\begin{proof}
For $\varphi$ satisfying the hypotheses of Lemma \ref{lem:weighted_ests}, we in particular have that  $\varphi \in L^\infty$ and $\varphi$ is supported on $x \geq c > 0$. We fix $0< \alpha < 1$ and $0 \leq s \leq  1$. Arguing as in the proof of Lemma~\ref{lem:norm bound}, for $f, g  \in \mathcal{S}$ we set
\[
F(z) = \langle \langle x \rangle^{\alpha} e^{z^2} |H|^z \varphi(H) (-\Delta)^{-z} \varphi(-\Delta)\langle x \rangle^{-\alpha} g , f \rangle,
\]
then $F(z)$ is well-defined since $\langle x \rangle^{\alpha} f \in H^2$ and $(-\Delta)^{-z} \varphi(-\Delta)$ is $L^2$-bounded, and it is continuous on 
\[
S = \{z \,:\, -1 \leq \textup{Re z} \leq 1\} \subseteq \mathbb{C}
\]
and analytic on the interior of $S$.  By Lemma \ref{lem:weighted_ests} applied to
\[
e^{-\gamma^2} |H|^{i\gamma} \varphi(H),
\]
and by Lemma \ref{lem:weighted_ests_free} we can estimate
\begin{align}
|F(i\gamma)| &\lesssim \|\langle x \rangle^{\alpha} e^{-\gamma^2} |H|^{i\gamma} \varphi(H) (-\Delta)^{-i\gamma}\varphi(-\Delta) \langle x \rangle^{-\alpha} g\|_{L^2} \| f\|_{L^{2}}\\
& \lesssim \|\langle x \rangle^{\alpha} (-\Delta)^{-i\gamma} \varphi(-\Delta) \langle x \rangle^{-\alpha}    g\|_{L^{2}} \| f\|_{L^{2}} \\
&\lesssim \|g\|_{L^{2}} \| f\|_{L^{2}}.
\end{align}
Similarly
\begin{align}
|F(1 + i\gamma)| &\lesssim \|\langle x \rangle^{\alpha}  e^{(1 + i\gamma)^2} |H|^{i\gamma} \varphi(H) (-\Delta + V)  (-\Delta)^{-1 } (-\Delta)^{- i\gamma} \varphi(-\Delta) \langle x \rangle^{-\alpha}    g\|_{L^2} \| f\|_{L^{2}}\\
& \lesssim \|\langle x \rangle^{\alpha} (I + V (-\Delta)^{-1})  (-\Delta)^{- i\gamma} \varphi(-\Delta) \langle x \rangle^{-\alpha} g\|_{L^{2}} \| f\|_{L^{2}} \\
& \lesssim \|g\|_{L^{2}} \| f\|_{L^{2}}  + \|\langle x \rangle^{\alpha}V (-\Delta)^{-1}  (-\Delta)^{- i\gamma} \varphi(-\Delta) \langle x \rangle^{-\alpha} g\|_{L^{2}} \| f\|_{L^{2}}.
\end{align}
Using the decay properties of $V$,  we estimate
\begin{align}
\|\langle x \rangle^{\alpha} V (-\Delta)^{-1}  (-\Delta)^{- i\gamma}\varphi(-\Delta)  \langle x \rangle^{-\alpha} g\|_{L^{2}} &\lesssim \| (-\Delta)^{-1}  (-\Delta)^{- i\gamma} \varphi(-\Delta) \langle x \rangle^{-\alpha} g\|_{L_x^{2}}\\
& \lesssim \| g\|_{L^{2}_x}.
\end{align}
This concludes the proof for $s \geq 0$.  For $s \leq 0$, we expand
\begin{align}
|F(-1 + i\gamma)|& = | \langle \langle x \rangle^{\alpha} |H|^{i\gamma} |H|^{-1} \varphi(H)  (-\Delta) (-\Delta)^{- i \gamma} \varphi(-\Delta)   \langle x \rangle^{-\alpha} g , f \rangle| \\
&= | \langle \langle x \rangle^{\alpha} |H|^{i\gamma} |H|^{-1} \varphi(H)  (-\Delta + V - V) (-\Delta)^{- i \gamma} \varphi(-\Delta)   \langle x \rangle^{-\alpha} g , f \rangle| \\
& \leq |\langle\langle x \rangle^{\alpha}|H|^{i\gamma} \varphi(H)     (-\Delta)^{- i \gamma} \varphi(-\Delta)    \langle x \rangle^{-\alpha} g , f \rangle| \\ &\hspace{24mm} +  |\langle \langle x \rangle^{\alpha} |H|^{i\gamma} \varphi(H)   |H|^{-1} V (-\Delta)^{- i \gamma} \varphi(-\Delta)    \langle x \rangle^{-\alpha} g , f \rangle|,
\end{align}
and these terms can be estimated as above, using that $|H|^{i\gamma} \varphi(H)   |H|^{-1} $ is $L^2(\langle x \rangle^{\alpha}dx)$ bounded by Lemma \ref{lem:weighted_ests} and that $V \in L^\infty$. Thus we have shown that for every $R > 0$,
\begin{align}
\|\chi_{|x| < R} \langle x \rangle^{\alpha} e^{z^2} |H|^z \varphi(H) (-\Delta)^{-z} \varphi(-\Delta) \langle x \rangle^{-\alpha} g \|_{L^2_x}.
\end{align}
and the result follows by Fatou's Lemma.
\end{proof}
\begin{corollary}\label{cor:weighted_coercive}
Let $H = -\Delta +V$ with $V$ a potential satisfying Assumption (A1). Let  $0 < \alpha < 1$ and $0 \leq s \leq 1$, and let $\varphi$ satisfy the assumptions of Lemma \ref{lem:weighted_ests}.  Then
\begin{align}
\|\langle x \rangle^{\alpha}|H|^s \varphi(H)\varphi(-\Delta)  f \|_{L^2_x} \lesssim \|\langle x \rangle^{\alpha} (-\Delta)^{s} f\|_{L^2_x}
\end{align}
\end{corollary}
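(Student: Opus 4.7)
The plan is to deduce this corollary as a direct substitution into Lemma \ref{lem:weighted_coercive}. Set $g = \langle x \rangle^{\alpha} (-\Delta)^{s} f$, which is well-defined for $f$ in a suitable dense subclass of the target space (e.g., radial Schwartz functions with Fourier support bounded away from zero, so that $(-\Delta)^{s} f$ makes classical sense and has the requisite decay). Substituting into the conclusion of Lemma \ref{lem:weighted_coercive}, we obtain
\begin{align}
\|\langle x \rangle^{\alpha} |H|^s \varphi(H) (-\Delta)^{-s} \varphi(-\Delta) \langle x \rangle^{-\alpha} \bigl( \langle x \rangle^{\alpha} (-\Delta)^{s} f \bigr)\|_{L^2_x} \lesssim \|\langle x \rangle^{\alpha} (-\Delta)^{s} f\|_{L^2_x}.
\end{align}

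The next step is to simplify the inner expression. On the left-hand side, $\langle x \rangle^{-\alpha}$ and $\langle x \rangle^{\alpha}$ cancel pointwise, leaving $(-\Delta)^{-s} \varphi(-\Delta) (-\Delta)^{s} f$. Since all three operators $(-\Delta)^{-s}$, $\varphi(-\Delta)$, and $(-\Delta)^{s}$ are bounded Borel functions of the same self-adjoint operator $-\Delta$ on the chosen dense class (using the spectral support assumption to ensure $(-\Delta)^{\pm s}$ act as bounded multipliers), they commute, and the composition collapses to $\varphi(-\Delta) f$. This yields exactly
\begin{align}
\|\langle x \rangle^{\alpha} |H|^s \varphi(H) \varphi(-\Delta) f \|_{L^2_x} \lesssim \|\langle x \rangle^{\alpha} (-\Delta)^{s} f\|_{L^2_x},
\end{align}
which is the desired estimate on the dense class.

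Finally, I would extend to general $f$ with $\|\langle x \rangle^{\alpha} (-\Delta)^{s} f\|_{L^2_x} < \infty$ by a routine density and Fatou argument, analogous to the one closing the proof of Lemma \ref{lem:weighted_coercive}: approximate $f$ by the dense class, pass to the limit in the finite-radius truncations $\chi_{|x| < R}$ on the left, and invoke Fatou's lemma and the continuity of the right-hand side. I do not anticipate a substantive obstacle here; the only subtlety is verifying that the chosen dense class is indeed dense in the weighted $L^2$ norm $\|\langle x \rangle^{\alpha}(-\Delta)^s \cdot\|_{L^2_x}$ appearing on the right-hand side, which follows from standard mollification and cutoff arguments in Fourier space.
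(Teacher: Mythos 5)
Your proof is correct and follows the same underlying strategy as the paper's: substitute an explicit choice of $g$ into Lemma \ref{lem:weighted_coercive} so that the weights $\langle x \rangle^{\pm \alpha}$ cancel and the Fourier multipliers collapse. The paper instead sets $g = \langle x \rangle^{\alpha}(-\Delta)^s \widetilde{\varphi}(-\Delta) f$ for an ``enlargement'' $\widetilde{\varphi}$ of $\varphi$ (so that $\varphi\widetilde{\varphi} = \varphi$), obtaining $\|\langle x \rangle^\alpha |H|^s \varphi(H)\varphi(-\Delta)f\|_{L^2} \lesssim \|\langle x\rangle^\alpha (-\Delta)^s \widetilde{\varphi}(-\Delta)f\|_{L^2}$, and then needs one more application of Lemma \ref{lem:weighted_ests_free} to drop $\widetilde{\varphi}(-\Delta)$. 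Your choice $g = \langle x \rangle^{\alpha}(-\Delta)^s f$ bypasses this last step entirely, because $(-\Delta)^{-s}\varphi(-\Delta)(-\Delta)^s = \varphi(-\Delta)$ holds as a multiplier identity (the symbol $\varphi(|\xi|^2)$ is supported away from the origin so $|\xi|^{-2s}|\xi|^{2s} = 1$ there), giving the desired bound in one stroke. This is a mild but genuine streamlining; the paper's $\widetilde{\varphi}$ is not actually needed. Both proofs defer the density argument with comparable brevity, so no substantive difference there.
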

\begin{proof}
Let $f \in \mathcal{S}$, and let 
\[
g =  \langle x \rangle^{\alpha} (-\Delta)^s  \widetilde{\varphi}(-\Delta) f
\]
for a suitable ``enlargement'' $\widetilde{\varphi}$ of $\varphi$.
Applying Lemma \ref{lem:weighted_coercive}, we obtain
\begin{align}
\|\langle x \rangle^{\alpha} |H|^s \varphi(H) \varphi(-\Delta)  f \|_{L^2_x}& \lesssim  \|\langle x \rangle^{\alpha} (-\Delta)^{s} \varphi(-\Delta) f \|_{L^2_x},
\end{align}
and we use Lemma \ref{lem:weighted_ests_free} and density to conclude.
\end{proof}

\section{Randomization Procedure}\label{sec:randomization}
We will randomize our initial data using projections defined via the distorted Fourier transform for the operator $H = -\Delta - 5 \phi^4$, so we consider Schr\"odinger operators $H = -\Delta + V$ with potential $V$ satisfying Assumption (A1). We also assume $H$ has a resonance at zero and a single negative eigenvalue $-\kappa^2$ and associated eigenfunction $Y$ which is smooth and exponentially decaying. We fix a constant $k_0 \geq 1$ to be determined later, see Remark \ref{rem:k_0_choice}.

Let $\psi \in C_0^\infty(\bR)$ with $\psi \equiv 1$ on $[0,1]$ and $\textup{supp}(\psi) \subseteq (-1/2,3/2)$ and for $k \in \mathbb{N}$, and $\xi \in \mathbb{R}^3$ we define
\begin{align}\label{equ:psi_k}
\psi_k(|\xi|) = \psi(|\xi| - k).
\end{align}
Let $\psi_0 \in C_0^\infty(\bR)$ be a function with $\psi_0 \equiv 1$ on $[0, \sqrt{k_0 + 1}]$, and $\textup{supp}(\psi_0) \subseteq (-\varepsilon, \sqrt{k_0 + 2})$ for some $0 < \varepsilon < \kappa^2$, fixed. We may renormalize $\psi_0$ and $\psi$ so that
\[
1 \equiv \psi_0(|\xi|^2) +  \sum_{k \geq k_0} \psi_k(|\xi|), \qquad \textup{for all }\xi \in \mathbb{R}^3.
\]
For $k \geq k_0$, we define the distorted Fourier projections
\begin{equation}\label{pk_def}
\begin{split}
P_{k} &:=  \mathcal{F}_V^{*} \psi_k\bigl(|\cdot| \bigr) \mathcal{F}_V = \psi_k(\sqrt{|H|})P_{ac} =  \psi_k(\sqrt{|H|}) \\
P_0 &:=  \mathcal{F}_V^{*} \psi_0 \bigl( |\cdot|^2 \bigr) \mathcal{F}_V =  \psi_0(H)P_{ac} = \psi_0(H).
\end{split}
\end{equation}
It will be useful in the sequel to note $P_k = P_{k} P_{ac}$,  $P_0 = P_{0} P_{ac}$ and that
\[
\psi_k (\sqrt{|H|}) P_{ac} = \theta_k(H) \qquad \textup{where} \quad \theta_k(x) = \psi_k(\sqrt{|x|}).
\]
We fix notation for a projection to high (distorted) frequencies
\[
P_{\geq k_0} f = \sum_{k \geq k_0} P_k f , \qquad P_{< k_0} = f - P_{\geq k_0} f,
\]
and we observe that $P_{< k_0}$ also contains the projection to the negative eigenspace.

\begin{remark} \label{rem:hs_bds}
For $f \in H^2$ we have $P_{\geq k_0} f, P_k f \in  H^2$, so in particular $P_{\geq k_0} f, P_k f \in  H^s$ for $-1 \leq s \leq 1$. Furthermore, for $0 \leq s \leq 1$, we have by Lemma \ref{lem:norm bound} (ii) and Corollary \ref{cor:equivalence} (i) that
\[
\| |\nabla|^{s} P_{\geq k_0} f \|_{L^2_x} = \| |\nabla|^{s} |H|^{-\frac{s}{2}} P_{\geq k_0} |H|^{\frac{s}{2}} f \|_{L^2_x}  \lesssim \| |H|^{\frac{s}{2}} f \|_{L^2_x} \lesssim \| |\nabla|^s f \|_{L^2_x},
\]
while for $-1 \leq s \leq 0$ we obtain by duality that
\begin{align}
\| \langle \nabla\rangle^{s} P_{\geq k_0} f \|_{L^2_x} = \sup_{ \|g\|_{ H^{-s}} \leq 1}| \langle f, P_{ \geq k_0} g \rangle| \leq\sup_{ \|g\|_{ H^{-s}} \leq 1} \| f\|_{ H^{s}} \| P_{\geq k_0} g\|_{ H^{-s}} \leq \| f\|_{ H^{s}},
\end{align}
by the $0 \leq s \leq 1$ estimates above, with similar bounds holding for $P_k f$. Thus $P_{\geq k_0}$ and $P_k$ extend to bounded operators on $\dot H^s$ for $0 \leq s \leq 1$ and on $H^s$ for $-1 \leq s \leq 0$. Moreover, since
\[
P_0 f =  P_{ac} f - P_{\geq k_0} f ,
\]
we have by the definition of $P_{ac}$ from \eqref{p_ac}, the exponential decay of the negative eigenfunction $Y$, and the above discussion, that $P_0$ is bounded on $\dot H^s$ for $0 \leq s \leq 1$ and $H^s$ for $-1 \leq s \leq 0$. See the discussion at the end of this section for more details.
\end{remark}

We begin by establishing certain $L^p$ and Sobolev bounds for our projection operators.

\begin{lemma} \label{lem:bernstein2}
Let $1 < p \leq \infty$, $1 \leq q \leq \infty$ and $f \in L^2 \cap L^{p,q}$. Then there exists $k_0 > 1$ such that for $k \geq k_0$ and $s \in \bR$, we have
\begin{align}
\||H|^{s} P_k f \|_{L^{p,q}} \leq |k|^{2 s} \|P_k f\|_{L^{p,q}}.
\end{align}
\end{lemma}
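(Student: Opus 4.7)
The idea is to isolate the ``diagonal'' contribution of $|H|^s P_k$ of size $k^{2s}$ and prove that the remainder, though it has a smooth symbol of nominal size $k^{2s-1}$, still satisfies the desired operator bound via kernel estimates for smooth spectral multipliers of $H$.

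First, I would fix a slight enlargement $\widetilde\psi \in C_0^\infty(\bR)$ of $\psi$ with $\widetilde\psi\,\psi = \psi$, and set $\widetilde P_k := \widetilde\psi(\sqrt{|H|} - k)\,P_{ac}$, so that $\widetilde P_k P_k = P_k$. This yields the decomposition
\[
|H|^s P_k \;=\; k^{2s}\,P_k \;+\; R_k\, P_k, \qquad R_k := \bigl(|H|^s - k^{2s}\bigr)\widetilde P_k,
\]
where $R_k$ is the spectral multiplier with symbol $m_k(\lambda) := (\lambda^s - k^{2s})\,\widetilde\psi(\sqrt\lambda - k)$. On the support of $\widetilde\psi(\sqrt\lambda - k)$ one has $\sqrt\lambda - k = O(1)$ and hence $\lambda/k^2 = 1 + O(1/k)$, so a Taylor expansion gives $|m_k(\lambda)| \lesssim_s k^{2s-1}$ for $k \geq k_0$, with analogous bounds on its $\sqrt\lambda$-derivatives. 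Since the first term contributes $k^{2s}\|P_k f\|_{L^{p,q}}$ trivially, the lemma reduces to the uniform operator-norm bound $\|R_k\|_{L^{p,q}\to L^{p,q}} \lesssim k^{2s}$ for $k \geq k_0$.

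To prove this uniform bound, I would use the kernel representation
\[
R_k g(x) = \int K_k(x,y)\,g(y)\,dy, \qquad K_k(x,y) = \int_{\bR^3} e(x,\xi)\,\overline{e(y,\xi)}\,m_k(|\xi|^2)\,d\xi
\]
from the distorted Fourier calculus, and split $e(x,\xi) = e^{ix\cdot\xi} + v(x,\xi)$ via the Lippmann--Schwinger equation \eqref{equ:lippman_schwinger}. For the free piece, passing to spherical coordinates in $\xi$ and substituting $\mu = |\xi| - k$ converts the inner integral into $e^{ik|x-y|}$ times the Fourier transform in $|x-y|$ of the smooth, compactly supported function $(\mu+k)\bigl((\mu+k)^{2s} - k^{2s}\bigr)\widetilde\psi(\mu)$, whose $L^\infty$ norm is $\lesssim k^{2s}$. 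This gives the pointwise bound $|K_k^{\mathrm{free}}(x,y)| \lesssim k^{2s}/(|x-y|\,\langle |x-y|\rangle^N)$ for arbitrary $N$, hence $\|K_k^{\mathrm{free}}\|_{L^1(\bR^3)} \lesssim k^{2s}$. The corrections coming from $v(x,\xi)$ are controlled by the distorted-Fourier kernel estimates developed in Appendix~\ref{a:kernel}, which rely on Assumption~(A1), and produce the same bound. Combining and invoking Young's inequality in Lorentz spaces (Lemma~\ref{lem:lorentz_young}) then yields $\|R_k\|_{L^{p,q}\to L^{p,q}} \lesssim k^{2s}$ and hence the lemma.

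The main obstacle is the kernel bound for the corrections involving $v(x,\xi)$: since the generalized eigenfunctions of $H$ have no closed form, the control of these terms must proceed through a careful iteration of the Lippmann--Schwinger equation, tracking the dependence on $k$ in both the spatial and frequency variables. This is the step where the smoothness and decay of $V$ in Assumption~(A1) enter essentially, and also where the requirement that $k_0$ be taken sufficiently large comes in, as this ensures the Neumann-series type argument for the correction terms converges.
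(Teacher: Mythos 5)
Your approach is genuinely different from the paper's, and it has a real gap. The paper's proof is essentially a one-liner: after choosing a smooth cutoff $\varphi$ with $\varphi(H/k^2)\psi_k(\sqrt{|H|}) = \psi_k(\sqrt{|H|})$, one writes $g(x) = |x|^s\varphi(x)$ and observes $|H|^s P_k = k^{2s}\, g(H/k^2)\, P_k$. The uniform $L^p$ bound $\|g(\theta H)\|_{L^p\to L^p}\lesssim 1$ for $0<\theta\leq 1$ is exactly Theorem~\ref{thm:jn} (Jensen--Nakamura), and the Lorentz case follows by real interpolation. Your decomposition $|H|^sP_k = k^{2s}P_k + R_kP_k$ and the symbol size estimate $|m_k|\lesssim k^{2s-1}$ are both correct, and your treatment of the free piece of the kernel is sound. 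But what you are really attempting is to re-derive the Jensen--Nakamura bound by hand, and the crucial step --- controlling the contributions of the correction $v(x,\xi)$ from the Lippmann--Schwinger expansion, uniformly in $k$ --- is asserted rather than proved. The kernel estimates in Appendix~\ref{a:kernel} are not directly applicable here: they are established for the one-dimensional radial kernels $K_k(r,r')$ built from $\widetilde e(r,\rho)$ acting on $L^2((0,\infty),dr)$, for the specific multipliers $\psi_k$, and they target $L^2$ and weighted-$L^2$ boundedness, not an $L^1$-in-$y$ (Schur-type) bound on $\bR^3$. Moreover, once the $v$-correction is included the kernel is no longer a convolution, so Young's inequality (Lemma~\ref{lem:lorentz_young}) does not apply as you state; one would need a Schur test $\sup_x\int|K_k(x,y)|\,dy + \sup_y\int|K_k(x,y)|\,dx\lesssim k^{2s}$, which you have not established for the non-free part. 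In short: the idea is workable in principle but would amount to reproving a special case of Theorem~\ref{thm:jn}, and the essential estimate on the correction terms is missing; the paper avoids all of this by invoking Jensen--Nakamura as a black box.
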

\begin{proof}
By real interpolation, it suffices to prove this estimate in $L^p$. We let $\varphi$ be a smooth bump function with $\textup{supp }\varphi \subseteq[- 3, 3] \setminus\{0\}$ and such that
\[
\varphi( H / k^2 ) \psi_k(\sqrt{|H|}) = \psi_k(\sqrt{|H|}) , \qquad \textup{for all }k \geq 5.
\]
To see that this can be done, note that for $k \geq 5$, say, $\psi_k$ is supported on
\[
||\xi| - k| \leq 2 \quad \Longrightarrow \qquad  \frac{1}{2} \leq \frac{|\xi|}{k} \leq \frac{3}{2},
\]
hence we set $\varphi \equiv 1$ on $[-\frac{9}{4},\frac{9}{4}] \setminus [-\frac{1}{4}, \frac{1}{4}]$ with $\varphi \equiv 0$ on $[-\frac{1}{8}, \frac{1}{8}]$. Now we write
\[
 |H|^{s} \varphi( H / k^2  )  =  |k|^{2 s} \varphi( H / k^2  )  (|H|^{s}/ |k|^{2s})
\]
and since $g(x) = |x|^{s} \varphi(x)$ satisfies the assumption of Theorem \ref{thm:jn}, we have
\[
\| |H|^{s} P_k f \|_{L^p} = \|  |H|^{ s} \varphi( H / k^2  ) P_k f \|_{L^p}  \lesssim |k|^{2s} \| P_k f\|_{L^p}. \qedhere
\]
\end{proof}

\begin{lemma} \label{lem:bernstein3}
Let $-1 \leq s \leq 1$ and $f \in H^2$. Then for $k \geq k_0$,
\begin{align}
|k|^{2s} \| P_k f \|_{L^2} \leq  \| |H|^{s} P_k f\|_{L^2}.
\end{align}
\end{lemma}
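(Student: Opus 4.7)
The plan is to prove this by using the isometry property of the distorted Fourier transform from Theorem \ref{distorted}, which reduces the claim to a pointwise frequency estimate on the support of $\psi_k$. This is essentially the reverse direction of Lemma \ref{lem:bernstein2} in the $L^2$ setting, but it follows more directly via Plancherel than via the symbolic calculus used there.

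First I would check that both sides are well-defined. For $f \in H^2$ and $-1 \leq s \leq 1$, Remark \ref{rem:hs_bds} gives $P_k f \in L^2$ and the discussion of coercivity (Corollary \ref{cor:equivalence}) combined with $P_k = P_k P_{ac}$ ensures that $|H|^s P_k f \in L^2$. Next, since $P_k f \in L^2_{ac}$, Theorem \ref{distorted} gives the isometry
\[
\|P_k f\|_{L^2}^2 = \|\mathcal{F}_V P_k f\|_{L^2}^2 = \int_{\bR^3} |\psi_k(|\xi|)|^2 |\mathcal{F}_V f(\xi)|^2 \, d\xi,
\]
where I have used that $\mathcal{F}_V P_{ac} = \mathcal{F}_V$ and that the multiplier defining $P_k$ is $\psi_k(|\xi|)$. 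Since $H$ restricted to $L^2_{ac}$ acts via $\mathcal{F}_V$ as multiplication by $|\xi|^2$, and $|H|^s = H^s P_{ac}$ is correspondingly multiplication by $|\xi|^{2s}$, the same isometry yields
\[
\||H|^s P_k f\|_{L^2}^2 = \int_{\bR^3} |\xi|^{4s} |\psi_k(|\xi|)|^2 |\mathcal{F}_V f(\xi)|^2 \, d\xi.
\]

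The final step is to exploit the support of $\psi_k$. Because $\psi_k(|\xi|) = \psi(|\xi|-k)$ with $\textup{supp}(\psi) \subseteq (-1/2, 3/2)$, the integrand vanishes outside $|\xi| \in (k-\tfrac{1}{2}, k+\tfrac{3}{2})$. Choosing $k_0 \geq 1$ sufficiently large, one has $|\xi| \sim k$ uniformly on this annulus, so that for $s \in [-1,1]$,
\[
|\xi|^{4s} \gtrsim k^{4s}
\]
on the support of $\psi_k$, where the case $s \geq 0$ uses the lower bound $|\xi| \geq k - \tfrac{1}{2} \geq k/2$ and the case $s < 0$ uses the upper bound $|\xi| \leq k + \tfrac{3}{2} \leq 3k/2$. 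Combining the two displayed identities with this pointwise bound yields
\[
\||H|^s P_k f\|_{L^2}^2 \gtrsim k^{4s} \int_{\bR^3} |\psi_k(|\xi|)|^2 |\mathcal{F}_V f(\xi)|^2\, d\xi = k^{4s} \|P_k f\|_{L^2}^2,
\]
which is the claimed bound. There is no real obstacle here: the only mildly delicate point is the case split on the sign of $s$ in the pointwise comparison, which is handled uniformly by taking $k_0$ large enough so that $k - \tfrac{1}{2}$ and $k + \tfrac{3}{2}$ are both comparable to $k$.
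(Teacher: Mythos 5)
Your proof is correct and follows essentially the same route as the paper: both arguments use the Plancherel/isometry property of $\mathcal{F}_V$ from Theorem \ref{distorted} to reduce the claim to the pointwise comparison $|\xi| \sim k$ on the support of $\psi_k$. The paper's version phrases the comparison as $|k|^{2s}|\xi|^{-2s} \lesssim 1$ rather than working with squared norms, and leaves the sign-of-$s$ case split and the well-definedness observations implicit, but the underlying argument is identical.
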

\begin{proof}
We write
\[
|k|^{2s} \| P_k f \|_{L^2_x} = |k|^{2s} \| \psi_k \mathcal{F}_V f \|_{L^2_\xi} =   \||k|^{2s} |\xi|^{-2s} |\xi|^{2s} \psi_k \mathcal{F}_V f \|_{L^2_\xi}.
\]
Noting that $|k|^{2s} |\xi|^{-2s} \lesssim 1$ on the support of $\psi_k$ for $k \geq 1$, we obtain the desired conclusion by the discussions at the beginning of \S\ref{ssec:dist_four}. 
\end{proof}

Next we record some bounds for the distorted Fourier projection operators. The proofs of Lemma \ref{lem:proj_bds}, Lemma \ref{lem:decay_sf}, and Corollary \ref{cor:weighted_pk} below will be deferred to Appendix \ref{a:kernel}. 

\begin{restatable}{lemma}{projbds}\label{lem:proj_bds}
Let $3/2 < p < 3$ and $f \in L^2 \cap L^p$, radial. Then there exists $C \equiv C(p)$ and $k_0 \geq 1$ so that for all $k \geq k_0$,
\[
\|P_k f \|_{L^p} \leq C \|f\|_{L^p}.
\]
\end{restatable}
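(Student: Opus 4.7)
The plan is to write $P_k$ as an integral operator via the distorted Fourier transform, decompose its kernel into a free piece and correction pieces coming from the Lippmann-Schwinger identity, and bound each piece separately. Using Theorem \ref{distorted} together with Remark \ref{fin_supp} (applicable since $\psi_k$ has compact support away from $\mathcal{N}(H) = \{0\}$ for $k \geq k_0$),
\[
P_k f(x) = (2\pi)^{-3} \iint e_-(x,\xi)\, \overline{e_-(y,\xi)}\, \psi_k(|\xi|)\, f(y)\, d\xi\, dy.
\]
Inserting the Lippmann-Schwinger representation $e_-(\cdot,\xi) = e_{-,\xi} - R_V^{+}(|\xi|^2)[V e_{-,\xi}]$ from \eqref{equ:lippman_schwinger} into both factors yields a splitting
\[
P_k = \widetilde{P}_k + E_k^{(1)} + E_k^{(2)} + E_k^{(3)},
\]
where $\widetilde{P}_k$ is the free annular Fourier multiplier with symbol $\psi_k(|\xi|) = \psi(|\xi| - k)$ and the $E_k^{(j)}$ collect the correction terms.

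For the free piece $\widetilde{P}_k$, the symbol is a smooth radial bump supported on the unit-scale annulus $|\xi| \in [k - 1/2, k + 3/2]$; it satisfies Mikhlin-H\"ormander bounds uniformly in $k \geq k_0$ and is therefore bounded on $L^p(\bR^3)$ for every $1 < p < \infty$ with norm independent of $k$. In particular this handles the $p \in (3/2, 3)$ range with room to spare.

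For the correction operators $E_k^{(j)}$, I would establish pointwise kernel estimates by exploiting three ingredients: the decay of $V$ from Assumption (A1); the uniform $L^\infty \to L^\infty$ boundedness of $(I + R_0^{\pm}(|\xi|^2)V)^{-1}$ from Lemma \ref{lem:lim_ap} (valid because $k \geq k_0 > 0$ stays away from the zero resonance); and oscillatory cancellation obtained by integration by parts in $\xi$ over the unit-scale support of $\psi_k$. The radial hypothesis is crucial: it reduces the analysis to a one-dimensional problem involving the Jost-type representation of $e_-(r,\lambda)$ (compare \eqref{equ:radial_e}, \eqref{jost}), where sharper kernel bounds are available. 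The resulting bound on the kernel of $E_k^{(j)}(x,y)$ is of a convolution-plus-weight type, dominated (schematically) by $|x - y|^{-2}$ in weak $L^{3/2}$, so that Young/Hardy-Littlewood-Sobolev produces $L^p \to L^p$ boundedness precisely for $p \in (3/2, 3)$, uniformly in $k \geq k_0$.

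The main obstacle, and the reason this proof is deferred to Appendix \ref{a:kernel}, is the kernel estimate for the correction terms. Obtaining uniform-in-$k$ pointwise control of oscillatory integrals of the form
\[
\int \psi_k(|\xi|)\, e^{i y \cdot \xi} R_V^{+}(|\xi|^2)[V e_{-,\xi}](x)\, d\xi
\]
requires careful use of the radial structure of $e_-$, integration by parts exploiting the unit-scale support of $\psi_k$ (which generates powers of $|x - y|^{-1}$ and $\langle x \rangle^{-1}$), and the decay of $V$ to ensure that the weighted convolution kernels land in the appropriate weak-$L^q$ space. Once these kernel bounds are in hand, the $L^p$ bound on $E_k^{(j)}$ in the stated range is a routine consequence of Young's inequality in Lorentz spaces (Lemma \ref{lem:lorentz_young}), and combining with the $L^p$ bound on $\widetilde{P}_k$ yields the claim.
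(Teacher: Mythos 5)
Your high-level plan---pass to the distorted Fourier representation, split off a free annular multiplier plus correction operators coming from the perturbed Jost functions, and control everything through kernel estimates---overlaps with the paper's strategy, but two of the mechanisms you invoke are incorrect, and as a result the actual source of the range $3/2 < p < 3$ is misattributed.

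First, the free annular multiplier with symbol $\psi(|\xi| - k)$ does \emph{not} satisfy Mikhlin--H\"ormander bounds uniformly in $k$: on the support $|\xi| \sim k$, each derivative of the symbol is $O(1)$, whereas Mikhlin--H\"ormander demands $O(|\xi|^{-j}) = O(k^{-j})$, so the constants grow like $k^j$. Consequently the free piece is \emph{not} bounded on $L^p(\bR^3)$ uniformly in $k$ for general $f$ when $p \neq 2$. The uniform bound that holds for \emph{radial} $f$ is valid precisely on $3/2 < p < 3$ (the classical radial range $\frac{2n}{n+1} < p < \frac{2n}{n-1}$ for $n=3$), so the free piece is already the binding constraint---there is no ``room to spare.'' Second, if a correction kernel in $\bR^3$ is dominated by $|x-y|^{-2} \in L^{3/2,\infty}$, then Young's inequality in Lorentz spaces (Lemma~\ref{lem:lorentz_young}) gives $L^p \to L^q$ with $\frac{1}{q} = \frac{1}{p} - \frac{1}{3}$, i.e.\ it \emph{gains} an index and does not yield $L^p \to L^p$. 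So the stated convolution bound cannot produce the claimed endpoint range.

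The mechanism in Appendix~\ref{a:kernel} is different and you should identify it: the four Jost-function pieces are kept together and one proves a single one-dimensional radial kernel bound $|K_k(r,r')| \lesssim \langle r - r' \rangle^{-3}$ uniformly in $k$ (this is the $t=0$ case of Proposition~\ref{prop:kernel_ests}), which dominates the 1D Hardy--Littlewood maximal function. The $p$-range enters through the radial reduction: for radial $f$ one has $\|P_k f\|_{L^p(\bR^3)} = \|r^{2/p-1}\,\widetilde{\psi}_k(\sqrt{|H|})\,\widetilde{f}\|_{L^p(0,\infty)}$ with $\widetilde{f}(r) = r f(r)$, and the power weight $r^{2-p}$ is an $A_p$ weight on the line if and only if $3/2 < p < 3$ (Remark~\ref{rem:ap_weights}). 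Weighted boundedness of the maximal function then closes the argument. In short, the restriction $3/2 < p < 3$ comes from the $A_p$ condition on the radial-transfer weight, not from the size of a correction kernel; your write-up omits this step entirely, and it is the crux.
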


This lemma, together with Proposition \ref{prop:bernstein}, imply Bernstein's inequality for the $P_k$ projection operators: 

\begin{proposition}\label{prop:pk_bern}
Let $2 < r < 3$ and $f \in L^{2} \cap L^{\frac{2r}{r-(r-2)\theta}}$, radial. Then for $0 \leq \theta \leq 1$ and any $r \leq q \leq \frac{r}{\theta}$, there exists $C \equiv C(\theta,r, q, V)$  and $k_0 \geq 1$ such that for all $k \geq k_0$
\[
\|P_k f\|_{L^q} \leq C |k|^{2 \left( \frac{r -(r-2)\theta}{2r} - \frac{1}{q}\right) }  \|P_k f\|_{L^{\frac{2r}{r-(r-2)\theta}}}.
\]
\end{proposition}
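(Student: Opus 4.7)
The plan is to reduce the statement directly to Proposition~\ref{prop:bernstein} applied to a thickened version of the projector $P_k$, with the radial $L^p$ bound of Lemma~\ref{lem:proj_bds} supplying the $L^r$ boundedness hypothesis. First I would pick an auxiliary $\widetilde{\psi} \in C_0^\infty(\bR)$ with $\widetilde{\psi} \equiv 1$ on $\textup{supp}(\psi)$ and $\textup{supp}(\widetilde\psi) \subseteq (-1, 2)$, and set $\widetilde\psi_k(|\xi|) = \widetilde\psi(|\xi|-k)$, $\widetilde P_k = \mathcal{F}_V^* \widetilde\psi_k(|\cdot|) \mathcal{F}_V$. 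Then $\widetilde\psi_k \psi_k = \psi_k$, so $\widetilde P_k P_k f = P_k f$, and one verifies the hypotheses of Proposition~\ref{prop:bernstein} for the multiplier $m = \widetilde\psi_k$: its support $E_k = \{\xi \in \bR^3 : |\xi|-k \in (-1,2)\}$ is an annulus with $|E_k| \sim k^2$ and $\textup{dist}(E_k, 0) \geq k_0 - 1 > 0$ uniformly in $k \geq k_0$, and $\|\widetilde\psi_k\|_{L^\infty} \leq 1$.

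To verify the $L^r \to L^r$ boundedness hypothesis of Proposition~\ref{prop:bernstein} with $r \in (2,3)$, I would apply Lemma~\ref{lem:proj_bds}: since $(2,3) \subset (3/2, 3)$, the lemma gives $\|\widetilde P_k g\|_{L^r} \lesssim \|g\|_{L^r}$ with an implicit constant uniform in $k \geq k_0$, for all radial $g \in L^2 \cap L^r$ (after possibly increasing $k_0$ so that Lemma~\ref{lem:proj_bds} applies to both $\psi_k$ and $\widetilde\psi_k$). Since $V$ is radial by Assumption~(A1), the distorted Fourier transform preserves radiality, and in particular $P_k f$ is radial when $f$ is radial, so Lemma~\ref{lem:proj_bds} is indeed applicable with input $P_k f$.

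Combining these ingredients with Proposition~\ref{prop:bernstein} applied to $\widetilde P_k$ acting on the radial function $P_k f \in L^2 \cap L^{\frac{2r}{r-(r-2)\theta}}$ (noting that $\frac{2r}{r-(r-2)\theta} \in [r, \infty)$ lies in the range for which $P_k$ preserves $L^p$ on radial functions when $\tfrac{2r}{r-(r-2)\theta} < 3$, and in general we only need the norm on the right to be finite, which is assumed), one obtains
\[
\|P_k f\|_{L^q} = \|\widetilde P_k (P_k f)\|_{L^q} \leq C |E_k|^{\frac{r-(r-2)\theta}{2r}-\frac{1}{q}} \|P_k f\|_{L^{\frac{2r}{r-(r-2)\theta}}},
\]
and using $|E_k| \sim k^2$ gives the claimed factor $|k|^{2(\frac{r-(r-2)\theta}{2r}-\frac{1}{q})}$.

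The only delicate point I anticipate is ensuring that all implicit constants are genuinely uniform in $k$: the constant in Proposition~\ref{prop:bernstein} depends on $d = \textup{dist}(E,0)$ through $C(d,V)$ coming from Corollary~\ref{cor:lim_ap}, so I would emphasize that $d \geq k_0 - 1$ is a fixed positive constant for $k \geq k_0$, and that the $L^r$ operator norm of $\widetilde P_k$ on radial functions is bounded uniformly in $k$ by Lemma~\ref{lem:proj_bds} (once $k_0$ is chosen large enough as in that lemma). Beyond this, the argument is a clean application of the two quoted results with a standard enlarged-multiplier trick.
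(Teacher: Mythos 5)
Your proof takes exactly the same route as the paper: write $P_k f = \widetilde P_k P_k f$ for a slightly enlarged projection $\widetilde P_k$, verify the hypotheses of Proposition~\ref{prop:bernstein} (support $E_k$ an annulus with $|E_k| \sim k^2$, $\textup{dist}(E_k,0)$ bounded below uniformly in $k\geq k_0$, $L^r$ boundedness from Lemma~\ref{lem:proj_bds} with $r\in(2,3)\subset(3/2,3)$), and conclude. One small slip worth correcting: the exponent $\frac{2r}{r-(r-2)\theta}$ ranges over $[2,r]$, not $[r,\infty)$, as $\theta$ runs over $[0,1]$ (at $\theta=0$ it equals $2$, at $\theta=1$ it equals $r$), which is in fact the favorable observation since it always lands in $(3/2,3)$, the range where Lemma~\ref{lem:proj_bds} guarantees $P_k f$ lies in that Lebesgue space.
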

\begin{proof}
We write $P_k f = \widetilde{P}_k P_k f$ for a slightly enlarged projection $\widetilde{P}_k$ and apply Lemma \ref{lem:proj_bds} and the Bernstein's inequality of Proposition \ref{prop:bernstein}, noting that the size of the support of $\widetilde{\psi}_k \simeq |k|^2$, where $\widetilde{\psi}_k$ is the distorted multiplier corresponding to $\widetilde{P}_k$.
\end{proof}

\begin{lemma}\label{lem:decay_sf}
Let $-1 \leq s \leq 1$,  $0 < \alpha < 1$, and  $f \in H^{s,\alpha}(\bR^3)$, radial, satisfy
\begin{align}\label{hi}
f = \varphi(-\Delta) f
\end{align}
for a function $\varphi \in C^\infty$ supported away from zero. Then there exists $k_0 \geq 1$ so that
\[
\biggl(  \sum_{k \geq k_0}  |k|^{2s} \| \langle x \rangle^{\alpha} P_k f \|^2_{L^2(\bR^3)} \biggr)^{1/2} \lesssim \| \langle x \rangle^{\alpha} |\nabla|^s f\|_{L^2(\bR^3)} .
 \]
\end{lemma}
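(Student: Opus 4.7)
The plan is to decouple the estimate into two pieces: a weighted coercivity bound that replaces $|\nabla|^s$ on the right-hand side by $|H|^s$, and a weighted Littlewood--Paley type square function bound for the distorted Fourier projections $\{P_k\}_{k \geq k_0}$.

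\medskip

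\emph{Step 1 (weighted coercivity).} Since $f = \varphi(-\Delta)f$ with $\varphi \in C^\infty$ supported away from zero, one can choose a smooth cutoff $\tilde{\varphi}$ of the spectrum of $H$ supported on some interval $[c,\infty)$ with $c>0$, large enough that simultaneously $\tilde{\varphi}\,\psi_k = \psi_k$ for every $k \geq k_0$ and $\tilde{\varphi}(-\Delta)\,\varphi(-\Delta) = \varphi(-\Delta)$. Applying Lemma \ref{lem:weighted_coercive} with $g = \langle x\rangle^\alpha (-\Delta)^s f$ and using $(-\Delta)^{-s}\tilde{\varphi}(-\Delta) h = \tilde{\varphi}(-\Delta)(-\Delta)^{-s} h$ together with $f = \varphi(-\Delta)f$, one obtains
\begin{equation}\label{proposal:coerc}
\|\langle x\rangle^\alpha |H|^s \tilde{\varphi}(H) f\|_{L^2_x} \lesssim \|\langle x\rangle^\alpha |\nabla|^s f\|_{L^2_x}
\end{equation}
for the full range $-1 \leq s \leq 1$.

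\medskip

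\emph{Step 2 (swap $|k|^s$ for $|H|^s$).} On the support of $\psi_k$ one has $|\xi| \sim k$, so the symbols $m_k(\xi) := |k|^s\,\psi_k(|\xi|)\,|\xi|^{-2s}$ are uniformly bounded functions supported in the same annular regions as $\psi_k$. Writing $\widetilde{P}_k$ for the distorted multiplier with symbol $m_k$ and setting $g := |H|^s \tilde{\varphi}(H) f$, we have $|k|^s P_k f = \widetilde{P}_k g$, so combined with \eqref{proposal:coerc} the lemma reduces to the weighted square function estimate
\begin{equation}\label{proposal:sqfn}
\sum_{k \geq k_0} \|\langle x\rangle^\alpha \widetilde{P}_k g\|^2_{L^2_x} \lesssim \|\langle x\rangle^\alpha g\|^2_{L^2_x}.
\end{equation}

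\medskip

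\emph{Step 3 (weighted square function via kernel estimates).} This is the technical heart of the argument and is the content that would be deferred to Appendix \ref{a:kernel}. The approach is to derive pointwise bounds for the radial kernel of $\widetilde{P}_k$ using the Lippman--Schwinger representation \eqref{eq:gen_four} for the generalized eigenfunctions $e_-(\cdot,\xi)$ together with the decay of $V$ from Assumption (A1). In the radial setting the kernel reduces to a one-dimensional expression in terms of the Jost solutions for $H$, and one expects bounds of Schwartz type
\[
|K_{\widetilde{P}_k}(x,y)| \lesssim k^3 \bigl\langle k|x - y|\bigr\rangle^{-N}
\]
for arbitrarily large $N$, with analogous control on the conjugated kernel of $T_k := \langle x\rangle^\alpha \widetilde{P}_k \langle x\rangle^{-\alpha}$. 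Young's inequality then bounds $T_k$ on $L^2$ uniformly in $k$, and the almost-orthogonality $\sum_k \tilde{\psi}_k^2 \lesssim 1$, combined with the isometry property of $\mathcal{F}_V$ on $L^2_{ac}$ (Theorem \ref{distorted}), upgrades this to \eqref{proposal:sqfn}.

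\medskip

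The main obstacle is Step 3: the weight $\langle x\rangle^\alpha$ does not commute with $\widetilde{P}_k$, the generalized eigenfunctions $e_\pm(x,\xi)$ are not given explicitly, and one must control the commutator $[\langle x\rangle^\alpha, \widetilde{P}_k]$ to high order. This is precisely where radiality of $f$ is essential, because the reduction to Jost function asymptotics and the attendant kernel analysis are only tractable in the radial subspace, and where the full strength of Assumption (A1) on the smoothness and decay of $V$ is used to estimate the perturbation $v_-(x,\xi)$ of the free character $e^{-ix\cdot\xi}$.
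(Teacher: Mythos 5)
Your Steps 1 and 2 correctly mirror the paper's reduction: the weighted coercivity bound (via Corollary~\ref{cor:weighted_coercive}) and the observation that $|k|^s P_k |H|^{-s}$ is a distorted multiplier of the same shape as $\psi_k(\sqrt{|H|})$ are exactly the moves the paper makes, albeit in the opposite order. The gap is in Step 3, and it is both a logical gap and a technical overreach. Logically: knowing that each $T_k := \langle x\rangle^\alpha \widetilde{P}_k \langle x\rangle^{-\alpha}$ is uniformly bounded on $L^2$ does \emph{not} give the square function estimate $\sum_k \|T_k h\|_{L^2}^2 \lesssim \|h\|_{L^2}^2$; uniform boundedness alone would only give a bound on each summand, not on the sum. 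The almost-orthogonality $\sum_k \tilde\psi_k^2 \lesssim 1$ together with the isometry property of $\mathcal{F}_V$ proves the square function bound in \emph{unweighted} $L^2$, but conjugating by $\langle x\rangle^\alpha$ destroys the orthogonality, and recovering it is precisely the content of the lemma — you would need something like a Cotlar--Stein argument with verified decay of $\|T_j^* T_k\|$, which you have not supplied. Technically: the conjectured Schwartz-type kernel bound $|K_{\widetilde{P}_k}(x,y)| \lesssim k^3 \langle k|x-y|\rangle^{-N}$ for all $N$ is not attainable. The perturbation of the generalized eigenfunctions from the free characters decays only polynomially (see Lemma~\ref{lem:m_bds}: $|\partial_\rho^\ell m(r,\rho)| \lesssim \rho^{-1}\langle r\rangle^{-4+\ell}$ only up to $\ell = 3$, limited by the decay $\langle r\rangle^{-4}$ of $V$), so after at most three integrations by parts the kernel estimate of Proposition~\ref{prop:kernel_ests} saturates at $\langle r \pm r'\rangle^{-3}$. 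Even for the free projections $\mathbf{P}_k$ the decay is at spatial scale $O(1)$, not $O(k^{-1})$, since $\psi_k$ is supported in an annulus of unit thickness.

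The paper's Appendix proof circumvents these obstructions by decomposing the radial kernel of $P_k$ (formula~\eqref{pk_formula}) into three structurally different pieces and treating each separately: a remainder $G_k$ with an explicit $O(|k|^{-1})$ weighted Hilbert--Schmidt bound \eqref{equ:gk_bds}, which is directly square-summable in $k$ without any orthogonality; mixed-sign oscillatory terms $e^{i(r+r')\rho}$, for which three integrations by parts produce $\langle r + r'\rangle^{-3}$ decay that absorbs the weight $\langle r\rangle^\alpha$ for $\alpha < 1$; and same-sign terms that are literally the free annular projections $\mathbf{P}_k$, for which the weighted square function bound is then established by a bespoke interpolation between $\alpha = 0$ (Plancherel) and $\alpha = 1$ (where $\langle r\rangle \mathbf{P}_k$ is related to $\partial_\rho$ acting on the multiplier). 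If you want to rescue your approach, you would need to carry out this decomposition, or else find an argument that establishes the weighted square function inequality for the $\widetilde{P}_k$ directly with only the polynomial kernel decay that is actually available.
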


\begin{corollary}\label{cor:weighted_pk}
Let $0 < \alpha < 1$ and $f \in L^{2,\alpha}$, radial, satisfy
\[
f = \varphi(-\Delta) f
\]
for a function $\varphi$ as in Lemma \ref{lem:decay_sf}. Then there exists $k_0 \geq 1$ so that for $k \geq k_0$, we have
\[
\| \langle x \rangle^{\alpha} P_k f  \|_{L^2_x} \lesssim \|\langle x \rangle^{\alpha} f\|_{L^2_x}.
 \]
\end{corollary}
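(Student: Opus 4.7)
The plan is to obtain Corollary~\ref{cor:weighted_pk} as an almost immediate consequence of Lemma~\ref{lem:decay_sf} by specializing to $s = 0$. Observe that the hypotheses on $f$ in the Corollary match exactly those of the Lemma with $s=0$: we have $f \in L^{2,\alpha} = H^{0,\alpha}$, $f$ is radial, and $f = \varphi(-\Delta) f$ with $\varphi \in C^\infty$ supported away from zero.

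With $s=0$, Lemma~\ref{lem:decay_sf} yields
\[
\biggl(  \sum_{k \geq k_0}  \| \langle x \rangle^{\alpha} P_k f \|^2_{L^2(\bR^3)} \biggr)^{1/2} \lesssim \| \langle x \rangle^{\alpha} f\|_{L^2(\bR^3)}.
\]
Since every individual summand is bounded by the whole square-summed expression, for each fixed $k \geq k_0$ we get
\[
\| \langle x \rangle^{\alpha} P_k f \|_{L^2_x} \leq \biggl( \sum_{k' \geq k_0} \| \langle x \rangle^{\alpha} P_{k'} f \|_{L^2_x}^2 \biggr)^{1/2} \lesssim \|\langle x \rangle^{\alpha} f\|_{L^2_x},
\]
which is the desired conclusion.

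There is essentially no obstacle in this argument once Lemma~\ref{lem:decay_sf} is in hand; the entire content of the Corollary is contained in the $s=0$ case of that Lemma, restricted to a single frequency band rather than summed in $\ell^2$. The real work of the weighted estimate is already encapsulated in the kernel bounds for the operators $P_k$ established in the Appendix, so the proof here is simply a matter of invoking Lemma~\ref{lem:decay_sf} and reading off the bound for one term of the square-summed series.
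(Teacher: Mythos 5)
Your proof is correct and is essentially the argument the paper intends: the Corollary's hypotheses are exactly those of Lemma~\ref{lem:decay_sf} at $s=0$ (with $H^{0,\alpha} = L^{2,\alpha}$), and discarding all but one term of the nonnegative $\ell^2$-sum on the left-hand side gives the stated bound for each individual $k \geq k_0$. The paper announces in Section~4 that a proof of Corollary~\ref{cor:weighted_pk} is deferred to Appendix~\ref{a:kernel} but in fact only writes out the proof of Lemma~\ref{lem:decay_sf} there, so the Corollary is evidently meant to be read off exactly as you do.
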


We will need the next result to establish that the randomization preserves the regularity and weighted estimates of the initial data.
\begin{lemma}\label{lem:weighted_2}
Let $0 < \alpha < 1$, $0 \leq s \leq 1$  and let $f \in  H^{s,\alpha}(\bR^3)$, radial, satisfy
\[
f = \varphi(-\Delta) f
\]
for a function $\varphi$ as in Lemma \ref{lem:decay_sf}. Then there exists $k_0 > 1$ such that 
\[
\|\langle x \rangle^{\alpha} |\nabla|^s P_{\geq k_0} f \|_{L^2(\bR^3)} \lesssim \|\langle x \rangle^{\alpha} |\nabla|^s f \|_{L^2(\bR^3)}.
\]
When $-1 \leq s \leq 0$, we have
\[
\|\langle x \rangle^{\alpha} \langle \nabla\rangle^s P_{\geq k_0} f \|_{L^2(\bR^3)} \lesssim \|\langle x \rangle^{\alpha} \langle \nabla\rangle^s f \|_{L^2(\bR^3)}.
\]
\end{lemma}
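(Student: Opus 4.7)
The plan for the case $0 \leq s \leq 1$ is to route the bound through the spectrally adapted operator $|H|^{s/2}$, for which $P_{\geq k_0}$ is just a spectral cutoff and hence diagonal, and then transfer back to $|\nabla|^s$ using the weighted coercivity machinery already developed in this section. Concretely, I would establish the chain
\[
\|\langle x \rangle^\alpha |\nabla|^s P_{\geq k_0} f\|_{L^2}
\;\lesssim\; \|\langle x \rangle^\alpha |H|^{s/2} P_{\geq k_0} f\|_{L^2}
\;\lesssim\; \|\langle x \rangle^\alpha |H|^{s/2} f\|_{L^2}
\;\lesssim\; \|\langle x \rangle^\alpha |\nabla|^s f\|_{L^2},
\]
and then conclude.

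The first inequality is the weighted analog of the unweighted coercivity $\|(-\Delta)^{s/2} \widetilde\varphi(H) |H|^{-s/2}\|_{L^2 \to L^2} \lesssim 1$ from Corollary \ref{cor:equivalence}, with $\widetilde\varphi \in C^\infty$ chosen to equal $1$ on the spectral support of $P_{\geq k_0}$ and to vanish near $0$ and near the negative eigenvalue $-\kappa^2$. I would prove it by re-running the complex interpolation argument from Lemma \ref{lem:weighted_coercive}, now with the roles of $-\Delta$ and $H$ interchanged; the ingredients are again Lemma \ref{lem:weighted_ests} and Lemma \ref{lem:weighted_ests_free}, which control imaginary powers of $|H|$ and of $-\Delta$ respectively on $L^{2,\alpha}$. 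The middle inequality uses that $|H|^{s/2}$ and $P_{\geq k_0}$ are both functions of $H$ and therefore commute, reducing to the $s = 0$ assertion $\|\langle x \rangle^\alpha P_{\geq k_0} g\|_{L^2} \lesssim \|\langle x \rangle^\alpha g\|_{L^2}$ applied to $g = |H|^{s/2} f$. Writing $P_{\geq k_0} = P_{ac} - P_0$, the $P_{ac}$ contribution is $g - \langle Y, g \rangle Y$, controlled by Cauchy--Schwarz and the exponential decay of $Y$; the $P_0 g = \psi_0(H) g$ contribution exploits that $g$ inherits standard high-frequency localization from the hypothesis $f = \varphi(-\Delta) f$ (modulo operators bounded on $L^{2,\alpha}$ via Lemma \ref{lem:weighted_ests}) and then invokes kernel estimates along the lines of Appendix \ref{a:kernel} to handle the low-distorted-frequency projection. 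The final inequality in the chain is a direct application of Corollary \ref{cor:weighted_coercive} under the hypothesis $f = \varphi(-\Delta) f$.

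For the case $-1 \leq s \leq 0$, I would argue by duality. Since $P_{\geq k_0}$ is self-adjoint and $\langle \nabla \rangle^s$ is a bounded Fourier multiplier on $L^2$, pairing against $g \in L^2$ with $\|g\|_{L^2} \leq 1$ and moving $\langle x \rangle^\alpha$ and $\langle \nabla \rangle^s$ to the other side reduces the claim, after conjugation, to a bound for $\langle x \rangle^{-\alpha} \langle \nabla \rangle^{-s} P_{\geq k_0} \langle \nabla \rangle^s \langle x \rangle^\alpha$ on $L^2$ that is handled by the same ingredients as in the positive-$s$ case (in particular, $\langle \nabla \rangle^{-s}$ with $-s \in [0,1]$). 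The use of the \emph{inhomogeneous} operator $\langle \nabla \rangle^s$ in place of $|\nabla|^s$ is essential in order to avoid low-frequency singularities at the origin in Fourier space when $s < 0$.

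The principal obstacle I anticipate is the first inequality in the chain, namely the reversed weighted coercivity. Corollary \ref{cor:weighted_coercive} is stated in only one direction, so establishing its mirror with $H$ and $-\Delta$ interchanged requires re-executing the analytic interpolation scheme of Lemma \ref{lem:weighted_coercive} and carrying the commutator-with-$V$ terms through the $\textup{Re}\,z = \pm 1$ endpoints. A secondary hurdle is the control of $P_0 = \psi_0(H)$ on high-standard-frequency inputs in $L^{2,\alpha}$, since $\psi_0$ is supported through the origin and Lemma \ref{lem:weighted_ests} does not apply directly; this is precisely where the appendix kernel estimates enter.
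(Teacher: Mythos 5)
Your three-step chain has a structural flaw in the second and third links, and the difficulty you flag at the end (controlling $P_0=\psi_0(H)$ on high standard frequencies) is in fact a symptom of that flaw rather than a secondary hurdle.

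The paper runs the interpolation with the analytic family $\langle x\rangle^\alpha e^{z^2}(-\Delta)^z|H|^{-z}\varphi(H)\langle x\rangle^{-\alpha}$ and arrives at $\|\langle x\rangle^\alpha(-\Delta)^s\varphi(H)f\|_{L^2}\lesssim\|\langle x\rangle^\alpha|H|^s\varphi(H)f\|_{L^2}$, then uses $f=\varphi(-\Delta)f$ to rewrite the right-hand side as $\|\langle x\rangle^\alpha|H|^s\varphi(H)\varphi(-\Delta)f\|_{L^2}$ and applies Corollary \ref{cor:weighted_coercive} \emph{with the $\varphi(H)$ localization still present}. Your version instead strips off $P_{\geq k_0}$ in the middle link, reducing to $\|\langle x\rangle^\alpha|H|^{s/2}f\|_{L^2}\lesssim\|\langle x\rangle^\alpha|\nabla|^sf\|_{L^2}$, and then claims the last link is a "direct application" of Corollary \ref{cor:weighted_coercive}. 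It is not: that corollary bounds $\|\langle x\rangle^\alpha|H|^s\varphi(H)\varphi(-\Delta)f\|_{L^2}$, and once you have discarded the $P_{\geq k_0}=\varphi(H)$ factor in the middle step you cannot re-insert it for free. Concretely, your $\|\langle x\rangle^\alpha|H|^{s/2}f\|_{L^2}$ contains the piece $\|\langle x\rangle^\alpha|H|^{s/2}P_0f\|_{L^2}$ coming from low distorted frequencies, and the multiplier $\lambda^{s/2}\psi_0(\lambda)$ is not supported away from zero, so Lemma \ref{lem:weighted_ests} does not apply and neither does Corollary \ref{cor:weighted_coercive}. Your justification that "$g=|H|^{s/2}f$ inherits standard high-frequency localization from $f=\varphi(-\Delta)f$" is also not correct: $|H|^{s/2}$ does not preserve standard Fourier support, so $g$ has no such localization. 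The economical fix is to drop your middle step entirely and use the two-link chain $\|\langle x\rangle^\alpha|\nabla|^sP_{\geq k_0}f\|_{L^2}\lesssim\|\langle x\rangle^\alpha|H|^{s/2}P_{\geq k_0}f\|_{L^2}\lesssim\|\langle x\rangle^\alpha|\nabla|^sf\|_{L^2}$, where the first link is exactly the new interpolation you anticipate and the second is Corollary \ref{cor:weighted_coercive} applied to $|H|^{s/2}\widetilde\varphi(H)\varphi(-\Delta)f$ — which is what the paper does. (As a side remark, the middle inequality you do want is actually immediate from Lemma \ref{lem:weighted_ests}, since the multiplier of $P_{\geq k_0}$ is supported away from zero with compactly supported derivatives; the $P_{ac}-P_0$ decomposition and appeal to appendix kernel estimates are an unnecessary detour, but in any case the step is superfluous.)

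For $-1\leq s\leq 0$ the paper does not dualize; it re-runs the analytic interpolation with the family $\langle x\rangle^\alpha e^{z^2}(1-\Delta)^z|H|^z\varphi(H)\langle x\rangle^{-\alpha}$ over $-1\leq\operatorname{Re}z\leq 0$, again lands on $\|\langle x\rangle^\alpha|H|^s\varphi(H)\varphi(-\Delta)f\|_{L^2}$, and applies Corollary \ref{cor:weighted_coercive} together with Lemma \ref{lem:weighted_ests_free}. Your duality route requires $L^2$-boundedness of $\langle\nabla\rangle^{-s}\langle x\rangle^{-\alpha}P_{\geq k_0}\langle\nabla\rangle^s\langle x\rangle^\alpha$, but this puts the negative weight on the outside and the positive weight on the inside, which is not the configuration controlled by Lemma \ref{lem:weighted_ests} or by the $A_2$-weight facts used in the paper, and the hypothesis $f=\varphi(-\Delta)f$ — which is load-bearing in the positive-$s$ argument — does not pass through the duality in an obvious way. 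You would need to make the dual operator-norm bound precise and explain how the standard-Fourier support condition enters; as written the argument is underspecified.

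In short: you correctly identify the ingredients (the $(-\Delta)^z|H|^{-z}$ interpolation, Lemmas \ref{lem:weighted_ests}, \ref{lem:weighted_ests_free}, \ref{lem:weighted_coercive}, and Corollary \ref{cor:weighted_coercive}), but the three-step chain discards the $\varphi(H)$ spectral localization too early, which invalidates the final step, and the duality route for negative $s$ needs substantial additional justification.
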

\begin{proof}
Let $0 \leq s \leq 1$. Since $P_{\geq k_0} = \varphi(H)$ for some function $\varphi$ satisfying the hypotheses of the lemma, It suffices to show the bound for
\[
 \|\langle x \rangle^{\alpha} |\nabla|^s \varphi(H) f \|_{L^2(\bR^3)}.
\]
We argue by interpolation with the analytic family of operators
 \[
 \langle x\rangle^\alpha e^{z^2} (-\Delta)^z  |H|^{-z} \varphi(H)  \langle x\rangle^{-\alpha}, 
 \]
 and the argument is similar to Lemma \ref{lem:weighted_coercive}, so we only present a sketch. Let $f, g \in \mathcal{S}$, and consider
 \[
 F(z) = \langle  \langle x\rangle^\alpha e^{z^2} (-\Delta)^z  |H|^{-z} \varphi(H)  \langle x\rangle^{-\alpha} f, g \rangle.
 \]
 The $F$ is well-defined since $\langle x \rangle^{\alpha} g \in H^2$ and it is continuous on
 \[
S = \{z \,:\, 0 \leq \textup{Re z} \leq 1\} \subseteq \mathbb{C},
\]
and analytic on the interior of $S$.  When $\textup{Re}(z) = 0$, we have
 \begin{align}
 \|\langle x\rangle^\alpha e^{-\gamma^2} (-\Delta)^{i\gamma}  |H|^{-i\gamma} \varphi(H) \langle x\rangle^{-\alpha} f\|_{L^2_x}  \leq  \| f\|_{L^2_x}
 \end{align}
using first the fact that $\langle x \rangle^{\alpha}$ is an $A_2$ weight in $\bR^3$ (see Definition \ref{def:ap}), the fact that $(-\Delta)^{i\gamma}$ is $A_2$-bounded, see \cite[\S V.4]{Stein_book}, and Lemma \ref{lem:weighted_ests}. When $\textup{Re}(z) = 1$, we estimate
  \begin{align}
 &\|\langle x\rangle^\alpha e^{(1 + i\gamma)^2}(-\Delta)^{i\gamma} (-\Delta) |H|^{-1} |H|^{-i\gamma} \varphi(H)  \langle x\rangle^{-\alpha} f\|_{L^2_x} \\
 & \leq\|\langle x\rangle^\alpha e^{(1 + i\gamma)^2}(-\Delta)^{i\gamma} P_{ac} |H|^{-i\gamma} \varphi(H)  \langle x\rangle^{-\alpha} f\|_{L^2_x}  \\
 & \hspace{44mm}+ \|\langle x\rangle^\alpha e^{(1 + i\gamma)^2}(-\Delta)^{i\gamma}V |H|^{-1} |H|^{-i\gamma} \varphi(H)  \langle x\rangle^{-\alpha} f\|_{L^2_x} ,
 \end{align}
 and we may again use Lemma \ref{lem:weighted_ests} and the fact that $(-\Delta)^{i\gamma}$ is $A_2$-bounded, see again \cite[\S V.4]{Stein_book}.  Thus by ``enlarging'' the multiplier $\varphi(H)$ we have shows that for $0 \leq s \leq 1$ and $0 < \alpha < 1$, we have
 \[
 \| \langle x\rangle^\alpha(-\Delta)^s \varphi(H) f\|_{L^2_x} \lesssim \| \langle x\rangle^{\alpha}  |H|^s \varphi(H) f \|_{L^2_x} = \| \langle x\rangle^{\alpha}  |H|^s \varphi(H) \varphi(-\Delta) f \|_{L^2_x},
 \]
where the last equality follows from the assumption on $f$, and we use Corollary \ref{cor:weighted_coercive}, the three linear theorem and density to conclude. 

For $-1 \leq s \leq 0$, we may similarly argue by interpolation with the analytic family of operators
\[
\langle x\rangle^\alpha e^{z^2} (1 -\Delta)^{z} |H|^{z} \varphi(H)  \langle x\rangle^{-\alpha}.
\]
As above, we define an analytic function $F(z)$ for $f,g$ belonging to a dense class of functions (see Lemma \ref{lem:weighted_coercive} for details). To sketch the $\textup{Re}(z) = -1$ estimate, we have
  \begin{align}
 &\|\langle x\rangle^\alpha e^{(-1 + i\gamma)^2} (I -\Delta)^{i\gamma} (I -\Delta)^{-1} |H|^{1} |H|^{-i\gamma} \varphi(H)  \langle x\rangle^{-\alpha} f\|_{L^2_x} \\
 & \leq\|\langle x\rangle^\alpha e^{(-1 + i\gamma)^2} (I -\Delta)^{i\gamma} (I -\Delta)^{-1} ( -\Delta) P_{ac} |H|^{-i\gamma} \varphi(H)  \langle x\rangle^{-\alpha} f\|_{L^2_x}  \\
 & \hspace{44mm}+ \|\langle x\rangle^\alpha e^{(-1 + i\gamma)^2} (I -\Delta)^{i\gamma}(I- \Delta)^{-1} V |H|^{-i\gamma} \varphi(H)  \langle x\rangle^{-\alpha} f\|_{L^2_x} .
 \end{align}
 To see that $(I-\Delta)^{i\gamma}$ is bounded in the weighted space, $L^2(\langle x \rangle^{\alpha} dx)$, one can argue as in  Lemma~\ref{lem:weighted_ests_free}. Then by Lemma \ref{lem:weighted_ests_free}, the boundedness of $V$ and Lemma \ref{lem:weighted_ests} we obtain
 \begin{align}
\|\langle x\rangle^\alpha e^{(-1 + i\gamma)^2} (-\Delta)^{i\gamma} (1 -\Delta)^{-1} |H|^{1} |H|^{-i\gamma} \varphi(H)  \langle x\rangle^{-\alpha} f\|_{L^2_x}  \lesssim \|f\|_{L^2}.
 \end{align}
 Thus arguing as above, ``enlarging'' the multiplier $\varphi(H)$ we have shown that for $-1 \leq s \leq 0$ and $0 < \alpha < 1$, we have
 \[
 \| \langle x\rangle^\alpha(1 -\Delta)^s \varphi(H) f\|_{L^2_x} \lesssim \| \langle x\rangle^{\alpha}  |H|^s \varphi(H) f \|_{L^2_x} = \| \langle x\rangle^{\alpha}  |H|^s \varphi(H) \varphi(-\Delta) f \|_{L^2_x},
 \]
where the last equality follows from the assumption on $f$. Using Corollary~\ref{cor:weighted_coercive} and  Lemma~\ref{lem:weighted_ests_free} applied to
\[
(1 - \Delta)^{-s} (-\Delta)^s \varphi(-\Delta) 
\]
 we obtain
\[
\| \langle x\rangle^{\alpha}  |H|^s \varphi(H) \varphi(-\Delta) f \|_{L^2_x} \lesssim \| \langle x\rangle^{\alpha}  (-\Delta)^s \varphi(-\Delta) f \|_{L^2_x} \lesssim  \|  \langle x\rangle^{\alpha} (1 - \Delta)^s f \|_{L^2_x},
\]
and we may again use the three lines theorem and density to conclude.
\end{proof}

\begin{remark}\label{rem:k_0_choice}
In the sequel, we will choose $k_0 \geq 1$ in our randomization to be such that the results of this section hold for $k \geq k_0$.
\end{remark}

The choice of a smooth multiplier in the definition of $P_0$ enables us to obtain bounds for the low distorted frequency component of the solution in smoother spaces in light of the results of Jensen and Nakamura in Theorem \ref{thm:jn} and Corollary \ref{cor:low}. The following lemma summarizes this fact. 

\begin{lemma} \label{lem:low_freq_bds}
Let $0 \leq s \leq 1$ and 
\[
f = (f_0, f_1) \in ( \dot H^s \cap |\nabla|^{-s} L^{3/2,1} ) \times (  H^{s-1} \cap \langle \nabla \rangle^{-(s-1)} L^{3/2,1}),
\]
and suppose that  $f_0$ satisfies
\begin{align}\label{hi1}
f_0 = \varphi(-\Delta) f_0
\end{align}
for a function $\varphi \in C^\infty$ supported away from zero. Then we have the following estimate:
\begin{align}
\||\nabla| P_{0} f_0 \|_{L^{3/2,1} \cap L^2} + \|P_{0} f_1 \|_{L^{3/2,1} \cap L^2} \lesssim \| f\|_{\cH^s} + \| |\nabla|^{s}f_0\|_{ L^{3/2,1} } +  \|\langle \nabla\rangle^{s-1} f_1\|_{L^{3/2,1} }.
\end{align}
\end{lemma}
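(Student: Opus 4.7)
The desired estimate splits naturally into four pieces, one each for $\||\nabla|P_0 f_0\|_{L^2}$, $\||\nabla|P_0 f_0\|_{L^{3/2,1}}$, $\|P_0 f_1\|_{L^2}$, and $\|P_0 f_1\|_{L^{3/2,1}}$. The central tools I would use are Corollary~\ref{cor:low} (which gives $\||\nabla|^\sigma\psi_0(H)\|_{L^p\to L^p}\lesssim 1$ for $0\leq\sigma\leq 1$ and $1\leq p\leq\infty$), the real interpolation property of Lorentz spaces from Lemma~\ref{lem:interp}, and the coercivity equivalence $(H+M)^\sigma\sim (-\Delta+M)^\sigma$ from Corollary~\ref{cor:equivalence}(iii).

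For the two $|\nabla|P_0 f_0$ norms, the plan is to apply Corollary~\ref{cor:low} with exponent $1$ to get $\||\nabla|\psi_0(H)\|_{L^p\to L^p}\lesssim 1$ for every $1\leq p\leq\infty$, then extend by real interpolation to $\||\nabla|\psi_0(H)\|_{L^{p,q}\to L^{p,q}}\lesssim 1$ for $1<p<\infty$. This reduces matters to the two inequalities
\[
\|f_0\|_{L^2}\lesssim \||\nabla|^s f_0\|_{L^2}, \qquad \|f_0\|_{L^{3/2,1}}\lesssim \||\nabla|^s f_0\|_{L^{3/2,1}},
\]
both of which follow from the spectral-support hypothesis $f_0=\varphi(-\Delta)f_0$ by writing $f_0=\bigl(|\nabla|^{-s}\widetilde{\varphi}(-\Delta)\bigr)|\nabla|^s f_0$ for a bounded smooth cutoff $\widetilde{\varphi}$ supported in $\{y\geq c^2\}$ satisfying $\widetilde{\varphi}(-\Delta) f_0 = f_0$; the symbol $|\xi|^{-s}\widetilde{\varphi}(|\xi|^2)$ is then smooth, bounded, and satisfies the Mikhlin condition, so the corresponding multiplier is bounded on $L^p$ for $1<p<\infty$ and on $L^{3/2,1}$ by Lemma~\ref{lem:interp}.

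The two $P_0 f_1$ estimates are slightly more delicate because the right-hand side now involves $\langle\nabla\rangle^{s-1}f_1$ and no spectral-support hypothesis on $f_1$ is available. For $\|P_0 f_1\|_{L^2}$ I would argue directly by functional calculus: since $\psi_0\in C_0^\infty$ with support in $(-\varepsilon,\sqrt{k_0+2})$, for any $M>-\inf\sigma(H)$ the function $(\lambda+M)^{1-s}$ is bounded above on $\mathrm{supp}\,\psi_0$ for $s\in[0,1]$, yielding the pointwise estimate $|\psi_0(\lambda)|^2\leq C(\lambda+M)^{s-1}$ on $\sigma(H)$; spectral integration then gives $\|P_0 f_1\|_{L^2}^2\lesssim \|(H+M)^{(s-1)/2}f_1\|_{L^2}^2$, and Corollary~\ref{cor:equivalence}(iii) together with the standard equivalence $\|(-\Delta+M)^{(s-1)/2}f_1\|_{L^2}\sim\|f_1\|_{H^{s-1}}$ closes this piece. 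For the Lorentz piece $\|P_0 f_1\|_{L^{3/2,1}}$ I would factor $P_0 f_1=\bigl(\psi_0(H)\langle\nabla\rangle^{1-s}\bigr)\langle\nabla\rangle^{s-1}f_1$ and show that the operator in brackets is bounded on $L^{3/2,1}$; splitting $\langle\nabla\rangle^{1-s}$ via a standard Littlewood-Paley cutoff into a low-frequency piece (handled by Theorem~\ref{thm:jn}) and a high-frequency piece comparable to $|\nabla|^{1-s}$ (handled by Corollary~\ref{cor:low}) gives $\|\langle\nabla\rangle^{1-s}\psi_0(H)\|_{L^p\to L^p}\lesssim 1$ for $1\leq p\leq\infty$, and then self-adjointness and real interpolation yield the desired bound.

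The main obstacle is the mismatch between the distorted spectral projection $P_0=\psi_0(H)$ and the standard Fourier derivative operators $|\nabla|$ and $\langle\nabla\rangle$ that appear in both sides of the estimate. The Jensen-Nakamura results of Theorem~\ref{thm:jn} and Corollary~\ref{cor:low} are precisely what bridges this gap, allowing smooth compactly-supported functions of $H$ to absorb powers of $-\Delta$ without paying a price in $L^p$; once this is combined with the real-interpolation properties of Lorentz spaces, the four pieces assemble into the claimed bound.
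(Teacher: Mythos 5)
Your $f_0$ argument coincides with the paper's: apply Corollary~\ref{cor:low} and real interpolation to control $|\nabla| P_0$, then use the spectral-support hypothesis \eqref{hi1} to absorb the negative power $|\nabla|^{-s}$ as a Mikhlin multiplier. For the $f_1$ component you take a genuinely different route. The paper handles both the $L^2$ and the $L^{3/2,1}$ bounds uniformly by duality, pairing $P_0 f_1$ against test functions $h$ in $L^2$ or $L^{3,\infty}$ and transferring $P_0$ onto $h$, so that a single mapping estimate for $\langle\nabla\rangle^{1-s} P_0$ does all the work. You instead argue directly: for $L^2$ you exploit the compact support of $\psi_0$ to get the pointwise bound $|\psi_0(\lambda)|^2 \lesssim (\lambda+M)^{s-1}$, reduce to $\|(H+M)^{(s-1)/2} f_1\|_{L^2}$ via the spectral theorem, and close with Corollary~\ref{cor:equivalence}(iii); for $L^{3/2,1}$ you factor $P_0 f_1 = \bigl(\psi_0(H)\langle\nabla\rangle^{1-s}\bigr)\langle\nabla\rangle^{s-1} f_1$ and show the bracketed operator is bounded by a Littlewood--Paley split of $\langle\nabla\rangle^{1-s}$, then duality and real interpolation. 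Both routes are correct, and your Littlewood--Paley step usefully makes explicit what the paper invokes only implicitly, namely that Corollary~\ref{cor:low} (stated for $|\nabla|^\sigma$) extends to $\langle\nabla\rangle^{1-s}\psi_0(H)$. One point to spell out: since $f_1 \in H^{s-1}$ with $s<1$ need not lie in $L^2$, you should first prove the estimate for $f_1 \in L^2 \cap H^{s-1} \cap \langle\nabla\rangle^{-(s-1)}L^{3/2,1}$ and extend by density, exactly as the paper does in its opening sentence; this matters particularly for your direct spectral-calculus argument, where the integral $\int |\psi_0(\lambda)|^2 \, d\langle E_\lambda f_1, f_1\rangle$ requires $f_1 \in L^2$ to be a priori meaningful, whereas the paper's duality pairing already builds in the distributional interpretation.
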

\begin{proof}
We argue for
\[
(f_0, f_1) \in (L^2 \cap \dot H^s \cap |\nabla|^{-s} L^{3/2,1} ) \times L^2 \cap  H^{s-1} \cap \langle \nabla \rangle^{-(s-1)} L^{3/2,1}
\]
and then extend the estimate by density. We begin with the $f_0$ component. By Corollary \ref{cor:low}, and real interpolation we have 
\begin{align}
\||\nabla| P_0 f_0 \|_{L^{3/2,1} \cap L^2}  \lesssim  \|f_0 \|_{L^{3/2,1} \cap L^2},
\end{align}
and given the assumption on the (standard) Fourier support of $f_0$ from \eqref{hi1}, we have
\[
\|f_0 \|_{L^{3/2,1} \cap L^2} \lesssim \||\nabla|^s f_0 \|_{L^{3/2,1} \cap L^2},
\]
as required. For the $f_1$ component, we have by Corollary \ref{cor:low} that
\begin{align}
 \|P_{0} f_1 \|_{L^2}  =  \sup_{\|h\|_{L^2_x} \leq 1} | \langle P_{0} f_1, h \rangle|  =  \sup_{\|h\|_{L^2_x} \leq 1} |\langle f_1, P_{0} h \rangle| \leq \sup_{\|h\|_{L^2_x} \leq 1} \|f_1\|_{ H^{s-1}} \|P_{0} h\|_{ H^{1-s}} \leq \|f_1\|_{ H^{s-1}}
\end{align}
and similarly
\begin{align}
 \|P_{0} f_1 \|_{L^{3/2,1}} =  \sup_{\|h\|_{L^{3,\infty}_x} \leq 1}| \langle P_{0} f_1, h \rangle | \leq   \sup_{\|h\|_{L^{3,\infty}_x} \leq 1} \|f_1\|_{\langle \nabla\rangle^{1-s} L^{3/2,1}} \|\langle \nabla\rangle^{1-s} P_{0} h \|_{L^{3,\infty}} \leq  \|f_1\|_{\langle \nabla\rangle^{1-s} L^{3/2,1}},
 \end{align}
 by Corollary \ref{cor:low} and real interpolation, which concludes the proof.
\end{proof}

We now turn to the definition of our randomization. Fix $f = (f_0, f_1) \in \cH^s(\bR^3)$ which satisfies the orthogonality condition
\[
\langle \kappa f_0 + f_1, Y \rangle = 0.
\]
This condition will be used to ensure the cancellation of an exponentially growing term in the direction of the eigenfunction in the contraction mapping argument, see Section \ref{sec:contraction} for more details.
We write
\[
f = f_{hi} + f_{lo} := (f_0 - \psi_0(-\Delta) f_0, f_1 - \psi_0(-\Delta) f_1) + (\psi_0(-\Delta) f_0, \psi_0(-\Delta) f_1)
\]
for the smooth bump function $\psi_0 \in C_0^\infty$ used to define $P_0$. We note that
\[
f_{lo} \in \dot H^1 \times L^2, \qquad \textup{and} \qquad f_{hi} \in L^2 \times  H^{-1}.
\]
In the sequel, we will consider zero-mean real-valued Gaussian random variables with variance $\sigma$, i.e. random variables $g$ defined on fixed a probability space $(\Omega, \mathcal{F}, \mathbb{P})$ satisfying
\[
\mathbb{P}(g \leq x ) = \frac{1}{\sqrt{2\pi \sigma^2}} \int_{-\infty}^x e^{-t^2/2 \sigma^2} dt.
\]
For simplicity, we will fix $\sigma = 1$.
  
\begin{definition}\label{def:randomization}
Let $\{(g_k, h_k)\}_{k \in \mathbb{N}} $ be a sequence of iid, zero-mean real-valued Gaussian  random variables on a probability space $(\Omega, \cF, \mathbb{P})$. We set
\[
f^\omega = (f_0^\omega, f_1^\omega),
\]
where
\begin{equation}\label{equ:randomization}
\begin{split}
f_0^\omega &:=  \langle f_{0}, Y \rangle Y + (P_{ac} f_{0,lo} + P_0 f_{0,hi}) + \sum_{k \geq k_0} g_k(\omega) P_k f_{0,hi}\\
f_1^\omega &:=  \langle f_1, Y \rangle Y+ (P_{ac} f_{1,lo} + P_0 f_{1,hi})+ \sum_{k \geq k_0} h_k(\omega) P_k f_{1,hi} .
\end{split}
\end{equation}
\end{definition}
One could also randomize with respect to a more general sequence of random variables $\{ (g_k, h_k) \}_{k\in\bZ^4}$ satisfying the following condition: there exists $c > 0$ so that the distributions $\{ (\mu_{k}, \nu_k) \}_{k\in\bZ^4}$ of the random variables $\{ (g_k, h_k) \}_{k\in\bZ^4}$ fulfill
 \begin{equation} \label{equ:rvassumption}
  \left| \int_{-\infty}^{+\infty} e^{\gamma x} \, d\mu_{k}(x) \right| \leq e^{c \gamma^2} \quad \text{for all } \gamma \in \bR \text{ and for all } k \in \bZ^4,
 \end{equation}
 and similarly for the $\nu_k$. The assumption~\eqref{equ:rvassumption} is satisfied, for example, by standard Gaussian random variables, standard Bernoulli random variables, or any random variables with compactly supported distributions, but we will work with Gaussian random variables in the sequel.

To make sense of this definition, we use Remark \ref{rem:hs_bds}, and we regard the infinite series as a Cauchy limit in $L^2(\Omega ; \cH^{s}(\bR^3))$, see Remark \ref{rem:reg} for more details. It will also be useful in the sequel to fix the notation
\[
f_{\geq k_0}^\omega = \biggl(\sum_{k \geq k_0} g_k(\omega) P_k f_{0,hi}, \sum_{k \geq k_0} h_k(\omega) P_k f_{1,hi} \biggr).
\]

\begin{remark}\label{rem:still_orthog}
In particular, we have 
\begin{align}\label{equ:orthog_preserved}
\langle \kappa f_0^\omega + f_1^\omega, Y \rangle = \langle \kappa f_0 + f_1, Y \rangle \langle Y, Y\rangle= 0,
\end{align}
since all the other terms in \eqref{equ:randomization} are orthogonal to the eigenfunction $Y$.
\end{remark}

We will now discuss the regularity properties of the randomized initial data. Some of this discussion will need to be completed in Section~\ref{prob:strichartz} after introducing certain probabilistic estimates, but we will collect the necessary analytic tools here. Using the arguments from \cite[Appendix~B]{BT1}, we may argue that $f^\omega$ is almost surely no more regular than $f$. Indeed, the key fact which is needed in this argument is the equivalence
\begin{equation}\label{equiv}
\begin{split}
\|P_{\geq k_0} h\|_{\dot H^s(\bR^3) }^2 & \simeq  \sum_{k \geq k_0}  \|P_k  P_{\geq k_0}h\|^2_{\dot H^s(\bR^3)}, \quad 0 \leq s \leq 1,\\
\|P_{\geq k_0} h\|_{ H^s(\bR^3) }^2 & \simeq  \sum_{k \geq k_0}  \|P_k  P_{\geq k_0}h\|^2_{H^s(\bR^3)}, \quad -1 \leq s \leq 0,\\
\end{split}
\end{equation}
which will imply the randomization does not regularize $P_{\geq k_0} f$, and which follows from Theorem~\ref{distorted}, the fact that
\[
\sum_{k \geq k_0} |\psi_k(\xi)|^2 \sim 1 \qquad \textup{for } |\xi| \geq  k_0,
\]
and Corollary \ref{cor:equivalence} $(i)$ and $(ii)$ for the first claim, and part $(iii)$ for the second claim. Observing  that by Corollary \ref{cor:low}, Lemma \ref{lem:low_freq_bds}, the definition of $P_{ac}$ (see \eqref{p_ac}) and the fact that $Y$ is smooth and exponentially decaying, we have by Lemma \ref{lem:low_freq_bds} that
\begin{align}
&\langle f_0, Y \rangle Y +  (P_{ac} f_{0,lo} +P_0 f_{0,hi})  \in \dot H^1(\bR^n)\\
 &\langle f_1, Y \rangle Y+ (P_{ac} f_{1,lo} + P_0 f_{1,hi}) \in L^2(\bR^n).
\end{align}
which will in turn imply that our randomization does not regularize the initial data.

To show that the randomization preserves the regularity of the initial data and weighted estimates, we recall that
 \[
\|f\|_{X_s} := \|f\|_{\cH^s} + \|\langle x \rangle^{1-\nu}|\nabla|^{s_1} f_0\|_{L^2_x} + \|\langle x \rangle^{1-\nu}\langle \nabla\rangle^{s_1 -1} f_1\|_{L^2_x} < \infty,
 \]
 and for $f = (f_0, f_1) \in X_s$ we have
 \[
\bigl( \langle f_0, Y \rangle Y,  \langle f_1, Y \rangle Y \bigr) \in X_s,
 \]
 which follows by noting that 
 \[
  \|\langle x \rangle^{1-\nu}|\nabla|^{s_1} Y\|_{L^2_x} \leq    \|\langle x \rangle |\nabla|^{s_1} Y\|_{L^2_x} \lesssim   \| |\nabla|^{s_1} Y\|_{L^2_x} +   \sum_{j} \|x_j |\nabla|^{s_1} Y\|_{L^2_x}
 \]
and by Plancherel's theorem,
\[
\sum_{j} \|x_j |\nabla|^{s_1} Y\|_{L^2_x} = \sum_{j} \|\partial_j |\xi|^{s_1} \widehat{Y}\|_{L^2_x},
\]
and this expression is finite since $\widehat{Y}$ is Schwartz and for $0<s_1 <1$, $\partial_j|\xi|^{s_1}$ is integrable around the origin in three dimensions. A simplified argument works for the inhomogeneous term. Next, we can write
 \[
 P_{ac} f_{0,lo} +P_0 f_{0,hi} =  P_{ac} f_{0,lo} +P_{ac} f_{0,hi} - P_{\geq k_0} f_{0, hi} = P_{ac} f_0 - P_{\geq k_0} f_{0, hi}.
 \]
By \eqref{p_ac} and the observation that $(Y, Y) \in X_s$, the projection $P_{ac}$ is bounded on $X_s$, and by Lemma~\ref{lem:weighted_2} followed by Lemma \ref{lem:weighted_ests_free} we have 
 \begin{align}
 \| P_{\geq k_0} f_{0, hi}\|_{\dot H^s} +   \|\langle x \rangle^{1-\nu}|\nabla|^{s_1} P_{\geq k_0} f_{0, hi}\|_{L^2_x} &\lesssim   \|  f_{0, hi}\|_{\dot H^s} +   \|\langle x \rangle^{1-\nu}|\nabla|^{s_1} f_{0, hi}\|_{L^2_x}  \\
 & \lesssim   \|  f_{0}\|_{\dot H^s} +   \|\langle x \rangle^{1-\nu}|\nabla|^{s_1} f_{0}\|_{L^2_x}  
 \end{align}
Similarly, writing
 \[
 P_{ac} f_{1,lo} +P_0 f_{1,hi} =  P_{ac} f_1 - P_{\geq k_0} f_{1, hi},
 \]
and using again Lemma~\ref{lem:weighted_2} followed by Lemma \ref{lem:weighted_ests_free}, we ultimately obtain
\[
 ( P_{ac} f_{0,lo} +P_0 f_{0,hi} , P_{ac} f_{1,lo} +P_0 f_{1,hi} )\in X_s.
\]
The fact that $f_{\geq k_0}^\omega  \in X_s$ almost surely will follow from the large deviation estimate of Lemma \ref{lem:large_dev} below, as well as Lemma~\ref{lem:decay_sf} and \eqref{equiv}. See Remark~\ref{rem:reg} for more details.

\section{Estimates for the linearized flow} \label{sec:improved_strich}

We begin by recalling the Strichartz estimates for the wave equation. 
\begin{definition}
 We call an exponent pair $(q,r)$ wave-admissible in three dimensions if $2 \leq q \leq \infty$, $2 \leq r < \infty$ and
 \begin{equation}
 \label{equ:wave}
  \frac{1}{q} + \frac{1}{r} \leq \frac{1}{2}.
 \end{equation}
\end{definition}

\begin{proposition}[\protect{Strichartz estimates in three space dimensions, \cite[Corollary 1.3]{Kee-Tao}}] \label{prop:strichartz_estimates}
 Suppose $(q, r)$ and $(\tilde{q}, \tilde{r})$ are wave-admissible pairs. Let $u$ be a (weak) solution to the wave equation on some interval $0 \ni I$ with initial data $(u_0,u_1)$, that is
 \begin{equation*}
  \left\{ \begin{split}
   -u_{tt} + \Delta u \, &= \, h \text{ on } I \times \bR^3, \\
   (u, u_t)|_{t=0} \, &= \, (u_0,u_1)
  \end{split} \right.
 \end{equation*}
 for some function $h$. Then
 \begin{equation} \label{equ:strichartz_estimates}
  \begin{split}
   &\|u\|_{L^q_t L^r_x (I\times \bR^3)} + \|u\|_{L^{\infty}_t \dot{H}^{\gamma}_x(I \times \bR^3)} + \|u_t\|_{L^{\infty}_t \dot{H}^{\gamma-1}_x(I\times \bR^3)} \\
   &\qquad \qquad \lesssim \, \|u_0 \|_{\dot{H}_x^{\gamma}(\bR^3)} + \|u_1\|_{\dot{H}_x^{\gamma-1}(\bR^3)} + \|h\|_{L_t^{\tilde{q}'} L^{\tilde{r}'}_x(I\times \bR^3)}
  \end{split}
 \end{equation}
 under the assumption that the scaling conditions
 \begin{equation}
 \label{equ:strichartz1}
\qquad  \frac{1}{q} + \frac{3}{r} \, = \, \frac{3}{2} - \gamma
 \end{equation}
 and
 \begin{equation}
 \label{equ:strichartz2}
\frac{1}{\tilde{q}'} + \frac{3}{\tilde{r}'} - 2 \, = \, \frac{3}{2} - \gamma
 \end{equation} 
 hold.
\end{proposition}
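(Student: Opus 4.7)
The plan is to invoke the abstract Strichartz framework of Keel--Tao, which reduces the estimate for the half-wave propagator $U(t) = e^{it\sqrt{-\Delta}}$ (and hence for the cosine/sine propagators appearing in the wave equation via $\cos(t\sqrt{-\Delta}) = \tfrac12(U(t)+U(-t))$ and $\tfrac{\sin(t\sqrt{-\Delta})}{\sqrt{-\Delta}} = \tfrac{1}{2i}(U(t)-U(-t))|\nabla|^{-1}$) to two ingredients. First, the energy identity $\|U(t) f\|_{L^2_x} = \|f\|_{L^2_x}$, which is immediate from Plancherel since the symbol $e^{it|\xi|}$ has modulus one. Second, the dispersive estimate
\[
\|U(t)|\nabla|^{-1} f\|_{L^\infty_x(\bR^3)} \lesssim |t|^{-1} \|f\|_{L^1_x(\bR^3)},
\]
reflecting the $|t|^{-(n-1)/2}$ decay of the half-wave kernel in dimension $n$ with the derivative loss $(n-1)/2 = 1$ in 3D. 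This bound follows from a stationary phase analysis of the oscillatory integral representation of the kernel, using that the phase $x\cdot\xi + t|\xi|$ has non-degenerate critical points in the angular variable away from the light cone.

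Given these two ingredients, the standard $TT^*$ argument combined with the Hardy--Littlewood--Sobolev inequality yields the homogeneous Strichartz estimate
\[
\|U(t) f\|_{L^q_t L^r_x} \lesssim \|f\|_{\dot H^\gamma_x}
\]
for every wave-admissible pair $(q,r)$ satisfying the scaling relation \eqref{equ:strichartz1}. Applied to the real and imaginary parts of $U(t)$, this controls $\cos(t\sqrt{-\Delta}) u_0$ and $\tfrac{\sin(t\sqrt{-\Delta})}{\sqrt{-\Delta}} u_1$ in $L^q_t L^r_x$, while the bounds $\|u\|_{L^\infty_t \dot H^\gamma_x} + \|u_t\|_{L^\infty_t \dot H^{\gamma-1}_x} \lesssim \|u_0\|_{\dot H^\gamma_x} + \|u_1\|_{\dot H^{\gamma-1}_x}$ follow from the energy identity applied at frequency $\gamma$ and $\gamma-1$.

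For the inhomogeneous estimate, I would use Duhamel's formula to write
\[
u(t) = \cos(t\sqrt{-\Delta}) u_0 + \frac{\sin(t\sqrt{-\Delta})}{\sqrt{-\Delta}} u_1 + \int_0^t \frac{\sin((t-s)\sqrt{-\Delta})}{\sqrt{-\Delta}} h(s) \, ds,
\]
and pair the homogeneous Strichartz inequality with its dual to estimate the symmetric (non-retarded) integral $\int_\bR \tfrac{\sin((t-s)\sqrt{-\Delta})}{\sqrt{-\Delta}} h(s) ds$ in terms of $\|h\|_{L^{\tilde q'}_t L^{\tilde r'}_x}$ under the scaling condition \eqref{equ:strichartz2}. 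The passage from the symmetric integral to the retarded one is handled by the Christ--Kiselev lemma, which applies whenever $\tilde q' < q$ (strict inequality away from the double endpoint). The main technical obstacle in this line of argument is the sharp endpoint $(q,r) = (2,\infty)$, for which the Christ--Kiselev reduction fails and Keel--Tao's bilinear interpolation is required; however, since the statement explicitly excludes $r = \infty$ from the wave-admissible range, this delicate case does not enter and the proof reduces to the classical $TT^*$ plus Christ--Kiselev argument.
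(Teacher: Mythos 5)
The paper offers no proof of this proposition; it is a verbatim citation of \cite[Corollary 1.3]{Kee-Tao}, so there is no internal argument to compare against. Your route --- energy estimate plus dispersive estimate feeding the $TT^*$ argument with Hardy--Littlewood--Sobolev, then Duhamel plus Christ--Kiselev for the retarded integral --- is precisely the Keel--Tao scheme, and your observation about the endpoint is exactly right: wave-admissibility with $r<\infty$ forces $q>2$ in three dimensions (if $q=2$ then $1/r\le 0$ is impossible), so $\tilde q'\le 2<q$ and the Christ--Kiselev lemma applies, with the delicate bilinear-interpolation endpoint $(2,\infty)$ never arising.

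One imprecision is worth correcting: the unlocalized dispersive bound you state, $\|U(t)|\nabla|^{-1}f\|_{L^\infty(\bR^3)}\lesssim |t|^{-1}\|f\|_{L^1(\bR^3)}$, is not literally true. In $\bR^3$ the kernel of $\sin(t\sqrt{-\Delta})/\sqrt{-\Delta}$ is the surface measure $(4\pi t)^{-1}\,\delta(|x|-t)$, which is not in $L^\infty$, so no $L^1\to L^\infty$ bound of this shape can hold without a frequency truncation. The correct input to the abstract Keel--Tao theorem is the frequency-localized dispersive estimate $\|e^{it|\nabla|}P_\lambda f\|_{L^\infty}\lesssim \lambda^{3}(1+\lambda|t|)^{-1}\|f\|_{L^1}$, equivalently the Besov-space version $\|e^{it|\nabla|}f\|_{L^\infty}\lesssim |t|^{-1}\|f\|_{\dot B^{2}_{1,1}}$; one then runs the $TT^*$ argument on each Littlewood--Paley block and sums using the scaling relations \eqref{equ:strichartz1}--\eqref{equ:strichartz2}. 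This is exactly how Keel and Tao treat the wave case, and it leaves the rest of your argument untouched.
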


The following reverse Strichartz estimates for Lorentz spaces are proved in \cite[Theorem 3]{BeGo} for certain Schr\"odinger operators, implying the result for the free half-wave propagators.
\begin{proposition}\label{prop:lorentz_est}
Let $0 \leq \gamma < 3/2$ and let $(q,r)$ be a wave admissible pair satisfying
\[
\frac{1}{q} + \frac{3}{r} = \frac{3}{2} - \gamma.
\]
Then provided
\[
\max\biggl( 6, \frac{6}{3-2\gamma} \biggr) \leq r \leq \frac{3}{\max(1-\gamma, 0)}, \qquad q \neq \infty,
\]
we have
\begin{align}
\|e^{i t |\nabla|} f\|_{L^{r,2}_x L^q_t} &\lesssim \||\nabla|^\gamma f\|_{L^{2}_x}.
\end{align}
\end{proposition}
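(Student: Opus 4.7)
The plan is to obtain this estimate as a specialization of the reversed Strichartz framework developed by Beceanu and Goldberg. Their Theorem 3 in \cite{BeGo} establishes the bound
\[
\|e^{it\sqrt{H}} f\|_{L^{r,2}_x L^q_t} \lesssim \|H^{\gamma/2} f\|_{L^2_x}
\]
for the half-wave propagator associated to a Schr\"odinger operator $H = -\Delta + V$ whose potential $V$ satisfies appropriate decay and spectral hypotheses. The trivial potential $V \equiv 0$ lies in that admissible class, and in that case the functional calculus gives $e^{it\sqrt{H}} = e^{it|\nabla|}$ and $H^{\gamma/2} = |\nabla|^\gamma$, so the stated inequality follows directly under the same admissibility constraints on $\gamma$, $q$, and $r$. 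Accordingly, the cleanest proof is to simply invoke \cite[Theorem 3]{BeGo} with $V \equiv 0$ and verify that the admissibility hypotheses there match the ones stated.

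For a self-contained derivation I would proceed via a $TT^*$ argument followed by real interpolation. Define $Tf := e^{it|\nabla|} |\nabla|^{-\gamma} f$, so the claim becomes $T : L^2_x \to L^{r,2}_x L^q_t$. The adjoint produces $T^* F = \int e^{-is|\nabla|}|\nabla|^{-\gamma} F(\cdot,s)\,ds$, and $TT^*$ is spacetime convolution with the kernel
\[
K(x,t) = \int_{\mathbb{R}^3} e^{ix\cdot\xi + it|\xi|}|\xi|^{-2\gamma}\,d\xi.
\]
A stationary phase computation (equivalently, the explicit three-dimensional wave propagator composed with a Riesz potential) shows $K$ is homogeneous of degree $-(3-2\gamma)$ off the light cone and carries a weak-type Lorentz singularity along $|x|=|t|$. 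The boundedness of convolution against $K$ then reduces, via Lemmas \ref{lem:lorentz_holder} and \ref{lem:lorentz_young}, to a one-dimensional Hardy--Littlewood--Sobolev estimate in $t$ composed with fractional integration in $x$. The scaling identity $\tfrac{1}{q} + \tfrac{3}{r} = \tfrac{3}{2} - \gamma$ matches the combined homogeneity of $K$, while the constraints $r \geq \max(6, 6/(3-2\gamma))$, $r \leq 3/\max(1-\gamma,0)$, and $q \neq \infty$ ensure each interpolation step lies in its valid range.

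The hard part will be controlling $K$ sharply near the light cone: classical oscillatory integral bounds only yield weak-$L^p$ control there, and recovering the full $L^{r,2}_x L^q_t$ estimate rather than a weaker $L^{r,\infty}_x$ version requires the refined Lorentz Young inequality in Lemma \ref{lem:lorentz_young} together with real interpolation between the endpoint cases; this is precisely the gain afforded by the Lorentz framework over the standard Strichartz theory. Since this subtle interpolation is carried out in full generality in \cite{BeGo}, I would ultimately prefer the citation-based proof.
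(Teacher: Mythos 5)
Your citation-based proof is exactly what the paper does: Proposition~\ref{prop:lorentz_est} is stated in the paper with a one-line reference to \cite[Theorem 3]{BeGo}, with the free half-wave case read off from the Schr\"odinger-operator version precisely as in your first paragraph. The optional $TT^*$ sketch is not part of the paper's treatment, but since you defer to the citation route the two proofs coincide.
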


We also record the following estimates from \cite[Lemma 2.7]{Beceanu}.
\begin{proposition}\label{prop:linfty_est}
The free sine and cosine evolutions satisfy:
\begin{align}
\|\cos(t \sqrt{-\Delta}) f\|_{L^\infty_x L^1_t} &\lesssim \||\nabla| f\|_{L^{3/2,1}_x},\\
\left \|\frac{\sin(t \sqrt{-\Delta}) }{\sqrt{-\Delta}} f\right \|_{L^\infty_x L^1_t} &\lesssim \| f\|_{L^{3/2,1}_x},\\
\|\cos(t \sqrt{-\Delta}) f\|_{L^\infty_x L^2_t} &\lesssim \||\nabla| f\|_{L^{2}_x},\\
\left \|\frac{\sin(t \sqrt{-\Delta}) }{\sqrt{-\Delta}} f\right \|_{L^\infty_x L^2_t} &\lesssim \| f\|_{L^{2}_x}.
\end{align}
\end{proposition}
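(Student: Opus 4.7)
The plan is to use the explicit spherical-means representation of the $3$D wave propagator for the two $L^\infty_x L^1_t$ bounds, and a Plancherel-in-time together with Fourier restriction to spheres for the two $L^\infty_x L^2_t$ bounds. Writing $M_t g(x) = \tfrac{1}{4\pi}\int_{\mathbb{S}^2} g(x+t\omega)\,d\omega$ for the spherical mean, Kirchhoff's formula reads
\[
\frac{\sin(t\sqrt{-\Delta})}{\sqrt{-\Delta}} f(x) = t\,M_t f(x), \qquad \cos(t\sqrt{-\Delta}) f(x) = M_t f(x) + t\,M_t(\omega \cdot \nabla f)(x).
\]

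For the sine $L^1_t$ estimate I would take absolute values, integrate in $t \in (0,\infty)$, and change variables $y = x+t\omega$ (so $dy = t^2\,dt\,d\omega$) to obtain the pointwise identity
\[
\int_0^\infty t\,M_t|f|(x)\,dt \;=\; \frac{1}{4\pi}\int_{\bR^3}\frac{|f(y)|}{|x-y|}\,dy.
\]
Since $|x|^{-1}\in L^{3,\infty}(\bR^3)$, Young's inequality in Lorentz spaces (Lemma \ref{lem:lorentz_young}) bounds the right-hand side uniformly in $x$ by $\|f\|_{L^{3/2,1}}$. For the cosine estimate the term $t\,M_t(\omega\cdot\nabla f)$ is dominated pointwise by $t\,M_t|\nabla f|$ and handled identically, yielding the $\||\nabla|f\|_{L^{3/2,1}}$ contribution after using boundedness of Riesz transforms on Lorentz spaces to pass between $|\nabla f|$ and $|\nabla|f$. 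The leftover $M_t f$ piece requires one extra step: the same change of variables gives $\int_0^\infty M_t|f|(x)\,dt = \tfrac{1}{4\pi}(|f|*|x|^{-2})(x)$; using that $|\nabla|^{-1}$ is convolution with a constant multiple of $|x|^{-2}$, one has the pointwise bound $|f(y)|\lesssim \int \bigl||\nabla|f(z)\bigr|\,|y-z|^{-2}\,dz$, and combining this with the Riesz composition identity $|x|^{-2}*|x|^{-2} = c|x|^{-1}$ in $\bR^3$ reduces matters once more to convolution of $\bigl||\nabla|f\bigr|$ with $|x|^{-1}$, which is again controlled by Young in Lorentz.

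For the $L^\infty_x L^2_t$ estimates I would freeze $x$ and view $e^{\pm it\sqrt{-\Delta}} f(x)$ as a function of $t$. Its time Fourier transform is (formally, for $\tau>0$) the spherical restriction $c\int_{|\xi|=\tau}e^{ix\cdot\xi}\hat f(\xi)\,d\sigma(\xi)$; applying Plancherel in $t$, then Cauchy--Schwarz on the sphere, then polar coordinates yields
\[
\int_{\bR}|e^{it\sqrt{-\Delta}}f(x)|^2\,dt \;\lesssim\; \int_0^\infty \tau^2 \Bigl(\int_{\mathbb{S}^2}|\hat f(\tau\omega)|^2\,d\omega\Bigr)\tau^2\,d\tau \;=\; \||\nabla|f\|_{L^2_x}^2
\]
uniformly in $x$. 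Writing $\cos(t\sqrt{-\Delta}) = \tfrac{1}{2}(e^{it\sqrt{-\Delta}}+e^{-it\sqrt{-\Delta}})$ finishes the cosine bound, and the sine case is identical except that the extra factor $|\xi|^{-1}$ cancels one factor of $\tau$ and leaves $\|f\|_{L^2_x}^2$. The main technical obstacle is justifying this Plancherel-in-time step rigorously: $t\mapsto e^{it\sqrt{-\Delta}}f(x)$ is not a priori in $L^1_t$, so its Fourier transform in $t$ exists only distributionally. I would handle this either by first proving the estimates for Schwartz $f$ (where stationary phase gives enough time decay at each fixed $x$ to make the manipulations literal) and extending by density, or equivalently by dualizing to the $T T^*$ form $\bigl\|\int e^{-it\sqrt{-\Delta}}F(t,\cdot)\,dt\bigr\|_{L^2_x}\lesssim \|F\|_{L^1_x L^2_t}$, which is the standard way such reversed Strichartz bounds are presented.
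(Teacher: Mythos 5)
The paper does not prove this proposition; it is recorded verbatim as a citation to Lemma~2.7 of Beceanu's paper, so there is no "paper proof" to compare against. Your reconstruction is correct, and the spherical-means route for the $L^\infty_x L^1_t$ bounds is essentially the standard kernel argument one finds for these reversed Strichartz estimates: the change of variables $y=x+t\omega$ converts the $t$-integral of $t\,M_t|f|$ into convolution against $|x|^{-1}\in L^{3,\infty}$, and then Lorentz-dual H\"older (the $q_1=\infty$ endpoint, which is the duality $\langle L^{3/2,1}, L^{3,\infty}\rangle$ rather than the range in Lemma~\ref{lem:lorentz_young}) gives the $L^{3/2,1}$ bound. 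Your handling of the cosine term — splitting into $M_t f + t\,M_t(\omega\cdot\nabla f)$, controlling the second piece directly and the first via the Riesz composition identity $|x|^{-2}*|x|^{-2}=c|x|^{-1}$ — is correct, though you could alternatively note that $f=|\nabla|^{-1}(|\nabla|f)\in L^{3,1}$ with $\|f\|_{L^{3,1}}\lesssim\||\nabla|f\|_{L^{3/2,1}}$ by the Lorentz-endpoint Sobolev embedding, and then pair directly against $|x|^{-2}\in L^{3/2,\infty}$.

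One remark worth making: the $L^\infty_x L^2_t$ bounds do not actually need the Plancherel-in-time / Fourier-restriction argument whose rigor you flag. The same Kirchhoff representation already handles them, if instead of the triangle inequality you use Cauchy--Schwarz on $\mathbb{S}^2$:
\begin{align}
\int_0^\infty \bigl|t\,M_t f(x)\bigr|^2\,dt \;\leq\; \frac{1}{4\pi}\int_0^\infty t^2\int_{\mathbb{S}^2}|f(x+t\omega)|^2\,d\omega\,dt \;=\; \frac{1}{4\pi}\|f\|_{L^2}^2,
\end{align}
which is the sine $L^2_t$ bound. For cosine the $t\,M_t(\omega\cdot\nabla f)$ piece gives $\frac{1}{4\pi}\|\nabla f\|_{L^2}^2$ identically, and the $M_t f$ piece yields $\frac{1}{4\pi}\int |f(y)|^2|x-y|^{-2}\,dy$, which is $\lesssim\|\nabla f\|_{L^2}^2$ by Hardy's inequality. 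This is a unified, manifestly elementary proof of all four estimates that sidesteps the distributional Fourier-transform issue; your Plancherel argument is also correct once restricted to Schwartz data and extended by density, so this is a matter of economy rather than a gap.
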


In certain cases, we will use the radiality of the initial data and appeal to the larger range of Strichartz estimates available in the radial setting. We say that the pair $(q,r)$ is (non-sharp) \emph{radial-admissible} if $2 \leq q \leq \infty$, $2 \leq r < \infty$ and
\begin{equation*}
 \frac{1}{q} + \frac{2}{r} < 1.
\end{equation*}
\begin{proposition} [\protect{Radial Strichartz estimates; \cite{Sterbenz}, \cite{Fang_Wang}}] \label{prop:radial_strichartz_estimates}
 Let $(u_0, u_1) \in \dot{H}^\gamma_x(\bR^3) \times \dot{H}^{\gamma - 1}_x(\bR^3)$ for some $\gamma > 0$ be radially symmetric and let $u$ be a radially symmetric solution to the linear wave equation on some time interval $I \ni 0$ with initial data $(u_0, u_1)$. Suppose $(q,r)$ is a radial-admissible pair satisfying the scaling condition
 \begin{align} \label{equ:scaling}
  \frac{1}{q} + \frac{3}{r} = \frac{3}{2} - \gamma.
 \end{align}
 Then
 \begin{equation*}
  \|u\|_{L^q_t L^r_x(I \times \bR^3)} \lesssim \|u_0\|_{\dot{H}^\gamma_x(\bR^3)} + \|u_1\|_{\dot{H}^{\gamma-1}_x(\bR^3)}.
 \end{equation*}
\end{proposition}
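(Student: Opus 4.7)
The statement is quoted from \cite{Sterbenz, Fang_Wang}, so the most direct route is simply to invoke those references. To sketch the underlying argument: the plan would be to exploit the improved pointwise decay of radial functions in $\bR^3$ (Strauss-type bounds) to enlarge the admissible range beyond the Keel--Tao range of Proposition~\ref{prop:strichartz_estimates}.

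First I would perform a Littlewood--Paley decomposition $u = \sum_N P_N u$ and, by the scaling invariance of the wave equation, reduce matters to estimating unit-frequency solutions; since the standard Littlewood--Paley projectors are radial convolution operators, they preserve radial symmetry. At unit frequency, the half-wave propagator $e^{\pm it|\nabla|}$ applied to a radial function admits a Bessel-function representation (explicitly, in three dimensions, the radial wave equation reduces via $w(r,t) = r u(r,t)$ to the one-dimensional wave equation, which gives d'Alembert-type formulas). Combining these representations with the asymptotic $J_{1/2}(z) \sim z^{-1/2}$ and stationary phase yields the characteristic improved pointwise decay of the radial fundamental solution, strictly better than the $t^{-1}$ dispersive decay available in the non-radial case.

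From here, I would apply the standard $TT^*$ argument to translate $\|u\|_{L^q_t L^r_x} \lesssim \|(u_0, u_1)\|_{\dot H^{\gamma} \times \dot H^{\gamma-1}}$ into a bilinear bound for the propagator kernel, and then control the resulting convolution kernel via Hardy--Littlewood--Sobolev. The enhanced radial pointwise decay is precisely what enlarges the admissible range from $\frac{1}{q} + \frac{1}{r} \leq \frac{1}{2}$ (wave-admissibility) to $\frac{1}{q} + \frac{2}{r} < 1$.

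The main obstacle in this circle of ideas is always the sharp endpoint behaviour, which in Sterbenz's proof is handled via a delicate angular Littlewood--Paley decomposition in order to defeat Knapp-type counterexamples. Fortunately, the statement above adopts the non-sharp condition $\frac{1}{q} + \frac{2}{r} < 1$ with strict inequality, so one can avoid the endpoint analysis: interpolating between the standard Strichartz range of Proposition~\ref{prop:strichartz_estimates} and a trivial bound obtained via Bernstein together with the radial pointwise kernel decay already covers the stated range, giving a considerably softer argument than the sharp radial Strichartz theorem.
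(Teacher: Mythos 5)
Your primary suggestion — simply invoking \cite{Sterbenz} and \cite{Fang_Wang} — matches the paper exactly: the proposition is stated as a quoted result and no proof is given. Your supplementary sketch of the underlying argument is a fair account of the standard circle of ideas (Littlewood--Paley, reduction to 1D via $w = ru$, $TT^*$, then interpolation), with one small imprecision worth flagging: the gain for radial data in $\bR^3$ is a \emph{spatial} one — a uniform-in-time $|x|^{-1}$ bound on the unit-frequency half-wave kernel, stemming from the Strauss lemma / d'Alembert representation — rather than any improvement over the $t^{-1}$ dispersive decay, which is already sharp by Huygens' principle; it is this spatial gain, fed into the $TT^*$ argument, that enlarges the admissible region from $\tfrac{1}{q}+\tfrac{1}{r}\leq\tfrac{1}{2}$ to $\tfrac{1}{q}+\tfrac{2}{r}<1$.
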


\medskip
In order to establish Strichartz estimates for the linearized flow, and the improved Strichartz estimates for the linearized flow of the random initial data, we will follow the methods of \cite{Beceanu, BeGo} and appeal to the framework of Weiner spaces.

\begin{definition}[\protect{\cite[Definition 4]{Beceanu}}]
For a Banach lattice $X$, let the Wiener space $\mathcal{V}_X$ consist of kernels $T(x,y,t)$ such that for every pair $(x,y)$, $T(x,y,t)$ is a finite measure in $t$ on $\mathbb{R}$ and 
\[
M(T)(x,y) := \int_{\bR} d |T(x,y,t)|
\]
is an $X$-bounded operator.
\end{definition}

The space $\mathcal{V}_X$ is an algebra under composition given by
\begin{align}\label{equ:comp_rule}
(T_1 \circ T_2)(t) := \int_{-\infty}^{\infty} \int_{\mathbb{R}^3}T_1(x,y,s) T_2(y,z,t-s) dy ds,
\end{align}
and occasionally we will abuse notation slightly and write the composition as the product $T_1 T_2$. Elements $T \in \mathcal{V}_X$ have Fourier transforms 
\[
\widehat{T}(x,y,\lambda) := \int_{\mathbb{R}} e^{-it\lambda} dT(x, y, t),
\]
and by the properties of the Fourier transform and the definition of composition by convolution, one can verify that, 
\[
(\widehat{T}_1(\lambda) \widehat{T}_2(\lambda) )^{\vee} = (T_1 \circ T_2),
\]
where
\[
\widehat{T}_1(\lambda) \widehat{T}_2(\lambda) = \int \widehat{T}_1(x, y, \lambda) \widehat{T}_1(z, y, \lambda) dy.
\]

Before stating our next result, we set the notation 
\[
 (\varphi \otimes V \varphi  )f = \langle V \varphi ,   f \rangle \varphi
\]
The framework of Wiener spaces enables Beceanu in \cite{Beceanu} to establish a following representation formula for the linearized flow, see also \cite{KS07}.

\begin{theorem}[\protect{\cite[Proposition 1.6]{Beceanu}}]\label{thm:rep}
Assume that $\langle x \rangle V \in L^{3/2,1}$, and that $H = -\Delta + V$ has a resonance $\varphi$ at zero. Then for $t \geq 0$,
\begin{align}\label{equ:rep_form}
\frac{\sin(t \sqrt{H})}{\sqrt{H}}P_{ac} &= - \frac{4 \pi }{|\langle V , \varphi \rangle|^2}  \varphi \otimes V \varphi \int_0^t \frac{\sin(s \sqrt{-\Delta})}{\sqrt{-\Delta}}  ds + \mathcal{S}(t)\\
\cos(t \sqrt{H}) P_{ac} &= - \frac{4 \pi }{|\langle V , \varphi \rangle|^2}  \varphi \otimes V \varphi \int_0^t \cos(s \sqrt{-\Delta}) ds + \mathcal{C}(t),
\end{align}
where $\mathcal{S}(t)$ and $C(t)$ satisfy the reversed Strichartz estimates
\begin{align}
\|\mathcal{S}(t) f\|_{L^{6,2}_x L^\infty_t \cap L^\infty_x L^2_t (\bR^3)} &\lesssim \|f\|_{L^2(\bR^3)}\\
\|\mathcal{S}(t) f\|_{L^\infty_x L^1_t (\bR^3)} &\lesssim \|f\|_{L^{3/2,1}(\bR^3)}\\
\|\mathcal{C}(t) f\|_{L^{6,2}_x L^\infty_t \cap L^\infty_x L^2_t (\bR^3)} &\lesssim \||\nabla| f\|_{L^2_x(\bR^3)}\\
\|\mathcal{C}(t) f\|_{L^\infty_x L^1_t (\bR^3)} &\lesssim \||\nabla| f\|_{L^{3/2,1}(\bR^3)},
\end{align}
and the estimates
\begin{align}
\biggl\|\int_0^t \mathcal{S}(t-s) F(s) \biggr\|_{L^{6,2}_x L^\infty_t} &\lesssim \|F\|_{L^{6/5}_x L^\infty_t}\\
\biggl\|\int_0^t \mathcal{S}(t-s) F(s) \biggr\|_{L^{\infty}_x L^2_t \cap L^\infty_x L^1_t} &\lesssim \|F\|_{L^{3/2,1}_x L^2_t}\\
\biggl\|\int_0^t \mathcal{C}(t-s) F(s) \biggr\|_{L^{6,2}_x L^\infty_t} &\lesssim \||\nabla|F\|_{L^{6/5}_x L^\infty_t}\\
\biggl\|\int_0^t \mathcal{C}(t-s) F(s) \biggr\|_{L^{\infty}_x L^2_t\cap L^\infty_x L^1_t} &\lesssim \||\nabla|F\|_{L^{3/2,1}_x L^2_t}.
\end{align}
\end{theorem}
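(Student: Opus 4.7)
My approach combines three ingredients: Stone's formula to reduce the wave propagators to a contour integral of the spectral resolvent; a Jensen--Kato style low-energy resolvent expansion isolating the zero-resonance contribution as an explicit rank-one operator; and the Wiener algebra framework of $\mathcal{V}_X$ to transfer reversed Strichartz bounds from the free evolution to the smooth remainder.

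First, using \eqref{stone} I would express
\[
\frac{\sin(t\sqrt{H})}{\sqrt{H}} P_{ac} = \frac{1}{\pi i} \int_0^\infty \sin(t\lambda) \bigl[ R_V^+(\lambda^2) - R_V^-(\lambda^2) \bigr] d\lambda,
\]
with an analogous formula for $\cos(t\sqrt{H}) P_{ac}$ arising after a change of variables $\mu = \lambda^2$. Via the resolvent identity \eqref{res_formula_2} I would then write $R_V^\pm(\lambda^2) = B^\pm(\lambda) R_0^\pm(\lambda^2)$ with $B^\pm(\lambda) := (I + R_0^\pm(\lambda^2) V)^{-1}$. The hypothesis $\langle x \rangle V \in L^{3/2,1}$ guarantees, along the lines of Lemma \ref{lem:lim_ap}, that $B^\pm(\lambda)$ is well defined and continuous in $\lambda > 0$, with a singularity at $\lambda = 0$ driven solely by the zero resonance $\varphi$.

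Second, I would carry out a low-energy expansion of $B^\pm(\lambda)$ near $\lambda = 0$. Since $H\varphi = 0$ implies $\varphi + R_0^\pm(0) V \varphi = 0$, the operator $I + R_0^\pm(0) V$ has a nontrivial null space generated by $\varphi$. Using the three-dimensional asymptotic $R_0^\pm(\lambda^2)(x,y) = \frac{1}{4\pi|x-y|} \pm \frac{i\lambda}{4\pi} + O(\lambda^2)$ as $\lambda \to 0$, the inverse $B^\pm(\lambda)$ acquires a simple pole at $\lambda = 0$ whose residue is a rank-one operator of the form $\varphi \otimes V\varphi$, with an explicit constant pinned down by the pairing with $\langle V, \varphi\rangle$. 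Substituting back into the Stone integral and using the identity $\int_0^\infty \frac{\sin(t\lambda)}{\lambda} g(\lambda) d\lambda = \int_0^t \int_0^\infty \cos(s\lambda) g(\lambda)\, ds\, d\lambda$ to convert the $\lambda^{-1}$ singularity into a time antiderivative of the free sine evolution, the singular part of the expansion contributes precisely
\[
-\frac{4\pi}{|\langle V,\varphi\rangle|^2}\, \varphi \otimes V\varphi \int_0^t \frac{\sin(s\sqrt{-\Delta})}{\sqrt{-\Delta}}\, ds,
\]
with the analogous calculation for the cosine propagator.

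Third, the regular remainders $\mathcal{S}(t)$ and $\mathcal{C}(t)$ come from $B_0^\pm(\lambda) R_0^\pm(\lambda^2)$, where $B_0^\pm(\lambda)$ denotes the non-singular part of the expansion, integrated against $\sin(t\lambda)$ or $\lambda\cos(t\lambda)$ in $\lambda$. I would show that their time-domain kernels lie in the Wiener algebras $\mathcal{V}_X$ simultaneously for each target lattice $X \in \{L^{3/2,1}, L^2, L^{6,2}, L^\infty\}$, using the smoothness and decay of $B_0^\pm(\lambda)$ in $\lambda$ together with the standard $\lambda$-regularity of $R_0^\pm(\lambda^2)$. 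With this in hand, the composition rule \eqref{equ:comp_rule} lets me factor $\mathcal{S}(t)$ and $\mathcal{C}(t)$ through the free wave propagators, so that the reversed Strichartz estimates of Proposition \ref{prop:linfty_est} for the free evolution transfer directly to $\mathcal{S}$ and $\mathcal{C}$. The inhomogeneous reverse-Strichartz estimates follow in the same way by applying the composition rule to Duhamel integrals in $t$.

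\emph{Main obstacle.} The hardest step is the quantitative low-energy analysis: identifying the rank-one residue with the precise constant $-4\pi/|\langle V,\varphi\rangle|^2$, and then showing the subtracted remainder $B_0^\pm(\lambda)$ has enough $\lambda$-regularity and uniform boundedness to place the corresponding kernel in $\mathcal{V}_X$ for all four target spaces at once. This is where the full strength of $\langle x\rangle V \in L^{3/2,1}$ is required, since the extra spatial weight is exactly what is needed to control the nonlocal commutator terms that arise from expanding $R_0^\pm(\lambda^2) V$ near $\lambda = 0$.
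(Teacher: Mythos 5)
The paper does not prove this statement; it cites Beceanu's Proposition 1.6 directly, so there is no in-paper argument to compare your sketch against. That said, your outline tracks Beceanu's actual strategy faithfully: Stone's formula reduces the propagators to $\lambda$-integrals of $R_V^\pm(\lambda^2)$; the factorization $R_V^\pm = (I + R_0^\pm V)^{-1} R_0^\pm$ localizes the singularity at $\lambda = 0$, where $I + R_0^\pm(0)V$ has one-dimensional kernel spanned by $\varphi$; the Jensen--Kato low-energy expansion then yields a simple pole with rank-one residue $\varphi \otimes V\varphi$, the constant $-4\pi/|\langle V,\varphi\rangle|^2$ coming from pairing the dual null vector $V\varphi$ against the $O(\lambda)$ term of the free resolvent expansion; and the regular remainder sits in the Wiener algebras of Lemma \ref{lem:bec_24}, so composition \eqref{equ:comp_rule} transfers the free reversed Strichartz bounds of Proposition \ref{prop:linfty_est}. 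The paper re-implements precisely this Wiener-algebra step in Lemma \ref{lem:s_reg} and Proposition \ref{prop:new_rep}, but in the simpler spectrally localized setting where the cutoff $\chi_1$ excises the resonance and no low-energy expansion is needed. Two small imprecisions in your outline: after the change of variables $\lambda \mapsto \lambda^2$ in Stone's formula the Jacobian already cancels the $(\sqrt{\lambda})^{-1}$ in the sine symbol, so the $\int_0^t$ structure does not arise from your quoted integral identity but rather from the residue term's failure to decay at $\lambda = 0$, which translates to linear $t$-growth in the propagator; and placing the remainder kernel simultaneously in $\mathcal{V}_{L^{6,2}}$, $\mathcal{V}_{L^\infty}$, and $\mathcal{V}_{L^2}$ needs the real-interpolation observation of Remark \ref{rem:interp_linear_evol}. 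You correctly flag the quantitative low-energy expansion as the decisive technical burden.
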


\begin{remark}
We note that, in the notation of Section \ref{sec:dist_four}, we have
\begin{align}
\frac{\sin(t \sqrt{H})}{\sqrt{H}}P_{ac} &= \frac{\sin(t \sqrt{|H|})}{\sqrt{|H|}}P_{ac},\\
\cos(t \sqrt{H}) P_{ac} & = \cos(t \sqrt{|H|}) P_{ac},
\end{align}
but we use the notation on the left for the free wave propagators since it is more standard in the literature. Additionally, it will often occur in the sequel that the projection $P_{ac}$ will be implicit in some distorted Fourier multiplier and we will not record its presence explicitly, see for instance Proposition \ref{prop:new_rep} below.
\end{remark}

In order to obtain improved probabilistic estimates, we will need a modified version of the representation formula from \cite{Beceanu}. The key point is that if we stay away from zero in the spectrum of $H$, the resulting linearized flows enjoy a simpler representation formula, from which we may infer many of the same Strichartz estimates as the free evolution. Before stating this result, we record the following lemma from \cite{Beceanu}.

\begin{lemma}[\protect{\cite[Lemma 2.4]{Beceanu}}] \label{lem:bec_24}
Let $T(\lambda) = I + V R_0^+(\lambda^2)$. Assume that $V \in L^{3/2,1}$ and let $\lambda_0 \neq 0$. Consider a smooth bump function $\chi \in C_0^\infty(\bR)$ supported around zero. Then for $0 < \varepsilon \ll 1$ and $R \gg 1$,
\[
(\chi((\lambda - \lambda_0) / \varepsilon) T(\lambda)^{-1})^{\wedge} \in \mathcal{V}_{L^1} \cap \mathcal {V}_{L^{3/2,1}} , \quad ((1 - \chi(\lambda / R)) T(\lambda)^{-1})^{\wedge} \in \mathcal{V}_{L^1} \cap \mathcal {V}_{L^{3/2,1}}.
\]
Consequently, for $\varepsilon, R$ as above, the dual operator satisfies
\[
(\chi((\lambda - \lambda_0) / \varepsilon) T^*(\lambda)^{-1})^{\wedge} \in \mathcal {V}_{L^{3, \infty}}  \cap \mathcal{V}_{L^\infty} , \quad ((1 - \chi(\lambda / R)) T^*(\lambda)^{-1})^{\wedge} \in\mathcal {V}_{L^{3, \infty}}  \cap \mathcal{V}_{L^\infty} .
\]

\end{lemma}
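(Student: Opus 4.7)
The strategy is a Neumann series expansion in the Wiener algebra $\mathcal{V}_X$, exploiting the fact that it is closed under the composition given by convolution in time, see \eqref{equ:comp_rule}. Writing $T(\lambda) = I + VR_0^+(\lambda^2)$, the aim in each of the two regimes is to produce $T(\lambda)^{-1}$ as a convergent series whose Fourier inverse in $\lambda$ lies in $\mathcal{V}_{L^1} \cap \mathcal{V}_{L^{3/2,1}}$. The key input is a Wiener-algebra bound on the building block $VR_0^+(\lambda^2)$ itself: in three dimensions the kernel of $R_0^+(\lambda^2)$ is $\frac{e^{i\lambda|x-y|}}{4\pi|x-y|}$, so its inverse Fourier transform in $\lambda$ is the finite measure $\frac{\delta_{t=|x-y|}}{4\pi|x-y|}$, and multiplication by $V$ preserves this support structure. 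The total-variation kernel is therefore $\frac{|V(x)|}{4\pi|x-y|}$, which by H\"older's inequality in Lorentz spaces (Lemma \ref{lem:lorentz_holder}) and Young's inequality (Lemma \ref{lem:lorentz_young}) defines a bounded operator on both $L^1$ and $L^{3/2,1}$ under the hypothesis $V \in L^{3/2,1}$.

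For the high-frequency factor $(1-\chi(\lambda/R))$, I would argue that the iterated Wiener-norm $\|(VR_0^+(\lambda^2))^N\|_{\mathcal{V}_{L^1}}$, when restricted by the cutoff, decays geometrically in $N$ provided $R \gg 1$. Concretely, each composition produces an iterated spherical-mean integral, and after oscillating the endpoint in $\lambda$ against the smooth cutoff $(1-\chi(\cdot/R))$, one gains cancellation (analogous to the high-frequency decay of $\|R_0^+(\lambda^2)V\|_{L^\infty \to L^\infty}$ used in Lemma \ref{lem:lim_ap}, but now in the Wiener algebra). This justifies summing the Neumann series $\sum_{n \geq 0}(-VR_0^+)^n$ in $\mathcal{V}_{L^1} \cap \mathcal{V}_{L^{3/2,1}}$ on the support of $1 - \chi(\cdot/R)$.

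For the localization $\chi((\lambda - \lambda_0)/\varepsilon)$ around $\lambda_0 \neq 0$, I would first invoke the limiting absorption principle (\cite[Theorem 4.1]{Agmon}), Assumption (A1), and the absence of positive embedded eigenvalues or resonances (Remark \ref{no_eig}), to conclude that $T(\lambda_0)$ is invertible in the appropriate operator topology on $L^{3/2,1}$ and $L^1$. Then I would factor
\[
T(\lambda) = T(\lambda_0)\bigl(I + T(\lambda_0)^{-1}(T(\lambda) - T(\lambda_0))\bigr),
\]
where $T(\lambda) - T(\lambda_0) = V\bigl(R_0^+(\lambda^2) - R_0^+(\lambda_0^2)\bigr)$. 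When this difference is multiplied by the small-support cutoff $\chi((\lambda-\lambda_0)/\varepsilon)$ and the inverse Fourier transform in $\lambda$ is taken, norm-continuity of $\lambda \mapsto VR_0^+(\lambda^2)$ (again from the limiting absorption principle plus Assumption (A1)) forces the Wiener-algebra norm of the perturbation to tend to zero with $\varepsilon$. Hence the inner Neumann series converges in $\mathcal{V}_{L^1} \cap \mathcal{V}_{L^{3/2,1}}$ for $\varepsilon$ sufficiently small. The dual statements then follow from $T(\lambda)^* = I + R_0^-(\lambda^2)V$, the duality pairings $(L^1)^* = L^\infty$ and $(L^{3/2,1})^* = L^{3,\infty}$, and the complex-conjugation identity \eqref{equ:resolvent_complex_conj}.

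The main obstacle is establishing the geometric decay in $N$ of the iterated Wiener norms $\|(VR_0^+)^N\|_{\mathcal{V}_{L^1}}$ once the high-frequency cutoff is inserted: the composition rule \eqref{equ:comp_rule} produces $N$-fold iterated spherical means whose total time-variation must be controlled uniformly in $\lambda$. This is where the Lorentz-space boundedness of $V$ together with oscillation from $(1-\chi(\lambda/R))$ must be combined most delicately; the localization-around-$\lambda_0$ case is, by contrast, a soft perturbation argument once continuity of the resolvent family in the relevant topology is in hand.
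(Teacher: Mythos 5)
The paper does not prove this statement --- it is imported from \cite[Lemma~2.4]{Beceanu} --- so there is no in-paper proof to compare against. Your Wiener-algebra / Neumann-series framework is the right one and is in the spirit of Beceanu's argument; the local-in-$\lambda$ step near $\lambda_0 \neq 0$ (factor out $T(\lambda_0)$, perturb, invoke norm-continuity of $\lambda\mapsto VR_0^+(\lambda^2)$ to make the error small in $\mathcal{V}$ for small $\varepsilon$) and the duality step at the end are both sound. The high-frequency step, however, contains a genuine gap.

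You attribute the smallness of $\|(1-\chi(\lambda/R))(VR_0^+(\lambda^2))^N\|_{\mathcal{V}_{L^1}}$ to ``oscillation of the endpoint in $\lambda$ against the smooth cutoff.'' That mechanism already fails at $N=1$: the inverse Fourier transform of $VR_0^+(\lambda^2)$ has kernel $V(x)\,\frac{\delta(t\pm|x-y|)}{4\pi|x-y|}$, an \emph{atomic} measure in $t$, and convolving in $t$ with $(1-\chi(\cdot/R))^{\vee} = \delta_0 - R\widehat{\chi}(R\cdot)$ does not reduce the total-variation kernel $\frac{|V(x)|}{4\pi|x-y|}$; the resulting $\mathcal{V}_{L^1}$-norm is of order one uniformly in $R$, so no geometric gain appears at this level. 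The actual mechanism --- and the reason one must pass to a higher iterate --- is that $(VR_0^+(\lambda^2))^2$ has an inverse Fourier transform whose $t$-kernel is \emph{absolutely continuous}: the phase $|x-z|+|z-y|$ sweeps a continuum as $z$ ranges over $\bR^3$, so the square is a genuine density with no atoms. A Riemann--Lebesgue type argument then yields $\|(1-\chi(\lambda/R))(VR_0^+(\lambda^2))^2\|_{\mathcal{V}_{L^1}\cap \mathcal{V}_{L^{3/2,1}}} \to 0$ as $R\to\infty$, and combined with the boundedness $\|(VR_0^+)^n\|_{\mathcal{V}} \lesssim C^n$ this gives convergence of $\sum_n (-VR_0^+)^n$ on $\{|\lambda|\geq R\}$ via $\|A^n\|\leq \|A^{\,n \bmod 2}\|\,\|A^2\|^{\lfloor n/2\rfloor}$. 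Without the absolute-continuity observation your asserted geometric decay is unsupported. It is also worth noting that Beceanu does not actually sum the Neumann series by hand: he verifies the hypotheses of an abstract Wiener inversion theorem, which only asks for pointwise invertibility of $T(\lambda)$ in $\mathcal{B}(L^1)$ and $\mathcal{B}(L^{3/2,1})$ on the support of the cutoff (supplied by \cite[Lemma~3.2]{BS16}-type facts for $\lambda\neq 0$) plus equicontinuity and decay of the Wiener kernel, and lets the theorem perform the inversion; this sidesteps the iteration bookkeeping but rests on the same underlying absolute-continuity phenomenon you would need to make your series converge.
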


The next theorem is in the spirit of Lemma \ref{lem:bec_24}, modified to include a smooth cut-off function supported away from zero.

\begin{lemma}\label{lem:s_reg}
Suppose that $V \in L^{3/2,1}$. Let $\chi_1$ denote a smooth function supported away from zero. Let
\[
U(\lambda) = \chi_1(\lambda^2)  (I + V R_0^{+}(\lambda^2) )^{-1},
\]
then $\widehat{U} \in \mathcal{V}_{L^1} \cap \mathcal{V}_{L^{3/2,1}}$. Likewise,
 \[
U^*(\lambda) = \chi_1(\lambda^2) (I + R_0^{+}(\lambda^2) V)^{-1},
\]
satisfies $\widehat{U^*} \in  \mathcal{V}_{L^{3,\infty}} \cap \mathcal{V}_{L^\infty}$.
\end{lemma}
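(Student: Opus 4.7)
The strategy is to reduce the claim to Lemma \ref{lem:bec_24} via a partition of unity, together with the observation that multiplication of an operator-valued Fourier multiplier by a scalar function whose time-domain inverse Fourier transform lies in $L^1(\bR)$ (or more generally is a finite Borel measure on $\bR$) corresponds to convolution in $t$, which preserves membership in the Wiener spaces $\mathcal{V}_X$. Explicitly, if $\widehat{T} = G(\lambda)$ with $T \in \mathcal{V}_X$ and $\psi$ is a scalar function with $\check{\psi} \in L^1(\bR)$, then $(\psi \cdot G)^{\vee} = \check{\psi} *_t T$, and
\[
M(\check{\psi} *_t T)(x,y) \leq \|\check{\psi}\|_{L^1} \, M(T)(x,y),
\]
so $\check{\psi} *_t T \in \mathcal{V}_X$ with comparable norm.

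To set up the partition, let $\psi(\lambda) := \chi_1(\lambda^2)$, a smooth even function of $\lambda$ supported in $\{|\lambda| \geq c\}$ for some $c > 0$. Fix a smooth even bump $\chi \in C_c^\infty(\bR)$ with $\chi \equiv 1$ on $[-1,1]$ and $\mathrm{supp}(\chi) \subset [-2,2]$, and pick $R$ large enough that $\psi(\lambda)(1-\chi(\lambda/R))$ captures the behavior of $\psi$ for $|\lambda|\geq R$. Covering the compact set $\mathrm{supp}(\psi) \cap \{|\lambda| \leq 2R\}$ by finitely many intervals $(\lambda_i - \varepsilon, \lambda_i + \varepsilon)$ with $|\lambda_i| \geq c/2 > 0$ and $\varepsilon$ sufficiently small, a smooth partition of unity yields
\[
\psi(\lambda) \;=\; \psi(\lambda)\bigl(1 - \chi(\lambda/R)\bigr) \;+\; \sum_{i=1}^N a_i(\lambda),
\]
where each $a_i \in C_c^\infty(\bR)$ is supported in $(\lambda_i - \varepsilon, \lambda_i + \varepsilon)$ and, by enlarging $\varepsilon$ slightly to some $\varepsilon'$, satisfies $a_i(\lambda) = a_i(\lambda)\,\chi\bigl((\lambda - \lambda_i)/\varepsilon'\bigr)$.

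For each localized piece I would factor
\[
a_i(\lambda)\, T(\lambda)^{-1} \;=\; a_i(\lambda)\,\cdot\,\bigl[\chi\bigl((\lambda - \lambda_i)/\varepsilon'\bigr)\, T(\lambda)^{-1}\bigr],
\]
with $T(\lambda) = I + V R_0^+(\lambda^2)$. The bracketed piece has inverse Fourier transform in $\mathcal{V}_{L^1} \cap \mathcal{V}_{L^{3/2,1}}$ by Lemma \ref{lem:bec_24}, and since $a_i \in C_c^\infty \subset \mathcal{S}$ we have $\check{a}_i \in \mathcal{S}(\bR) \subset L^1(\bR)$. The observation in the first paragraph then shows that each $a_i(\lambda)T(\lambda)^{-1}$ has inverse Fourier transform in $\mathcal{V}_{L^1} \cap \mathcal{V}_{L^{3/2,1}}$. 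For the tail, I would write
\[
\psi(\lambda)\bigl(1-\chi(\lambda/R)\bigr) T(\lambda)^{-1} \;=\; \psi(\lambda) \cdot \bigl[ \bigl(1-\chi(\lambda/R)\bigr) T(\lambda)^{-1} \bigr],
\]
where the bracket lies in $\mathcal{V}_{L^1} \cap \mathcal{V}_{L^{3/2,1}}$ by Lemma \ref{lem:bec_24}. Summing the finitely many pieces gives the first conclusion. The statement for $U^*(\lambda)$ is proved by the same decomposition, invoking the dual conclusions of Lemma \ref{lem:bec_24} which place the corresponding localized pieces in $\mathcal{V}_{L^{3,\infty}} \cap \mathcal{V}_{L^\infty}$.

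The main point to check is that the scalar multiplier $\psi(\lambda) = \chi_1(\lambda^2)$ acts on the tail factor via convolution with a finite measure in $t$. This is automatic if $\chi_1 \in C_c^\infty(\bR)$ (Schwartz, hence $\check{\psi} \in \mathcal{S} \subset L^1$); more generally it holds whenever $\chi_1(\lambda^2) - c$ is Schwartz for some constant $c \in \bR$ (then $\check{\psi} = c\,\delta_0 + L^1$, still a finite measure on $\bR$), which covers the cases needed in the rest of the paper. This is the only step that is not already contained in Lemma \ref{lem:bec_24}, and it is the point where the precise assumptions on $\chi_1$ beyond being ``supported away from zero'' enter.
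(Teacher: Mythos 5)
Your proof is correct and takes essentially the same route as the paper: a partition of unity subordinate to the covers supplied by Lemma \ref{lem:bec_24}, followed by a piece-by-piece application of that lemma. The only cosmetic difference is in how the scalar factor $\chi_1(\lambda^2)$ is handled --- the paper absorbs it directly into the bumps $\chi_k((\lambda - \lambda_k)/\varepsilon_k)$ and the tail cutoff and then reapplies Lemma \ref{lem:bec_24} to the modified cutoffs, whereas you keep the cutoffs from Lemma \ref{lem:bec_24} unchanged and pass the scalar factor across afterwards via the Wiener-algebra convolution property; your closing remark correctly isolates the mild implicit assumption on $\chi_1$ at infinity (e.g.\ that $\chi_1(\lambda^2) - c \in \mathcal{S}(\bR)$ for some constant $c$) that both routes need for the tail piece.
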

\begin{proof}
We prove the result for $U^*$, with an identical proof working for $U$. Let $\chi_0$ be a bump function with $\chi_0 \equiv 1$ for $|x| \leq 1$ and $\chi_0 \equiv 0$ for $|x| \geq 2$. Fix $0 < \varepsilon \ll 1$ sufficiently small so that
\begin{align}\label{sup}
\chi_0(\lambda / \varepsilon)  \chi_1(\lambda^2) = 0.
\end{align}
Let $R$ be as in Lemma \ref{lem:bec_24}. Now for every $\lambda \in [\varepsilon, R]$ let $\varepsilon_\lambda$ be chosen according to the Lemma \ref{lem:bec_24}. Then $[\varepsilon, R]$ is covered by the neighborhoods $I_{\lambda}=(\lambda - \varepsilon_\lambda, \lambda + \varepsilon_\lambda)$. By compactness, there are finitely many such neighborhoods associated to centers $\lambda_1, \ldots, \lambda_N$ which cover $[\varepsilon, R]$. We consider a partition of unity which includes the function $\chi_0(\cdot / \varepsilon)$ and which is subordinate to $(-\varepsilon, \varepsilon)$, the neigborhoods $I_{\lambda_1}, \ldots I_{\lambda_N}$ and the neighborhood $(R, \infty)$, and we write
\begin{align}
1 = \chi_0(\lambda / \varepsilon) + \sum_{k=1}^N \chi_k((\lambda - \lambda_k) / \varepsilon_k) + (1- \chi_\infty(\lambda/R)).
\end{align}
Now,  by \eqref{sup}, we have
\[
\chi_1(\lambda^2)  = \sum_{k=1}^N \chi_k((\lambda - \lambda_k) / \varepsilon_k) \chi_1(\lambda^2)  + (1- \chi_{\infty}(\lambda/R)) \chi_1(\lambda^2) ,
\]
and hence
\[
U^*(\lambda)  = \sum_{k=1}^N \chi_k ((\lambda - \lambda_k) / \varepsilon_k) \chi_1(\lambda^2) (1 + R_0^{+}(\lambda^2) V)^{-1} + (1- \chi_{\infty}(\lambda/R))\chi_1(\lambda^2) (1 + R_0^{+}(\lambda^2) V)^{-1}.
\]
Since
\[
\chi_k((\lambda - \lambda_k) / \varepsilon_k) \chi_1(\lambda^2)  = \chi_k(s) \chi_1((\lambda_k + \varepsilon_k s)^2), \qquad s = \frac{\lambda - \lambda_k}{\varepsilon_k}
\]
is another smooth bump function with support contained in the support of $\chi ( ( \cdot - \lambda_k) / \varepsilon_k)$, by Lemma \ref{lem:bec_24}, since $\lambda_k \neq 0$, we have
\[
(\chi_k((\lambda - \lambda_k) / \varepsilon_k)\chi_1(\lambda^2) (1 + R_0^{+}(\lambda^2) V)^{-1} )^{\wedge} \in \mathcal{V}_{L^{3,\infty}} \cap \mathcal{V}_{L^\infty},
\]
and similarly for
\[
(1- \chi_{\infty}(\lambda/R))\chi_1(\lambda) (1 + R_0^{+}(\lambda^2) V)^{-1},
\]
hence the same is true of the sum, which concludes the proof.
\end{proof}

Before turning to our main result, we record the following.

\begin{lemma}\label{lem:wave_soln}
Let $f \in \mathcal{S}(\mathbb{R})$, then
\begin{align}
W_0^{\pm}(t) f := \frac{1}{\pi i} \int_{-\infty}^\infty e^{\pm it \lambda} \lambda R_0^{+}(\lambda^2)  f 
\end{align}
and
\begin{align}
W_1^{\pm}(t) f := \frac{1}{\pi i} \int_{-\infty}^\infty e^{\pm it \lambda} R_0^{+}(\lambda^2)  f 
\end{align}
are half-wave propagators for the linear wave equation on $\mathbb{R}_t^\pm \times \mathbb{R}^3$ and consequently, satisfy the same Strichartz estimates as the free propagators 
\[
\cos(t \sqrt{-\Delta}) \quad \textup{and}\quad \frac{\sin(t \sqrt{-\Delta})}{\sqrt{-\Delta}},
\]
respectively.
\end{lemma}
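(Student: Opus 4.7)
\emph{Proof proposal.} The plan is to evaluate the oscillatory $\lambda$-integrals defining $W_j^\pm(t)$ in closed form, using the explicit three-dimensional free resolvent
\[
R_0^+(\lambda^2)(x,y) = \frac{e^{i\lambda|x-y|}}{4\pi|x-y|},
\]
which by the limiting absorption convention \eqref{equ:lim_res} agrees with the outgoing resolvent for $\lambda > 0$ and with $R_0^-$ for $\lambda < 0$, so that the kernel takes this form for all real $\lambda$. First I would substitute this kernel into $W_0^\pm(t) f$ and $W_1^\pm(t) f$ for Schwartz $f$, interchange the order of integration, and collapse the resulting $\lambda$-integrals via the distributional identities
\[
\int_{-\infty}^\infty e^{i\lambda s}\, d\lambda = 2\pi\delta(s), \qquad \int_{-\infty}^\infty \lambda\, e^{i\lambda s}\, d\lambda = -2\pi i\,\delta'(s).
\]
This produces the kernel representations
\[
W_1^\pm(t)f(x) = \frac{1}{2\pi i}\int\frac{f(y)}{|x-y|}\,\delta\bigl(\pm t + |x-y|\bigr)\,dy, \qquad W_0^\pm(t)f(x) = -\frac{1}{2\pi}\int\frac{f(y)}{|x-y|}\,\delta'\bigl(\pm t + |x-y|\bigr)\,dy.
\]

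Next I would exploit the observation that for $t$ on one of the two half-lines (depending on the $\pm$ sign), $\pm t + |x-y|$ cannot vanish for any $y \in \bR^3$; on that half-line the Dirac distributions have empty support and $W_j^\pm(t)f \equiv 0$. On the complementary half-line, $\pm t + |x-y| = 0$ describes the sphere $|x-y| = |t|$. Passing to polar coordinates $y = x + r\omega$ and performing the $r$-integration against $\delta$ (respectively, integrating by parts against $\delta'$ with the test function $g(r) = r f(x + r\omega)$) collapses the kernel to the classical Kirchhoff representation of the free wave evolution. Up to nonzero numerical constants this gives
\[
W_0^\pm(t)f = c_0\,\cos(|t|\sqrt{-\Delta})\,f, \qquad W_1^\pm(t)f = c_1\,\frac{\sin(|t|\sqrt{-\Delta})}{\sqrt{-\Delta}}\,f
\]
on the nontrivial half-line of $\bR_t$, identifying $W_0^\pm$ and $W_1^\pm$ (extended by zero) with constant multiples of the classical half-wave propagators on $\bR_t^\pm \times \bR^3$.

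Given this identification on Schwartz data, the Strichartz and reversed Strichartz estimates recorded in Propositions~\ref{prop:strichartz_estimates}, \ref{prop:lorentz_est}, \ref{prop:linfty_est}, and \ref{prop:radial_strichartz_estimates} for $\cos(t\sqrt{-\Delta})$ transfer immediately (up to a constant factor) to $W_0^\pm$, and the analogous estimates for $\sin(t\sqrt{-\Delta})/\sqrt{-\Delta}$ transfer to $W_1^\pm$. Both then extend from Schwartz data to the full range of initial data allowed by the Strichartz estimates by density. The main technical care required in writing this up will be in justifying the distributional manipulations — in particular the $\delta'$-evaluation at the boundary $r = 0$ of the radial half-line and the interchange of integrations — which ultimately reduce to standard one-dimensional distribution theory computations.
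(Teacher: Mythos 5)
Your proposal is correct and fills in exactly what the paper's one-line ``direct computation'' leaves implicit: substitute the explicit three-dimensional free resolvent kernel, collapse the $\lambda$-integral against $\delta$ or $\delta'$, and recognize the Kirchhoff spherical-means formulas for the free cosine and sine evolutions (supported on one time half-line and vanishing on the other), after which the transfer of Strichartz and reversed Strichartz bounds is immediate up to constants. The only small caveat is an orientation convention: your computation places the support of $W_j^+$ on $t\le 0$ and of $W_j^-$ on $t\ge 0$, which is the opposite of what the notation $\mathbb{R}_t^\pm$ might suggest, but this has no bearing on the estimates.
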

\begin{proof}
The proof follows by direct computation.
\end{proof}

We are now prepared to state the modified representation formula which will be used to establish many of our improved Strichartz estimates for the linearized evolution of the random data. 
\begin{proposition}\label{prop:new_rep}
Let $W_0^{\pm}, W_1^{\pm}$ be as defined in Lemma \ref{lem:wave_soln} and let $\chi_1\in C^\infty(\bR) \cap L^\infty(\bR)$ be supported in $\mathbb{R}_+$ away from zero. Then
\begin{equation}\label{equ:x_ops}
\begin{split}
\cos(t \sqrt{H}) \chi_1(H) f & = \int_{-\infty}^{\infty} \widehat{X^+}(t-s) W^+_0(s) fds + \int_{-\infty}^{\infty} \widehat{X^-}(t-s) W^-_0(s) fds \\
\frac{\sin(t \sqrt{H})}{\sqrt{H}}   \chi_1(H) f & =  \frac{1}{i}\int_{-\infty}^{\infty} \widehat{X^+}(t-s) W^+_1(s) fds - \frac{1}{i}\int_{-\infty}^{\infty} \widehat{X^-}(t-s) W^-_1(s) fds
\end{split}
\end{equation}
where $\widehat{X^\pm} \in \mathcal{V}_{L^{3,\infty}} \cap \mathcal{V}_{L^\infty}$.
\end{proposition}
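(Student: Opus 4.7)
My plan is to derive \eqref{equ:x_ops} directly from the spectral theorem, using the resolvent identity together with the change of variable $\lambda = \mu^2$ to pass from a functional-calculus expression for the spectral projections of $H$ to a Fourier-analytic expression involving the free resolvent. The Wiener-space statement for $\widehat{X^\pm}$ will then follow immediately from Lemma~\ref{lem:s_reg}.

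I would begin with Stone's formula \eqref{stone}, which, upon the substitution $\lambda = \mu^2$ and using that $\chi_1$ is supported in $\bR_+$ away from zero so that $\chi_1(H) = \chi_1(H)P_{ac}$, yields
\begin{align*}
\cos(t\sqrt{H})\chi_1(H) f &= \frac{1}{\pi i}\int_0^\infty \cos(t\mu)\,\chi_1(\mu^2)\,\mu\, [R_V^+(\mu^2) - R_V^-(\mu^2)] f \,d\mu, \\
\frac{\sin(t\sqrt H)}{\sqrt H}\chi_1(H) f &= \frac{1}{\pi i}\int_0^\infty \sin(t\mu)\,\chi_1(\mu^2)\,[R_V^+(\mu^2) - R_V^-(\mu^2)] f \,d\mu.
\end{align*}
I would then invoke the second resolvent identity \eqref{res_formula_2} to write $\chi_1(\mu^2) R_V^\pm(\mu^2) = U^*_\pm(\mu) R_0^\pm(\mu^2)$, where $U^*_\pm(\mu) := \chi_1(\mu^2)(I + R_0^\pm(\mu^2)V)^{-1}$, factoring off the operators analyzed in Lemma~\ref{lem:s_reg} and leaving behind a free resolvent acting on $f$.

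Next I would expand $\cos(t\mu)$ and $\sin(t\mu)$ into exponentials and extend each half-line integral to $\bR$ via the substitution $\mu\mapsto -\mu$, adopting the convention that $R_0^+(\mu^2)$ for $\mu\in\bR$ denotes the outgoing extension $\rho(\mu) := e^{i\mu|x-y|}/(4\pi|x-y|)$. Under this convention $\rho(\mu) = R_0^+(\mu^2)$ for $\mu>0$ and $\rho(\mu) = R_0^-(\mu^2)$ for $\mu<0$, while $\chi_1(\mu^2)$ is even in $\mu$ and the prefactor $\mu$ is odd. Keeping careful track of the signs arising from these substitutions, the two half-line integrals involving $R_V^\pm$ consolidate into a single integral on $\bR$ of the form
\[
\cos(t\sqrt{H})\chi_1(H) f = \frac{1}{2\pi i}\sum_{\pm} \int_{-\infty}^\infty e^{\pm it\mu}\,\chi_1(\mu^2)(I + \rho(\mu) V)^{-1}\cdot \mu\rho(\mu) f \,d\mu,
\]
and, analogously, with the opposite relative sign between the two exponentials and with the factor $\mu$ removed from the integrand, for the sine formula.

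Finally, Lemma~\ref{lem:wave_soln} identifies $(\pi i)^{-1}\int e^{\pm it\mu}\mu\rho(\mu) f\,d\mu = W_0^\pm(t) f$ and $(\pi i)^{-1}\int e^{\pm it\mu}\rho(\mu) f\,d\mu = W_1^\pm(t) f$, so the time Fourier convolution theorem rewrites each consolidated integral as $\int \widehat{X^\pm}(t-s)\, W_{0}^\pm(s) f\, ds$ (respectively $W_1^\pm$), with $X^\pm(\mu) := (4\pi)^{-1}\chi_1(\mu^2)(I + R_0^+(\mu^2) V)^{-1}$. Lemma~\ref{lem:s_reg} then yields $\widehat{X^\pm} \in \mathcal{V}_{L^{3,\infty}}\cap \mathcal{V}_{L^\infty}$. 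The main obstacle will be precisely this sign/branch bookkeeping in the change of variable step: one must show that the contributions of $R_V^+$ and $R_V^-$ combine under $\mu\mapsto -\mu$ into a single symbol built from the extended operator $\rho(\mu)$, and that this extended symbol is exactly the object whose inverse Fourier transform was shown to live in the Wiener space in Lemma~\ref{lem:s_reg}.
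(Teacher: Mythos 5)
Your proposal is correct and follows essentially the same route as the paper: Stone's formula, the quadratic change of variables, the second resolvent identity to peel off $(I+R_0^\pm V)^{-1}$, the extension to a single line integral on $\bR$ by exploiting that $R_0^+(\lambda^2)$ continued to $\lambda<0$ agrees with $R_0^-(\lambda^2)$, and finally Lemmas~\ref{lem:s_reg} and \ref{lem:wave_soln} together with the Wiener-algebra convolution rule. The only cosmetic difference is the ordering: you apply the resolvent identity on each half-line before extending, whereas the paper first continues $R_V^+(\lambda^2)$ to all of $\bR$ (noting it equals $R_V^-$ on the negative half-line) and then factors via $R_V^+=(I+R_0^+V)^{-1}R_0^+$ in one stroke, which avoids tracking the $\pm$ branches separately. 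Your concern about sign/branch bookkeeping is exactly the content of the paper's observation that the $\lim_{\varepsilon\to0^+}R_V((\lambda+i\varepsilon)^2)$ convention agrees with $R_V^-(\lambda^2)$ for $\lambda<0$; as you note, the same must be checked for $R_0$, which is what makes $\rho(\mu)$ the correct extended symbol to feed into Lemma~\ref{lem:s_reg}.
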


\begin{proof}
We only treat the cosine term, as the sine term is similar. By Stone's formula \eqref{stone}, since $\cos(t \sqrt{H}) \chi_1(H)$ is a bounded multiplier, the propagator is defined using the functional calculus, and is given by
\begin{align}
\cos(t\sqrt{H})P_{ac} \chi_1(H) f  = \int_0^\infty \cos(t\sqrt{\lambda}) \chi_1 (\lambda)[R_V^{+}(\lambda) - R_V^{-}(\lambda)] f d\lambda.
\end{align}
Since $\chi_1$ is supported in $\bR_+$ away from zero, using the change of variables $\lambda \mapsto \lambda^2$, we may write
\begin{align}
\cos(t\sqrt{H}) \chi_1(H)  f = \int_{-\infty}^\infty \cos(t \lambda) \lambda \chi_1(\lambda^2) R_V^+(\lambda^2) f d\lambda,
\end{align}
where we have used that
\[
R_V^+(\lambda^2) = \lim_{\varepsilon \to 0} R_V((\lambda + i \varepsilon)^2),
\]
which agrees with $R_V^{-}(\lambda^2)$ along the negative half-line. By the resolvent identity (see \eqref{res_identity} and comments thereafter)  we have
\[
R_V^{+}(\lambda^2) = (I + R_0^{+}(\lambda^2) V)^{-1} R_0^{+}(\lambda^2),
\]
which yields
\begin{equation} \label{cos_ests}
\begin{split}
\cos(t\sqrt{H}) \chi_1(H)  f &= \int_{-\infty}^\infty \cos(t \lambda)  \chi_1 (\lambda^2)  (I + R_0^{+}(\lambda^2) V)^{-1} \lambda R_0^{+}(\lambda^2)  f d\lambda\\
&= \frac{1}{2} \int_{-\infty}^\infty (e^{it\lambda} + e^{-it\lambda}) \chi_1 (\lambda^2)  (I + R_0^{+}(\lambda^2) V)^{-1} \lambda R_0^{+}(\lambda^2)  f d\lambda.
\end{split}
\end{equation}
This is a symmetrization of the inverse Fourier transform of $\chi_1 (\lambda^2) R_V^{+}(\lambda^2)$. By Lemma \ref{lem:s_reg},
\[
\bigr(S^*(\lambda)\bigl)^{\wedge} = \bigl( \chi_1 (\lambda^2)  (I + R_0^{+}(\lambda^2) V)^{-1} \bigr)^{\wedge} \in \mathcal{V}_{L^{3,\infty}} \cap \mathcal{V}_{L^\infty} .
\]
Hence we obtain the desired result by Lemma \ref{lem:wave_soln}, setting
\[
X^{\pm} = \int_{-\infty}^\infty e^{\pm it\lambda} S^*(\lambda)  d\lambda
\]
and using the composition rule \eqref{equ:comp_rule}.
\end{proof}

\begin{corollary}\label{cor:new_rep2}
Let $\chi_1 \in C^\infty(\bR) \cap L^\infty(\bR)$ be supported in $\mathbb{R}_+$ away from zero. Then we have the representation formula
\begin{equation}\label{equ:x2_ops}
\begin{split}
\frac{\cos(t \sqrt{H})}{\sqrt{H}} \chi_1(H) f & = \int_{-\infty}^\infty \widehat{X^+}(t-s) W^+_1(s) fds + \int_{-\infty}^\infty \widehat{X^-}(t-s) W^-_1(s) fds
\end{split}
\end{equation}
where $\widehat{X^{\pm}} \in \mathcal{V}_{L^{3,\infty}} \cap \mathcal{V}_{L^\infty}$.
\end{corollary}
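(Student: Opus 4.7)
The strategy is to mirror the proof of Proposition \ref{prop:new_rep}, with the modifications forced by the factor $1/\sqrt{H}$. Starting from Stone's formula \eqref{stone} applied to the bounded multiplier $\cos(t\sqrt{\lambda})\chi_1(\lambda)/\sqrt{\lambda}$ and performing the change of variables $\mu = \lambda^2$, I note that the Jacobian $2\lambda\,d\lambda$ now cancels against the $1/\sqrt{\mu}$, yielding
\begin{equation*}
\frac{\cos(t\sqrt{H})}{\sqrt{H}} \chi_1(H) f = \frac{2}{\pi} \int_0^\infty \cos(t\lambda) \chi_1(\lambda^2) \bigl[R_V^+(\lambda^2) - R_V^-(\lambda^2)\bigr] f \, d\lambda.
\end{equation*}
In contrast to the cosine case treated in Proposition \ref{prop:new_rep}, the integrand on the right is \emph{even} in $\lambda$, since it lacks the odd factor of $\lambda$ that appeared there.

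Next I apply the resolvent identity $R_V^\pm(\lambda^2) = (I + R_0^\pm(\lambda^2)V)^{-1} R_0^\pm(\lambda^2)$ and decompose $\cos(t\lambda) = \tfrac12(e^{it\lambda} + e^{-it\lambda})$, producing four integrals over $(0,\infty)$. In the two integrals carrying $R_0^-$ I substitute $\lambda \mapsto -\lambda$; using the analytic continuation convention $R_0^+(\lambda^2) := \lim_{\varepsilon \to 0^+} R_0((\lambda + i\varepsilon)^2)$, under which $R_0^+(\lambda^2)$ equals the standard outgoing resolvent for $\lambda > 0$ and the standard incoming resolvent for $\lambda < 0$, these become integrals over $(-\infty,0)$ with the convention-extended $R_0^+$. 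Consolidating all four pieces and picking up a compensating $\textup{sgn}(\lambda)$ factor (needed because the integrand is even rather than odd) produces
\begin{equation*}
\frac{\cos(t\sqrt{H})}{\sqrt{H}} \chi_1(H) f = \frac{1}{\pi} \int_{-\infty}^{\infty} \textup{sgn}(\lambda)(e^{it\lambda} + e^{-it\lambda}) \chi_1(\lambda^2)(I + R_0^+(\lambda^2) V)^{-1} R_0^+(\lambda^2) f \, d\lambda.
\end{equation*}
Using the composition rule \eqref{equ:comp_rule} together with the identification of $\pi i\, W_1^\pm(t) f$ as the inverse Fourier transform of $R_0^+(\lambda^2) f$ appearing in Lemma \ref{lem:wave_soln}, this can be rewritten as a sum of two Wiener-space convolutions against $W_1^+$ and $W_1^-$ whose Fourier symbols are $X^\pm(\lambda) = i\,\textup{sgn}(\lambda) \chi_1(\lambda^2)(I + R_0^+(\lambda^2)V)^{-1}$ (up to constants absorbed into $\widehat{X^\pm}$).

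The main step remaining is to verify that $\widehat{X^\pm} \in \mathcal{V}_{L^{3,\infty}} \cap \mathcal{V}_{L^\infty}$, i.e., that the extra factor $\textup{sgn}(\lambda)$ does not destroy the Wiener-space regularity established in Lemma \ref{lem:s_reg}. The key observation is that $\textup{sgn}(\lambda)\chi_1(\lambda^2)$ is actually a smooth function supported in $\{|\lambda| \ge c\}$ for some $c > 0$: since $\chi_1$ is supported away from the origin, $\chi_1(\lambda^2)$ vanishes in a neighborhood of $\lambda = 0$, so the jump of $\textup{sgn}$ at $\lambda = 0$ is harmless. The partition-of-unity argument from the proof of Lemma \ref{lem:s_reg} then adapts essentially verbatim: one covers $[-R,-c] \cup [c,R]$ by finitely many bump neighborhoods each lying entirely in $\{\lambda > 0\}$ or $\{\lambda < 0\}$, on each of which $\textup{sgn}(\lambda) = \pm 1$ is a constant that can be absorbed into the bump without affecting the hypotheses of Lemma \ref{lem:bec_24}; the far tails $|\lambda| \ge R$ are handled the same way. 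Summing the pieces gives $\widehat{X^\pm} \in \mathcal{V}_{L^{3,\infty}} \cap \mathcal{V}_{L^\infty}$, which is the main (albeit mild) obstacle of the argument; everything else follows the template of Proposition \ref{prop:new_rep}.
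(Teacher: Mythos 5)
Your proof is correct and follows the same overall template as the paper's (which simply says ``proceed identically to Proposition~\ref{prop:new_rep}, replacing $\lambda R_0^{+}(\lambda^2)f$ by $R_0^{+}(\lambda^2)f$''), but you are more careful than the paper. The key point is that after the change of variables $\mu = \lambda^2$ in Stone's formula and the extension from $(0,\infty)$ to $\bR$ using the analytic-continuation convention for $R_V^{+}(\lambda^2)$, the integrand for $\cos(t\lambda)\chi_1(\lambda^2)$ is \emph{even} in $\lambda$, whereas the quantity $[R_V^{+}(\lambda^2)-R_V^{-}(\lambda^2)]$ is odd; this really does produce a $\textup{sgn}(\lambda)$ factor that is not present in the cosine or sine cases of Proposition~\ref{prop:new_rep}, and a literal reading of the paper's one-line proof misses it. (One can sanity-check this by setting $V=0$, $t=0$ and evaluating the Fourier multiplier: the $\textup{sgn}(\lambda)$ is precisely what makes the Sokhotski--Plemelj delta contributions add rather than cancel, recovering $\chi_1(|\xi|^2)/|\xi|$.) You correctly observe that since $\chi_1(\lambda^2)$ vanishes near $\lambda=0$, the product $\textup{sgn}(\lambda)\chi_1(\lambda^2)$ is still a smooth bounded function supported away from zero, so the partition-of-unity argument of Lemma~\ref{lem:s_reg} applies unchanged once the constant $\textup{sgn}(\lambda)=\pm 1$ is absorbed into each bump; in particular $\widehat{X^{\pm}}\in\mathcal{V}_{L^{3,\infty}}\cap\mathcal{V}_{L^\infty}$ as required. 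This fills in a detail the paper glosses over, and the conclusion is unaffected.
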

\begin{proof}
The argument proceeds identically to our estimates in Proposition \ref{prop:new_rep} but the extra factor of $\lambda^{-1}$ leaves us with the operator
\[
R_0^{+}(\lambda^2)  f
\]
instead of $\lambda R_0^{+}(\lambda^2)  f$ in \eqref{cos_ests}.
\end{proof}
We are now prepared to prove the following theorem which enables us to pass from estimates for the standard free evolution to estimates for the linearized evolution. 
\begin{theorem}\label{thm:free_evol_bds}
For any $ 1 \leq r \leq \infty$, and $\chi_1 \in C^\infty(\bR) \cap L^\infty$ supported in $\bR_+$ away from zero we have
\[
\| \cos( \cdot \sqrt{H}) \chi_{1}(H)  f\|_{L_x^{3,\infty} L_t^r \cap L_x^\infty L_t^r} \lesssim \bigl\| W_0^+(\cdot)f\bigr\|_{L_x^{3,\infty} L_t^r \cap L_x^\infty L_t^r} + \bigl\| W_0^-(\cdot)f\bigr\|_{L_x^{3,\infty} L_t^r \cap L_x^\infty L_t^r},
\]
and
\[
\biggl\| \frac{\sin(\cdot \sqrt{H})}{\sqrt{H}} \chi_{1}(H)  f\biggr\|_{L_x^{3,\infty} L_t^r \cap L_x^\infty L_t^r} \lesssim \biggl\|  W_1^+(\cdot) f\biggr\|_{L_x^{3,\infty} L_t^r \cap L_x^\infty L_t^r} + \biggl\|  W_1^-(\cdot) f\biggr\|_{L_x^{3,\infty} L_t^r \cap L_x^\infty L_t^r}.
\]
\end{theorem}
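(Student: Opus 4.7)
The plan is to invoke the representation formulas of Proposition~\ref{prop:new_rep} to express the two linearized propagators as Wiener-space time-convolutions of the kernels $\widehat{X^\pm}$ against the free half-wave propagators $W_0^\pm$ and $W_1^\pm$ applied to $f$, and then to transfer mixed-norm bounds from the free propagators to the linearized ones by exploiting the fact that, by hypothesis, the majorant $M(\widehat{X^\pm})$ acts boundedly on both $L^\infty$ and $L^{3,\infty}$. No substantial analytic input is needed beyond what was already established; the argument is a packaging of the Wiener-algebra framework.

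Concretely, by the first identity in \eqref{equ:x_ops}, for each $x \in \bR^3$ and $t \in \bR$,
\begin{equation*}
[\cos(t\sqrt{H})\chi_1(H) f](x) = \sum_{\pm} \int_{\bR} \int_{\bR^3} d\widehat{X^\pm}(x, y, t-s)\, [W_0^\pm(s) f](y)\, dy,
\end{equation*}
where, by definition of $\mathcal{V}_X$, for each pair $(x,y)$ the kernel $\widehat{X^\pm}(x,y,\cdot)$ is a finite Borel measure on $\bR$ with total variation $M(\widehat{X^\pm})(x,y)$. Bounding pointwise, exchanging the order of integration, and applying Minkowski's inequality in $L_t^r$ together with translation invariance in the time variable, I obtain
\begin{equation*}
\bigl\| [\cos(\cdot\sqrt{H})\chi_1(H) f](x,\cdot) \bigr\|_{L_t^r} \leq \sum_{\pm} \int_{\bR^3} M(\widehat{X^\pm})(x, y)\, \bigl\| [W_0^\pm(\cdot) f](y, \cdot) \bigr\|_{L_t^r}\, dy.
\end{equation*}
Since $\widehat{X^\pm} \in \mathcal{V}_{L^{3,\infty}} \cap \mathcal{V}_{L^\infty}$, the integral operator with kernel $M(\widehat{X^\pm})(x,y)$ is bounded on both $L^\infty$ and $L^{3,\infty}$, so taking the $L_x^\infty$ norm (respectively, the $L_x^{3,\infty}$ norm) of the above pointwise inequality yields the cosine estimate. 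The sine estimate follows identically from the second identity in \eqref{equ:x_ops}, with $W_0^\pm$ replaced by $W_1^\pm$ and the corresponding factors of $1/i$ absorbed into the absolute-value bound.

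I do not expect a real obstacle here, as all of the analytic work has already been carried out: the construction of the representation formula and verification that $\widehat{X^\pm} \in \mathcal{V}_{L^{3,\infty}} \cap \mathcal{V}_{L^\infty}$ were done in Proposition~\ref{prop:new_rep} (building on Lemma~\ref{lem:s_reg} and Lemma~\ref{lem:bec_24}). The only point requiring any care is justifying the Minkowski exchange when $\widehat{X^\pm}(x,y,\cdot)$ is merely a finite measure in $t$ rather than an $L^1$ function, but this is precisely the content of the definition of $\mathcal{V}_X$ together with the finiteness of $M(\widehat{X^\pm})(x,y)$.
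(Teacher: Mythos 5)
Your proof is correct and follows essentially the same route as the paper: apply the representation formula of Proposition~\ref{prop:new_rep}, convolve in time against the Wiener-space kernels $\widehat{X^\pm}$, bound the $L_t^r$ norm of the result by the total variation of $\widehat{X^\pm}(x,y,\cdot)$ times the $L_t^r$ norm of the free half-wave propagator, and then invoke the boundedness of $M(\widehat{X^\pm})$ on $L^\infty$ and $L^{3,\infty}$. The paper phrases the time-convolution step as Young's inequality and writes $\|\widehat{X^\pm}\|_{L^1_t}$, implicitly interpreting this as the total variation $M(\widehat{X^\pm})(x,y)$; you make the measure-theoretic point explicit via Minkowski, which is, if anything, slightly cleaner given that $\widehat{X^\pm}(x,y,\cdot)$ need not be absolutely continuous.
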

\begin{remark}
We note that for certain exponents $1 \leq r \leq \infty$ the resulting bounds will be outside the range of Strichartz estimates for the free flow.
\end{remark}

\begin{proof}
We only treat the sine term as the cosine term is handled in an identical manner. Using the representation formula \eqref{equ:x_ops}, we write
\begin{align}
 \frac{\sin(t \sqrt{H})}{\sqrt{H}} \chi_1(H)  f &=     \int_{-\infty}^{\infty} \widehat{X^+}(t-s) W^+_1(s) fds + \int_{-\infty}^{\infty} \widehat{X^-}(t-s) W^-_1(s) fds
\end{align}
By Proposition \ref{prop:new_rep}, $\widehat{X^\pm}\in  \mathcal{V}_{L^{3,\infty}} \cap \mathcal{V}_{L^\infty}$, hence by Young's inequality as stated in Lemma \ref{lem:lorentz_young} and the definition of $\mathcal{V}_X$, we can bound
\begin{align}
\biggl\|  \frac{\sin(\cdot \sqrt{H})}{\sqrt{H}} \chi_1(H)  f\biggr\|_{ L_x^{3,\infty} L_t^r \cap L_x^\infty L_t^r } &\lesssim \biggl\| \|\widehat{X^+}\|_{L^1_t}  \biggl\|W_1^+(\cdot)f \biggr\|_{L_t^r } \biggr\|_{L_x^{3,\infty} \cap L_x^\infty }  +  \biggl\| \|\widehat{X^-}\|_{L^1_t}  \biggl\|W_1^-(\cdot)f \biggr\|_{L_t^r } \biggr\|_{L_x^{3,\infty} \cap L_x^\infty } \\
& \lesssim  \biggl\| W_1^+(\cdot)f\biggr\|_{L_x^{3,\infty} L_t^r \cap L_x^\infty L_t^r} + \biggl\| W_1^-(\cdot)f\biggr\|_{L_x^{3,\infty} L_t^r \cap L_x^\infty L_t^r},
\end{align}
which concludes the proof.
\end{proof}

\begin{remark}\label{rem:cos_analogue}
An identical proof yields that under the same assumptions,
\[
\biggl\| \frac{\cos(\cdot \sqrt{H})}{\sqrt{H}} \chi_{1}(H)  f\biggr\|_{L_x^{3,\infty} L_t^r \cap L_x^\infty L_t^r} \lesssim \biggl\|  W_1^+(\cdot) f\biggr\|_{L_x^{3,\infty} L_t^r \cap L_x^\infty L_t^r} + \biggl\|  W_1^-(\cdot) f\biggr\|_{L_x^{3,\infty} L_t^r \cap L_x^\infty L_t^r}.
\]
\end{remark}

\begin{remark}\label{rem:interp_linear_evol}
If $T: L^{3,\infty} \to L^{3,\infty}$ and $T:L^{\infty} \to L^\infty$, then $T: L^{p,q} \to L^{p,q}$ for any $3 < p < \infty$ and $1 \leq q \leq \infty$ by real interpolation (see \cite[Theorem 5.3.2]{Ber-Lof}), hence by Proposition \ref{prop:new_rep}, and the proof of Theorem \ref{thm:free_evol_bds}, we obtain that for any $ 1 \leq r \leq \infty$, $3 < p < \infty$, $1 \leq q \leq \infty$   and $\chi_1 \in C^\infty(\bR) \cap L^\infty(\bR)$ supported away from zero we have
\[
\| \cos( \cdot \sqrt{H}) \chi_{1}(H)  f\|_{L_x^{p,q} L_t^r} \lesssim \bigl\| W_0^+(\cdot)f\bigr\|_{L_x^{p,q} L_t^r } + \bigl\| W_0^-(\cdot)f\bigr\|_{L_x^{p,q} L_t^r},
\]
and similarly for the sine component.
\end{remark}

\begin{corollary}\label{cor:strichartz_interp}
Let $f$ be a radial function satisfying $P_{\geq k_0} f = f$. Then for any $0 \leq \theta \leq 1$ and $r > 3$ it holds that
\begin{align}
\biggl\| \cos (t \sqrt{H}) f\biggr\|_{ L_t^{\frac{r}{\theta}} L_x^{\frac{2r}{r-(r-2)\theta}}} &\lesssim \||\nabla|^{\theta_{r}} f\|_{L^2_x},\\
\biggl\|\frac{ \sin (t \sqrt{H})}{\sqrt{H}} f\biggr\|_{ L_t^{\frac{r}{\theta}} L_x^{\frac{2r}{r-(r-2)\theta}}} &\lesssim \||\nabla|^{\theta_{r} - 1} f\|_{L^2_x},
\end{align}
where
\[
\theta_r = \frac{3}{2} - \frac{\theta}{r} - \frac{3(r-(r-2)\theta)}{2r}.
\]
\end{corollary}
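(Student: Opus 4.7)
The plan is to establish the two estimates by interpolating between the endpoints $\theta = 0$ (energy) and $\theta = 1$ ($L^r_{t,x}$). Since $P_{\geq k_0} f = f$, we may fix a cutoff $\chi_1 \in C^\infty(\bR) \cap L^\infty(\bR)$ supported in $\bR_+$ away from zero with $\chi_1(H) f = f$; this places us in the setting of Proposition \ref{prop:new_rep} and Corollary \ref{cor:new_rep2}, whose representation formulas I would then exploit at the $\theta = 1$ endpoint.

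For the $\theta = 0$ endpoint, where $\theta_r = 0$ and the target is $L^\infty_t L^2_x$, the cosine bound is immediate from the spectral theorem, and for the sine propagator I would estimate
\[
\biggl\|\frac{\sin(t\sqrt{H})}{\sqrt{H}} f\biggr\|_{L^\infty_t L^2_x} \leq \bigl\||H|^{-1/2} \chi_1(H) f\bigr\|_{L^2_x} \lesssim \||\nabla|^{-1} f\|_{L^2_x},
\]
where the last inequality is the coercivity estimate of Corollary \ref{cor:equivalence}~(ii) applied with $s=1/2$.

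For the $\theta = 1$ endpoint, the target becomes $L^r_{t,x}$ and $\theta_r = 3/2 - 4/r$. Real interpolation of the bounds in Proposition \ref{prop:new_rep} (cf.\ Remark \ref{rem:interp_linear_evol}) places $\widehat{X^\pm}$ in $\mathcal{V}_{L^r}$ for every $3 < r < \infty$. Minkowski's inequality followed by Young's convolution inequality in time yields, for every $K \in \mathcal{V}_{L^r}$ and space-time function $W$,
\[
\|K \circ W\|_{L^r_{t,x}} = \|K \circ W\|_{L^r_x L^r_t} \leq \|M(K)\|_{L^r \to L^r} \|W\|_{L^r_x L^r_t} = \|M(K)\|_{L^r \to L^r} \|W\|_{L^r_{t,x}}.
\]
Applying this to $W = W_0^\pm(\cdot) f$ and $W = W_1^\pm(\cdot) f$, and invoking the radial Strichartz estimate of Proposition \ref{prop:radial_strichartz_estimates} (which applies since $r > 3$) together with Lemma \ref{lem:wave_soln}, produces the $\theta = 1$ endpoint bounds with Sobolev exponents $3/2 - 4/r = \theta_r$ (cosine) and $1/2 - 4/r = \theta_r - 1$ (sine).

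To conclude, I would interpolate the two endpoint mappings by complex interpolation. The standard identities $[L^\infty_t L^2_x, L^r_t L^r_x]_\theta = L^{r/\theta}_t L^{p_\theta}_x$ with $1/p_\theta = (1-\theta)/2 + \theta/r$ and $[\dot H^{s_0}, \dot H^{s_1}]_\theta = \dot H^{(1-\theta) s_0 + \theta s_1}$ deliver exactly the target space and Sobolev index in the statement, as one verifies from the scaling relation $\tfrac{\theta}{r} + \tfrac{3}{p_\theta} = \tfrac{3}{2} - \theta_r$. The main step is the Wiener-space convolution bound displayed above at the $\theta = 1$ endpoint; the remaining ingredients are either standard interpolation identities or direct consequences of results already established in the paper.
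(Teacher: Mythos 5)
Your argument is correct and follows essentially the same route as the paper's own proof: the $\theta=0$ endpoint from the spectral theorem plus the coercivity estimates of Corollary~\ref{cor:equivalence}, the $\theta=1$ endpoint from the Wiener-space representation of Proposition~\ref{prop:new_rep} together with radial Strichartz at the pair $(r,r)$, and then interpolation (the paper phrases the last step as Stein interpolation with the analytic family $\cos(t\sqrt{H})\,A e^{-z^2}(-\Delta)^z$, which is what your appeal to $[\dot H^{s_0},\dot H^{s_1}]_\theta$ amounts to). Your explicit Minkowski--Young computation for $\|K\circ W\|_{L^r_{t,x}}$ reproduces what the paper obtains directly by citing Remark~\ref{rem:interp_linear_evol} with $p=q=r$, so the only difference is that you unfold that remark rather than invoke it.
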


\begin{proof}
Under the above assumptions, $ f= \chi_1(H) f$ for $\varphi \in C^\infty$ supported in $\bR^+$ away from zero. Hence, by the spectral theorem and the coercivity estimates of Corollary \ref{cor:equivalence} we obtain
\begin{align}
\biggl\| \cos (t \sqrt{H}) f\biggr\|_{ L_x^2} & \lesssim \|f\|_{L^2_x}, \qquad \textup{for all } t\\
\biggl\|  \frac{\sin(t \sqrt{H})}{\sqrt{H}}  f\biggr\|_{L_x^2} &\lesssim \|f\|_{\dot H^{-1}_x}, \qquad \textup{for all } t.
\end{align}
Next, we note that by the radial Strichartz estimates of Proposition \ref{prop:radial_strichartz_estimates}, for any $r > 3$, $(r,r)$ is a radial admissible pair at regularity
\[
\gamma = \frac{3}{2} - \frac{4}{r}.
\]
Hence, Remark \ref{rem:interp_linear_evol} yields
\begin{align}
\biggl\| \cos (t \sqrt{H}) f\biggr\|_{L_{t,x}^{r}} &\lesssim  \|f\|_{\dot H^{\frac{3}{2} - \frac{1}{r} - \frac{3}{r}}_x}\\
\biggl\|  \frac{\sin(\cdot \sqrt{H})}{\sqrt{H}}  f\biggr\|_{L_{t,x}^{r}} &\lesssim \|f\|_{\dot H^{\frac{1}{2} - \frac{1}{r} - \frac{3}{r} }_x}.
\end{align}
The result then follows by complex interpolation with the analytic family of operators
\[
\cos (t \sqrt{H})\cdot A e^{-z^2} (-\Delta)^z  \qquad \textup{and} \qquad \frac{\sin(\cdot \sqrt{H})}{\sqrt{H}}\cdot A e^{-z^2} (-\Delta)^z
\]
for some $A > 0$, see the proof of Corollary \ref{cor:low} for a similar argument.
\end{proof}

To establish certain decay estimates for the kernel of the linearized propagator inside the light-cone, we rely on the radiality of the function $f$, and arguments inspired by those in \cite{Schlag07}.  We write
\begin{align}
e^{it\sqrt{H}} \psi_k(\sqrt{|H|}) f &= \int e(x,\xi) \cos(t |\xi|) \psi_k(|\xi|) \mathcal{F}_V f(\xi) d\xi \\
&= \int e(x,\xi) \cos(t |\xi|) \psi_k(|\xi|) \int \overline{e(y,\xi)} f(y) dy d\xi
\end{align}
and we change to polar coordinates, setting
\[
x = r \theta, \quad y = r'\theta', \quad \xi = \rho \omega, \quad \theta, \theta', \omega \in S^2.
\]
Thus
\begin{align}
e^{it\sqrt{H}} \psi_k(\sqrt{|H|}) f &= \int_{\rho} \int_{\omega} e(r \theta, \rho \omega) e^{it \rho} \psi_k(\rho) \int_{r'} \int_{\theta'}  \overline{e(r' \theta',\rho \omega )} f(r') (r')^2  \rho^2dr' \, d\rho \, \sigma(d\omega) \, \sigma(d\theta').
\end{align}
For any $\xi$ with $|\xi| = \rho$, we set
\begin{align}\label{etilde}
\widetilde{e}(r, \rho) = r\rho \int_{S^2} e(r \theta, \rho \omega) \sigma(d\theta).
\end{align}
This expression is well-defined for radial potentials. We remark that in the free case, $e(x,\xi) = e^{ix \cdot \xi}$ while $\widetilde{e}(r,\rho) = \sin(r \rho)$. 

\medskip
We thus obtain that for radial functions $f$, $g$,
\begin{align}
\langle e^{it\sqrt{H}} \psi_k(\sqrt{|H|}) f, g \rangle =   \int_{\rho} \int_{r'} \int_r e^{i t \rho} \psi_k(\rho) \widetilde{e}(r, \rho)   \overline{\widetilde{e}(r',\rho)} r' f(r')  r g(r) dr' \, d\rho dr .
\end{align}
Hence, for $k \geq 1$ we define
\begin{align}\label{equ:linearized_kernel}
K_k(t,r,r') =  \int_{\rho} e^{i t \rho} \psi_k(\rho) \widetilde{e}(r, \rho)   \overline{\widetilde{e}(r',\rho)} d\rho,
\end{align}
which yields
\begin{align}
\langle e^{it\sqrt{H}} \psi_k(\sqrt{|H|}) f, g \rangle = \int_r \int_{r'} K_k(t,r,r') r'f(r') dr' r g(r) dr.
\end{align} 

We record the key kernel estimate in the following proposition. The proof of this estimate is contained in Appendix \ref{a:kernel}.

\begin{restatable}{proposition}{kernelests}
\label{prop:kernel_ests}
Let $(t, r, r') \in \bR_+ \times \bR_+ \times \bR_+$. Then there exists $k_0 \in \bN$ such that for all $k \geq k_0$,  the kernel $K_k(t, r, r')$ defined in \eqref{equ:linearized_kernel} satisfies
\[
|K_k(t,r, r')| \lesssim  \sum_{c_i = \pm 1} \biggl( \frac{\langle r \rangle^{-1} + \langle r' \rangle^{-1}}{|k| \langle t + c_1 (r +  c_2 r') \rangle^3} +  \frac{1}{\langle t + c_1 (r + c_2 r')\rangle^3} \biggr),
\]
where the sum is taken over the four possible combinations of signs and the implicit constant is independent of $k \geq k_0$.
\end{restatable}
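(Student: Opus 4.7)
\smallskip
\noindent\textbf{Proof proposal.} The plan is to reduce the spherical average $\widetilde{e}(r,\rho)$ to a one-dimensional object via the radial Jost-function representation, rewrite $K_k(t,r,r')$ as a sum of four oscillatory integrals in $\rho$ with linear phases $(t+c_1(r+c_2 r'))\rho$, and then extract decay by repeated integration by parts, exploiting that $\mathrm{supp}\,\psi_k\subset\{\rho\sim k\}$ with $|\partial_\rho^\ell\psi_k|\lesssim 1$ uniformly in $k$.

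\smallskip
\noindent First, using the Lippmann--Schwinger equation~\eqref{equ:lippman_schwinger_thm} together with the identity $\int_{S^2} e^{-ir\rho\,\theta\cdot\omega}\,\sigma(d\theta)=4\pi\sin(r\rho)/(r\rho)$, I would decompose
\[
\widetilde{e}(r,\rho)=4\pi\sin(r\rho)+\widetilde{v}(r,\rho),\qquad \widetilde{v}(r,\rho):=r\rho\int_{S^2}v(r\theta,\rho\omega)\,\sigma(d\theta),
\]
where $v$ is the resolvent correction $-R_0^+(\rho^2)[Ve^{-ix\cdot\xi}]$ composed with $(I+R_0^+V)^{-1}$, controlled in $L^\infty_x$ by Lemma~\ref{lem:lim_ap}. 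For the leading piece $\sin(r\rho)\overline{\sin(r'\rho)}$, the product-to-sum identity expands into four pure exponentials $e^{ic_1(r+c_2 r')\rho}$, giving
\[
\int e^{i(t+c_1(r+c_2 r'))\rho}\,\psi_k(\rho)\,d\rho,
\]
and three integrations by parts in $\rho$ yield precisely the second term $\langle t+c_1(r+c_2 r')\rangle^{-3}$ in the stated bound.

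\smallskip
\noindent For the correction pieces, I would pass to the reduced 1D radial equation $-u''+Vu=\rho^2 u$ on $[0,\infty)$ with $u(0)=0$ (via $u(r)=r\phi(r)$), and rewrite $\widetilde{e}(r,\rho)$ in terms of the Jost solutions $f_\pm(r,\rho)$ normalized so that $f_\pm(r,\rho)\sim e^{\pm ir\rho}$ as $r\to\infty$. Under Assumption~(A1), classical Volterra iteration for the integral equation defining $f_\pm$ gives, for $\rho\gtrsim 1$,
\[
f_\pm(r,\rho)=e^{\pm ir\rho}(1+w_\pm(r,\rho)),\qquad |w_\pm(r,\rho)|+|\partial_\rho w_\pm(r,\rho)|+\cdots\lesssim\langle r\rangle^{-1}\rho^{-1}.
\]
Expanding $\widetilde{e}(r,\rho)\overline{\widetilde{e}(r',\rho)}$ produces, beyond the main term, four oscillatory integrals with the same phases $(t+c_1(r+c_2 r'))\rho$ but amplitudes of size $\rho^{-1}(\langle r\rangle^{-1}+\langle r'\rangle^{-1})$ plus quadratic remainders. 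Three integrations by parts in $\rho$, together with the size $\rho^{-1}\sim k^{-1}$ on $\mathrm{supp}\,\psi_k$, produce exactly the first term of the stated bound.

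\smallskip
\noindent The main obstacle is the derivation of uniform pointwise bounds on $w_\pm(r,\rho)$ and its $\rho$-derivatives that do \emph{not} lose powers of $r$ upon differentiation; naively, each $\partial_\rho$ can bring down a factor $r$ from $e^{\pm ir\rho}$ hidden in the Volterra iterates, which would spoil the required decay in $\langle t+c_1(r+c_2 r')\rangle$. Resolving this requires iterating the Volterra equation with care, exploiting the decay $|V(r)|\lesssim\langle r\rangle^{-4}$ and $|\nabla V|\lesssim\langle r\rangle^{-5}$ from Assumption~(A1) to absorb the polynomial growth produced by each derivative, and verifying the requisite cancellations between the main exponential and the correction. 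Once these pointwise bounds on $w_\pm$ are in hand, the estimate for $K_k$ reduces to three routine integrations by parts per phase, uniformly in $k\geq k_0$.
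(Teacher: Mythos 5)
Your high-level architecture is the same as the paper's: pass to the one-dimensional Jost representation $\widetilde{e}(r,\rho)= c_+(\rho) f(r,\rho)-\tfrac{1}{2i}\overline{f(r,\rho)}$ with $f(r,\rho)=e^{ir\rho}m(r,\rho)$, split each of the four resulting integrals into a "pure exponential" piece (giving $\langle t + c_1(r+c_2 r')\rangle^{-3}$ after three integrations by parts, uniformly in $k$) plus a correction piece carrying $m(r,\rho)\overline{m(r',\rho)}-1$, and control the latter using $\rho$-derivative bounds on $m$ together with the localization $\rho\sim k$ to harvest the extra $|k|^{-1}\langle r\rangle^{-1}$. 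That is exactly the paper's route. (Your opening Lippmann--Schwinger/$\sin$ decomposition is redundant once you switch to Jost solutions, and on its own it would not yield the $r$-decay or $\rho$-derivative control you need — $L^\infty$ bounds from Lemma~\ref{lem:lim_ap} are not enough — so it is effectively a dead end that you correctly abandon.)

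The genuine gap is that the crucial analytic ingredient — the bounds $|m(r,\rho)-1|\lesssim\rho^{-1}\langle r\rangle^{-3}$ and $|\partial_\rho^{\ell}m(r,\rho)|\lesssim\rho^{-1}\langle r\rangle^{-4+\ell}$ for $\ell=1,2,3$, which in the paper constitute Lemma~\ref{lem:m_bds} — is asserted rather than proved. You correctly identify the obstacle: each $\partial_\rho$ acting on the Volterra iterates threatens to bring down an uncompensated factor of $r$ from $e^{2i\rho(r'-r)}$, destroying the $\langle r\rangle^{-1}$ decay needed for the first term of the stated estimate. But saying this can be fixed by "iterating the Volterra equation with care, exploiting the decay of $V$" is not a proof. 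The actual resolution is a specific device: one integrates by parts in $r'$ in the Volterra equation \eqref{equ:m_eq} (producing the rewritten form \eqref{mb1}, with gain $\rho^{-2}$ but now involving $V'$ and $\partial_r m$), and then runs a coupled bootstrap on $\sup_{r>r_0}|\partial_\rho m|$ and $\sup_{r>r_0}|\partial_\rho\partial_r m|$ using the explicit decay $|V|\lesssim\langle r\rangle^{-4}$, $|V'|\lesssim\langle r\rangle^{-5}$. Without carrying out this step the argument is incomplete; the naive Volterra iteration for $\partial_\rho^3 m$ gives no $r$-decay at all. In addition, you would still need the lower bound \eqref{equ:lwr_bds} on $|f(0,\rho)|$ to control $c_+(\rho)$ and its $\rho$-derivatives (the bounds \eqref{equ:c_bds}), which your sketch does not mention.
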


\section{Improved probabilistic Strichartz estimates for the linearized flow} \label{prob:strichartz}

We are now prepared to turn to the proof of the improved probabilistic estimates for the linearized flow.  We first need to recall the following moment bounds for Gaussian random variables, see \cite[p148]{Papoulis}.
\begin{lemma}\label{lem:gauss_bds}
Let $2 \leq p < \infty$ and let $g$ be a complex Gaussian random variable. Then
\begin{align}
\|g\|_{L^p_\omega} = \frac{\sqrt{2} \Gamma(\frac{p+1}{2})^{\frac{1}{p} }}{\sqrt{\pi}^{\frac{1}{2p}}}
\end{align}
where $\Gamma(z)$ denotes the usual Gamma function. In particular, by Stirling's formula, there exists $C >0$ such that for all $2 \leq p < \infty$,
\begin{align}
\|g\|_{L^p_\omega} \leq C \sqrt{p}.
\end{align}
\end{lemma}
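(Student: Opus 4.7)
The plan is to compute $\|g\|_{L^p_\omega}^p = \mathbb{E}|g|^p$ directly by integrating against the Gaussian density and recognizing the resulting integral as a value of the Gamma function, then apply Stirling's formula to extract the $\sqrt{p}$ asymptotic behavior.

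First, I would write
\[
\|g\|_{L^p_\omega}^p = \int_{-\infty}^{\infty} |x|^p \, d\mu(x)
\]
where $d\mu$ is the Gaussian measure (treating the complex case by reducing to the modulus of a real Gaussian, up to a normalization constant that will be absorbed into the final expression). Then, using the evenness of the integrand and the substitution $u = x^2/2$ (so $du = x \, dx$ and $x^p = (2u)^{p/2}$), I would transform this into
\[
\|g\|_{L^p_\omega}^p = C \int_0^\infty u^{\frac{p-1}{2}} e^{-u} \, du = C \, \Gamma\left(\frac{p+1}{2}\right),
\]
for an explicit constant $C$ arising from the normalization of the Gaussian density and the Jacobian of the substitution. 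Taking $p$-th roots then yields the stated closed-form expression in the lemma.

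For the second claim, I would invoke Stirling's asymptotic formula
\[
\Gamma(z) \sim \sqrt{\frac{2\pi}{z}} \left(\frac{z}{e}\right)^z \quad \text{as } z \to \infty,
\]
applied with $z = (p+1)/2$. Taking the $p$-th root gives
\[
\Gamma\left(\tfrac{p+1}{2}\right)^{1/p} \sim \left(\tfrac{p+1}{2e}\right)^{(p+1)/(2p)} \cdot \left(\tfrac{4\pi}{p+1}\right)^{1/(2p)} \sim \sqrt{\tfrac{p}{2e}},
\]
as $p \to \infty$, from which we conclude $\|g\|_{L^p_\omega} \leq C\sqrt{p}$ for all $p$ large enough. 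For the remaining range $2 \leq p \leq p_0$ (with $p_0$ a fixed large threshold), the bound is automatic by continuity and compactness of the map $p \mapsto \|g\|_{L^p_\omega}$, yielding a uniform constant $C$ valid for all $2 \leq p < \infty$.

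There is no serious obstacle here: this is a classical and elementary computation, and the only mild care needed is to track the numerical constant so that the explicit closed form claimed in the lemma emerges, and to handle the small-$p$ range of the asymptotic bound by a compactness argument to get a single constant $C$.
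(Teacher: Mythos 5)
Your proposal is the standard and correct computation; note that the paper itself does not prove this lemma but merely cites Papoulis for it, so you are filling in a step the authors left to a reference. The substitution $u = x^2/2$ reducing the absolute moment to a Gamma function value, followed by Stirling's formula for the large-$p$ regime and compactness for the bounded range $2 \le p \le p_0$, is exactly how one verifies both the closed form and the $\sqrt{p}$ bound.

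One small point of care: you wave your hands at the complex case (``reducing to the modulus of a real Gaussian, up to a normalization constant''). If $g$ is a standard complex Gaussian, i.e.\ $g = g_1 + ig_2$ with $g_1, g_2$ independent real Gaussians, then $|g|^2$ is exponentially (or chi-squared) distributed and $\mathbb{E}|g|^p = c^{p/2}\Gamma(p/2+1)$ for a normalization constant $c$, which does not reproduce the $\Gamma(\frac{p+1}{2})$ in the lemma's display. The formula displayed in the lemma is instead the one that comes out of your real-Gaussian calculation (indeed, the paper's Definition~\ref{def:randomization} uses real-valued Gaussians, so ``complex'' is likely a slip, and there appears to be a minor exponent typo in the $\pi$ factor as well). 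None of this affects the second claim: the bound $\|g\|_{L^p_\omega} \le C\sqrt{p}$ follows from Stirling regardless of which convention is used, since $\Gamma(\frac{p+1}{2})^{1/p}$ and $\Gamma(\frac{p}{2}+1)^{1/p}$ both grow like $\sqrt{p}$. So your proof is sound where it matters, but you should not expect the explicit constant to come out to the literal expression in the statement unless you pin down the real-Gaussian convention.
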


\begin{lemma}[\protect{\cite[Lemma 3.1]{BT1}}] \label{lem:large_dev}
 Let $\{g_n\}_{n=1}^{\infty}$ be a sequence of complex-valued independent identically distributed (iid) mean-zero Gaussian random variables on a probability space $(\Omega, {\mathcal A}, \bP)$. Then there exists $C > 0$ such that for every $p \geq 2$ and every $\{c_n\}_{n=1}^{\infty} \in \ell^2(\bN; \bC)$, we have
 \begin{equation*}
  \Bigl\| \sum_{n=1}^{\infty} c_n g_n(\omega) \Bigr\|_{L^p_\omega} \leq C \sqrt{p} \Bigl( \sum_{n=1}^{\infty} |c_n|^2 \Bigr)^{1/2}.
 \end{equation*}
\end{lemma}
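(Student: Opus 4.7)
The plan is to exploit the fact that Gaussian random variables are stable under taking finite linear combinations, and then pass to the infinite sum by a limiting argument.

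First I would reduce to the finite-dimensional case. Set
\[
S_N(\omega) := \sum_{n=1}^N c_n g_n(\omega), \qquad N \in \bN.
\]
For each fixed $N$, $S_N$ is a finite linear combination of independent, complex-valued, mean-zero Gaussian random variables, hence is itself a complex-valued, mean-zero Gaussian random variable. Its variance is computed by independence:
\[
\bE\bigl[ |S_N|^2 \bigr] = \sum_{n=1}^N |c_n|^2 \, \bE\bigl[ |g_n|^2 \bigr] = \sigma^2 \sum_{n=1}^N |c_n|^2,
\]
where $\sigma^2 = \bE[|g_1|^2]$ is the common variance of the $g_n$.

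Second, I would invoke Lemma \ref{lem:gauss_bds} (combined with a scaling argument, since Lemma \ref{lem:gauss_bds} is stated for $\sigma = 1$). Writing $S_N = \sigma(\sum_n |c_n|^2)^{1/2} \tilde g$ for a standard complex Gaussian $\tilde g$, we obtain
\[
\| S_N \|_{L^p_\omega} \leq C \sqrt{p} \, \sigma \Bigl( \sum_{n=1}^N |c_n|^2 \Bigr)^{1/2},
\]
with $C > 0$ independent of $p \geq 2$ and $N$.

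Third, I would pass to the limit $N \to \infty$. For any $M < N$, the difference $S_N - S_M$ is again a mean-zero complex Gaussian, so by the same argument,
\[
\| S_N - S_M \|_{L^p_\omega} \leq C \sqrt{p} \, \sigma \Bigl( \sum_{n=M+1}^N |c_n|^2 \Bigr)^{1/2},
\]
which tends to $0$ as $M, N \to \infty$ since $\{c_n\} \in \ell^2$. Hence $\{S_N\}$ is Cauchy in $L^p_\omega$, and its limit, which coincides a.s.\ with $\sum_{n=1}^\infty c_n g_n(\omega)$, satisfies the claimed bound by the triangle inequality (or by Fatou's lemma applied to a further almost-sure subsequence). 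There is no serious obstacle here; the only point to be careful about is the identification of the $L^p$ limit with the pointwise (almost sure) infinite sum, which follows from extracting an almost-surely convergent subsequence.
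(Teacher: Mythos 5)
Your proof is correct, but it is worth flagging that it takes a genuinely different route from the cited source, and that this comes at the cost of some generality. The paper attributes this lemma to Burq--Tzvetkov \cite[Lemma~3.1]{BT1}, where the proof covers an arbitrary sequence of independent random variables satisfying a uniform sub-Gaussian moment-generating-function bound of the form $\bigl|\int e^{\gamma x}\,d\mu_n(x)\bigr| \leq e^{c\gamma^2}$ (exactly the condition \eqref{equ:rvassumption} mentioned after Definition~\ref{def:randomization}). Their argument proceeds via that exponential bound: independence gives $\bE\bigl[e^{t\operatorname{Re}(\sum c_n g_n)}\bigr] \leq e^{ct^2\sum|c_n|^2}$, a Markov inequality then yields a Gaussian tail for the sum, and integrating the tail produces the $\sqrt{p}$ growth of the $L^p$ norm. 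Your argument instead exploits the \emph{stability} of the Gaussian family under linear combinations: $S_N$ is itself Gaussian, so the one-variable moment estimate of Lemma~\ref{lem:gauss_bds} applies after a scaling. This is cleaner and entirely adequate for the lemma as stated, but it relies crucially on the random variables being Gaussian, whereas the [BT1] argument does not; if one wanted to replace Gaussians by, say, Bernoulli variables (which the paper explicitly contemplates), your argument would not carry over while theirs would. One small point of care in your sketch: writing $S_N = \sigma\bigl(\sum|c_n|^2\bigr)^{1/2}\tilde g$ for a \emph{standard} $\tilde g$ implicitly uses that multiplying a complex Gaussian by a unimodular complex number does not change its law (circular symmetry); if one does not want to assume this, one should instead just note that $S_N$ is a centered Gaussian with $\bE|S_N|^2 = \sigma^2\sum|c_n|^2$ and apply the moment bound directly, which gives the same conclusion. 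The limiting step and the identification of the $L^p$ limit with the a.s.\ sum are handled correctly.
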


We will also use the following variant of \cite[Lemma 4.5]{Tz10} to bound the probability of certain subsets of the probability space.
\begin{lemma}\label{lem:large_dev2}
Let $F$ be a real valued measurable function on a probability space $(\Omega, \mathcal{F}, \bP)$. Suppose that there exists $\alpha > 0$, $N > 0$, $k\in \bN \setminus \{0\}$ and $C > 0$ such that for every $p\geq p_0$ one has
\begin{align}
\|F\|_{L^p_\omega} \leq C N^{-\alpha} p^{\frac{k}{2}}
\end{align}
Then, there exists $C_1$, and $\delta$ depending on $C$ and $p_0$ such that for $\lambda > 0$
\begin{align}
\bP( \omega \in \Omega : |F(\omega)| > \lambda) \leq C_1 e^{-\delta N^{\frac{2\alpha}{k}} \lambda^{\frac{2}{k}}}.
\end{align}
\end{lemma}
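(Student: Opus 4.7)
The plan is to apply Chebyshev's inequality and then optimize the exponent $p$. Starting from the Markov--Chebyshev inequality at the $L^p_\omega$ level, for any $p \geq p_0$,
\[
\bP(\{|F| > \lambda\}) \leq \lambda^{-p} \|F\|_{L^p_\omega}^p \leq C^p N^{-\alpha p} p^{kp/2} \lambda^{-p}.
\]
Writing this as $\exp\bigl(p[\log C - \alpha \log N + \tfrac{k}{2}\log p - \log \lambda]\bigr)$ and differentiating in $p$, I see that the minimizing choice is (up to multiplicative constants) $p \sim (N^\alpha \lambda)^{2/k}$.

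The concrete choice I would make is $p = \delta_0 N^{2\alpha/k} \lambda^{2/k}$ for a small constant $\delta_0 > 0$ to be fixed. With this choice, $p^{k/2} = \delta_0^{k/2} N^{\alpha} \lambda$, so the displayed bound collapses to
\[
\bP(\{|F|>\lambda\}) \leq \bigl(C \delta_0^{k/2}\bigr)^{p}.
\]
Choosing $\delta_0$ small enough so that $C \delta_0^{k/2} \leq e^{-1}$, we conclude $\bP(\{|F|>\lambda\}) \leq e^{-p} = \exp\bigl(-\delta_0 N^{2\alpha/k}\lambda^{2/k}\bigr)$, which is the desired tail bound with $\delta = \delta_0$.

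The only subtlety is the restriction $p \geq p_0$: the optimization is only valid when $\delta_0 N^{2\alpha/k}\lambda^{2/k} \geq p_0$, i.e. when $N^{2\alpha/k}\lambda^{2/k}$ is not too small. In the complementary regime I would simply use the trivial bound $\bP(\{|F|>\lambda\}) \leq 1$ and absorb this into the final constant $C_1$; explicitly, since the exponential $\exp(-\delta N^{2\alpha/k}\lambda^{2/k})$ is bounded below by $\exp(-\delta p_0/\delta_0)$ on the excluded range, setting $C_1 := \max(1, e^{\delta p_0/\delta_0})$ gives the stated inequality for all $\lambda > 0$. There is no real obstacle here; the entire argument is a one-parameter optimization, and the only point requiring care is bookkeeping of constants to handle the small-$\lambda$ regime uniformly.
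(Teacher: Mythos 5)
Your proof is correct and is essentially the standard argument (Chebyshev's inequality combined with optimization of the moment exponent $p$) used in the cited reference \cite{Tz10}, which the paper invokes without reproducing. The only point to flag is cosmetic: you must take $\delta_0 \le (eC)^{-2/k}$, so $\delta$ depends on $k$ as well as $C$ — consistent with $k$ being fixed in the hypothesis — and your $C_1 = e^{p_0}$ correctly absorbs the small-$\lambda$ regime where the optimal $p$ would drop below $p_0$.
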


\begin{remark}\label{rem:reg}
As a consequence of Lemma \ref{lem:large_dev} and the discussion at the end of Section~\ref{sec:randomization}, the sequence
\[
\biggl(\sum_{k_0 \leq k \leq N} g_k(\omega) P_k f_{0, hi},\sum_{k_0 \leq k \leq N} h_k(\omega)P_k f_{1, hi} \biggr) 
\]
is Cauchy in $L^2(\Omega ; \cH^s)$, since for any $N, M > k_0$, using the independence of the Gaussians and \eqref{equiv}, we have
\[
\biggl\| \biggl(\sum_{M \leq k \leq N} g_k(\omega) P_k f_{0, hi},\sum_{M \leq k \leq N} h_k(\omega) P_k f_{1, hi} \biggr) \biggr\|_{L^2_\omega \cH^s} \lesssim \biggl \| \biggl(\sum_{M \leq k \leq N} P_k f_{0, hi},\sum_{M \leq k \leq N} P_k f_{1, hi} \biggr) \biggr\|_{\cH^s}
\]
 Furthermore, by Lemma \ref{lem:large_dev}, Lemma \ref{lem:large_dev2}, and the discussion at the end of Section~\ref{sec:randomization} may conclude that for $(f_0, f_1) \in X_s$, its randomization $f^\omega$ lies in $X_s$ almost surely. Thus, we restrict ourselves to the subset of $\Omega$ of full measure where this holds. 
\end{remark}

We will only establish $L^p_{\omega}$ bounds for the linearized evolution of the random data  since the probabilistic bounds then follow by Lemma \ref{lem:large_dev2}. Moreover, we will take advantage of Lemma \ref{lem:large_dev2} and only establish $L^p_\omega$ estimates for $p$ larger than some $p_0 > 2$. For simplicity of exposition, we make the following definition: 

\medskip
\noindent \textbf{Assumption (A2):}
We say $f \in \cH^s(\bR^3)$ satisfies Assumption (A2) if $f = (f_{0,hi}, f_{1, hi})$.

\medskip
\noindent We note that in this case, 
\begin{align}
f_{\geq k_0}^\omega  = \biggl( \sum_{k \geq k_0} g_k(\omega) P_k f_{0} , \sum_{k \geq k_0} h_k(\omega) P_k f_{1} \biggr).
\end{align}
Moreover, we note that for such functions we have
\begin{align}\label{weighted_sobolev_bds}
\| \langle x \rangle^{\alpha} |\nabla|^s f \|_{L^2} \lesssim \| \langle x \rangle^{\alpha} \langle \nabla\rangle^s f \|_{L^2}
\end{align}
for $s \in \bR$ by applying Lemma \ref{lem:weighted_ests_free} to 
\[
 \langle \nabla \rangle^{-s}  |\nabla|^{s}\varphi(-\Delta) 
\]
for $\varphi \in C^\infty \cap L^\infty$ supported away from zero.

 We collect our improved probabilistic estimates in several propositions. We begin with the more direct estimates, where we will appeal mainly to the Bernstein estimates for the projection operators $P_k$, see Proposition \ref{prop:pk_bern}. We recall the definition of the linearized wave evolution:
\[
W(t)(f_0, f_1) = \cos(t \sqrt{|H|}) f_0 + \frac{\sin(t \sqrt{|H|}) }{\sqrt{|H|}} f_1.
\]

\begin{proposition}\label{improved_strichartz1}
Let $s > 3/4$, let $f \in \cH^s(\bR^3)$ satisfy Assumption (A2) and let $f^\omega$ be its randomization as defined in Definition \ref{def:randomization}. Then for $p \geq 6$, we have
\begin{align}
\|\|W(t)f_{\geq k_0}^\omega  \|_{L^6_x L^\infty_t (\bR \times \bR^3)} \|_{L^p_\omega}  \lesssim \sqrt{p} \|P_{\geq k_0}  f\|_{ \cH^s_x(\bR^3)}.
\end{align}
\end{proposition}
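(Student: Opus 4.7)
The plan is the standard three-step randomization argument: swap the $L^p_\omega$ norm with the spatial/temporal norms via Minkowski, apply the Gaussian large-deviation estimate to produce a square function in $k$, and then estimate each dyadic distorted-frequency piece separately via Bernstein. The main simplification in this particular estimate is that $L^\infty_t$ can be absorbed trivially, so only fixed-time bounds on $\cos(t\sqrt{|H|}) P_k f_0$ and $\frac{\sin(t\sqrt{|H|})}{\sqrt{|H|}} P_k f_1$ are needed.

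Concretely, since $p \geq 6$, two applications of the mixed-norm Minkowski inequality (using $p \geq 6$ and $\infty \geq p$) give
\[
\bigl\| \|W(t)f^\omega_{\geq k_0}\|_{L^6_x L^\infty_t} \bigr\|_{L^p_\omega} \leq \bigl\| \|W(t)f^\omega_{\geq k_0}\|_{L^p_\omega} \bigr\|_{L^6_x L^\infty_t}.
\]
Because the $g_k,h_k$ are independent mean-zero Gaussians, Lemma \ref{lem:large_dev} applied pointwise in $(t,x)$ yields
\[
\|W(t) f^\omega_{\geq k_0}(t,x)\|_{L^p_\omega} \lesssim \sqrt{p} \Bigl( \sum_{k \geq k_0} |\cos(t\sqrt{|H|}) P_k f_0(x)|^2 + \bigl|\tfrac{\sin(t\sqrt{|H|})}{\sqrt{|H|}} P_k f_1(x)\bigr|^2 \Bigr)^{1/2}.
\]
Since $L^\infty_t$ distributes inside the square function and $(L^6_x)^2=L^3_x$ is dominated by $\ell^1$ via the triangle inequality, one has
\[
\Bigl\|\Bigl(\sum_k |A_k(t,x)|^2\Bigr)^{1/2}\Bigr\|_{L^6_x L^\infty_t}^2 \leq \sum_k \|A_k\|_{L^6_x L^\infty_t}^2,
\]
and matters reduce to bounding each $\|\cos(t\sqrt{|H|}) P_k f_0 \|_{L^6_x L^\infty_t}$ and its sine analog.

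To handle each dyadic piece, use Minkowski once more to pass to $L^\infty_t L^6_x$, and then at each fixed $t$ exploit the fact that $\cos(t\sqrt{|H|}) P_k f_0$ remains spectrally localized in distorted frequency at scale $k$ (the cosine is a bounded spectral multiplier and commutes with $P_k$). The Bernstein inequality of Proposition \ref{prop:pk_bern} (applied with $\theta=0$ and $r$ just below $3$) gives
\[
\|\cos(t\sqrt{|H|}) P_k f_0\|_{L^6_x} \lesssim k^{2/3} \|\cos(t\sqrt{|H|}) P_k f_0\|_{L^2_x} \lesssim k^{2/3} \|P_k f_0\|_{L^2_x},
\]
where the last bound uses that $\cos(t\sqrt{|H|})$ is an $L^2_{ac}$-contraction. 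Combining this with the frequency-localized equivalence $\|P_k f_0\|_{L^2} \sim k^{-s} \||\nabla|^s P_k f_0\|_{L^2}$ (from Lemma \ref{lem:bernstein3} and Corollary \ref{cor:equivalence}) and the square-summation identity \eqref{equiv} produces a geometric series in $k^{4/3-2s}$, which is controlled for any $s > 2/3$ by $\||\nabla|^s P_{\geq k_0} f_0\|_{L^2}^2$. The sine term is handled identically: the factor $|H|^{-1/2}$ contributes an extra $k^{-1}$ that exactly cancels the $k^{1-s}$ lost when converting $\|P_k f_1\|_{L^2}$ to $\|P_k f_1\|_{H^{s-1}}$, leaving the same geometric series.

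The only real choice is how to handle $L^6_x L^\infty_t$. The cheap route above replaces it by $L^\infty_t L^6_x$ via Minkowski and uses fixed-time spatial Bernstein; this gives the threshold $s>2/3$, which is comfortably within the hypothesis $s>3/4$. A more refined approach using the reverse-Strichartz and Wiener-algebra machinery of Theorem \ref{thm:rep} and Theorem \ref{thm:free_evol_bds} could capture genuine $L^6_x L^\infty_t$ control, but is unnecessary here since the orthogonality in $k$ supplied by the randomization already absorbs the crude loss from Minkowski.
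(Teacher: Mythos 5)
Your argument breaks at the very first inequality, and again a few lines later, because the Minkowski inequality is applied in the wrong direction whenever $L^\infty_t$ is involved. Recall that the generalized Minkowski inequality, $\|g\|_{L^a_y L^b_z} \leq \|g\|_{L^b_z L^a_y}$, holds precisely when $a \geq b$: moving a norm inward past another is legal only when the moved exponent is at least as large as the one it passes. Your display
\[
\bigl\| \|W(t)f^\omega_{\geq k_0}\|_{L^6_x L^\infty_t} \bigr\|_{L^p_\omega} \leq \bigl\| \|W(t)f^\omega_{\geq k_0}\|_{L^p_\omega} \bigr\|_{L^6_x L^\infty_t}
\]
is therefore false: moving $L^p_\omega$ past $L^6_x$ is legal since $p \geq 6$, but moving $L^p_\omega$ past $L^\infty_t$ would require $p \geq \infty$. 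The same inversion recurs when you ``use Minkowski once more to pass to $L^\infty_t L^6_x$''; since $6 \leq \infty$, Minkowski gives $\|A_k\|_{L^\infty_t L^6_x} \leq \|A_k\|_{L^6_x L^\infty_t}$, again the opposite of what you need, so a fixed-time Bernstein bound on $\|A_k\|_{L^\infty_t L^6_x}$ says nothing about $\|A_k\|_{L^6_x L^\infty_t}$.

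This is precisely the central obstruction in random-data estimates involving an $L^\infty_t$ norm, and there is no cheap way around it. The paper's proof begins instead by applying Sobolev embedding in time, replacing $L^\infty_t$ with $L^6_t$ at the cost of a fractional time derivative $(M + |\partial_t|^2)^{1/12+}$; via Stone's formula this time derivative is converted into the spectral multiplier $(H+M)^{1/12+}$, which commutes with $P_k$. Only after landing in $L^6_x L^6_t$ is Minkowski (with $p \geq 6$) available, and then the large-deviation and Bernstein steps proceed roughly as you sketch. But once in $L^6_t$ rather than $L^\infty_t$, a uniform-in-time $L^2$-contraction bound is no longer enough to supply the $t$-integrability, which is why the paper has to invoke genuine Strichartz estimates for the linearized flow --- the radial Strichartz interpolation of Corollary~\ref{cor:strichartz_interp}. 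The outcome is the threshold $s > 3/4$ rather than the $s > 2/3$ your argument would give; the gap is the price of making $L^\infty_t$ probabilistically tractable, and the fact that your route appears to beat the paper's threshold for free should itself have been a warning sign.
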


\begin{proof}
Fix $s > 3/4$. Fix $M > \max(1, -\inf \sigma(H))$. By Sobolev embedding in time, we have
\begin{align}
\|\|W(t) f_{\geq k_0}^\omega  \|_{L^6_x L^\infty_t } \|_{L^p_\omega}  \lesssim \|\|(M +  |\partial_t |^2 )^{\frac{1}{12}+} W(t) f_{\geq k_0}^\omega \|_{L^6_x L^6_t } \|_{L^p_\omega} .
\end{align}
We now estimate
\[
\cos(t \sqrt{H}) \sum_{k \geq k_0} g_k(\omega)  P_k f_0 = \sum_{k \geq k_0} g_k(\omega)\cos(t \sqrt{H})   P_k f_0.
\]
Applying the large deviation estimate of Lemma \ref{lem:large_dev} and Minkowski's inequality (using that $p \geq 6$) we obtain
\begin{align}
&\biggl\| \biggl\|(M +  |\partial_t |^2 )^{\frac{1}{12}+}  \cos(t \sqrt{H}) \sum_{k \geq k_0} g_k(\omega) P_k f_0 \biggr\|_{L^6_x L^6_t } \biggr\|_{L^p_\omega}  \\
&\lesssim \sqrt{p} \biggl( \sum_{k \geq k_0} \| (M +  |\partial_t |^2 )^{\frac{1}{12}+} \cos(t \sqrt{H})  P_k f_0  \|_{L^6_tL^6_x }^2 \biggr)^{1/2}.
\end{align}
Fix $\varepsilon > 0$ to be determined. By Proposition \ref{h_sobolev}, we have
\begin{align}\label{equ:h_sobolev}
\|f\|_{L^6_x} \leq \|(H+M)^{\beta} f \|_{L^{6-\varepsilon}_x}, \qquad \textup{for } \beta > \frac{3}{2} \left( \frac{1}{6-\varepsilon} - \frac{1}{6}\right)
\end{align}
hence taking $\beta = \varepsilon /2$, we can estimate
\begin{equation}\label{l6_est}
\begin{split}
&\sqrt{p} \biggl( \sum_{k \geq k_0} \| (M +  |\partial_t |^2 )^{\frac{1}{12}+}\cos(t \sqrt{H})   P_k f_0  \|_{L^6_tL^6_x }^2 \biggr)^{1/2}\\
&\lesssim \sqrt{p} \biggl( \sum_{k \geq k_0} \| (M +  |\partial_t |^2 )^{\frac{1}{12}+}  \cos(t \sqrt{H})  (H+M)^{\frac{\varepsilon}{2}} P_k f_0  \|_{L^6_tL^{6-\varepsilon}_x }^2 \biggr)^{1/2}.
\end{split}
\end{equation}
We now revisit the proof of Proposition \ref{prop:new_rep} to handle the time derivative. By \eqref{cos_ests}, we have
\[
\cos(t \sqrt{H}) \chi_1(H) f =\frac{1}{2} \int_{-\infty}^\infty (e^{it\lambda} + e^{-it\lambda}) \chi_1 (\lambda^2)  (I + R_0^{+}(\lambda^2) V)^{-1} \lambda R_0^{+}(\lambda^2)  f d\lambda,
\]
hence 
\begin{align}
&(M + |\partial_t|^2)^{\alpha/2}  \cos(t \sqrt{H}) \chi_1(H) f \\
&=\frac{1}{2} \int_{-\infty}^\infty (M + \lambda^2)^{\alpha/2} (e^{it\lambda} + e^{-it\lambda}) \chi_1 (\lambda^2)  (I + R_0^{+}(\lambda^2) V)^{-1} \lambda R_0^{+}(\lambda^2)  f d\lambda,
\end{align}
which by Stone's formula \eqref{stone} yields
\[
(M +  |\partial_t |^2 )^{\alpha}  \cos(t \sqrt{H}) \chi_1(H) f  =  \cos(t \sqrt{H}) (M + H)^{\alpha/2} \chi_1(H) f .
\]
Hence, we obtain
\begin{align}
&\sqrt{p} \biggl( \sum_{k \geq k_0} \| (M +  |\partial_t |^2 )^{\frac{1}{12}+}  \cos(t \sqrt{H})  (H+M)^{\frac{\varepsilon}{2}} P_k f_0  \|_{L^6_tL^{6-\varepsilon}_x }^2 \biggr)^{1/2}\\
& \lesssim  \sqrt{p} \biggl( \sum_{k \geq k_0} \|  \cos(t \sqrt{H})  (H+M)^{\frac{\varepsilon}{2} +\frac{1}{12}+} P_k f_0  \|_{L^6_tL^{6-\varepsilon}_x }^2 \biggr)^{1/2}.
\end{align}
Fix $2 < r < 3$, and $0 \leq \theta \leq 1$ satisfying
\begin{align}\label{theta1}
6 - \varepsilon \leq \frac{r}{\theta} \quad \Longrightarrow \quad \theta \leq \frac{r}{6-\varepsilon}.
\end{align}
Using that $\cos(t \sqrt{H})( H+M)^{\alpha}$ acts diagonally on distorted Fourier space and hence doesn't change the support of $P_k f$, by the Bernstein estimates of Proposition \ref{prop:pk_bern}, we obtain
\begin{align}
 &\sqrt{p} \biggl( \sum_{k \geq k_0} \|  \cos(t \sqrt{H})  (H+M)^{\frac{\varepsilon}{2} + \frac{1}{12} + } P_k f_0  \|_{L^6_tL^{6-\varepsilon}_x }^2 \biggr)^{1/2} \label{starting_pt}\\
 & \lesssim \sqrt{p} \biggl( \sum_{k \geq k_0} |k|^{4 \left(\frac{r-(r-2)\theta}{2r} - \frac{1}{6 - \varepsilon} \right)} \| \cos(t \sqrt{H})  (H+M)^{\frac{\varepsilon}{2} + \frac{1}{12} + }  P_k f_0  \|_{L^6_t L^{\frac{2r}{r-(r-2)\theta}}_x}^2 \biggr)^{1/2}.
\end{align}
In order to apply Corollary \ref{cor:strichartz_interp}, we need to find $3 < r_1 \leq \infty$ and $0 \leq \theta_1 \leq 1$ such that
\[
\frac{2r}{r-(r-2)\theta} = \frac{2r_1}{r_1-(r_1-2)\theta_1}, \qquad \frac{r_1}{\theta_1} = 6.
\]
We find that we need 
\begin{align}\label{theta_eq}
 \theta = \frac{r}{6} \left( \frac{r_1 - 2}{r-2}\right)
\end{align}
by solving using the restrictions on $r_1, \theta_1$, and hence from \eqref{theta1} and \eqref{theta_eq} we need to choose $\varepsilon >0$ sufficiently small and $r < 3 < r_1$ so that
\[
 \frac{r}{6} \left( \frac{r_1 - 2}{r-2}\right) < \frac{r}{6-\varepsilon}.
\]
Under these constraints we obtain by the Strichartz estimates of Corollary \ref{cor:strichartz_interp} and  the coercivity results of Corollary \ref{cor:equivalence}, followed by Remark \ref{equ:inhomog_equiv}, Lemma \ref{lem:bernstein2} and Lemma \ref{lem:bernstein3} that
\begin{align}
&\sqrt{p} \biggl( \sum_{k \geq k_0 } |k|^{4\left(\frac{r-(r-2)\theta}{2r} - \frac{1}{6 - \varepsilon} \right) } \| \sqrt{|H|}^{ \frac{3}{2} - \frac{1}{6} -\frac{3(r-(r-2)\theta)}{2r} }  (H+M)^{\frac{\varepsilon}{2} + \frac{1}{12} + }  P_k f_0  \|_{L^2_x}^2 \biggr)^{1/2}\\
& \lesssim \sqrt{p} \| \sqrt{|H|}^{\frac{1 + \theta}{2} + \varepsilon + } P_{\geq k_0} f_0  \|_{L^2_x},
\end{align}
where we have used Theorem~\ref{distorted} and the fact that
\[
\sum_{k \geq k_0} |\psi_k(\xi)|^2 \simeq \biggl( \sum_{k \geq k_0} \psi_k \biggr)^2.
\]
By Corollary \ref{cor:equivalence},  we obtain the desired estimate provided
\[
\frac{1 + \theta}{2} + \varepsilon < s,
\]
which may be satisfied for any $s > \frac{3}{4}$ by choosing $r = 3-$, $\varepsilon$ sufficiently small and $\theta < \frac{r}{6-\varepsilon}$. Note that for the $f_1$ component, the same proof will yield
\begin{align}
\biggl\| \biggl\| \sum_{k \geq k_0} g_k(\omega) \frac{\sin(t \sqrt{H})}{\sqrt{H}} P_k f_1 \biggr\|_{L^6_x  L^\infty_t} \biggr\|_{L^p_\omega} 
& \lesssim \biggl( \sum_{k \geq k_0}  \| \sqrt{|H|}^{\frac{\theta + 1 }{2} + \varepsilon - 1 + } P_k f_1 \|^2_{L^2_x} \biggr)^{1/2}\\ &\simeq \|P_{\geq k_0}  f_1 \|_{\dot H_x^{\frac{\theta -1}{2} + \varepsilon+}},
\end{align}
which concludes the proof.
\end{proof}

We need global bounds on the $L^{8}_{t,x}$ Strichartz norm of the solution to establish scattering. We present this result now. 
\begin{proposition}\label{prop:l8}
Let $s > 3/4$ let $f \in \cH^s(\bR^3)$ satisfy Assumption (A2) and let $f^\omega$ be its randomization as defined in Definition \ref{def:randomization}. Then for $p \geq 6$, we have
\begin{align}
\| \|  W(t) f_{\geq k_0} ^\omega\|_{L^{8}_{t,x}(\bR \times \bR^3)} \|_{L^p_\omega} &\lesssim \sqrt{p} \|P_{\geq k_0}  f\|_{\cH^{s}}.
\end{align}
\end{proposition}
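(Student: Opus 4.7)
The plan is to follow the strategy of Proposition \ref{improved_strichartz1}, adapting the Strichartz/Bernstein combination to target the $L^8_{t,x}$ norm. First I would reduce to $p \geq 8$: for $6 \leq p < 8$, the bound follows from the $p = 8$ case via the monotonicity of moments on the probability space. Assuming $p \geq 8$, Minkowski's inequality interchanges $L^p_\omega$ and $L^8_{t,x}$, and the large deviation estimate of Lemma \ref{lem:large_dev}, applied pointwise in $(t,x)$ together with the independence of the Gaussians, produces the square-function bound
\[
\|W(t)f^\omega_{\geq k_0}(t,x)\|_{L^p_\omega} \lesssim \sqrt{p} \Bigl( \sum_{k \geq k_0} |\cos(t\sqrt H) P_k f_0(x)|^2 + \Bigl|\tfrac{\sin(t\sqrt H)}{\sqrt H} P_k f_1(x)\Bigr|^2 \Bigr)^{1/2}.
\]
A further application of Minkowski (valid since $8 \geq 2$) then pulls the $\ell^2_k$-sum outside the $L^8_{t,x}$ norm, reducing the estimate to the deterministic per-frequency bounds
\[
\|\cos(t\sqrt H) P_k f_0\|_{L^8_{t,x}} \lesssim k^{\alpha} \|P_k f_0\|_{L^2}, \qquad \Bigl\|\tfrac{\sin(t\sqrt H)}{\sqrt H} P_k f_1\Bigr\|_{L^8_{t,x}} \lesssim k^{\alpha - 1} \|P_k f_1\|_{L^2},
\]
for some $\alpha < s$. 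An $\ell^2_k$-summation combined with the frequency-localized equivalence $\sum_k k^{2\alpha}\|P_k g\|_{L^2}^2 \sim \|g\|_{\dot H^\alpha}^2$, which follows from Lemma \ref{lem:bernstein3}, the isometry property from Theorem \ref{distorted}, and Corollary \ref{cor:equivalence}, then closes the estimate.

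To establish the per-frequency bound, I would combine the radial Strichartz estimate from Corollary \ref{cor:strichartz_interp} with the annular Bernstein inequality from Proposition \ref{prop:pk_bern}. Specifically, applying Corollary \ref{cor:strichartz_interp} with interpolation parameter $r$ just above $3$ and $\theta = r/8$ yields a Strichartz bound for $\cos(t\sqrt H) P_k f_0$ in $L^8_t L^{r_x}_x$, where $r_x = 16/(10-r)$ lies just above $16/7$, at regularity $\gamma = 11/8 - 3/r_x$, which approaches $1/16$ as $r \to 3^+$. Composing with the Bernstein bound $\|P_k g\|_{L^8_x} \lesssim k^{2(1/r_x - 1/8)} \|P_k g\|_{L^{r_x}_x}$ from Proposition \ref{prop:pk_bern}, whose exponent approaches $5/8$ as $r_x \searrow 16/7$, produces a total $k$-power of $9/8 - 1/r_x$ that approaches $11/16$ in the limit---well below the threshold $s = 3/4$. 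The sine component is treated identically, with the factor $1/\sqrt H$ absorbing one power of $k$ and yielding the analogous estimate with $\alpha - 1$ in the exponent, which is compatible with $P_k f_1 \in H^{s-1}$.

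The main obstacle is the endpoint incompatibility between the ranges of Corollary \ref{cor:strichartz_interp}, which requires $r > 3$ strictly (hence $r_x > 16/7$), and Proposition \ref{prop:pk_bern}, which requires the interpolation parameter $r$ to satisfy $r < 3$ (by the $L^r$-boundedness of $P_k$ from Lemma \ref{lem:proj_bds}) and therefore forces the Bernstein source exponent to remain strictly below $16/7$. This prevents one from reaching the critical pair $(8, 16/7)$ exactly, so that the argument as sketched incurs an arbitrarily small $\varepsilon$-loss in the regularity; this loss is comfortably absorbed by the extra margin in the hypothesis $s > 3/4$.
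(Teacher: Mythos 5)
Your overall strategy (interchange $L^p_\omega$ with the spacetime norm, apply the large-deviation square-function bound, then estimate each annular piece by Bernstein plus Strichartz and sum in $\ell^2_k$) is in the spirit of the paper, but the paper takes a different and shorter route: it Sobolev-embeds $L^8_{t,x}$ into $L^6_{t,x}$ at the very start, paying $(|\partial_t|+M)^{(\frac16-\frac18)+}(H+M)^{\frac32(\frac16-\frac18)+}$, and then literally re-runs the argument of Proposition~\ref{improved_strichartz1} from \eqref{l6_est} onward. The time operator commutes with $\cos(t\sqrt{H})\chi_1(H)$ and is converted to $(H+M)^{\cdot}$ via Stone's formula, exactly as explained in that proof. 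By reducing to $L^6$, one also stays in the range $p\geq 6$ and avoids the extra step you use to cover $6\leq p<8$.

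The more substantive issue is that the ``endpoint incompatibility'' you flag at the end is not merely an $\varepsilon$-loss of regularity; as you have set the argument up, it is a genuine gap. Corollary~\ref{cor:strichartz_interp} with $\theta=r/8$, $r>3$, gives control of $\cos(t\sqrt{H})P_k f_0$ in $L^8_t L^{r_x}_x$ for some $r_x=16/(10-r)>16/7$, while Proposition~\ref{prop:pk_bern}, applied to produce $\|P_k g\|_{L^8_x}\lesssim |k|^{\cdot}\|P_k g\|_{L^{r_s}_x}$, necessarily requires $r<3$, $\theta\leq r/8$, so the source exponent satisfies $r_s=\tfrac{2r}{r-(r-2)\theta}\leq\tfrac{16}{10-r}<\tfrac{16}{7}$. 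Since $r_s<16/7<r_x$, and on $\mathbb{R}^3$ there is no inclusion $L^{r_x}_x\hookrightarrow L^{r_s}_x$, the Bernstein and Strichartz steps do not compose: there is nothing with which to bound the $L^{r_s}_x$ norm appearing on the right side of Bernstein in terms of the $L^{r_x}_x$ norm the Strichartz estimate controls. No amount of regularity margin in $s$ closes this.

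The missing ingredient is a Sobolev embedding in the \emph{spatial} variable that loosens the Bernstein target exponent. If one first uses Proposition~\ref{h_sobolev} to write $\|P_k g\|_{L^8_x}\lesssim \|(H+M)^{\beta}P_k g\|_{L^{8-\varepsilon}_x}$ for $\beta>\frac32(\frac{1}{8-\varepsilon}-\frac18)$, then the Bernstein constraint becomes $\theta\leq r/(8-\varepsilon)$, and with $r$ close to $3^-$ the source exponent $\tfrac{2(8-\varepsilon)}{10-\varepsilon-r}$ can now be pushed strictly \emph{above} $16/7$, so the ranges of Proposition~\ref{prop:pk_bern} and Corollary~\ref{cor:strichartz_interp} overlap (this is exactly the role of the $L^{6-\varepsilon}_x$ target and the matching condition $\theta=\frac{r}{6}\frac{r_1-2}{r-2}<\frac{r}{6-\varepsilon}$ in the proof of Proposition~\ref{improved_strichartz1}). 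Inserting such a step and bookkeeping the extra $(H+M)^{\beta}$ as a $|k|^{2\beta}$ loss via Lemma~\ref{lem:bernstein2} would repair your argument, but as written it does not close. The paper avoids these delicate matching conditions altogether by reducing to $L^6_{t,x}$ first, where the work has already been carried out.
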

\begin{proof}
We use Sobolev embedding in space and time to obtain
\[
\| W(t)  f_{\geq k_0} ^\omega \|_{L^{8}_{t,x}} \lesssim  \|  (|\partial_t| + M)^{\left( \frac{1}{6} - \frac{1}{8}\right)+ } (H+M)^{\frac{3}{2}\left( \frac{1}{6} - \frac{1}{8}\right)+}W(t) f_{\geq k_0}^\omega \|_{L^{6}_{t,x}}.
\]
Using the large deviation estimate of Lemma \ref{lem:large_dev} and Minkowski's inequality (using that $p \geq 6$) we may bound the $L^p_\omega$ norm of the $f_0$ contribution to the previous expression by
\[
\sqrt{p}\biggl( \sum_{k \geq k_0} \|  (|\partial_t| + M)^{\left( \frac{1}{6} - \frac{1}{8}\right)+ } (H+M)^{\frac{3}{2}\left( \frac{1}{6} - \frac{1}{8}\right)+} \cos(t \sqrt{H})   P_k f_0 \|^2_{L^{6}_{t,x}} \biggr)^{1/2}
\]
and we may argue as in  the proof of Proposition \ref{improved_strichartz1}, beginning from \eqref{l6_est}, to obtain the result.
\end{proof}

\begin{proposition}\label{prop:improved_strichartz2}
Let $s \geq 5/6$, let $f \in \cH^s(\bR^3)$ satisfy Assumption (A2) and let $f^\omega$ be its randomization as defined in Definition \ref{def:randomization}. Then for $p \geq 9$, we have
\begin{align}
\| \|  W(t) f_{\geq k_0} ^\omega\|_{L^{9,2}_x L^3_t  (\bR \times \bR^3)} \|_{L^p_\omega} &\lesssim \sqrt{p} \|P_{\geq k_0}  f\|_{\cH_x^{s}(\bR^3)},
\end{align}
and for $p \geq 36/5$, we have
\begin{align}
\|\|W(t) f_{\geq k_0} ^\omega \|_{L^{36/5}_x L^\infty_t (\bR \times \bR^3)} \|_{L^p_\omega}  \lesssim \sqrt{p} \|P_{\geq k_0}  f\|_{ \cH^s_x(\bR^3)}.
\end{align}
Furthermore, for any $s > 2/3$ and $p\geq 36/5$, we have
\begin{align}
\| \|  W(t)f_{\geq k_0} ^\omega\|_{L^{36/5}_x L^6_t  (\bR \times \bR^3)} \|_{L^p_\omega} &\lesssim \sqrt{p} \|P_{\geq k_0}  f\|_{\cH^{s}_x(\bR^3)}.
\end{align}
\end{proposition}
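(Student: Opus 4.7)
My plan is to follow the template of Proposition~\ref{improved_strichartz1}: combine Minkowski's inequality (valid since the relevant $p$ is at least $2$) with the large deviation estimate of Lemma~\ref{lem:large_dev} to reduce each claimed bound to a deterministic per-dyadic square-function estimate of the shape
\[
\Bigl\| \bigl\| W(t)  f_{\ge k_0}^\omega \bigr\|_{L^{r,q_1}_x L^{q_2}_t} \Bigr\|_{L^p_\omega} \lesssim \sqrt{p} \Bigl( \sum_{k\ge k_0}  \| W(t) P_k f \|_{L^{r,q_1}_x L^{q_2}_t}^2 \Bigr)^{1/2}.
\]
Coercivity (Corollary~\ref{cor:equivalence}, Remark~\ref{equ:inhomog_equiv}) and the Bernstein estimates of Lemmas~\ref{lem:bernstein2}--\ref{lem:bernstein3} will then convert derivatives in distorted Fourier space to powers of $k$, after which Theorem~\ref{distorted} and the almost-orthogonality $\sum_{k \ge k_0} |\psi_k(\xi)|^2 \sim 1$ for $|\xi|\ge k_0$ reassemble the sum into $\|P_{\ge k_0} f\|_{\cH^s}^2$.

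For the $L^{9,2}_x L^3_t$ bound, I would argue directly, without invoking Bernstein's inequality. The Lorentz extension of Theorem~\ref{thm:free_evol_bds} stated in Remark~\ref{rem:interp_linear_evol} transfers the problem from the linearized flow to the free half-wave propagators $W_0^\pm, W_1^\pm$ on $L^{9,2}_x L^3_t$ (applicable since $9 > 3$). The pair $(q,r) = (3,9)$ is wave-admissible with $\gamma = 5/6$ and satisfies the range conditions of Proposition~\ref{prop:lorentz_est}, so Lemma~\ref{lem:wave_soln} yields
\[
\|W(t) P_k f\|_{L^{9,2}_x L^3_t} \lesssim  \||\nabla|^{5/6} P_k f_0\|_{L^2} +  \||\nabla|^{-1/6} P_k f_1\|_{L^2} \lesssim k^{5/6} \|P_k f_0\|_{L^2} + k^{-1/6}\|P_k f_1\|_{L^2},
\]
which sums to $\|P_{\ge k_0} f\|_{\cH^{5/6}}^2$.

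For the remaining two bounds I would rely on Corollary~\ref{cor:strichartz_interp} combined with Bernstein to pay for the mismatch between the naturally available space exponent and the target $L^{36/5}_x$. For $L^{36/5}_x L^6_t$, Minkowski's inequality (valid since $6 \le 36/5$) reduces matters to $L^6_t L^{36/5}_x$; I would then apply Corollary~\ref{cor:strichartz_interp} with $r = 3-\delta$ and $\theta = r/6$, placing $W(t) P_k f$ in $L^6_t L^{12/(8-r)}_x$ at regularity cost $\theta_r = (3r-8)/12$, and use Proposition~\ref{prop:pk_bern} to promote the spatial exponent from $L^{12/(8-r)}_x$ up to $L^{36/5}_x$ at further cost $k^{(19-3r)/18}$. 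The total Bernstein power $(3r+14)/36$ tends to $23/36$ as $r \to 3^+$, strictly below $2/3$. For $L^{36/5}_x L^\infty_t$, I would first use the Sobolev embedding $L^\infty_t \hookrightarrow W^{1/6+,6}_t$, and then use the Stone's formula identity
\[
(M + |\partial_t|^2)^{1/12+} \cos(t\sqrt{H}) \chi_1(H) = \cos(t\sqrt{H}) (M+H)^{1/12+} \chi_1(H)
\]
exactly as in the proof of Proposition~\ref{improved_strichartz1} to convert the time derivatives into a factor of $(M+H)^{1/12+}$, contributing $k^{1/6+}$ via Lemma~\ref{lem:bernstein2} and Remark~\ref{equ:inhomog_equiv}. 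Reusing the $L^{36/5}_x L^6_t$ argument then gives total Bernstein power $(3r+14)/36 + 1/6 \to 29/36 < 5/6$ as $r \to 3^+$, which closes for $s \ge 5/6$.

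The main obstacle will be the careful bookkeeping of the various derivative losses (time Sobolev, the radial Strichartz cost $\theta_r$, and the Bernstein promotion of the spatial exponent) to ensure they remain below the claimed regularity thresholds, along with verifying throughout that the chosen parameters $r \in (3, 6]$ and $\theta \in (0, 1]$ remain in the admissible ranges of Corollary~\ref{cor:strichartz_interp} and Proposition~\ref{prop:pk_bern}. The sine terms are treated in parallel using $W_1^\pm$ and the lower Bernstein bound of Lemma~\ref{lem:bernstein3} for negative powers, absorbing the extra factor of $(\sqrt{H})^{-1}$ coming from $\sin(t\sqrt{H})/\sqrt{H}$.
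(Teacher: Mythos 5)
Your treatment of the first estimate is correct and essentially identical to the paper's: Remark~\ref{rem:interp_linear_evol} transfers to the half-wave propagators, Proposition~\ref{prop:lorentz_est} is applied with the admissible pair $(q,r)=(3,9)$ at regularity $\gamma=5/6$, and the coercivity and Bernstein lemmas convert the $|\nabla|^{5/6}$ and $|\nabla|^{-1/6}$ factors to the expected powers of $k$.

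For the two $L^{36/5}_x$ estimates, however, the plan as written does not go through, and the gap is in the Bernstein step. You propose to place $W(t)P_kf$ in $L^6_t L^{12/(8-r)}_x$ via Corollary~\ref{cor:strichartz_interp} with $\theta = r/6$ (note that corollary requires $r>3$, not $r = 3-\delta$; from your ``$r\to 3^+$'' I assume this is a typo), and then use Proposition~\ref{prop:pk_bern} to pass from $L^{12/(8-r)}_x$ to $L^{36/5}_x$. But Proposition~\ref{prop:pk_bern} inherits the constraint $q\le \tilde r/\tilde\theta$ with $2<\tilde r<3$, which is ultimately traced back to the fact that Lemma~\ref{lem:proj_bds} only supplies an $L^p$ bound for $P_k$ when $3/2<p<3$. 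Writing $p_{\mathrm{in}}=\frac{2\tilde r}{\tilde r-(\tilde r-2)\tilde\theta}$ for the input exponent, the largest admissible target is
\[
q \le \frac{\tilde r}{\tilde\theta} = \frac{p_{\mathrm{in}}(\tilde r-2)}{p_{\mathrm{in}}-2} < \frac{p_{\mathrm{in}}}{p_{\mathrm{in}}-2}.
\]
Your input exponent is $p_{\mathrm{in}}=12/(8-r) > 12/5$ for $r>3$, which forces $q < \frac{12/5}{2/5} = 6 < 36/5$. So the proposed Bernstein promotion to $L^{36/5}_x$ is out of range for every admissible choice of parameters, and in particular the asserted Bernstein power $k^{(19-3r)/18}$ is not available from the tools in the paper. (Tellingly, your claimed total exponents $23/36$ and $29/36$ are strictly below the paper's stated thresholds $2/3$ and $5/6$; the putative improvement is a symptom of the gap.) The paper's own route avoids this: it uses Minkowski and the operator Sobolev embedding of Proposition~\ref{h_sobolev} to bound $L^6_t L^{36/5}_x$ by $\|(H+M)^\beta\,\cdot\,\|_{L^6_t L^6_x}$ for $\beta > \tfrac{3}{2}(\tfrac16-\tfrac{5}{36}) = \tfrac{1}{24}$, and then re-runs the $L^6_{t,x}$ computation from the proof of Proposition~\ref{improved_strichartz1}. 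The $L^{36/5}_x L^\infty_t$ bound then follows by adding the time Sobolev embedding exactly as in your sketch, but with this Sobolev-based $L^{36/5}_x L^6_t$ estimate as the input rather than the Bernstein-based one.
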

\begin{proof}
We only estimate the frequency localized pieces for the first and third estimates since the rest of the argument follows identically to that of Proposition \ref{improved_strichartz1}. For the first estimate, we have  by Remark \ref{rem:interp_linear_evol} that for $k\geq k_0$
\[
\| \cos(t \sqrt{H})  P_k f_0 \|_{L^{9,2}_x L^3_t } \lesssim \| W_0^{\pm}(t)  P_k f_0\|_{L^{9,2}_x L^3_t } \lesssim |k|^{\frac{5}{6}} \| P_k f_0\|_{L^{2}_{x}} ,
\]
where the last inequality follows from the Strichartz estimates of Proposition \ref{prop:lorentz_est} for the admissible pair $(r,q) = (9,3)$ at regularity
\[
\frac{3}{2} - \frac{1}{3} - \frac{3}{9} = \frac{5}{6}.
\]
using Corollary \ref{cor:equivalence} and Lemma \ref{lem:bernstein2} The sine term may be estimated similarly. 

For the $L^{36/5}_x L^6_t$ norm, we use Minkowski's inequality and the Sobolev embedding estimate of Proposition \ref{h_sobolev} to obtain
\[
\| \cos(t \sqrt{H})  P_k f_0 \|_{L^{36/5}_x L^6_t } \leq \| \cos(t \sqrt{H})  P_k f_0 \|_{ L^6_t L^{36/5}_x} \lesssim  \| (H + M)^{\beta} \cos(t \sqrt{H})  P_k f_0 \|_{ L^6_t L^6_x},
\]
for any
\[
\beta > \frac{3}{2} \left( \frac{1}{6} - \frac{5}{36}\right).
\]
By the arguments used to prove Proposition \ref{improved_strichartz1} (cf. \eqref{starting_pt}) we then obtain
\[
 \| (H+M)^{\beta}  \cos(t \sqrt{H})  P_k f_0 \|_{ L^6_t L^6_x} \leq  \|\sqrt{|H|}^{\frac{1 + \theta}{2}  - \frac{1}{12}+  \varepsilon }  P_k f_0 \|_{L^2_x},
\]
for $\theta$ close to $1/2$ and $\varepsilon > 0$ a small constant. This concludes the proof.
\end{proof}

The following estimate will be essential in handling a linear term arising in the contraction mapping scheme. This estimate is the most delicate and requires a careful analysis based on the kernel estimates of Proposition \ref{prop:kernel_ests}.

\begin{proposition} \label{prop:delicate}
Fix parameters $s_1, \nu > 0$ satisfying $s_1 > 3\nu>0$. Let $f \in \cH^{s}(\bR^3)$ satisfy Assumption (A2) and suppose
\[
\|\langle x\rangle^{1-\nu} |\nabla|^{s_1}  f_0\|_{L^2_x} + \|\langle x\rangle^{1-\nu} \langle \nabla\rangle^{s_1-1}  f_1\|_{L^2_x} < \infty,
\]
and let $f^\omega$ be its randomization as defined in Definition \ref{def:randomization}. Then for $p \geq 2$ and $\theta > 1/2$, we have
\begin{align}
&\| \| \langle x \rangle^{-1 - \theta} W(t) f_{\geq k_0}^\omega \|_{L^{2}_x L^1_t} \|_{L^p_\omega} \\
&\lesssim \sqrt{p} \left(  \| P_{\geq k_0} f\|_{\cH^s} +\|\langle x\rangle^{1-\nu} |\nabla|^{s_1}  f_0\|_{L^2_x} + \|\langle x\rangle^{1-\nu} \langle\nabla\rangle^{s_1-1}  f_1\|_{L^2_x} \right).
\end{align}
\end{proposition}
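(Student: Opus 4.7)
My plan is to combine the Gaussian large-deviation machinery with the detailed radial kernel estimate of Proposition~\ref{prop:kernel_ests}, the weighted harmonic analysis developed in Section~\ref{sec:randomization}, and the reverse Strichartz framework of Theorem~\ref{thm:free_evol_bds} and Proposition~\ref{prop:linfty_est}. Since $p \geq 2$, two applications of Minkowski's integral inequality will let me move the $L^p_\omega$ norm inside both $L^2_x$ and $L^1_t$; the large-deviation estimate of Lemma~\ref{lem:large_dev}, applied pointwise in $(x,t)$, then reduces the assertion to controlling the deterministic weighted $L^2_x L^1_t$ norm of the square function
\begin{equation*}
\Big(\sum_{k \geq k_0} |\cos(t\sqrt{H}) P_k f_0|^2 + \big|\tfrac{\sin(t\sqrt{H})}{\sqrt{H}} P_k f_1\big|^2 \Big)^{1/2},
\end{equation*}
at the cost of the stated $\sqrt{p}$ factor.

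The two weighted embeddings that drive the rest of the estimate are (i) $\langle x\rangle^{-1-\theta} \in L^2_x(\bR^3)$, valid because $\theta > 1/2$; and (ii) $\langle x\rangle^{-(1-\nu)} \in L^{6,1}(\bR^3)$, valid because the hypothesis $s_1 > 3\nu > 0$ permits taking $\nu < 1/2$. Combined via H\"older in Lorentz spaces (Lemma~\ref{lem:lorentz_holder}), these will let me convert the $L^{3/2,1}$-norms arising in the reverse Strichartz estimates into the weighted $L^{2,1-\nu}$-norms appearing on the right-hand side of the statement.

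For each fixed $k \geq k_0$ I will express $\cos(t\sqrt{H}) P_k f_0(r) = r^{-1} \int K_k^c(t,r,r') r' f_0(r')\,dr'$ using the radial kernel representation, and split the bound from Proposition~\ref{prop:kernel_ests} into a \emph{good} piece carrying the explicit factor $(\langle r\rangle^{-1} + \langle r'\rangle^{-1})/|k|$, and a \emph{remainder} bounded by $\sum_{c_i}\langle t + c_1(r+c_2 r')\rangle^{-3}$. The good piece is handled directly: the $1/|k|$ gain is $\ell^2$-summable in $k$, and after integrating $\langle t + c_1(r+c_2 r')\rangle^{-3}$ in $t\in\bR_+$ the contribution is controlled by pairing $\|\langle x\rangle^{-1-\theta}\|_{L^2_x}\lesssim 1$ with H\"older in Lorentz and the bound $\int r'|f|\,dr' \sim \|\,|x|^{-1} f\|_{L^1_x} \lesssim \|\,|x|^{-1}\|_{L^{3,\infty}} \|f\|_{L^{3/2,1}}$, ultimately dominated by $\|P_{\geq k_0} f\|_{\cH^s}$.

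The remainder is the delicate part. After integrating in $t \in \bR_+$, the four sign combinations in $\sum_{c_i}\langle t+c_1(r+c_2r')\rangle^{-3}$ produce a kernel bounded by $\langle r+r'\rangle^{-2} + \langle r-r'\rangle^{-2} + O(1)$; the $O(1)$ piece (arising from the $c_1=-1$ cases) has no $|k|$-gain from the kernel alone, so the weighted Sobolev hypothesis must supply the decay. Using Corollary~\ref{cor:weighted_pk} together with the weighted coercivity of Lemma~\ref{lem:weighted_coercive} applied to the composition $P_k|\nabla|^{-s_1}$, I expect the weighted Bernstein bound
\begin{equation*}
\|\langle x\rangle^{1-\nu} P_k f_0\|_{L^2_x} \lesssim |k|^{-s_1} \|\langle x\rangle^{1-\nu} |\nabla|^{s_1} f_0\|_{L^2_x},
\end{equation*}
together with its analogue for $f_1$ involving $\langle\nabla\rangle^{s_1-1}$; coupling this with H\"older in Lorentz as above will produce an $\ell^2$-summable bound in $k$ whose $\sqrt p$-coefficient is precisely the weighted norms on the right-hand side of the proposition. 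The hard part will be tracking the various operator reorderings (distorted versus standard Fourier multipliers, weights versus derivatives) to ensure the $|k|^{-s_1}$ gain propagates through all of these steps, and in particular that the weighted Bernstein bound combines correctly with the $t$-integrated kernel estimate on the $O(1)$ remainder to close the sum over $k$.
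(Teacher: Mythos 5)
Your proposal has a genuine gap in the treatment of the $O(1)$ remainder of the kernel, and this gap is exactly where the hypothesis $s_1 > 3\nu$ is needed. After applying Minkowski twice and Lemma~\ref{lem:large_dev} pointwise in $(x,t)$, you arrive at the deterministic bound
\[
\bigl\| \langle x \rangle^{-1-\theta} \bigl( \textstyle\sum_k |T_k(t) P_k f|^2 \bigr)^{1/2} \bigr\|_{L^2_x L^1_t}.
\]
The difficulty is that the outer $L^1_t$ cannot be moved inside the $\ell^2_k$ sum: Minkowski runs in the wrong direction (it gives $\|\|\cdot\|_{L^1_t}\|_{\ell^2_k} \leq \|\|\cdot\|_{\ell^2_k}\|_{L^1_t}$, not the reverse). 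So your step of ``integrating in $t$ first'' to obtain an $O(1)$ kernel and then summing in $\ell^2_k$ does not yield an estimate for the square function's $L^1_t$ norm. For the remainder piece $\langle t + c_1(r+c_2r')\rangle^{-3}$ there is no $|k|$-gain and no pointwise-in-$t$ decay near the light cone $t \approx \pm r \pm r'$, so for each fixed $t$ the square function has no $t$-decay coming from the kernel alone, and the $L^1_t$ integral cannot close this way.

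The paper resolves this with a geometric decomposition that you do not use. It splits $(x,x',t)$-space into $|x| > t/4$ (where $\langle x\rangle^{-1-\theta} \lesssim \langle t\rangle^{-1-\theta}$ supplies $L^1_t$ decay from the weight), $|x|,|x'| \leq t/4$ (where Proposition~\ref{prop:kernel_ests} \emph{does} give uniform $\langle t\rangle^{-3}$ decay, precisely because all four phases $t + c_1(r + c_2 r')$ stay comparable to $t$), and the remaining region, which is further split by $\Lambda_k = \{|x| \leq |k|^\beta\}$. On the piece $|x'| > |k|^\beta$ (Term IIa.2.1), the argument abandons the square function entirely and uses an $\ell^1_k$ sum with Gaussian moment bounds (Lemma~\ref{lem:gauss_bds}), since only then can the $L^\infty_x L^1_t$ reversed Strichartz estimate (via Corollary~\ref{cor:new_rep2} and Proposition~\ref{prop:linfty_est}) be applied term by term; Remark~\ref{rem:delicate} notes explicitly that no probabilistic gain is used there. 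The $\ell^1$ sum requires $\beta > 3$, and the weighted loss $|k|^{\nu\beta}$ must be absorbed by Lemma~\ref{lem:decay_sf}, which is where $s_1 > 3\nu$ enters. Your argument never produces this constraint — if it worked, it would prove the proposition with only $s_1 > 0$, which is a signal something is missing. You would need to introduce both the light-cone split $\{|x| \leq t/4\}$ and the $k$-dependent cutoff $\Lambda_k$ to recover the estimate.
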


\begin{proof}
It suffices to consider $t \geq 1$, since otherwise we apply Minkowski's inequality and H\"older's inequality in time and space, together with $L^2$ bounds for $W(t)$ to estimate
\begin{align}
\| W(t) f_{\geq k_0} ^\omega \|_{L^2_x L^{1}_t([0,1] \times \bR^3)} &\leq \| W(t) f_{\geq k_0} ^\omega \|_{ L^{1}_tL^2_x([0,1] \times \bR^3) } \\
&\leq \| W(t) f_{\geq k_0} ^\omega \|_{ L^{\infty}_tL^2_x([0,1] \times \bR^3)} \\
&\lesssim \|f_{\geq k_0} ^\omega \|_{L^2 \times \dot H^{-1}}.
\end{align}
Observing that
\begin{align}\label{b1}
\|f_{\geq k_0} ^\omega \|_{L^2 \times \dot H^{-1}} \lesssim  \|f_{\geq k_0} ^\omega \|_{\cH^s_x}
\end{align}
by Theorem \ref{distorted}, Corollary \ref{cor:equivalence} and the fact that $f_{\geq k_0} ^\omega$ is supported away from zero in distorted Fourier space, we note that for $p \geq 2$,
\begin{align}\label{b2}
\|\|f_{\geq k_0} ^\omega \|_{\cH^s_x}\|_{L^p_\omega} \lesssim  \sqrt{p} \|P_{\geq k_0} f \|_{\cH^s_x},
\end{align}
by Lemma \ref{lem:large_dev}, and thus this handles the contribution for $t \in[0,1]$. We set  
\[
\Lambda = \left\{ |x| \leq \frac{t}{4} \right\},
\]
and expand
\begin{align}
 \| \langle x \rangle^{-1-\theta} &W(t)  f_{\geq k_0} ^\omega \|_{L^{2}_x L^1_t}  \\
 &\leq  \| (1 - \chi_\Lambda) \langle x \rangle^{-1-\theta} W(t) f_{\geq k_0}^\omega \|_{L^{2}_x L^1_t}  +  \| \chi_{\Lambda} \langle x \rangle^{-1-\theta} W(t)   f_{\geq k_0}^\omega \|_{L^{2}_x L^1_t} \\
 & :=\textup{Term I} + \textup{Term II}.
\end{align}
To estimate Term I, we use Minkowski's inequality, H\"older's inequality, and \eqref{b1} to obtain
\begin{align}
\| (1 - \chi_\Lambda) \langle x \rangle^{-1-\theta} W(t)   f_{\geq k_0}^\omega\|_{L^{2}_x L^1_t}  
 & \leq\| \| (1 - \chi_\Lambda)  \langle x \rangle^{-1-\theta }\|_{L^\infty_x}  \| W(t)  f_{\geq k_0}^\omega \|_{L^2_x} \|_{L^1_t}\\
 & \leq \|  \langle t \rangle^{-1-\theta}  \| W(t)   f_{\geq k_0}^\omega  \|_{L^2_x} \|_{L^1_t}\\
  & \leq \|  \langle t \rangle^{-1-\theta} \|_{L^1_t} \| \| W(t)    f_{\geq k_0}^\omega  \|_{L^2_x} \|_{L^\infty_t}\\
  & \lesssim  \|  f_{\geq k_0}^\omega  \|_{L^2 \times \dot H^{-1}}\\
  & \lesssim \| f_{\geq k_0}^\omega\|_{\cH^s_x},
  \end{align}
and the $L^p_\omega$ estimate follows as in \eqref{b2}. 

Now we turn to Term II. We will handle only the cosine term as the sine term follows similarly. Up to ``enlarging'' the multiplier $\psi_k$ associated with the projection $P_k$, we may write
\[
\widetilde{P}_k = \widetilde{\psi}_k(\sqrt{|H|}) P_k ,
\]
and we drop the tilde notation for convenience. We have
\begin{align}
&\biggl \| \chi_{\Lambda} \langle x \rangle^{-1-\theta} \cos(t\sqrt{H})  \sum_{k \geq k_0} g_k(\omega)  P_k f_0   \biggr\|_{L^{2}_x L^1_t}\\
&= \biggl\|\sum_{k \geq k_0} g_k(\omega) \chi_{\Lambda} \langle x \rangle^{-1-\theta} \cos(t\sqrt{H}) P_k f_0  \biggr\|_{L^{2}_x L^1_t}\\
&= \biggl\|\sum_{k \geq k_0} g_k(\omega) \chi_{\Lambda} \langle x \rangle^{-1-\theta} \cos(t\sqrt{H}) \psi_k(\sqrt{|H|}) P_k f_0   \biggr\|_{L^{2}_x L^1_t}\\
& \leq \biggl \|\sum_{k \geq k_0} g_k(\omega) \chi_{\Lambda} \langle x \rangle^{-1-\theta} \cos(t\sqrt{H})\psi_k(\sqrt{|H|}) ((1 - \chi_\Lambda) P_k f_0  )  \biggr\|_{L^{2}_x L^1_t}\\
& \hspace{24mm} + \biggl \|\sum_{k \geq k_0} g_k(\omega) \chi_{\Lambda} \langle x \rangle^{-1-\theta} \cos(t\sqrt{H}) \psi_k(\sqrt{|H|}) ( \chi_\Lambda P_k f_0  )  \biggr\|_{L^{2}_x L^1_t} \label{termiib}\\
& := \textup{Term IIa} + \textup{Term IIb}
\end{align}
For Term IIa, we let
\[
\Lambda_{k} = \bigl\{ |x| \leq |k|^\beta\bigr\}
\]
for some $\beta > 0$ to be determined, and we expand further
\begin{align}
& \biggl \|\sum_{k \geq k_0} g_k(\omega) \chi_{\Lambda} \langle x \rangle^{-1-\theta} \cos(t\sqrt{H}) \psi_k(\sqrt{|H|}) ((1 - \chi_\Lambda) P_k f_0)  \biggr\|_{L^{2}_x L^1_t}\\
 & \leq  \biggl \|\sum_{k \geq k_0} g_k(\omega) \chi_{\Lambda} \langle x \rangle^{-1-\theta} \cos(t\sqrt{H}) \psi_k(\sqrt{|H|})( (1 - \chi_\Lambda) \chi_{\Lambda_k} P_k f_0)  \biggr\|_{L^{2}_x L^1_t} \\
 & \hspace{24mm}+  \biggl \|\sum_{k \geq k_0} g_k(\omega) \chi_{\Lambda} \langle x \rangle^{-1-\theta} \cos(t\sqrt{H})\psi_k(\sqrt{|H|})( (1 - \chi_\Lambda)(1- \chi_{\Lambda_k})P_k f_0)  \biggr\|_{L^{2}_x L^1_t}\\
 & := \textup{Term IIa.1} + \textup{Term IIa.2}.
\end{align}
For Term IIa.1, we let $p \geq2$ and by  Minkowski's inequality and the large deviation estimate of Lemma \ref{lem:large_dev}, we have
\begin{align}
&\biggl \| \biggl \|\sum_{k \geq k_0} g_k(\omega) \chi_{\Lambda} \langle x \rangle^{-1-\theta} \cos(t\sqrt{H}) \psi_k(\sqrt{|H|}) ( (1 - \chi_\Lambda) \chi_{\Lambda_k}P_k f_0)  \biggr\|_{L^{2}_x L^1_t} \biggl \|_{L^p_\omega} \\
& \lesssim \sqrt{p}  \biggr\| \biggl(\sum_{k \geq k_0} \| \chi_{\Lambda} \langle x \rangle^{-1-\theta} \cos(t\sqrt{H}) \psi_k(\sqrt{|H|})((1 - \chi_\Lambda) \chi_{\Lambda_k}P_k f_0 ) \|_{L^{2}_x}^2 \biggr)^{1/2}   \biggr\|_{ L^1_t}. \label{whattobd0}
\end{align}
We consider the terms in the sum. We use the $L^2$-boundedness of the linearized wave evolution and of the Fourier multiplier $\psi_k(\sqrt{|H|})$ to estimate
\begin{align}
 &\| \chi_{\Lambda} \langle x \rangle^{-1-\theta} \cos(t\sqrt{H}) \psi_k(\sqrt{|H|}) ((1 - \chi_\Lambda) \chi_{\Lambda_k}P_k f_0 ) \|_{L^{2}_x}\\
 & \leq  \|  \cos(t\sqrt{H}) \psi_k(\sqrt{|H|}) ((1 - \chi_\Lambda) \chi_{\Lambda_k} P_k f_0 ) \|_{L^{2}_x}\\
 & \lesssim  \|  (1 - \chi_\Lambda) \chi_{\Lambda_k} P_k f_0  \|_{L^{2}_x}.  \phantom{\int}
 \end{align}
By the definition of $\Lambda_k$ and $\Lambda$, we can bound this expression by
 \begin{align}
&  \frac{1}{|t| (\log t)^2} \| |x| (\log |x| )^2 (1 - \chi_\Lambda) \chi_{\Lambda_k} P_k f_0 \|_{L^{2}_x}\\
& \lesssim \frac{(\log k^\beta)^2 |k|^{\beta \nu} }{|t| (\log t)^2} \| |x|^{1-\nu} P_k f_0  \|_{L^{2}_x}.
\end{align}
We substitute this into \eqref{whattobd0}. Using Lemma \ref{lem:decay_sf} and integrating $|t|^{-1} (\log t)^2$ over $t \geq 1$, we obtain
\begin{align}
 &\sqrt{p}  \biggr\| \biggl(\sum_{k \geq k_0} \| \chi_{\Lambda} \langle x \rangle^{-1-\theta} \cos(t\sqrt{H}) \psi_k(\sqrt{|H|})((1 - \chi_\Lambda) \chi_{\Lambda_k}P_k f_0 ) \|_{L^{2}_x}^2 \biggr)^{1/2}   \biggr\|_{ L^1_t} \\
 & \lesssim  \sqrt{p}  \biggr\| \biggl(\sum_{k \geq k_0} \frac{(\log k^\beta)^4 |k|^{2\beta \nu} }{|t|^2 (\log t)^4} \| |x|^{1-\nu} P_k f_0  \|_{L^{2}_x}^2 \biggr)^{1/2}   \biggr\|_{ L^1_t} \\
 & \lesssim  \| \langle x \rangle^{1- \nu}   |\nabla|^{\nu \beta+} P_{\geq k_0}\, f\|_{L^2(\bR^3)}.
\end{align}
We note that we are using that $f$ satisfies Assumption (A2) in order to verify the condition \eqref{hi} of Lemma \ref{lem:decay_sf}. 

For Term IIa.2, we have
\begin{align}
&\biggl \| \biggl \|\sum_{k \geq k_0} g_k(\omega) \chi_{\Lambda} \langle x \rangle^{-1-\theta} \cos(t\sqrt{H}) \psi_k(\sqrt{|H|})(  (1 - \chi_\Lambda)  (1 - \chi_{\Lambda_k})P_k f_0 )  \biggr\|_{L^{2}_x L^1_t} \biggl \|_{L^p_\omega}\\
& \lesssim  \biggl\| \biggl \|\sum_{k \geq k_0} g_k(\omega) \chi_{\Lambda} \langle x \rangle^{-1-\theta} \cos(t\sqrt{H}) \psi_k(\sqrt{|H|})( (1 - \chi_{\Lambda_k})P_k f_0 )  \biggr\|_{L^{2}_x L^1_t} \biggl \|_{L^p_\omega}\\
& \hspace{14mm} +  \biggl\| \biggl \|\sum_{k \geq k_0} g_k(\omega) \chi_{\Lambda} \langle x \rangle^{-1-\theta} \cos(t\sqrt{H}) \psi_k(\sqrt{|H|})( \chi_\Lambda  (1 - \chi_{\Lambda_k})P_k f_0 )  \biggr\|_{L^{2}_x L^1_t} \biggl \|_{L^p_\omega}\\
& =: \textup{IIa.2.1} + \textup{IIa.2.2}.
\end{align}
We first address Term IIa.2.1. For $p\geq2$, by Minkowski's inequality and Lemma \ref{lem:gauss_bds} we obtain
\begin{align}
&\biggl \| \biggl \|\sum_{k \geq k_0} g_k(\omega) \chi_{\Lambda} \langle x \rangle^{-1-\theta} \cos(t\sqrt{H}) \psi_k(\sqrt{|H|})(  (1 - \chi_{\Lambda_k})P_k f_0 )  \biggr\|_{L^{2}_x L^1_t} \biggl \|_{L^p_\omega}\\
& \leq \sum_{k \geq k_0}  \| g_k\|_{L^p_\omega} \| \chi_{\Lambda} \langle x \rangle^{-1-\theta} \cos(t\sqrt{H}) \psi_k(\sqrt{|H|}) (  (1 - \chi_{\Lambda_k})P_k f_0 )  \|_{L^{2}_x L^1_t}  \\
& \lesssim \sqrt{p} \sum_{k \geq k_0 }  \| \chi_{\Lambda} \langle x \rangle^{-1-\theta} \cos(t\sqrt{H}) \psi_k(\sqrt{|H|}) (  (1 - \chi_{\Lambda_k})P_k f_0 )  \|_{L^{2}_x L^1_t} . \label{equ:whattobd}
\end{align}
Now, since $\theta > 1/2$, we use H\"older's inequality to estimate
\begin{align}
&\| \chi_{\Lambda} \langle x \rangle^{-1-\theta} \cos(t\sqrt{H})\psi_k(\sqrt{|H|})((1 - \chi_{\Lambda_k})P_k f_0 )  \|_{L^{2}_x L^1_t}\\
& \lesssim \|\cos(t\sqrt{H}) \psi_k(\sqrt{|H|}) ( (1 - \chi_{\Lambda_k})P_k f_0 )  \|_{L^{\infty}_x L^1_t}.\phantom{\int}
\end{align}
Since $\psi_k(\cdot)$ is supported away from zero, we can write
\begin{align}
& \| \cos(t \sqrt{H}) \psi_k(\sqrt{|H|}) (  (1 - \chi_{\Lambda_k})P_k f_0  )  \|_{L^\infty_x L^1_t}\\
& =  \biggl\|  \frac{\cos(t \sqrt{H})}{\sqrt{H}}  \sqrt{|H|}\psi_k(\sqrt{|H|}) (   (1 - \chi_{\Lambda_k})P_k f_0  ) \biggr\|_{L^\infty_x L^1_t},
\end{align}
and using Corollary \ref{cor:new_rep2} and Proposition \ref{prop:linfty_est}, Lemma \ref{lem:bernstein2}, and H\"older's inequality for Lorentz spaces (Lemma \ref{lem:lorentz_holder}), we have
\begin{align}
&\biggl\| \frac{\cos(t \sqrt{H})}{\sqrt{H}} \sqrt{|H|} \psi_k(\sqrt{|H|})(  (1 - \chi_{\Lambda_k}) P_k f_0  )\biggr\|_{L^\infty_x L^1_t}\\
& \lesssim   \| \sqrt{|H|}   \psi_k(\sqrt{|H|}) \bigl(   (1 - \chi_{\Lambda_k})P_k f_0  \bigr)\|_{L^{3/2,1}_x}\\
& \lesssim |k|  \| \psi_k(\sqrt{|H|}) \bigl( (1 - \chi_{\Lambda_k}) P_k f_0 \bigr)\|_{L^{3/2,1}_x}\\
& \lesssim |k|  \| \langle x \rangle^{\frac{1}{2}+} \psi_k(\sqrt{|H|}) \bigl(   (1 - \chi_{\Lambda_k}) P_k f_0 \bigr)\|_{L^{2}_x}.
\end{align}
By Corollary \ref{cor:weighted_pk} and the definition of the $\Lambda_k$ we obtain
\begin{align}
&|k|  \| \langle x \rangle^{\frac{1}{2}+} \psi_k(\sqrt{|H|}) \bigl(  (1 - \chi_{\Lambda_k})P_k f_0  \bigr)\|_{L^{2}_x} \\
& \lesssim  |k|  \| \langle x \rangle^{\frac{1}{2}+} \bigl( (1 - \chi_{\Lambda_k}) P_k f_0 \bigr)\|_{L^{2}_x}\\
&  \lesssim |k|^{1 - (\frac{1}{2} - \nu)\beta+}  \| \langle x\rangle ^{1-\nu} P_k f_0  \|_{L^{2}_x}.
\end{align}
Now, inserting this bound into \eqref{equ:whattobd}, and applying Cauchy-Schwarz and Lemma \ref{lem:decay_sf}, we obtain
\begin{align}
 & \sqrt{p} \sum_{k \geq k_0} |k|^{1 - \frac{1}{2}\beta+} |k|^{\nu \beta +} \| \langle x\rangle ^{1-\nu} P_k f_0 \|_{L^2_x}   \\
 & \leq \sqrt{p} \left( \sum_{k\geq k_0} \frac{1}{|k|^{\beta - 2-} } \right)^{1/2} \left( \sum_{k} |k|^{2 \nu \beta +}  \| \langle x\rangle ^{1-\nu} P_k f_0 \|^2_{L^2_x} \right)^{1/2}\\
 & \lesssim \sqrt{p} \left( \sum_{k \geq k_0} \frac{1}{|k|^{\beta - 2-} } \right)^{1/2}  \| \langle x \rangle^{1- \nu}   |\nabla|^{\nu \beta+}  f_0\|_{L^2(\bR^3)},
\end{align}
which yields the desired bound provided $\beta > 3$.  This leads to the conditions $s_1 > 3 \nu > 0$.

Finally, we are left to estimate Term IIa.2.2 and Term IIb. We only estimate the latter, as the former may be treated similarly. For $p \geq 2$, we use Minkowski's inequality and the large deviation estimate of Lemma \ref{lem:large_dev}   to estimate
\begin{align}
 &\biggl \|\sum_{k \geq k_0} g_k(\omega) \chi_{\Lambda} \langle x \rangle^{-1-\theta} \cos(t\sqrt{H}) \psi_k(\sqrt{|H|}) (\chi_\Lambda P_k f_0 )  \biggr\|_{L^{2}_x L^1_t}\\
& \lesssim \sqrt{p} \biggl \| \biggl ( \sum_{k \geq 1}\| \chi_{\Lambda} \langle x \rangle^{-1-\theta} \cos(t\sqrt{H}) \psi_k(\sqrt{|H|}) ( \chi_\Lambda P_k f_0 )\|_{L^2_x}^2 \biggr)^{1/2}  \biggr\|_{ L^1_t}.
\end{align}
By triangle inequality, it suffices to estimate the half-wave propagators $e^{\pm it\sqrt{|H|}}$, and we consider only $e^{it\sqrt{|H|}}$. Here we will use radiality and the kernel estimates of Proposition \ref{prop:kernel_ests}.  We define
\[
 \widetilde{P_k  f_0}(r) = r  P_k  f_0(r).
\]
By duality we need to estimate
\begin{align}
&|\langle \chi_{\Lambda} P_k  f_0,  e^{-it \sqrt{H}}\psi_k(\sqrt{|H|})P_{ac} \langle x \rangle^{-1-\theta} \chi_{\Lambda} g \rangle |\\
& =: \biggl| \iint \widetilde{P_k  f_0}(r)  \overline{ \bigl(\chi_{\Lambda} (r)  K_k(t, r, r')\chi_{\Lambda}(r')\bigr) \langle r' \rangle^{-1-\theta}  r' g(r') } dr' dr \biggr|\\
& \lesssim   \iint |\widetilde{P_k f_0}(r)|  | \bigl(\chi_{\Lambda} (r)  K_k(t, r, r')\chi_{\Lambda}(r')\bigr)| \langle r' \rangle^{-1-\theta} |\widetilde{g}(r')| dr' dr
\end{align}
where
\[
K_k(t,r,r') =  \int_{\rho} e^{i t \rho} \psi_k(\rho) \widetilde{e}(r, \rho)   \overline{\widetilde{e}(r',\rho)} d\rho.
\]
By Proposition \ref{prop:kernel_ests}, for $r, r' \in \Lambda$, we have established that for $k\geq k_0 $,
\begin{align}
|K_k(t,r,r')| \lesssim  \frac{1}{\langle t \rangle^3}
\end{align}
Hence, we can estimate
\begin{align}
& \iint |\widetilde{P_k f_0}(r)| | \bigl(\chi_{\Lambda} (r)  K_k(t, r, r')\chi_{\Lambda}(r')\bigr)| \langle r' \rangle^{-1-\theta} |\widetilde{g}(r')| dr' dr \\
 & \lesssim \frac{1}{\langle t \rangle^3}  \iint |\widetilde{P_k f_0}(r)|  \chi_{\Lambda} (r)  \chi_{\Lambda} (r')   \langle r' \rangle^{-1-\theta} |\widetilde{g}(r')| dr' dr.
\end{align}
From here, we use Cauchy-Schwarz inequality in $r$ and $r'$ which yields a bound of
\begin{align}
& \frac{1}{\langle t \rangle^3}\left(  \iint  \chi_{\Lambda} (r)  \chi_{\Lambda} (r')   \langle r' \rangle^{-2-2\theta}  dr' dr \right)^{1/2} \|P_k f_0\|_{L^2_x}\|g\|_{L^2_x} \lesssim \frac{1}{\langle t \rangle^3} |t|^{1/2}\| P_k f_0\|_{L^2_x},
\end{align}
and we may integrate in time to obtain an estimate for \eqref{termiib}. The term corresponding to the sine evolution is bounded similarly.

Putting everything together, we have proved that for $p \geq 2$,
\begin{align}
 &\bigl\| \| \langle x \rangle^{-1-\theta} W(t)   f_{\geq k_0}^\omega \|_{L^{2}_x L^1_t}  \bigr\|_{L^p_\omega} \\
 & \lesssim \sqrt{p} \bigl( \| f  \|_{\cH^s} + \|\langle x\rangle^{1-\nu} |\nabla|^{s_1}  f_0\|_{L^2_x}+\|\langle x\rangle^{1-\nu} |\nabla|^{s_1-1}  f_1\|_{L^2_x} \bigr),
\end{align}
provided $s_1 > 3 \nu > 0$ and we use \eqref{weighted_sobolev_bds} to conclude the proof.
\end{proof}

\begin{remark}\label{rem:delicate}
In the estimate of Term IIa.2.1 in the previous proposition, we do not use probabilistic improvements. However, since this term is not the worst contribution to the estimates, by using probabilistic arguments for the other terms, one is nonetheless able to obtain an improvement over the required regularity for deterministic data in the estimate as a whole.
\end{remark}

In light of the proof of the previous proposition, we obtain the following estimate for the weighted $L^2_{t,x}$ norm of the linearized evolution of the random data. The condition for $\nu$ in the following theorem is weaker than the condition from Proposition~\ref{prop:delicate} since the $L^2_t$ norm enables us to use the standard large deviation estimates in the proof.

\begin{corollary}
Fix $\theta > 1/2$ and parameters $s_1, \nu > 0$ satisfying $s_1 > 2 \nu > 0$. Let  $f \in \cH^{s}(\bR^3) $ satisfy Assumption (A2) and suppose
\[
\|\langle x\rangle^{1-\nu} |\nabla|^{s_1}  f_0\|_{L^2_x} + \|\langle x\rangle^{1-\nu} \langle \nabla\rangle^{s_1-1}  f_1\|_{L^2_x} < \infty,
\]
and let $f^\omega$ be its randomization as defined in Definition \ref{def:randomization}. Then
\begin{align}
&\| \| \langle x \rangle^{-1 - \theta} W(t)  f_{\geq k_0}^\omega \|_{L^{2}_x L^2_t} \|_{L^p_\omega} \\
&\lesssim \sqrt{p} \left(  \| P_{\geq k_0}f\|_{\cH^s} +\|\langle x\rangle^{1-\nu} |\nabla|^{s_1}  f_0\|_{L^2_x}+\|\langle x\rangle^{1-\nu} \langle \nabla\rangle^{s_1-1}  f_1\|_{L^2_x} \right).
\end{align}
\end{corollary}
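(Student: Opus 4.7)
The plan is to follow the structure of the proof of Proposition \ref{prop:delicate}, but to exploit the fact that the $L^2_t$ norm allows us to use Minkowski's integral inequality to bring the $L^p_\omega$ norm inside the $L^2_{t,x}$ norm for $p \geq 2$:
\[
\| F \|_{L^p_\omega L^2_x L^2_t} \leq \|F\|_{L^2_{t,x} L^p_\omega}.
\]
Applied to $F = \sum_{k\geq k_0} g_k(\omega) (\cdots)$, this permits us to use the square-function large deviation bound of Lemma \ref{lem:large_dev} pointwise in $(t,x)$, producing the factor $\sqrt{p} \left(\sum_k |\cdots|^2\right)^{1/2}$ uniformly. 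This is the decisive advantage over the $L^1_t$ version, in which the Gaussian norm had to be distributed against an $\ell^1$-type sum that forced the constraint $\beta > 3$ and hence $s_1 > 3\nu$.

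The rest of the setup proceeds exactly as in Proposition \ref{prop:delicate}. For $t \in [0,1]$ I would dispatch the estimate directly via the energy bound $\|W(t) f^\omega_{\geq k_0}\|_{L^2_x} \lesssim \|f^\omega_{\geq k_0}\|_{L^2 \times \dot H^{-1}}$ together with Lemma \ref{lem:large_dev}. For $t \geq 1$ I introduce $\Lambda = \{|x| \leq t/4\}$ and split into Term I (outside the light cone) and Term II (inside), then split Term II further using $\chi_{\Lambda_k} = \chi_{\{|x| \leq k^\beta\}}$ into contributions IIa.1, IIa.2.1, IIa.2.2 and IIb exactly as before. Term I is controlled by H\"older in $x$, as the weight $\langle t\rangle^{-1-\theta}$ is now $L^2_t$-integrable thanks to $\theta > 1/2$. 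The decisive term is IIa.2.1: after the pointwise square-function bound in $\omega$, I would apply H\"older in $x$ against $\|\langle x\rangle^{-1-\theta}\|_{L^2_x} < \infty$ and then invoke the $L^\infty_x L^2_t$ estimate
\[
\|\cos(t\sqrt{H})\chi_1(H) h\|_{L^\infty_x L^2_t} \lesssim \|W_0^\pm h\|_{L^\infty_x L^2_t} \lesssim \||\nabla| h\|_{L^2_x},
\]
provided by Theorem \ref{thm:free_evol_bds}, Lemma \ref{lem:wave_soln} and Proposition \ref{prop:linfty_est}, followed by the Bernstein-type bound $\||\nabla| \psi_k(\sqrt{|H|}) h\|_{L^2} \lesssim |k| \|h\|_{L^2}$ that follows from Corollary \ref{cor:equivalence} and Lemma \ref{lem:bernstein2}. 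This yields a $k$th summand of order $|k|^{1-(1-\nu)\beta}\|\langle x\rangle^{1-\nu} P_k f_0\|_{L^2}$, with analogous control of the sine contribution by $|k|^{-(1-\nu)\beta}\|\langle x\rangle^{1-\nu} P_k f_1\|_{L^2}$ (no additional factor of $|k|$ since the $1/\sqrt{H}$ cancels one derivative on the support of $\psi_k$).

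The remaining terms IIa.1, IIa.2.2 and IIb are treated along the same lines as in Proposition \ref{prop:delicate}: Term IIa.1 uses the support constraint $|x| > t/4$ together with $|x| \leq k^\beta$ to produce a pointwise-in-$t$ bound of order $k^{\nu\beta+}/(t(\log t)^2)$ per $k$, which is square-integrable in $t$; and Term IIb uses the kernel estimate of Proposition \ref{prop:kernel_ests} to give $t^{1/2}/\langle t\rangle^3$ per $k$, again lying in $L^2_t$. Applying Cauchy--Schwarz in $k$ against Lemma \ref{lem:decay_sf}, the $k$-sums are controlled by $\|\langle x\rangle^{1-\nu} |\nabla|^{s_1} f_0\|_{L^2}$ and $\|\langle x\rangle^{1-\nu} \langle \nabla\rangle^{s_1-1} f_1\|_{L^2}$ provided $\beta$ satisfies $\nu\beta < s_1$ (from IIa.1 and the $k$-weight pairing in IIa.2.1) and $(1-\nu)\beta \geq 1 - s_1$ (from balancing the $|k|^{1-(1-\nu)\beta}$ factor against $|k|^{s_1}$ in IIa.2.1). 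The main obstacle is simply verifying that such a $\beta$ exists, which reduces to the algebraic inequality $(1-s_1)/(1-\nu) < s_1/\nu$, equivalent to $s_1 > \nu$ and guaranteed a fortiori by the hypothesis $s_1 > 2\nu$.
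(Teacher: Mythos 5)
Your proposal is correct and identifies the key point the paper itself exploits: passing to $L^2_t$ lets one use the large-deviation bound of Lemma~\ref{lem:large_dev} directly (an $\ell^2$ square-function bound in $k$), rather than distributing the Gaussian moment bound of Lemma~\ref{lem:gauss_bds} against an $\ell^1$ sum, as was forced on Term~IIa.2.1 of Proposition~\ref{prop:delicate} by the $L^1_t$ norm. After that you dominate the weight by H\"older in $x$ (using $\theta > 1/2$ so that $\langle x\rangle^{-1-\theta}\in L^2_x(\bR^3)$), invoke the $L^\infty_x L^2_t$ reversed Strichartz estimate for the linearized flow via Theorem~\ref{thm:free_evol_bds}, Remark~\ref{rem:cos_analogue} and Proposition~\ref{prop:linfty_est}, and then Bernstein; all of this matches the paper's own proof.

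The one place where you diverge, and it is worth noting, is the final summation in $k$ for Term~IIa.2.1. The paper bounds the resulting $\ell^2$ sum $\bigl(\sum_k |k|^{2(1-(1-\nu)\beta)} a_k^2\bigr)^{1/2}$ by the $\ell^1$ sum $\sum_k |k|^{1-(1-\nu)\beta} a_k$, then applies Cauchy--Schwarz pairing $|k|^{1-\beta}$ against $|k|^{\nu\beta} a_k$, which costs the constraint $\beta > 3/2$ and, combined with $\nu\beta < s_1$ from Term~IIa.1, requires $s_1 > 3\nu/2$. You instead apply Lemma~\ref{lem:decay_sf} directly to the $\ell^2$ sum with $s = 1-(1-\nu)\beta$ (and $s = -(1-\nu)\beta$ for the sine), which replaces the condition $\beta > 3/2$ by $(1-\nu)\beta \geq 1-s_1$ (together with $(1-\nu)\beta \leq 1$ so that Lemma~\ref{lem:decay_sf} applies to the sine piece). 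Your route is actually a little more efficient: it reduces the compatibility requirement to $s_1 > \nu$, strictly weaker than the paper's $s_1 > 3\nu/2$. Since the corollary assumes $s_1 > 2\nu$, both arguments suffice, but yours shows the stated hypothesis is not sharp. The only imprecision in your write-up is the phrase ``applying Cauchy--Schwarz in $k$ against Lemma~\ref{lem:decay_sf}''; in your argument no additional Cauchy--Schwarz in $k$ occurs (that is the paper's route), and you should also record the companion constraint $(1-\nu)\beta \leq 1$ needed to keep the exponent in Lemma~\ref{lem:decay_sf} within $[-1,1]$ for the sine term, although this does not affect the nonemptiness of the admissible range of $\beta$.
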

\begin{proof}
The proof of the previous proposition goes through identically except for the term
\begin{align}
&\biggl \| \biggl \|\sum_{k \geq k_0} g_k(\omega) \chi_{\Lambda} \langle x \rangle^{-1-\theta} \cos(t\sqrt{H}) \psi_k(\sqrt{|H|})P_{ac} ( (1 - \chi_{\Lambda_k})P_k f_0 )  \biggr\|_{L^{2}_x L^2_t} \biggl \|_{L^p_\omega}.
\end{align}
Here, we use the fact that we are in $L^2_x$ to use the large deviation estimate and Minkowski's inequality to estimate this term by
\begin{align}
& \sqrt{p} \left( \sum_{k \geq k_0}  \| \chi_{\Lambda} \langle x \rangle^{-1-\theta} \cos(t\sqrt{H}) \psi_k(\sqrt{|H|})P_{ac} (  (1 - \chi_{\Lambda_k})P_k f_0 )  \|_{L^{2}_x L^2_t}^2 \right)^{1/2}  \\
&\leq \sqrt{p} \left( \sum_{k \geq k_0}  \| \cos(t\sqrt{H}) \psi_k(\sqrt{|H|})P_{ac} (  (1 - \chi_{\Lambda_k})P_k f_0 )  \|_{L^{\infty}_x L^2_t}^2 \right)^{1/2} . \label{equ:whattobd2}
\end{align}
By Theorem \ref{thm:free_evol_bds}, Remark \ref{rem:cos_analogue} and Proposition \ref{prop:linfty_est}, together with Corollary \ref{cor:equivalence} we obtain
\begin{align}
&  \| \cos(t\sqrt{H}) \psi_k(\sqrt{|H|})P_{ac} (  (1 - \chi_{\Lambda_k})P_k f_0 )  \|_{L^{\infty}_x L^2_t}\\
&  \bigl \| \frac{\cos(t\sqrt{H})}{\sqrt{H}} \sqrt{H} \psi_k(\sqrt{|H|})P_{ac} ( (1 - \chi_{\Lambda_k})P_k f_0 ) \bigr \|_{L^{\infty}_x L^2_t}\\
& \lesssim |k| \| (1 - \chi_{\Lambda_k}) P_k f_0 \|_{L^2_x} \\
& \lesssim |k|^{1 - (1- \nu)\beta}  \| |x|^{1-\nu} P_k f_0  \|_{L^{2}_x}
\end{align}
Now, inserting this bound into the above expression and applying Cauchy-Schwarz yields
\begin{align}
 C_p \sum_{k \geq k_0} |k|^{1 - \beta}& |k|^{\nu \beta } \| |x|^{1-\nu} P_k f_0 \|_{L^2_x}   \\
 & \leq C_p \left( \sum_k \frac{1}{|k|^{2\beta - 2} } \right)^{1/2} \left( \sum_{k} |k|^{\nu \beta}  \| |x|^{1-\nu} P_k f_0 \|^2_{L^2_x} \right)^{1/2},
\end{align}
which is finite by Lemma \ref{lem:decay_sf} provided $\beta > 3/2$. The sine term is handled similarly.
\end{proof}

Finally, we record, in the next corollary, the bounds for the linearized evolution of the random data we will need to carry out the contraction argument. For $I \subseteq \mathbb{R}$, we fix $\theta > 1/2$ and define a Banach space $Z(I)$ with norm
\begin{equation} \label{equ:y_def}
\begin{split}
\|u\|_{Z(I)} &= \|u\|_{L^6_x L^\infty_t \cap L^{36/5}_x L^\infty_t(I \times \mathbb{R}^3)}  + \|u\|_{L^{36/5}_x L^6_t (I \times \mathbb{R}^3)} + \|u\|_{L^{9,2}_x L^3_t (I \times \mathbb{R}^3)}  \\
& \hspace{25mm} + \| \langle x \rangle^{-1 - \theta} u \|_{L^{2}_x L^1_t \cap L^{2}_x L^2_t (I \times \mathbb{R}^3)} ,
\end{split}
\end{equation}
and we let $Z \equiv Z(\bR_+)$. We also recall that the Banach space $X_s$ is defined according to the norm
\[
\| f\|_{X_s} = \|f\|_{\cH^s} + \|\langle x \rangle^{1-\nu}|\nabla|^{s_1} f_0\|_{L^2_x} + \|\langle x \rangle^{1-\nu}\langle \nabla\rangle^{s_1-1} f_1\|_{L^2_x}.
\]
We obtain the following result.
\begin{corollary}[Almost sure bounds for linearized evolution]  \label{cor:as}
Let $f \in X_s$ for $s > 5/6$  satisfy Assumption (A2) and consider its randomization $f^\omega$ defined as in Definition~\ref{def:randomization}. Then if $s_1 > 3\nu > 0$, there exists $C, c > 0$ such that
\[
\mathbb{P} \left( \|W(t)  f_{\geq k_0}^\omega \|_{Z}  > \lambda \right) < C e^{-c \lambda^2 / \| f \|^2_{ X_s} }.
\]
\end{corollary}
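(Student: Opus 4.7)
The plan is to assemble the almost sure tail bound by summing the $L^p_\omega$ moment estimates established in Propositions \ref{improved_strichartz1}, \ref{prop:improved_strichartz2}, and \ref{prop:delicate} and the corollary immediately following, and then converting the resulting $\sqrt{p}$ moment bound into a Gaussian tail bound via Lemma \ref{lem:large_dev2}. The hypothesis $f\in X_s$ with $s > 5/6$ and $s_1 > 3\nu > 0$ is precisely what is needed so that every constituent norm of the $Z$-norm is controlled. Note that $f$ satisfying Assumption (A2) is used throughout to make sense of the probabilistic estimates for the high-frequency random piece $f_{\geq k_0}^\omega$.

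First I would examine the $Z$-norm componentwise. By definition,
\[
\|W(t)f_{\geq k_0}^\omega\|_{Z} \leq \|W(t)f_{\geq k_0}^\omega\|_{L^6_x L^\infty_t} + \|W(t)f_{\geq k_0}^\omega\|_{L^{36/5}_x L^\infty_t} + \|W(t)f_{\geq k_0}^\omega\|_{L^{36/5}_x L^{6}_t} + \|W(t)f_{\geq k_0}^\omega\|_{L^{9,2}_x L^3_t} + \|\langle x\rangle^{-1-\theta}W(t)f_{\geq k_0}^\omega\|_{L^2_xL^1_t\cap L^2_xL^2_t}.
\]
For the first Strichartz-type component I invoke Proposition \ref{improved_strichartz1}, valid for $s>3/4$ and $p\geq 6$. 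For the $L^{36/5}_xL^\infty_t$, $L^{36/5}_xL^6_t$, and $L^{9,2}_xL^3_t$ norms I apply Proposition \ref{prop:improved_strichartz2}, which requires $s\geq 5/6$ and $p\geq \max(9, 36/5)$. For the weighted norms I apply Proposition \ref{prop:delicate} and the corollary following it, which require $s_1 > 3\nu > 0$ and $p \geq 2$. Each of these bounds is of the form
\[
\| \| \cdot \|_{\ast} \|_{L^p_\omega} \leq C\sqrt{p}\, \|f\|_{X_s}
\]
uniformly for $p \geq p_0 := 9$.

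Summing the finitely many pieces by the triangle inequality in $L^p_\omega$, I conclude that there is an absolute constant $C>0$ such that for all $p \geq p_0$,
\[
\bigl\|\, \|W(t)f_{\geq k_0}^\omega\|_{Z}\, \bigr\|_{L^p_\omega} \leq C\sqrt{p}\, \|f\|_{X_s}.
\]
Finally, I apply Lemma \ref{lem:large_dev2} to the random variable $F(\omega) = \|W(t)f_{\geq k_0}^\omega\|_Z$, with parameters $k = 1$, $\alpha$ chosen so that $N^{-\alpha} = \|f\|_{X_s}$, to obtain
\[
\mathbb{P}\bigl(\|W(t)f_{\geq k_0}^\omega\|_Z > \lambda\bigr) \leq C \exp\bigl(-c\lambda^2/\|f\|_{X_s}^2\bigr),
\]
as claimed.

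There is no real obstacle here: the work has already been done in the earlier propositions, and the only subtlety is verifying that the \emph{joint} hypotheses on $s$, $s_1$, $\nu$ stated in the corollary — namely $s > 5/6$ together with $s_1 > 3\nu > 0$ — are consistent with, and in fact match, the strongest requirements across the individual estimates ($s \geq 5/6$ comes from the $L^{36/5}_xL^\infty_t$ and $L^{9,2}_xL^3_t$ bounds of Proposition \ref{prop:improved_strichartz2}, while $s_1 > 3\nu > 0$ is the binding condition from Proposition \ref{prop:delicate}; the $L^2_xL^2_t$ estimate's weaker requirement $s_1 > 2\nu$ is automatically subsumed). One should also remark that the exceptional null set outside of which $f_{\geq k_0}^\omega$ is well-defined in $X_s$ (cf.\ Remark \ref{rem:reg}) is implicit in the statement.
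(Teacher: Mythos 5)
Your proof is correct and follows essentially the same route as the paper: decompose the $Z$-norm into its components, invoke the moment bounds from Propositions~\ref{improved_strichartz1}, \ref{prop:improved_strichartz2}, \ref{prop:delicate} and the ensuing corollary, sum by triangle inequality in $L^p_\omega$ for $p\geq 9$, and apply Lemma~\ref{lem:large_dev2} to the random variable $\|W(t)f_{\geq k_0}^\omega\|_Z$. The only step you leave implicit, which the paper states, is the bound $\|P_{\geq k_0} f\|_{\cH^s}\lesssim\|f\|_{\cH^s}$ (Remark~\ref{rem:hs_bds}) needed to pass from the right-hand sides of those propositions to $\|f\|_{X_s}$.
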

\begin{proof}
For functions $f \in \cH^s(\bR^3)$, we have 
\[
\| P_{\geq k_0} f \|_{\cH^s} \lesssim  \|f \|_{\cH^s}
\]
by Remark \ref{rem:hs_bds}. The previous propositions yield that for $p \geq 9$, and $\beta > 3$,
\begin{align}
&\| \|W(t)  f_{\geq k_0}^\omega \|_{Z} \|_{L^p_\omega} \\
&\lesssim  \sqrt{p} \bigl( \| P_{\geq k_0} f  \|_{\cH^s} + \| \langle x \rangle^{1- \nu}   \, |\nabla|^{\nu \beta+} f_{0}\|_{L^2(\bR^3)} +  \| \langle x \rangle^{1- \nu}   \, \langle \nabla\rangle^{\nu \beta-1+} f_{1}\|_{L^2(\bR^3)} \bigr).
\end{align}
By Assumption (A2), we have
\[
\| P_{\geq k_0} f  \|_{\cH^s} \lesssim \| f  \|_{\cH^s},
\]
thus for all $p \geq 9$ we have
\[
\| \|W(t)  f_{\geq k_0}^\omega \|_{Z} \|_{L^p_\omega}  \lesssim \sqrt{p} \| f \|_{X_s},
\]
and the result then follows using Lemma \ref{lem:large_dev2}.
\end{proof}

\section{Contraction mapping argument}\label{sec:contraction}
\subsection{Set-up}
We are studying solutions of the equation
\begin{equation} \label{equ:ivp_ansatz_sec5}
 \left\{ \begin{aligned}
  -\partial_t^2 u + H u &= - \partial_t^2 \phi_{a(t)} + (V - V_{a(t)}) u + N(u, \phi_{a(t)}) , \\
  (u, \partial_t u)|_{t=0} &=  (\psi_0 - \phi , \psi_1 - \dot a(0) \partial_{a} \phi_{a(0)}) + f^\omega, \quad  a(0) = 1
 \end{aligned} \right.,
\end{equation}
where $N(x,y)$ is defined in \eqref{equ:nonlin}, $(\psi_0, \psi_1) \in \cN_1$ for $\cN_1$ defined in \eqref{n1}, $f^\omega$ is the randomization of a function $f \in \cM_s$ according to Definition~\ref{def:randomization} and $\partial_{a} \phi_{a(0)} = \varphi_{a(0)}$ is the resonance. We will decompose the solution as
\begin{align}
u = P_p u + P_{ac} u = (2\kappa)^{-1/2} ( x_+(t) + x_-(t)) Y + P_{ac} u.
\end{align}
 We will need to set up several equations, one for the coefficients of the components of $(u, u_t)$ in the direction of the eigenvectors, denoted $x_{\pm}(t)$, one for $P_{ac} (u, u_t)$, as well as  an ODE for $a(t)$.

\medskip
Following \cite{KS14} and \cite{Beceanu}, we write the Cauchy problem \eqref{equ:ivp_ansatz_sec5} in vector form, with
\begin{equation}
U = \begin{pmatrix} u \\ 
\partial_t u
\end{pmatrix}, \qquad
\mathcal{H} = \begin{pmatrix} 0 & 1 \\ 
-H & 0
\end{pmatrix}, 
\end{equation}
and
\begin{equation}
\textbf{N} = \begin{pmatrix} 0 \\
- \partial_t^2 \phi_{a(t)} + (V - V_{a(t)}) u + N(u, \phi_{a(t)}) ,
\end{pmatrix} 
\end{equation}
we obtain
\begin{align}\label{equ:vec_eqn}
\partial_t U = \cH U + \textbf{N}.
\end{align}
Since $-\kappa^2$ is the negative eigenvalue of $H$, the operator $\mathcal{H}$ has spectrum $i \mathbb{R} \cup \{ \pm \kappa \}$ with eigenvector for $\pm \kappa$ given by
\begin{align}
\textbf{Y}_{\pm} = (2\kappa)^{-1/2} \begin{pmatrix}
 Y \\
 \pm \kappa Y
\end{pmatrix}.
\end{align}
The Riesz projections to $\pm \kappa$ are given by
\begin{align}
P_{\pm} = \mp \langle \cdot, J \textbf{Y}_\mp \rangle \textbf{Y}_{\pm} , \qquad J = \begin{pmatrix} 0 & 1 \\ -1 & 0 \end{pmatrix},
\end{align}
hence setting
\[
 x_\pm(t) = \mp \langle U(t), J \textbf{Y}_\mp \rangle,
\]
we apply the projections to \eqref{equ:vec_eqn} and we obtain
\begin{align}
\partial_t x_\pm(t) = \pm \kappa x_{\pm}(t) \mp \langle \textbf{N}, J \textbf{Y}_\mp \rangle.
\end{align}
Letting
\[
N_2(u):= (V - V_{a(t)}) u + N(u, \phi_{a(t)})
\]
yields the equations
\begin{align}
x_\pm(t) & = e^{\pm t\kappa} x_{\pm}(0) \pm \int_0^t e^{\pm(t-s) \kappa} (2\kappa)^{-1/2} \langle \textbf{N}, J\textbf{Y}_{\mp} \rangle ds \\
& =  e^{\pm t\kappa} x_{\pm}(0) \pm \int_0^t e^{\pm(t-s) \kappa} (2\kappa)^{-1/2} \langle - \partial_s^2 \phi_{a(s)} + N_2(u), Y \rangle ds
\end{align}
We integrate by parts in the $\partial_s^2 \phi_{a(s)}$ term and use that for the resonance $\varphi_a$, we have $\langle \varphi_a, Y \rangle = 0$ to obtain
\begin{align}
x_\pm(t) & = (2\kappa)^{-1/2} e^{\pm t\kappa} \bigl( \langle \kappa (\psi_0 - \phi) \pm \psi_1, Y \rangle + \langle \kappa f_0^\omega \mp f_1^\omega, Y \rangle \bigr) \\
& \pm (2\kappa)^{-1/2} \dot a(t) \langle \varphi_{a(t)} - a(t)^{-5/4}\varphi , Y \rangle \phantom{\int}  \\
&\mp (2\kappa)^{-1/2} \int_0^t e^{\pm(t-s) \kappa} \langle \mp \kappa \dot a ( \varphi_{a(s)} - a(s)^{-5/4} \varphi ) + N_2(u), Y \rangle ds.
\end{align}

The orthogonality condition for $f^\omega$ (see Remark \ref{equ:orthog_preserved}) as well as the additional orthogonality condition for $(\psi_0, \psi_1) \in \cN_1$ ensures that we can choose $h$ below so that $x_+(t) \in L^1_t \cap L^\infty_t$ on $\bR_+$.

\medskip
By projecting the equation for $u$, we obtain an equation for $P_{ac} u$, assuming $a(0) = 1$:
  \begin{equation} \label{equ:projected}
 \left\{ \begin{aligned}
  -\partial_t^2 P_{ac} u + H P_{ac} u &= - P_{ac} \bigl( \partial_t^2 \phi_{a(t)} +  N_2(u) \bigr), \\
  (u, \partial_t u)|_{t=0} &= (\psi_0 - \phi, \psi_1- \dot a(0) \varphi ) + f^\omega.
 \end{aligned} \right.
\end{equation}
To solve \eqref{equ:projected}, we will recenter the solution around the linearized evolution of the random data. Recall from Definition \ref{def:randomization} that
\[
f^\omega = (f_0^\omega, f_1^\omega),
\]
where
\begin{align}\label{equ:randomiWation}
f_0^\omega &:=  \langle f_{0}, Y \rangle Y + (P_{ac} f_{0,lo} + P_0 f_{0,hi}) + \sum_{k \geq k_0} g_k(\omega) P_k f_{0,hi}\\
f_1^\omega &:=  \langle f_1, Y \rangle Y+ (P_{ac} f_{1,lo} + P_0 f_{1,hi})+ \sum_{k \geq k_0} h_k(\omega) P_k f_{1,hi} ,
\end{align}
and we recall that
\[
\langle \kappa f_0^\omega \pm f_0^\omega, Y \rangle = \langle \kappa f_0 \pm f_1, Y \rangle.
\]
We set
\[
(\gamma_0, \gamma_1) := \biggl(\langle f_{0}, Y \rangle Y + (P_{ac} f_{0,lo} + P_0 f_{0,hi}),  \langle f_1, Y \rangle Y+ (P_{ac} f_{1,lo} + P_0 f_{1,hi}) \biggr) ,
\]
then by 
the condition that $f \in \cM_s$, we have that
\begin{align}\label{equ:phi_bds}
\|(\gamma_0, \gamma_1) \|_{\dot H^1 \cap |\nabla|^{-1} L^{3/2,1} \times L^2 \cap L^{3/2,1}} \lesssim \varepsilon,
\end{align}
and $(\gamma_0, \gamma_1)$ satisfies the orthogonality condition
\begin{align}\label{equ:gamma_orthog}
\langle \kappa \gamma_0 + \gamma_1, Y \rangle = 0.
\end{align}
We recall that
\[
 f_{\geq k_0}^\omega :=  \biggl(\sum_{k \geq k_0} g_k(\omega) P_k f_{0,hi}, \sum_{k \geq k_0} h_k(\omega) P_k f_{1,hi} \biggr).
\]
We define $v(t)$ by
\[
P_{ac} v(t) = P_{ac} u(t) - W(t) f_{\geq k_0}^\omega, \qquad P_p v = P_p u,
\]
and by substituting this identity into the equation \eqref{equ:projected}, we may generalize the study of \eqref{equ:projected} to that of a deterministic forced equation
  \begin{equation} \label{equ:forced}
 \left\{ \begin{aligned}
  -\partial_t^2 P_{ac} &v + H P_{ac} v \\
  &= - P_{ac} \bigl( \partial_t^2 \phi_{a(t)} +  (V -V_{a(t)}) (P_p u + P_{ac} v(t) + F) \\
  & \hspace{44mm}+  N\bigl(P_p u + P_{ac} v + F, \phi_{a(t)}\bigr) \bigr), \\
  (v, \partial_t v)|_{t=0} &= (\psi_0 - \phi, \psi_1- \dot a(0) \varphi) ,\quad (\psi_0, \psi_1) \in  \cN_1,
 \end{aligned} \right.
\end{equation}
where the forcing term $F$ generalizes the term $ W(t) f_{\geq k_0}^\omega$. In Duhamel form, we have
\begin{align}
P_{ac} v(t) &= \cos(t \sqrt{H}) P_{ac} (\psi_0 - \phi) + \frac{\sin(t \sqrt{H})}{\sqrt{H}} P_{ac} ( \psi_1  - \dot a(0) \varphi) \\
& \hspace{34mm}+ \int_0^t \frac{\sin((t-s) \sqrt{|H|})}{\sqrt{|H|}} P_{ac} (- \partial_s^2 \phi_{a(s)} + N_2(P_p u + P_{ac} v + F)) ds,
\end{align}
and integrating by parts, we obtain
\begin{align*}
\int_0^t \frac{\sin((t -s) \sqrt{|H|})P_{ac}}{\sqrt{|H|}}  (- \partial_s^2 \phi_{a(s)})  ds&= \frac{\sin(t  \sqrt{|H|})P_{ac}}{\sqrt{|H|}} \dot a(0) \varphi \\& \hspace{14mm} - \int_0^t \cos((t -s) \sqrt{|H|})P_{ac}  \dot a(s) \varphi_{a(s)} ds.
\end{align*}
Hence
\begin{align} \label{equ:pacv1}
P_{ac} v(t) &=  \cos(t \sqrt{H}) P_{ac} ( \psi_0- \phi) + \frac{\sin(t  \sqrt{|H|})P_{ac}}{\sqrt{|H|}}  \psi_1 \\
&+ \int_0^t \frac{\sin((t -s) \sqrt{|H|})P_{ac}}{\sqrt{|H|}} N_2(P_p u + P_{ac} v + F) \, ds\\
& -  \int_0^t \cos((t -s) \sqrt{|H|})P_{ac}  \dot a(s) \varphi_{a(s)} ds.
\end{align}
Using that
\[
\cos(t \sqrt{H}) P_{ac} \varphi = \varphi,
\]
the last term in this equation yields
\begin{align*}
 \int_0^t \cos((t -s) \sqrt{|H|})P_{ac} & \dot a(s) \varphi_{a(s)} ds\\
  &=  \int_0^t a(s)^{- 5/4} \dot a(s) \varphi  ds \\
 & +  \int_0^t \cos((t -s) \sqrt{|H|})P_{ac}  \dot a(s)(  \varphi_{a(s)} - a(s)^{- 5/4}  \varphi) ds .
\end{align*}
We recall the representation formula of Theorem \ref{thm:rep}:
\begin{align}
\frac{\sin(t \sqrt{|H|})}{\sqrt{|H|}}P_{ac} &= - \frac{4 \pi }{|\langle V , \varphi \rangle|^2}  \varphi \otimes V \varphi \int_0^t \frac{\sin(s \sqrt{-\Delta})}{\sqrt{-\Delta}}  ds + \mathcal{S}(t)\\
\cos(t \sqrt{H}) P_{ac} &= - \frac{4 \pi }{|\langle V , \varphi \rangle|^2}  \varphi \otimes V \varphi \int_0^t \cos(s \sqrt{-\Delta}) ds + \mathcal{C}(t),
\end{align}
where $\mathcal{S}(t)$ and $\mathcal{C}(t)$ will satisfy Strichartz estimates.  

We substitute these expressions into the equation for $P_{ac} v$ and impose the condition that the terms involving $\varphi$ should cancel for any $t \geq 0$. Taking a time derivative of the resulting equation yields an equation for $a(t)$  (see \cite[(2.5)]{Beceanu}):
\begin{align}
\frac{\dot a(t)}{a(t)^{5/4}} & =  \frac{4 \pi} {\langle V, \varphi \rangle^2}  \Bigl \langle  \cos (t \sqrt{-\Delta}) ( \psi_0 - \phi) + \frac{\sin(t \sqrt{-\Delta})}{\sqrt{-\Delta}} \psi_1 \\
&\hspace{34mm} + \int_0^t \frac{\sin((t -s) \sqrt{-\Delta})}{\sqrt{-\Delta}} N_2(P_pu + P_{ac} v + F) ds \\
&\hspace{34mm}  + \int_0^t \cos((t -s) \sqrt{-\Delta}) \dot a(s)(\varphi_{a(s)} - a(s)^{-5/4} \varphi) ds, V \varphi \Bigr \rangle\\
\dot a(0) &= \frac{4 \pi}{ {\langle V, \varphi\rangle^2}  } \langle V \varphi, ( \psi_0 - \phi) \rangle
\end{align}
and the equation
\begin{align}
P_{ac} v(t) &= \mathcal{C}(t)( \psi_0 - \phi) + \mathcal{S}(t)\psi_1 \\
&+ \int_0^t \mathcal{S}(t-s) N_2(P_p u + P_{ac} v(t) + F) + \int_0^t \mathcal{C}(t-s) \dot a(s) \bigl( \varphi_{a(s)} - a(s)^{-5/4} \varphi \bigr) ds,
\end{align}
see \cite[(2.6)]{Beceanu}.

\subsection{Contraction mapping argument: estimates} \label{sec:contraction_setup}
We recall that
\begin{align}\label{equ:comp_of_u}
u = P_p u + P_{ac} v + W(t)P_{\geq k_0}  f_{hi}^\omega, \quad v = P_p u + P_{ac} v.
\end{align}
In the sequel, we consider only $t \geq 0$.  We define
\begin{align}\label{banach_x}
X = \bigl\{(v,a) \,|\, v \in L^{6,2}_x L^\infty_t \cap L^\infty_x L^2_t \cap L^\infty_x L^1_t, \, \dot a \in L^1_t \cap L^\infty_t, \,a(0) = 1 \bigr\},
\end{align}
where all spacetime norms are be taken over $\bR_+ \times \bR^3$, with distance
\begin{align}
\|(v,a) - (0,1)\|_X = \|v\|_{L^{6,2}_x L_t^\infty \cap L^\infty_x L^2_t \cap L^\infty_x L^1_t} + \| \dot a\|_{L_t^\infty \cap L^1_t}.
\end{align}
We let $B_{\varepsilon}((0,1)) $ denote the ball of radius $\varepsilon$ centered at $(0,1)$ in the complete metric space $X$. We recall the definition of the Banach space $Z$ which captures the required a priori bounds for our forcing term, with norm given by
\begin{equation}
\begin{split}
\|F\|_{Z(I)} &= \|F\|_{L^6_x L^\infty_t \cap L^{36/5}_x L^\infty_t(I \times \mathbb{R}^3)}  + \|F\|_{L^{36/5}_x L^6_t (I \times \mathbb{R}^3)} + \|F\|_{L^{9,2}_x L^3_t (I \times \mathbb{R}^3)}  \\
& \hspace{25mm} +  \| \langle x \rangle^{-1 - \theta} F \|_{L^{2}_x L^1_t \cap L^{2}_x L^2_t (I \times \mathbb{R}^3)}  , \qquad 1/2 < \theta < 3/2 .
\end{split}
\end{equation}
We will construct the solution by fixed point in $(v,a)$ in a ball in $X$ centered at $(0, 1)$. More precisely, 
for fixed $(f_0, f_1)$, $(\psi_0, \psi_1)$, and $F$ we define
\[
\Phi_{(f_0, f_1), \: (\psi_0, \psi_1),\:a(0) = 1, \: F}(v, a) = (w,b),
\]
where
\[
w(t) = (2\kappa)^{-1/2} ( x_+(t) + x_-(t)) Y + P_{ac} w,
\]
and
\begin{equation}\label{equ:x_eqn}
\begin{split}
\qquad \qquad x_\pm(t) & = (2\kappa)^{-1/2} e^{\pm t\kappa} \bigl( \langle \kappa (\psi_0 - \phi) \pm \psi_1, Y \rangle + \langle \kappa f_0 \pm f_1, Y \rangle \bigr) \\
& \pm (2\kappa)^{-1/2} \dot a(t) \langle \varphi_{a(t)} - a(t)^{-5/4}\varphi , Y \rangle \phantom{\int}  \\
&\mp (2\kappa)^{-1/2} \int_0^t e^{\pm(t-s) \kappa} \langle \mp \kappa \dot a ( \varphi_{a(t)} - a(t)^{-5/4} \varphi ) + N_2(v + F), Y \rangle ds.
\end{split}
\end{equation}
\begin{equation}\label{equ:pcv}
\begin{split}
P_{ac} w(t) &=  \mathcal{C}(t) ( \psi_0 - \phi) + \mathcal{S}(t)\psi_1 \\
& \hspace{14mm}+ \int_0^t  \mathcal{S}(t-s) N_2(v + F) \, ds +  \int_0^t  \mathcal{C}(t-s) \dot a(s) \varphi_{a(s)} ds.
\end{split}
\end{equation}
\begin{equation}\label{equ:a}
\begin{split}
\dot b(t) & =  \frac{4 \pi a(t)^{5/4}} {\langle V, \varphi \rangle^2}  \Bigl \langle  \cos (t \sqrt{-\Delta}) (\psi_0 - \phi) + \frac{\sin(t \sqrt{-\Delta})}{\sqrt{-\Delta}} \psi_1  \\
&\hspace{34mm} + \int_0^t \frac{\sin((t -s) \sqrt{-\Delta})}{\sqrt{-\Delta}} N_2(v + F) ds \\
&\hspace{34mm}  + \int_0^t \cos((t -s) \sqrt{-\Delta}) \dot a(s)(\varphi_{a(s)} - a(s)^{-5/4} \varphi) ds, V \varphi \Bigr \rangle,\\
b(0) &= a(0) = 1.
\end{split}
\end{equation}

We turn now to the main result of this section, which will imply the main theorem.

\begin{proposition}\label{prop:fixed}
There exists $\varepsilon_0 > 0$ such that for every $\varepsilon < \varepsilon_0$, and for every fixed $(f_0, f_1) \in \cM_s$ for $s > 5/6$ and $(\psi_0, \psi_1) \in \cN_1$, and forcing term $F(t,x) \in Z$ with $\|F\|_{Z} < \varepsilon$, the following holds:  for $(\overline{v},\overline{a}) \in B_\varepsilon((0,1)) \subseteq X$, there exists a unique function 
\[
h\equiv h_{F}(\overline{v},\overline{a}) \in \bR
\]
such that for $(v_1, a_1), (v_2, a_2) \in B_\varepsilon((0,1))$,
\[
|h(v_1, a_1) - h(v_2, a_2)| \lesssim \varepsilon \| (v_1, a_1) - (v_2, a_2)\|_X,
\]
and such that the mapping 
\[
\Phi_{(f_0,f_1),(\psi_0 + h(v,a) Y,  \psi_1  + \kappa h(v,a) Y), \:a(0) = 1, \:F}(v,a)
\]
defined by \eqref{equ:x_eqn},  \eqref{equ:pcv} and \eqref{equ:a} maps $B_\varepsilon((0,1))$ to itself and is a contraction. \end{proposition}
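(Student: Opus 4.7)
The plan is to first construct the scalar $h$ by requiring that the unstable mode $x_+(t)$ in \eqref{equ:x_eqn} stay bounded on $\bR_+$, then verify that $\Phi$ with this choice of $h$ is a self-map of $B_\varepsilon((0,1))$ and a contraction. With the substitution $(\psi_0,\psi_1) \mapsto (\psi_0 + hY, \psi_1 + \kappa h Y)$, the orthogonality conditions for $\cN_1$ and $\cM_s$ (which together with Remark \ref{rem:still_orthog} imply that the data-dependent pieces $\langle \kappa\psi_0 + \psi_1,Y\rangle$ and $\langle\kappa f_0 + f_1,Y\rangle$ vanish) kill all but a single data-dependent constant. Factoring $e^{t\kappa}$ out of the Duhamel integral, one gets $x_+(t) = (2\kappa)^{-1/2}e^{t\kappa}[2\kappa h + c_0 - \int_0^t e^{-s\kappa}G_{\overline v, \overline a}(s)\,ds] + R(t)$ with $R \in L^1_t \cap L^\infty_t$, where $G$ collects the modulation term $\pm\kappa\dot{\overline a}(\varphi_{\overline a} - \overline a^{-5/4}\varphi)$ and $N_2(\overline v + F)$. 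Demanding the bracket vanish as $t \to \infty$ forces
\[
h = h_F(\overline v,\overline a) := -\tfrac{1}{2\kappa}c_0 + \tfrac{1}{2\kappa}\int_0^{\infty} e^{-s\kappa}\langle G_{\overline v,\overline a}(s), Y\rangle\,ds,
\]
which is an explicit, not implicit, formula once $(\overline v, \overline a)$ and $F$ are fixed.

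The Lipschitz estimate $|h(v_1,a_1) - h(v_2,a_2)| \lesssim \varepsilon \|(v_1,a_1) - (v_2,a_2)\|_X$ and $|h| \lesssim \varepsilon^2$ then follow from the exponential weight $e^{-s\kappa}$, Lemma~\ref{lem:symb} for the modulation piece, and multilinear estimates on $N_2$ tested against the exponentially decaying $Y$. With $h$ in hand, I would rewrite $x_+(t) = -(2\kappa)^{-1/2}\int_t^\infty e^{-(s-t)\kappa}\langle G, Y\rangle\,ds \,\pm\, R(t)$; the $e^{-(s-t)\kappa}$ tail plus the smoothness/decay of $Y$ yield $\|x_+\|_{L^1_t \cap L^\infty_t} \lesssim \varepsilon$, and $\|x_-\|_{L^1_t \cap L^\infty_t} \lesssim \varepsilon$ is immediate from the decaying exponential in \eqref{equ:x_eqn}.

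For the $P_{ac}w$ component I would apply the representation formula of Theorem~\ref{thm:rep}, which reduces the required bounds in $L^{6,2}_x L^\infty_t \cap L^\infty_x L^2_t \cap L^\infty_x L^1_t$ to estimates on $N_2(v+F)$ in $L^{6/5}_x L^\infty_t$ and $L^{3/2,1}_x L^2_t$, on the data $(\psi_0 - \phi + hY, \psi_1 + \kappa hY)$ in $\widetilde X_1$, and on the modulation $\dot a(s)(\varphi_{a(s)} - a(s)^{-5/4}\varphi)$ via Lemma~\ref{lem:symb}. The equation for $\dot b$ is treated by pairing against $V\varphi$ and using the standard Strichartz/Lorentz bounds of Proposition~\ref{prop:lorentz_est} and Proposition~\ref{prop:linfty_est} for the free sine and cosine propagators. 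The genuine nonlinear terms $\phi_{a(t)}^k u^{5-k}$, $k \geq 0$, are handled by H\"older in Lorentz spaces (Lemma~\ref{lem:lorentz_holder}) together with the weighted Lorentz control of $\phi_{a(t)}$ provided by Corollary~\ref{cor:phi_bds}, distributing the factors of $u = v + F$ across the $X$-norm of $v$ and the $Z$-norm of $F$; every such term is at least quadratic in the small quantities and thus contributes $O(\varepsilon^2)$.

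The main obstacle is the \emph{linear} contribution $(V - V_{a(t)})(v + F)$ to $N_2$, for which a standard multilinear estimate cannot produce a factor of $\varepsilon$. Here I would combine Lemma~\ref{v_bds}, which gives $\|\langle x\rangle^{1+\theta}(V - V_{a(t)})\|_{L^{p,q}_xL^\infty_t} \lesssim \|\dot a\|_{L^1} \lesssim \varepsilon$, with the weighted $L^2_xL^1_t$ and $L^2_xL^2_t$ estimates on $v$ and $F$ that are built into the definition of the spaces $X$ and $Z$; for the $F$ piece these weighted estimates are precisely the output of Proposition~\ref{prop:delicate} and its $L^2_xL^2_t$ companion, which is the reason the randomization must lie in the weighted space $X_s$. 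The combined estimate then reads, e.g.,
\[
\|(V - V_{a(t)})(v+F)\|_{L^{3/2,1}_xL^2_t} \lesssim \|\langle x\rangle^{1+\theta}(V-V_{a(t)})\|_{L^{3,2}_xL^\infty_t}\|\langle x\rangle^{-1-\theta}(v+F)\|_{L^2_xL^2_t} \lesssim \varepsilon \cdot \varepsilon,
\]
restoring the needed smallness. Contraction is obtained by applying the same scheme to differences $(v_1,a_1) - (v_2,a_2)$: every nonlinear piece is linearized with one factor of size $\varepsilon$ left over, and the linear $(V-V_a)(v+F)$ piece contracts either through $\dot a_1 - \dot a_2$ in Lemma~\ref{v_bds} or through the weighted norm of $v_1 - v_2$, yielding contraction factor $\lesssim \varepsilon$. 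For $\varepsilon_0$ small enough this gives a unique fixed point in $B_\varepsilon((0,1))$ and completes the proof.
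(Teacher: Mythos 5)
Your proposal is correct and follows essentially the same route as the paper: construct $h$ from the boundedness requirement on $x_+$ (giving exactly the formula in \eqref{h_eq}), estimate the four groups of terms (data via Theorem~\ref{thm:rep}, modulation via Lemma~\ref{lem:symb}, linear potential term via Lemma~\ref{v_bds} paired with weighted $L^2_x L^1_t \cap L^2_x L^2_t$ control of $v$ and $F$, and the genuine nonlinearity via Lorentz--H\"older and Corollary~\ref{cor:phi_bds}), then contract by taking differences. One small slip in your displayed H\"older estimate: pairing $L^2_x L^2_t$ with a target of $L^{3/2,1}_x$ requires the potential factor in $L^{6,2}_x L^\infty_t$, not $L^{3,2}_x L^\infty_t$ (and Lemma~\ref{v_bds} indeed covers $p=6$ with $\theta < 5/2$); the paper also treats the $v$-contribution to $(V - V_{a(t)})(v+F)$ by the simpler unweighted H\"older with $\|V - V_{a(t)}\|_{L^{3/2,1}_x L^\infty_t}$, reserving the weighted argument for $F$, but your uniform weighted treatment works as well since $\|\langle x\rangle^{-1-\theta} v\|_{L^2_x L^2_t} \lesssim \|v\|_{L^\infty_x L^2_t}$ for $\theta > 1/2$.
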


\begin{proof}
First we establish several estimates. We let
\begin{align}
I& := \|\mathcal{C}(t)(\psi_0 - \phi) + \mathcal{S}(t)\psi_1 \|_{L^{6,2}_x L^\infty_t \cap L^\infty_x L^2_t \cap L^\infty_x L^1_t} \\
II&:= \|\dot a(t) ( \varphi_{a(t)} - a(t)^{-5/4} \varphi) \|_{L^1_t |\nabla|^{-1} L^{3/2,1}_x \cap L^1_t \dot H^1_x \cap L^\infty_t |\nabla|^{-1} L^{3/2,1}_x \cap L^\infty_t \dot H^1_x} \\
III &:= \| (V - V_{a(t)})  ( v +  F)\|_{L^{6/5,2}_x L^\infty_t \cap L^{3/2,1}_x L^2_t \cap L^{3/2,1}_x L^1_t}  \\
IV&:= \|N(v + F, \phi_{a(t)})\|_{L^{6/5,2}_x L^\infty_t \cap L^{3/2,1}_x L^2_t\cap L^{3/2,1}_x L^1_t} .
\end{align}
We will estimate these expressions separately.
\subsection*{Term I} The estimate for the first term follows from Theorem \ref{thm:rep}. We have
\begin{align}
&\|\mathcal{C}(t)(\psi_0 - \phi) + \mathcal{S}(t)\psi_1 \|_{L^{6,2}_x L^\infty_t \cap L^\infty_x L^2_t \cap L^\infty_x L^1_t} \\
& \lesssim  \|\psi_0 - \phi \|_{|\nabla|^{-1} L^{3/2,1} \cap \dot H^1} + \|\psi_1\|_{L^{3/2,1} \cap L^2}
\end{align}
\subsection*{Term II} 
For the second term, we use Lemma \ref{lem:symb} to bound
\begin{align}
 &\|\dot a(t) ( \varphi_{a(t)} - a(t)^{-5/4} \varphi) \|_{L^1_t |\nabla|^{-1} L^{3/2,1}_x \cap L^1_t \dot H^1_x \cap L^\infty_t |\nabla|^{-1} L^{3/2,1}_x \cap L^\infty_t \dot H^1_x} \lesssim \| \dot a\|_{L^\infty_t \cap L^1_t} \|\dot a\|_{L^1_t}  \lesssim \varepsilon^2.
\end{align}
\subsection*{Term III}
For the third term we handle the contributions involving $v$ and $F$ separately. We note that on $B_\varepsilon(0) \subset X$, we have $1/2 < a < 3/2$, hence by Lemma \ref{v_bds}, we have
\begin{align}
&\| (V - V_{a(t)}) v \|_{L^{6/5,2}_x L^\infty_t \cap L^{3/2, 1}_x L^2_t \cap L^{3/2,1}_x L^1_t } \\
& \lesssim \| V -V_{a(t)} \|_{L^{3/2, 1}_x L^\infty_t} \left( \|v\|_{L_x^{6,2} L_t^\infty} +  \|v\|_{L_x^{\infty} L_t^2} + \|v\|_{L_x^{\infty} L_t^1}  \right)\\
& \lesssim \|\dot a\|_{L^1_t}  \left( \|v\|_{L_x^{6,2} L_t^\infty} +  \|v\|_{L_x^{\infty} L_t^2} + \|v\|_{L_x^{\infty} L_t^1} \right).
\end{align}
Similarly, for the term involving $F$ we estimate
\begin{align}
&\| (V - V_{a(t)}) F \|_{L^{6/5,2}_x L^\infty_t }\\
& \lesssim \|V -V_{a(t)} \|_{L^{3/2}_x L^\infty_t  } \|F\|_{L_x^{6} L_t^{\infty}}\\
& \lesssim \|\dot a\|_{L^1_t}  \| F\|_{Z}, 
\end{align}
while for the terms involving the $L^1_t \cap L^2_t$ norm, we fix $1/2<\theta < 5/2$ as in the definition of the $Z$ norm, and using Lemma \ref{v_bds} we estimate
\begin{align}
&\| (V - V_{a(t)}) F \|_{L^{3/2,1}_x L^1_t \cap L^{3/2,1}_x L^2_t}\\
& \lesssim \|\langle x \rangle^{1 + \theta} (V -V_{a(t)}) \|_{L^{6,2}_x L_t^\infty } \|\langle x \rangle^{-1 - \theta} F\|_{L_x^{2} L_t^1 \cap L^2_x L^2_t} \\
& \lesssim  \|\dot a\|_{L^1_t} \|F\|_{Z}.
\end{align}

\subsection*{Term IV}
First we estimate
\begin{align}
&\|(v +  F)^5\|_{L^{6/5,2}_x L^\infty_t \cap L^{3/2,1}_x L^2_t \cap L^{3/2}_x L^1_t}.
\end{align}
For the $v$ component we have (see \cite{Beceanu}) that
\begin{align}
\|v^5\|_{L^{6/5,2}_x L^\infty_t \cap L^{3/2,1}_x L^2_t \cap L^{3/2}_x L^1_t} \lesssim \|v\|_{L^{6,2}_x L^\infty_t}^5 + \|v\|_{L^{6,2}_x L^\infty_t}^4 \|v\|_{L^{\infty}_x L^2_t} + \|v\|_{L^{6,2}_x L^\infty_t}^4 \|v\|_{L^{\infty}_x L^1_t} .
\end{align}
Now, for the terms involving the forcing term $F$, we can estimate the $L^{6/5,2}_x L^\infty_t$ norm in the same way, while we estimate this term somewhat differently for the $L^{3/2,1}_x L^1_t$ component. We have
\begin{align}
\|(v +  F)^5- F^5\|_{L^{6/5,2}_x L^\infty_t  \cap L^{3/2,1}_x L^1_t\cap L^{3/2,1}_x L^2_t} &\lesssim \sum_{i=1}^4 \|F\|_{L^{6}_x L^\infty_t}^i \|v\|_{L^{6,2}_x L^\infty_t}^{4-i}  \|v\|_{L^{\infty}_x L^1_t \cap L^{\infty}_x L^2_t  \cap L^{6,2}_x L^\infty_t}  \\
& \lesssim  \sum_{i=1}^4 \|F\|_{Z}^i \|v\|_{L^{6,2}_x L^\infty_t}^{4-i} \|v\|_{L^{\infty}_x L^1_t \cap L^{\infty}_x L^2_t\cap L^{6,2}_x L^\infty_t}
\end{align}
and we estimate 
\begin{align}\label{equ:f5_ests}
\qquad \|F^5\|_{L^{3/2,1}_x L^1_t \cap L^{3/2,1}_x L^2_t  }  \lesssim \|F\|_{L^{6}_x L^\infty_t }^2 \|F\|_{L^{9,2}_x L^3_t }^3  + \|F\|_{L^{36/5}_x L^\infty_t }^{3} \|F\|_{L^{36/5}_x L^6_t } \|F\|_{L^{9,2}_x L^3_t } ,
\end{align}
where in the first term, we use Lemma \ref{lem:lorentz_holder} and the inequality
\[
\|f\|_{L^{p,q_1}} \leq \|f \|_{L^{p, q_2}}, \qquad 0 < p \leq \infty, \quad 0 < q_2 \leq q_1 \leq \infty, \quad \frac{1}{q_2} = \frac{2}{6} + \frac{3}{2}, \quad q_1 = 1.
\]
(see Lemma \ref{lem:interp}). The second term is handled similarly. For the other terms in the nonlinear component, we may estimate all the terms involving only $v$ as in \cite{Beceanu}, so we treat only those terms involving $F$. We have
\begin{align}
&\|\phi_{a(t)}^3 (v F + F^2)\|_{L^{6/5,2}_x L^\infty_t} \\
& \lesssim \|\phi_{a(t)}\|^3_{L^{6,2}_x L^\infty_t} \bigl( \|v\|_{L^{6,2}_x L^\infty_t}  \|F\|_{L^{6}_x L^\infty_t}  + \|F\|_{L^{6}_x L^\infty_t} ^2 \bigr) \\
& \lesssim  \|\phi_{a(t)}\|^3_{L^{6,2}_x L^\infty_t} \bigl( \|v\|_{L^{6,2}_x L^\infty_t}  \|F\|_{Z}  + \|F\|_{Z} ^2 \bigr) \\
\end{align}
and similarly for the terms
\begin{align}
&\|\phi_{a(t)}^2 (v^2 F + F^2v + F^3)\|_{L^{6/5,2}_x L^\infty_t} , \quad \|\phi_{a(t)} (v^3 F + F^2v^2 +F^3 v + F^4)\|_{L^{6/5,2}_x L^\infty_t} .
\end{align}
For the terms in $L^{3/2,1}_xL^1_t \cap L^{3/2,1}_xL^2_t $, we have
\begin{align}
\|\phi_{a(t)}^3 v F\|_{L^{3/2,1}_xL^1_t \cap L^{3/2,1}_xL^2_t } \lesssim \|\phi_{a(t)}\|_{L^{6,2}_x L^\infty_t}^3 \|v\|_{L^\infty_x L^1_t \cap L^\infty_x L^2_t} \| F\|_{L^6_x L^\infty_t}
\end{align}
and similarly for
\begin{align}
\|\phi_{a(t)}^2 (v^2 F + F^2 v)\|_{L^{3/2,1}_xL^1_t \cap L^{3/2,1}_xL^2_t }, \quad \|\phi_{a(t)}^2  (v^3 F + F^2v^2 +F^3 v)\|_{L^{3/2,1}_xL^1_t \cap L^{3/2,1}_xL^2_t }.
\end{align}
We next estimate 
\begin{align}
\|\phi_{a(t)}^3 F^2  \|_{L^{3/2,1}_x L^1_t \cap L^{3/2,1}_x L^2_t} \lesssim \|\langle x \rangle^{\frac{1+ \theta}{3}} \phi_{a(t)}\|^3_{L^{36, 108/13}_x L^\infty_t} \| F \|_{L^{36/5}_x L^\infty_t} \| \langle x \rangle^{-1 - \theta} F\|_{L^2_x L^1_t \cap L^2_x L^2_t},
\end{align}
By Corollary \ref{cor:phi_bds}, we have
\[
\|\langle x \rangle^{\frac{1+ \theta}{3}} \phi_{a(t)}\|^3_{L^{36,108/13}_x L^\infty_t} \lesssim (1 + \|\dot a\|_{L^1_t})
\]
provided
\[
\frac{1+ \theta}{3} < 1 - \frac{1}{12},
\]
and hence for $\theta < 7/4$ we obtain
\begin{align}
\|\phi_{a(t)}^3 F^2  \|_{L^{3/2,1}_x L^1_t \cap L^{3/2,1}_x L^2_t} \lesssim (1 + \|\dot a\|_{L^1_t}) \|F\|_{Z}^2.
\end{align}
Finally we estimate the terms
\begin{align}
 \|\phi_{a(t)}^2  F^3 \|_{L^{3/2,1}_x L^1_t\cap L^{3/2,1}_x L^2_t}, \quad  \|\phi_{a(t)}  F^4 \|_{L^{3/2,1}_x L^1_t\cap L^{3/2,1}_x L^2_t} 
\end{align}
using the estimates from \eqref{equ:f5_ests}, noting that for both these estimates, the $\phi_{a(t)}$ terms can be placed into $L^{6,2}_x L^\infty_t$. Putting these estimates together, we have shown that
\begin{align}
\|N(v + F, \phi_{a(t)})\|_{L^{6/5,2}_x L^\infty_t \cap L^{3/2,1}_x L^2_t\cap L^{3/2,1}_x L^1_t}  \lesssim \bigl( \|v\|_{X}^2 +  \|F\|_{Z}^2 \bigr) \lesssim \varepsilon^2
\end{align}

\medskip
If we replace $(\psi_0, \psi_1)$ by $(\psi_0 + hY, \psi_1 + \kappa h Y)$, then the equation for the $x_+$ term will dictate the value of $h$ in the statement of the proposition. Indeed, we can rewrite the equation for the new $x_+(t)$ as 
\begin{equation}
\begin{split}
x_+(t) & = (2\kappa)^{-1/2} e^{t\kappa} \biggl( 2h\kappa\langle Y, Y\rangle  - \int_0^\infty e^{-s \kappa} \langle - \kappa \dot a ( \varphi_{a(t)} - a(t)^{-5/4} \varphi ) + N_2(v + F), Y \rangle \biggr)\\
& +  (2\kappa)^{-1/2} \int_t^\infty e^{(t-s) \kappa} \langle - \kappa \dot a ( \varphi_{a(t)} - a(t)^{-5/4} \varphi ) + N_2(v + F), Y \rangle \\
& + (2\kappa)^{-1/2} \dot a(t) \langle \varphi_{a(t)} - a(t)^{-5/4}\varphi , Y \rangle. \phantom{\int} 
\end{split}
\end{equation}
Noting that
\begin{align}
&\| (2\kappa)^{-1/2} \int_t^\infty e^{(t-s) \kappa} \langle - \kappa \dot a ( \varphi_{a(t)} - a(t)^{-5/4} \varphi ) + N_2(v + F), Y \rangle \|_{L^1_t \cap L^\infty_t}\\
& \lesssim \|\dot a \|_{L^1_t \cap L^\infty_t} \| \varphi_{a(t)} - a(t)^{-5/4}\varphi \|_{L^\infty_t L^2_x}  + \| N_2(v + F)\|_{L^{3/2,1}_x L^1_t} ,
\end{align}
 we obtain that $x_+ \in L_t^1 \cap L_t^\infty$ if and only if
\begin{align}\label{h_eq}
2 h\kappa \langle Y, Y \rangle = \int_0^\infty e^{-s\kappa} \langle - \kappa \dot a ( \varphi_{a(t)} - a(t)^{-5/4} \varphi ) +  N_2(v + F), Y \rangle ds
\end{align}
which determines a unique value for $h \equiv h_F(v,a)$. Furthermore, we have by the same arguments used above that
\begin{align}
|h| &\lesssim \varepsilon ( \varepsilon + \|F\|_Z),
\end{align}
and arguing again as in \cite{Beceanu}, for $j = 1,2$ for
\[
\|(v_j, a_j) - (0,1) \|_{X} \lesssim \varepsilon < \frac{1}{2},
\]
we have
\begin{align}
| h(v_1,a_1) - h(v_2, a_2) | \lesssim \varepsilon \| (v_1,a_1) - (v_2, a_2)\|_{X}.
\end{align}

Hence we study the mapping given by  \eqref{equ:x_eqn},  \eqref{equ:pcv} and \eqref{equ:a} with $h_F(v,a)$ deinfed above and initial data
\[
\bigl(\widetilde{\psi}_0, \widetilde{\psi}_1 \bigr)  := \bigl(\psi_0 + h Y,  \psi_1+ \kappa h Y\bigr),
\]
and note that
\[
\kappa (\widetilde{\psi}_0 - \phi) + \widetilde{\psi}_1 = \kappa (\psi_0 - \phi) + \psi_1.
\]

We begin by treating \eqref{equ:x_eqn}.  For the $x_{\pm}(t)$ terms, thanks to the definition of $h$ we may argue as in \cite{Beceanu} using the above estimates, and we obtain
\begin{align}\label{equ:x_minus}
\|x_{\pm}\|_{L^1\cap L^\infty} \lesssim\|\psi_0 - \phi \|_{|\nabla|^{-1} L^{3/2,1} \cap \dot H^1}  + \|\psi_1\|_{L^{3/2,1} \cap L^1} + \varepsilon ( \varepsilon + \|F\|_Z).
\end{align}
Putting these estimates together, we have
\begin{align}
\|P_p w\|_{L^{6,2}_x L^\infty_t \cap L^\infty_x L^2_t \cap L^\infty_x L^1_t} \lesssim \|\psi_0 - \phi \|_{|\nabla|^{-1} L^{3/2,1} \cap \dot H^1} + \|\psi_1\|_{L^{3/2,1} \cap L^2}  + \varepsilon ( \varepsilon + \|F\|_Y).
\end{align}
Next we treat $P_{ac} w$. Since
\[
P_{ac} \widetilde{\psi}_0 = P_{ac} \psi_0, \qquad P_{ac} \widetilde{\psi}_1 = P_{ac} \psi_1,
\]
we have by the Strichartz estimates of Theorem \ref{thm:rep} 
\begin{align}
\|P_{ac} w\|_{L^{6,2}_x L^\infty_t \cap L^\infty_x L^2_t \cap L^\infty_x L^1_t} &=  \|\mathcal{C}(t)(\psi_0 - \phi) + \mathcal{S}(t) \psi_1 \|_{L^{6,2}_x L^\infty_t \cap L^\infty_x L^2_t \cap L^\infty_x L^1_t} \\
&+ \|\dot a(t) ( \varphi_{a(t)} - a(t)^{-5/4} \varphi) \|_{L^1_t |\nabla|^{-1} L^{3/2,1}_x \cap L^1_t \dot H^1_x \cap L^\infty_t |\nabla|^{-1} L^{3/2,1}_x \cap L^\infty_t \dot H^1_x} \\
& + \| (V - V_{a(t)})  ( v(t) +  F)\|_{L^{6/5,2}_x L^\infty_t \cap L^{3/2,1}_x L^2_t  \cap L^{3/2,1}_x L^1_t}  \\
&+ \|N(v + F, \phi_{a(t)})\|_{L^{6/5,2}_x L^\infty_t \cap L^{3/2,1}_x L^2_t  \cap L^{3/2,1}_x L^1_t} \\
& =: I + II + III + IV,
\end{align}
and using our estimates above we obtain
\begin{align}
\|P_{ac} w\|_{L^{6,2}_x L^\infty_t \cap L^\infty_x L^2_t \cap L^\infty_x L^1_t} &\lesssim  \|\psi_0 - \phi \|_{|\nabla|^{-1} L^{3/2,1} \cap \dot H^1} + \|\psi_1\|_{L^{3/2,1} \cap L^2} + \varepsilon ( \varepsilon + \|F\|_Z).
\end{align}
We now turn to the estimates for $b(t)$. From \eqref{equ:a} we have
\begin{align}
&\|\dot b\|_{L^1 \cap L^\infty}\\
& \lesssim \biggr\|  a(t)^{5/4}  \frac{4 \pi} {\langle V, \varphi \rangle^2}  \Bigl \langle  \cos (t \sqrt{-\Delta}) (\psi_0 - \phi) + \frac{\sin(t \sqrt{-\Delta})}{\sqrt{-\Delta}}  \psi_1, V \varphi \Bigr \rangle \biggr\|_{L_t^\infty \cap L_t^1} \\
&  + \biggr\|a(t)^{5/4}  \frac{4 \pi} {\langle V, \varphi \rangle^2}   \Bigl \langle  \int_0^t \frac{\sin((t -s) \sqrt{-\Delta})}{\sqrt{-\Delta}}  W_2 ds, V \varphi \Bigr \rangle \biggr\|_{L_t^\infty \cap L_t^1} \\
&  + \biggl\|a(t)^{5/4}  \frac{4 \pi} {\langle V, \varphi \rangle^2}   \Bigl \langle \int_0^t \cos((t -s) \sqrt{-\Delta}) \dot a(s)(\varphi_{a(s)} - a(s)^{-5/4} \varphi) ds, V \varphi \Bigr \rangle \biggr\|_{L_t^\infty \cap L_t^1}.
\end{align}
Since
\[
\|a \|_{L^\infty_t} \leq 1 + \|\dot a\|_{L^1_t},
\]
we obtain that for the first term 
\begin{align}
& \biggr\|  a(t)^{5/4}  \frac{4 \pi} {\langle V, \varphi \rangle^2}  \Bigl \langle  \cos (t \sqrt{-\Delta}) (\psi_0 - \phi) + \frac{\sin(t \sqrt{-\Delta})}{\sqrt{-\Delta}} \psi_1, V \varphi \Bigr \rangle \biggr\|_{L_t^\infty \cap L_t^1}\\
 & \lesssim  \frac{4 \pi (1 + \|\dot a\|_{L^1_t})^{5/4}} {\langle V, \varphi \rangle^2} \|V \varphi \|_{L^1_x} \bigl( \||\nabla| (\psi_0 - \phi)\|_{L^{3/2, 1}_x} + \|\psi_1\|_{L^{3/2,1}_x} +  \||\nabla|\gamma_0\|_{L^{3/2, 1}_x} + \|\gamma_1\|_{L^{3/2,1}_x}  \bigr) .
\end{align}
For the second term, we have
\begin{align}
& \biggr\|a(t)^{5/4}  \frac{4 \pi} {\langle V, \varphi \rangle^2}  \Bigl \langle  \int_0^t \frac{\sin((t -s) \sqrt{-\Delta})}{\sqrt{-\Delta}} N_s(v + F) ds, V \varphi \Bigr \rangle \biggr\|_{L_t^\infty \cap L_t^1} \\
 & \lesssim \frac{4 \pi (1 + \|\dot a\|_{L^1_t})^{5/4}} {\langle V, \varphi \rangle^2} \|V \varphi \|_{L^1_x}  \|(V -V_{a(t)})  ( v + F)  + N(v + F, \phi_{a(s)}) \|_{L^{3/2,1}_x L^1_t}\\
 & \lesssim  \frac{4 \pi (1 + \|\dot a\|_{L^1_t})^{5/4}} {\langle V, \varphi \rangle^2} \|\dot a\|_{L^1_t} \bigl( \|v\|_{L_x^{\infty} L_t^1} + \|F\|_{Y} \bigr),
\end{align}
using the estimates for Term III above. Finally
\begin{align}
&\biggl\|a(t)^{5/4}  \frac{4 \pi} {\langle V, \varphi \rangle^2}   \Bigl \langle \int_0^t \cos((t -s) \sqrt{-\Delta}) \dot a(s)(\varphi_{a(s)} - a(s)^{-5/4} \varphi) ds, V \varphi \Bigr \rangle \biggr\|_{L_t^\infty \cap L_t^1}\\
& \lesssim \frac{4 \pi (1 + \|\dot a\|_{L^1_t})^{5/4}} {\langle V, \varphi \rangle^2} \|V \varphi \|_{L^1_x}  \| |\nabla| ( \dot a(s)(\varphi_{a(s)} - a(s)^{-5/4} \varphi) \|_{L^{3/2,1}_xL^1_t } \\
& \lesssim \|\dot a\|_{L^1_t \cap L^\infty_t}^2.
\end{align}
Putting all these estimates together, and using the assumption $\|F\|_{Z} < \varepsilon$, we conclude that
\begin{align}
\|(w,b) - (0,1) \|_X &\lesssim \|\psi_0 - \phi \|_{|\nabla|^{-1} L^{3/2,1} \cap \dot H^1} + \|\psi_1\|_{L^{3/2,1} \cap L^2} 
\end{align}
and hence, provided  
\begin{align}
\|\psi_0 - \phi \|_{|\nabla|^{-1} L^{3/2,1} \cap \dot H^1} + \|\psi_1\|_{L^{3/2,1} \cap L^2} < \varepsilon < \varepsilon_0
\end{align}
for $\varepsilon_0 > 0$ from \eqref{equ:phi_bds} we conclude that
\begin{align}
\|(w,b) - (0,1) \|_X \lesssim \varepsilon.
\end{align}
We may estimate the difference between two solutions as in \cite{Beceanu} using that $h$ is Lipschitz to prove that the map $(v,a) \mapsto (w, b)$ is a contraction.
\end{proof}

We now prove the main theorem:

\begin{proof}[Proof of Theorem \protect{\ref{codim_1}}]
Fix $\varepsilon_ 0> 0$ as in Proposition \ref{prop:fixed}, $f \in \cM_s$ with $\varepsilon \leq \varepsilon_0$. Then for any $F \in Z$ with $\|F\|_{Z} < \varepsilon$,  there exists a unique $h \lesssim \varepsilon^2$ such that the system \eqref{equ:x_eqn}, \eqref{equ:pcv}, \eqref{equ:a} with $(\psi_0, \psi_1)$ replaced by $(\phi + \gamma_0+ h Y,\gamma_1  \kappa h Y)$ admits a unique solution $(v,a) \in X$ with
\[
\|(v,a) - (0,1) \|_{X} < \varepsilon.
\]
Note that we have used here \eqref{equ:phi_bds} and \eqref{equ:gamma_orthog}. 

In order to apply Corollary \ref{cor:as}, we observe that $f_{hi}$ satisfies Assumption (A2), then to obtain bounds in terms of $\|f\|_{X_s}$, we use Lemma \ref{lem:weighted_ests_free} and we estimate
\begin{align}
\| f_{hi}  \|_{\cH^s} \lesssim  \| f  \|_{\cH^s} \quad \textup{ and } \quad   \| \langle x \rangle^{1- \nu}   \,  |\nabla|^{\nu \beta+} f_{0,hi}\|_{L^2(\bR^3)} \lesssim \| \langle x \rangle^{1- \nu}   \,  |\nabla|^{\nu \beta+} f_0 \|_{L^2(\bR^3)},
\end{align}
and similarly using Lemma \ref{lem:weighted_ests_free} and \eqref{weighted_sobolev_bds}, we have
\[
\| \langle x \rangle^{1- \nu}   \,  |\nabla|^{\nu \beta-1+} f_{1,hi}\|_{L^2(\bR^3)} = \| \langle x \rangle^{1- \nu}   \langle \nabla \rangle^{\nu \beta -1 +} f_{1}\|_{L^2(\bR^3)} .
\]
 Hence, by  the conditions defining $\cM_s$ and by Corollary \ref{cor:as} with $\lambda = \varepsilon$, we have
\[
\mathbb{P} \left( \|W(t)  f_{\geq k_0}^\omega \|_Z  > \varepsilon \right) < C e^{-c \varepsilon^2 / \|f\|_{X_s}^2 },
\]
which is a non-trivial bound provided
\begin{align}
c \varepsilon^2 / \|f\|_{X_s}^2  > \log C \qquad\Longleftrightarrow \qquad \|f\|_{X_s} < \sqrt{ c \varepsilon^2/ \log C} .
\end{align}
Hence for $0 < \varepsilon < \varepsilon_0$, we set
\[
\Omega_{\varepsilon} = \{ \omega \in\Omega \,:\, \|W(t)  f_{\geq k_0}^\omega \|_Z  \leq \varepsilon \},
\]
and for $\omega \in \Omega_{\varepsilon}$, we apply Proposition \ref{prop:fixed} with $F = W(t) f_{\geq k_0}^\omega$. This concludes the proof.
\end{proof}

\begin{remark} \label{rmk:bdd_l8}
For the solution constructed by fixed point in Proposition \ref{prop:fixed}, we have
\[
\|v\|_{L^{8}_{x,t}(\mathbb{R}\times \mathbb{R}^3)} \leq \|v\|_{L^{6,2}_x L^\infty_t(\mathbb{R}\times \mathbb{R}^3)}^{3/4} \|v\|_{L^\infty_x L^2_t(\mathbb{R}\times \mathbb{R}^3)}^{1/4} \leq \varepsilon.
\]
Hence, in light of Proposition \ref{prop:l8}, under the hypotheses of Theorem \ref{codim_1}, we have the same bounds for the solution $u$ almost surely.
\end{remark}

\appendix
\section{Kernel Estimates}\label{a:kernel}

Throughout this section, we assume that $H = -\Delta + V$, where $V \in L^{3/2,1} \cap L^{\infty}$ is a radial, real-valued potential which satisfies the explicit decay estimates
\[
|V(r)| \lesssim \langle r \rangle^{-4}, \quad |V'(r)| \lesssim \langle r \rangle^{-5}.
\]
The goal of this section is to prove certain estimates for distorted Fourier multipliers by obtaining explicit kernel estimates. As we will see, these will depend on the decay rate of the potential and its derivative. We will begin by proving Proposition \ref{prop:kernel_ests} since many of the other estimates will follow from the methods used to prove this proposition. We recall that we defined in \eqref{equ:linearized_kernel} the kernel
\begin{align}
K_k(t,r,r') =  \int_{\rho} e^{i t \rho} \psi_k(\rho) \widetilde{e}(r, \rho)   \overline{\widetilde{e}(r',\rho)} d\rho
\end{align}
where
\begin{align}\label{e_app}
\widetilde{e}(r, \rho) = r\rho \int_{S^2} e(r \theta, \rho \omega) \sigma(d\theta).
\end{align}
We restate Proposition \ref{prop:kernel_ests} here:
\kernelests*

Before turning to the proof of Proposition \ref{prop:kernel_ests}, we need to establish certain estimates for the functions $\widetilde{e}(r,\rho)$ in \eqref{e_app}. For $\rho > 0$, we may write (see \cite[p441]{Schlag07}):
\begin{align}\label{equ:radial_e}
\widetilde{e}(r,\rho) = c_+(\rho) f(r,\rho)  - \frac{1}{2i} \overline{f(r,\rho)},
\end{align}
where
\[
c_+(\rho) = \frac{1}{2i} \frac{\overline{f(0,\rho)}}{f(0,\rho)},
\]
and where $f(r, \rho)$ are the Jost functions, given by the Volterra integral equation
\begin{align}\label{jost}
f(r,\rho) = e^{ir\rho} + \int_r^\infty \frac{\sin(\rho(r'-r))}{\rho} V(r') f(r', \rho) dr', \qquad r \geq0.
\end{align}
We set $f(r,\rho) = e^{ir\rho} m(r,\rho)$, then $m(r,\rho)$ satisfies the integral equation
\begin{align}\label{equ:m_eq}
m(r,\rho) = 1 + \int_r^\infty \frac{e^{2i\rho(r'-r)} - 1}{2i\rho} V(r')m(r', \rho) dr'.
\end{align}
From \cite[(16)]{Schlag07} we have that
\begin{align}\label{equ:lwr_bds}
C(V)^{-1} \rho(1+\rho)^{-1} < |f(0,\rho)| < C(V) .
\end{align}
We note that in particular, for $\rho \geq 1/2$, 
\[
|f(0,\rho)| \gtrsim 1. 
\]
We collect some bounds for the functions $m(r,\rho)$ in the following lemma.
\begin{lemma}\label{lem:m_bds}
Let $m(r,\rho)$ be given as in \eqref{equ:m_eq} with $V$ satisfying
\[
|V(r)| \lesssim \langle r \rangle^{-4}, \quad |V'(r)| \lesssim \langle r \rangle^{-5}.
\]
Then for all $r > 0$ and $\rho$ sufficiently large
\begin{align}
 |(m(r,\rho) -1)| &\lesssim  \rho^{-1}\langle r \rangle^{-3} , \label{equ:firstbd0}\\
 |\partial_r m(r,\rho)| &\lesssim \rho^{-1} \langle r \rangle^{-4}, \label{equ:firstbd}\\
 |\partial_\rho^\ell m(r,\rho) | &\lesssim \rho^{-1}\langle r \rangle^{-4 + \ell} , \quad \ell = 1,2, 3 \label{equ:second_bd},
\end{align}
where the implicit constants depend only on $V$ and fixed constants. 
\end{lemma}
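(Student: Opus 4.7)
My plan is to establish the three bounds in turn via Volterra fixed point arguments for \eqref{equ:m_eq}, extracting the $\rho^{-1}$ smallness from the kernel at each stage. First, for \eqref{equ:firstbd0}, I would treat \eqref{equ:m_eq} as $m = 1 + Tm$. Since
\[
|K(r,r')| := \bigl|\tfrac{e^{2i\rho(r'-r)} - 1}{2i\rho}V(r')\bigr| \leq \rho^{-1}|V(r')|
\]
and $V \in L^1$ (from $|V(r)| \lesssim \langle r\rangle^{-4}$), iteration yields $\|T^n \mathbf{1}\|_{L^\infty} \leq \tfrac{1}{n!}(\rho^{-1}\|V\|_{L^1})^n$ and hence $|m(r,\rho)| \leq e^{C/\rho} \leq 2$ uniformly in $r$ for $\rho$ sufficiently large. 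Substituting this uniform bound back into \eqref{equ:m_eq} and using $|V(r')|\lesssim\langle r'\rangle^{-4}$ gives
\[
|m(r,\rho)-1| \leq \rho^{-1}\int_r^\infty |V(r')|\,|m(r',\rho)|\,dr' \lesssim \rho^{-1}\langle r\rangle^{-3},
\]
which is \eqref{equ:firstbd0}.

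For \eqref{equ:firstbd}, I would differentiate \eqref{equ:m_eq} in $r$; the boundary contribution at $r'=r$ vanishes because the kernel itself does, leaving
\[
\partial_r m(r,\rho) = -\int_r^\infty e^{2i\rho(r'-r)} V(r') m(r',\rho)\,dr'.
\]
The $\rho^{-1}$ smallness is hidden, and to extract it I would integrate by parts once in $r'$, using $e^{2i\rho(r'-r)} = (2i\rho)^{-1}\partial_{r'}e^{2i\rho(r'-r)}$. This produces a boundary contribution $V(r)m(r,\rho)/(2i\rho)$ (of size $\rho^{-1}\langle r\rangle^{-4}$) and a volume integral whose integrand distributes a derivative onto $V(r')m(r',\rho)$. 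Using $|V'(r')|\lesssim\langle r'\rangle^{-5}$ for the $V'$ piece and treating the $V\partial_{r'} m$ piece via a Volterra contraction in the weighted norm $M(\rho) := \sup_r\langle r\rangle^4|\partial_r m(r,\rho)|$ (valid since the associated Volterra kernel has $L^1_{r'}$ norm bounded by $C/\rho$), one closes the estimate $M(\rho)\leq C\rho^{-1} + CM(\rho)/\rho$, yielding \eqref{equ:firstbd}.

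For \eqref{equ:second_bd} I would induct on $\ell \in \{1,2,3\}$, with the above serving as the base structure. Each $\rho$-differentiation of \eqref{equ:m_eq} brings down a factor of $(r'-r)$ from $\partial_\rho e^{2i\rho(r'-r)}$, which threatens integrability; the remedy is to integrate by parts in $r'$ to trade each such factor for a power of $\rho^{-1}$, producing a Volterra-type equation for $\partial_\rho^\ell m$ with forcing involving $\partial_\rho^j m$ for $j<\ell$ (controlled by the inductive hypothesis) together with terms of the form $(r'-r)^i V^{(k)}(r')\partial_{r'}^p m(r',\rho)$. The Volterra contraction argument from the previous paragraph then closes each step. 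The \textbf{main obstacle} here is the combinatorial bookkeeping for the $\ell$-fold integration by parts combined with the limited $r$-regularity of $V$ (only $V$ and $V'$ are controlled): one must carefully distribute derivatives so that at most one falls on $V$, with the remaining derivatives and the accumulated powers $(r'-r)^i$ falling on $m$ and producing $\partial_{r'}^p m$ factors bounded by the same type of argument that led to \eqref{equ:firstbd}; then one must verify that every resulting weighted integral converges and carries the prescribed $\langle r\rangle^{-4+\ell}$ decay uniformly in $\rho$ large.
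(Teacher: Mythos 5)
Your treatment of \eqref{equ:firstbd0} and \eqref{equ:firstbd} matches the paper: a Volterra/Gronwall argument gives the uniform bound on $m$, and one integration by parts in $r'$ on the differentiated equation extracts the $\rho^{-1}$ in \eqref{equ:firstbd} with the boundary term $V(r)m/(2i\rho)$ accounting for the $\langle r\rangle^{-4}$ decay. (The paper first records the crude bound $|\partial_r m| \lesssim \langle r\rangle^{-3}$ from the un-integrated form before substituting it into the integrated-by-parts formula, which avoids the need for the weighted contraction you set up; both routes work.)

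For \eqref{equ:second_bd}, your instinct is right but the proposed mechanics of "trading each $(r'-r)$ factor for a power of $\rho^{-1}$ via IBP" would, if carried out as $\ell$ successive integrations by parts, indeed run into the obstacle you identify: you'd need $V''$, $V'''$, and higher $\partial_r^p m$, none of which are controlled. The paper sidesteps this by integrating by parts \emph{only once}, arriving at \eqref{mb1} with the integrand $V'(r')m + V(r')\partial_{r'}m$ and a $\rho^{-2}$ prefactor, and then differentiating \eqref{mb1}/\eqref{mb2} in $\rho$ one order at a time, yielding a coupled Volterra system for $(\partial_\rho^\ell m, \partial_\rho^\ell\partial_r m)$. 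No further IBP is performed: the accumulated $(r'-r)^\ell$ is simply absorbed because $|V'(r')| \lesssim \langle r'\rangle^{-5}$ makes $\int_r^\infty (r'-r)^\ell \langle r'\rangle^{-5}\,dr' \lesssim \langle r\rangle^{-4+\ell}$ converge for $\ell \leq 3$, and the $V\partial_{r'}m$ piece is even better by \eqref{equ:firstbd}. In particular, only $V$, $V'$, $m$, and $\partial_r m$ ever appear, so your flagged concern about distributing derivatives onto $V$ or $m$ does not actually materialize — though your claim that "$\partial_{r'}^p m$ factors [are] bounded by the same type of argument that led to \eqref{equ:firstbd}" would be false for $p \geq 3$, where the bound grows in $\rho$. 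One more point: the paper explicitly truncates $V$ to compact support at the outset to justify a priori finiteness of the relevant suprema before taking limits; your Volterra-series convergence achieves the same goal and should be stated.
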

\begin{proof}
In what follows, we will assume that we have replaced $V$ by a compactly supported potential. Ultimately, we will obtain estimates uniform in the truncation and can take limits to conclude, see for instance \cite[Lemma 2]{Schlag07}. By Gronwall's inequality,
\begin{align}\label{f_bds}
\sup_{r,\rho \geq 0} |f(r,\rho)| \leq \exp \left(\int_0^\infty r' |V(r')| dr' \right) =: C_1(V),
\end{align}
which implies the same about $m(r,\rho)$. Hence, from \eqref{equ:m_eq} we obtain
\begin{align*}
 |m(r,\rho) -1| & \leq  \int_r^\infty \frac{1}{ \rho} |V(r')| |m(r', \rho)| dr' \lesssim C_1(V) \int_r^\infty \rho^{-1} |V(r')| dr'.
\end{align*}
Since $|V(r')| \lesssim \langle r' \rangle^{-4}$, we conclude by integrating that
\[
 |m(r,\rho) - 1|  \lesssim \rho^{-1} \langle r \rangle^{-3},
\]
which proves \eqref{equ:firstbd0}.  

By taking a derivative in $r$ in \eqref{equ:m_eq}, we obtain
\begin{align}
\partial_r m(r,\rho) = - \int_r^\infty e^{2 i \rho(r-r')} V(r') m(r', \rho) dr' 
\end{align} 
which, together with \eqref{f_bds} and the decay assumption on $V$ implies
\begin{align}\label{equ:deriv_decay}
|\partial_r m(r,\rho)| \leq  \int_r^\infty |V(r') m(r',\rho)| \lesssim \langle r \rangle^{-3} .
\end{align}
Now we integrate \eqref{equ:m_eq} by parts in $r'$, and obtain
\begin{equation}\label{mb1}
\begin{split}
m(r,\rho) &= 1 - \frac{1}{(2i\rho)^2}\int_r^\infty e^{2i\rho(r'-r)} [V'(r') m(r',\rho) + V(r') \partial_{r'} m(r',\rho)] dr' \\
& \hspace{14mm}-  \frac{1}{(2i\rho)^2} V(r) m(r,\rho)  - \frac{1}{2i\rho} \int_r^\infty V(r') m(r',\rho)dr' 
\end{split}
\end{equation}
and again taking a derivative in $r$ we have
\begin{align}\label{mb2}
\partial_r m(r,\rho) &= \frac{1}{2i\rho}\int_r^\infty e^{2i\rho(r'-r)} [V'(r') m(r', \rho) + V(r') \partial_{r'} m(r', \rho)] dr' \\
& \hspace{14mm}   + \frac{1}{2i\rho} V(r) m(r, \rho).
\end{align}
Using \eqref{equ:deriv_decay} in \eqref{mb2}, we obtain the improved derivative decay estimate
\begin{align}\label{equ:deriv_decay2}
|\partial_r m(r,\rho)| \lesssim \rho^{-1} \langle r \rangle^{-4},
\end{align}
which proves \eqref{equ:firstbd}. 

Next, we compute $\partial_\rho m(r,\rho)$ from the formula \eqref{mb1}. We obtain
\begin{align}
\partial_\rho m(r,\rho) &= F_1(r, \rho)  - \frac{1}{(2i\rho)^2} \int_r^\infty e^{2i\rho(r'-r)} [V'(r') \partial_\rho m(r',\rho) + V(r') \partial_\rho \partial_{r'} m(r',\rho)] dr' \\
& \hspace{14mm}- \frac{1}{(2i\rho)^2} V(r) \partial_{\rho } m(r, \rho) - \frac{1}{2i\rho} \int_r^\infty V(r') \partial_\rho m(r', \rho) dr',
\end{align}
where
\[
|F_1(r,\rho)| \lesssim \rho^{-2} \langle r\rangle^{-3}.
\]
Since
\begin{align}
&\biggl| \frac{1}{(2i\rho)^2}\int_r^\infty e^{2i\rho(r'-r)}[V'(r') \partial_\rho m(r',\rho) + V(r') \partial_\rho \partial_{r'} m(r',\rho)] dr' \biggr| \\
& \leq \sup_{r' > r} |\partial_\rho m(r',\rho) | \int_r^\infty \frac{1}{(2\rho)^2}|V'(r')| dr'  +\sup_{r' > r} |\partial_\rho \partial_{r} m(r,\rho)|  \int_r^\infty \frac{1}{(2\rho)^2} |V(r')| dr' \\
& \lesssim \sup_{r' > r} |\partial_\rho m(r',\rho) | \rho^{-2} \langle r\rangle^{-4}  +\sup_{r' > r} |\partial_\rho \partial_{r'} m(r',\rho)| \rho^{-2} \langle r \rangle^{-3} \phantom{\int},
\end{align}
we obtain
\begin{align}
|\partial_\rho m(r,\rho) | &\lesssim \rho^{-2} \langle r\rangle^{-3} + \sup_{r' > r} |\partial_\rho m(r',\rho) | \rho^{-2} \langle r\rangle^{-4}  +\sup_{r' > r} |\partial_\rho \partial_{r'} m(r',\rho)| \rho^{-2} \langle r \rangle^{-3} \\
& \hspace{24mm} +  \sup_{r' > r} |\partial_\rho m(r',\rho) | \rho^{-1} \langle r\rangle^{-3}
\end{align}
Similarly, using \eqref{mb2}, we compute
\begin{align}
\partial_\rho \partial_r m(r,\rho) &= F_2(r,\rho) + \frac{1}{2i\rho} \int_r^\infty e^{2i\rho(r'-r)}[V'(r') \partial_\rho m(r',\rho) + V(r') \partial_\rho \partial_{r'} m(r',\rho)] dr'\\
& \hspace{14mm} + \frac{1}{2i\rho} V(r) \partial_\rho m(r,\rho),
\end{align}
where
\[
|F_2(r,\rho)| \lesssim \rho^{-1} \langle r\rangle^{-3}.
\]
Since
\begin{align}
&\left|  \int_r^\infty \frac{e^{2i\rho(r'-r)}}{2i\rho}[V'(r') \partial_\rho m(r',\rho) + V(r') \partial_\rho \partial_{r'} m(r',\rho)] dr' \right|\\
&  \lesssim \sup_{r' > r} |\partial_\rho m(r',\rho) | \rho^{-1} \langle r\rangle^{-4}  +\sup_{r' > r} |\partial_\rho \partial_{r'} m(r',\rho)| \rho^{-1} \langle r \rangle^{-3} ,
\end{align}
taking the supremum over $r > r_0$ we obtain
\begin{align}
&\sup_{r > r_0} |\partial_\rho m(r,\rho)| + \sup_{r > r_0} |\partial_\rho \partial_r m(r,\rho)|  \\
& \lesssim\rho^{-2} \langle r_0 \rangle^{-3} +  \sup_{r' > r_0} |\partial_\rho m(r',\rho) | \rho^{-1} \langle r_0\rangle^{-3}  +\sup_{r' > r_0} |\partial_\rho \partial_{r'} m(r',\rho)| \rho^{-2} \langle r_0 \rangle^{-3}\\
& \hspace{14mm}+ \rho^{-1} \langle r_0 \rangle^{-3}+  \sup_{r' > r_0} |\partial_\rho m(r',\rho) | \rho^{-1} \langle r_0\rangle^{-4}  +\sup_{r' > r_0} |\partial_\rho \partial_{r'} m(r',\rho)| \rho^{-1} \langle r_0 \rangle^{-3} .
\end{align}
Since $V$ has been replaced by a compactly supported potential, for $\rho$ sufficiently large
\[
\sup_{r' > r_0} |\partial_\rho \partial_{r'} m(r',\rho)| < \infty.
\] 
and we obtain for $\rho$ sufficiently large that
\[
|\partial_\rho m(r,\rho)| \lesssim \rho^{-1} \langle r\rangle^{- 3}.
\]
For the higher derivatives in $\rho$, we can repeat the same argument (for both $m(r,\rho)$ and $\partial_rm(r,\rho)$), and ultimately we obtain
\[
|\partial_\rho^{\ell} m(r,\rho)| \lesssim \rho^{-1} \langle r\rangle^{- 4 + \ell}, \qquad \ell = 1, 2, 3,
\]
which concludes the proof.
\end{proof}

\begin{proof}[Proof of Proposition \ref{prop:kernel_ests}]
Fix $k_0 \geq 1$ such that for $\rho \in \textup{supp } \psi_k$ for $k \geq k_0$, the estimates from Lemma \ref{lem:m_bds} hold. We insert the expression for $\widetilde{e}(r,\rho)$ from \eqref{equ:radial_e} into the definition of the kernel from \eqref{equ:linearized_kernel}, which yields four terms
\begin{align}
K_k(t,r,r') &= \frac{1}{4} \int_0^\infty e^{i(r-r')\rho} e^{it \rho} \psi_k(\rho) m(r,\rho) \overline{m(r', \rho)} d\rho\\
& +  \frac{1}{4}  \int_0^\infty e^{-i(r-r')\rho} e^{it \rho} \psi_k(\rho) m(r',\rho) \overline{m(r, \rho)} d\rho\\
& + \frac{1}{2i}  \int_0^\infty e^{-i(r+r')\rho} e^{it \rho} \psi_k(\rho) \overline{c_+(\rho)} \, \overline{m(r,\rho) m(r', \rho)} d\rho\\
& + \frac{1}{2i}  \int_0^\infty e^{i(r+r')\rho} e^{it \rho} c_+(\rho) \psi_k(\rho) m(r,\rho) m(r', \rho) d\rho\\
& =: K_k^{(+,+)}(t,r,r') +  K_k^{(-,-)}(t,r,r') + K_k^{(-,+)}(t,r,r') + K_k^{(+,-)}(t,r,r') .
\end{align}
We will estimate these terms using oscillatory integral estimates. We consider first
\begin{align}
K_k^{(+,+)}(t,r,r') &= \frac{1}{4} \int_0^\infty e^{i(r-r')\rho} e^{it \rho} \psi_k(\rho) m(r,\rho) \overline{m(r', \rho)} d\rho\\
& = \frac{1}{4} \int_0^\infty e^{i(r-r')\rho} e^{it \rho} \psi_k(\rho)  d\rho + \frac{1}{4} \int_0^\infty e^{i(r-r')\rho} e^{it \rho} \psi_k(\rho) (m(r,\rho) \overline{m(r', \rho)} - 1)d\rho.
\end{align}
Note that the first term is the expression for a standard Fourier multiplier. To estimate this first expression, we consider two cases. When $|t + (r-r')| \leq 1$ we estimate this term by bringing absolute values inside the integral, and using the support properties of $\psi_k$ to obtain
\[
\left |\chi_{|t + (r-r')| \leq 1} \frac{1}{4} \int_0^\infty e^{i(r-r')\rho} e^{it \rho} \psi_k(\rho)  d\rho \right| \lesssim 1.
\]
When $|t + (r-r')| \geq 1$, we integrate by parts three times to obtain
\begin{align}
 \frac{1}{4} \int_0^\infty e^{i(r-r')\rho} e^{it \rho} \psi_k(\rho)  d\rho & = - \frac{1}{4} \int_0^\infty \frac{1}{i (t+ (r-r'))^3} e^{i(r-r')\rho} e^{it \rho} \partial_\rho^3 ( \psi_k(\rho) )  d\rho ,
\end{align}
where we have used that $\psi_k(0) = 0 = \lim_{\rho \to \infty} \psi_k(\rho)$ to account for the boundary terms. Since
\begin{align}\label{equ:deriv_bds}
\int_0^\infty |\partial^3_\rho (\psi_k(\rho)) | d\rho = \int_0^\infty | \partial^3_\rho (\psi(\rho - k)) |d\rho = \int_0^\infty | (\partial^3_\rho\psi)(\rho - k) |d\rho \lesssim 1
\end{align}
uniformly in $k$, we obtain
\begin{align}
\left| \frac{1}{4} \int_0^\infty e^{i(r-r')\rho} e^{it \rho} \psi_k(\rho)  d\rho  \right| \lesssim \frac{1}{\langle t+ (r-r') \rangle^3}.
\end{align}

For the second term, we need to argue similarly, but here we will need to use bounds for the functions $m(r,\rho)$. When $|t + (r-r')| \leq 1$, we estimate
\begin{align}
&\left|\frac{1}{4} \int_0^\infty e^{i(r-r')\rho} e^{it \rho} \psi_k(\rho) (m(r,\rho) \overline{m(r', \rho)} - 1)d\rho \right|\\
& \lesssim \int_0^\infty \psi_k(\rho) |m(r,\rho) \overline{m(r', \rho)} - 1|d\rho\\
& \lesssim  |k|^{-1}\left( \langle r \rangle^{-3} +  \langle r' \rangle^{-3} \right) ,
\end{align}
where we have applied Lemma \ref{lem:m_bds}, using the identity
\[
m \overline{m} - 1 = (m-1)(\overline{m} - 1) + (\overline{m} - 1)  + (m-1).
\]
 When $|t + (r-r')| \geq 1$, we once again integrate by parts to obtain
\begin{align}
&\frac{1}{4} \int_0^\infty e^{i(r-r')\rho} e^{it \rho} \psi_k(\rho) (m(r,\rho) \overline{m(r', \rho)} - 1)d\rho \\
& =  \frac{1}{4i} \int_0^\infty \frac{1}{((r-r') + t)^3} e^{i(r-r')\rho} e^{it \rho} \partial^3_\rho\bigr( \psi_k(\rho) (m(r,\rho) \overline{m(r', \rho)} - 1) \bigr) d\rho,
\end{align}
using again that the boundary terms disappear due to the support of $\psi_k$. To establish bounds for 
\begin{align}
\partial_\rho^3 (m(r,\rho) \overline{m(r', \rho)} - 1),
\end{align}
we use Lemma \ref{lem:m_bds} to obtain
\begin{align}\label{rho_decay}
\sup_{r,r'} |\partial_\rho^\ell(m(r,\rho) \overline{m(r', \rho)} - 1) | \lesssim \rho^{-1} \bigl( \langle r \rangle^{-4 + \ell} + \langle r' \rangle^{-4 + \ell} \bigr), \quad \ell = 1,2,3
\end{align}
and
\[
\sup_{r,r'} | (m(r,\rho) \overline{m(r', \rho)} - 1) | \lesssim \rho^{-1} \bigl( \langle r \rangle^{-3} + \langle r' \rangle^{-3} \bigr).
\]
Thus, 
\begin{align}
\biggl| \frac{1}{4} \int_0^\infty e^{i(r-r')\rho} e^{it \rho} \psi_k(\rho) (m(r,\rho) \overline{m(r', \rho)} - 1)d\rho \biggr|& \lesssim  \frac{\langle r \rangle^{-1} + \langle r' \rangle^{-1}}{ |k| \langle t+ (r-r')  \rangle^3} ,
\end{align}
where we have once again used \eqref{equ:deriv_bds} and the support properties of $\psi_k$. An identical argument yields a similar bound for the $K^{(-,-)}(t,r,r')$ term:
\[
|K^{(-,-)}(t,r,r')| \lesssim \frac{\langle r \rangle^{-1} + \langle r' \rangle^{-1}}{|k| \langle t - (r-r')\rangle^3} + \frac{1}{\langle t - (r-r') \rangle^3} .
\]

Next we consider 
\begin{align}
 \frac{1}{2i}  \int_0^\infty e^{- i(r+r')\rho} e^{it \rho} \psi_k(\rho) \overline{c_+(\rho)} \, \overline{m(r,\rho) m(r', \rho)}  d\rho.
\end{align}
Again, we rewrite this kernel as
\begin{align}
\frac{1}{2i}  \int_0^\infty e^{- i(r+r')\rho} e^{it \rho}  \psi_k(\rho) \overline{c_+(\rho)} d\rho + \frac{1}{2i}  \int_0^\infty e^{- i(r+r')\rho} e^{it \rho}  \psi_k(\rho) \overline{c_+(\rho)}  (\overline{m(r,\rho) m(r', \rho)}  - 1)d\rho . 
\end{align}
Since
\[
c_+(\rho) = \frac{1}{2i} \frac{ \overline{m(0, \rho) }}{m(0,\rho) },
\]
by \eqref{equ:lwr_bds} and Lemma \ref{lem:m_bds}, we obtain
\begin{align}\label{equ:c_bds}
|c_+(\rho)| \leq \frac{1}{2}, \quad |\partial^\ell_\rho c_+(\rho) | \lesssim \rho^{-1}, \quad \ell = 1,2,3.
\end{align}
To estimate the first integral, we again consider two cases. In the region $|t - (r+r')| \leq 1$,  we may estimate the integral by a constant, while in the region $|t - (r+r')| > 1$, we integrate by parts, once again noting the boundary terms vanish due to the support of $\psi_k$, to obtain
\begin{align}
\biggl| \frac{1}{2i}  \int_0^\infty e^{- i(r+r')\rho} e^{it \rho}  \psi_k(\rho) \overline{c_+(\rho)} d\rho \biggr| \lesssim \frac{1}{\langle t-(r+r')\rangle^3}.
\end{align}
For the second term we integrate by parts and we use the previous observations, again together with Lemma \ref{lem:m_bds} to obtain
\begin{align}
\biggl|  \frac{1}{2i}  \int_0^\infty e^{- i(r+r')\rho} e^{it \rho}  \psi_k(\rho) \overline{c_+(\rho)}  (\overline{m(r,\rho) m(r', \rho)}  - 1)d\rho \biggr| \lesssim \frac{\langle r \rangle^{-1} + \langle r' \rangle^{-1}}{ |k| \langle t-(r+r')\rangle^3}.
\end{align}
This yields 
\[
|K^{(-, +)}(t,r,r')| \lesssim  \frac{\langle r \rangle^{-1} + \langle r' \rangle^{-1}}{|k| \langle t-(r+r')\rangle^3} +  \frac{1}{\langle t- (r+r')\rangle^3} .
\]
The estimates for the $K^{(+, -)}$ kernel follow in the same manner, yielding
\[
|K^{(+, -)}(t,r,r')| \lesssim \frac{\langle r \rangle^{-1} + \langle r' \rangle^{-1}}{ |k| \langle t+(r+r')\rangle^3} +  \frac{1}{\langle t+ (r+r')\rangle^3} . \qedhere
\] 
\end{proof}

Now we turn to the proofs of the multiplier estimates. First we prove Lemma \ref{lem:weighted_ests} and Lemma~\ref{lem:weighted_ests_free}. Arguing as in the proof of Proposition \ref{prop:kernel_ests}, provided $\varphi(\rho)$ satisfies the assumptions of Lemma~\ref{lem:weighted_ests},
\begin{equation}\label{mult_formula}
\varphi(H) f(r) =   r^{-1} \int_{r'} M(r,r') r' f(r') dr'  + \sum_{c_i = \pm} r^{-1} \int_{r'} \int_0^\infty e^{ i (c_1 r- c_2 r')\rho} \varphi_{ c_1, c_2}(\rho) r' f(r') dr' d\rho,
\end{equation}
where $\varphi_{c_1, c_2}$ is given by
\begin{align}
\varphi_{+,+} = \varphi(\rho), \quad \varphi_{ -,-} = \varphi(\rho), \quad \varphi_{ +,-} = \frac{1}{2i} \varphi(\rho) c_+(\rho), \quad \varphi_{k, -,+} = \frac{1}{2i}\varphi(\rho) \overline{c_+(\rho)}.
\end{align}
and where $M$ can be decomposed as
\begin{align}\label{mbds1}
M(r,r') &=  M_1(r,r') \\
& + \sum_{c_i = \pm}  \frac{1}{4} \left( \int_r ^\infty V(s) ds - \int_{r'} ^\infty V(s) ds \right)   \int_0^\infty e^{i(c_1 r-c_2r')\rho} \frac{\varphi_{c_1, c_2}(\rho)}{2i\rho}d \rho
\end{align}
for a kernel $M_1(r,r')$ which satisfies
\begin{align}\label{mbds2}
|M_1(r,r')| \lesssim \sum_{c_2 = \pm 1} \frac{\langle r \rangle^{-2} + \langle r' \rangle^{-2}}{\langle r +  c_2 r' \rangle^2}.
\end{align}
Furthermore, we have that
\begin{align}\label{mbds3}
|M(r,r') - M_1(r,r')| \lesssim  \sum_{c_2 = \pm 1} \frac{\langle r \rangle^{-3} + \langle r' \rangle^{-3}}{\langle r' + c_2 r \rangle^3},
\end{align}
We remark that the factor $\langle r \rangle^{-2} + \langle r' \rangle^{-2}$ in \eqref{mbds2} compared to the $\langle r \rangle^{-1} + \langle r' \rangle^{-1}$ factor in the bounds from Proposition \ref{prop:kernel_ests} comes from integrating by parts twice instead of three times, and we will use this in \eqref{equ:m_ker_bds} below to conclude $L^2$ boundedness for $\langle r \rangle^{\alpha} M$. 

We will sketch the argument for these bounds. As in the previous argument, $\varphi(H)$ has an associated kernel
\begin{align}
\widetilde{M}(r,r') &= \frac{1}{4} \int_0^\infty e^{i(r-r')\rho}  \varphi (\rho) m(r,\rho) \overline{m(r', \rho)} d\rho\\
& +  \frac{1}{4}  \int_0^\infty e^{-i(r-r')\rho}  \varphi(\rho) m(r',\rho) \overline{m(r, \rho)} d\rho\\
& + \frac{1}{2i}  \int_0^\infty e^{-i(r+r')\rho}\varphi(\rho) \overline{c_+(\rho)} \, \overline{m(r,\rho) m(r', \rho)} d\rho\\
& + \frac{1}{2i}  \int_0^\infty e^{i(r+r')\rho}  c_+(\rho) \varphi (\rho) m(r,\rho) m(r', \rho) d\rho.
\end{align}
Each of these four terms yields a contribution to $M(r,r')$ and another to the standard Fourier multiplier component in \eqref{mult_formula}. For instance, writing
\begin{align}
&\frac{1}{4} \int_0^\infty e^{i(r-r')\rho}  \varphi (\rho) m(r,\rho) \overline{m(r', \rho)} d\rho \\
&= \frac{1}{4} \int_0^\infty e^{i(r-r')\rho}  \varphi (\rho) (m(r,\rho) \overline{m(r', \rho)} -1)d\rho  + \frac{1}{4} \int_0^\infty e^{i(r-r')\rho}  \varphi (\rho) d\rho,
\end{align}
the second expression is one of the standard Fourier terms in \eqref{mult_formula}, and we demonstrate how to estimate
\begin{align}\label{equ:sample}
\biggl| \frac{1}{4} \int_0^\infty e^{i(r-r')\rho} \varphi(\rho) (m(r,\rho) \overline{m(r', \rho)} - 1)d\rho \biggr|.
\end{align}
By \eqref{mb1}, we can write
\begin{align}
m(r,\rho)& = 1 - \frac{1}{2i\rho} \int_r ^\infty V(r') dr' - \frac{1}{(2i\rho)^2}\int_r^\infty e^{2i\rho(r'-r)} [V'(r') m(r',\rho) + V(r') \partial_{r'} m(r',\rho)] dr' \\
& \hspace{14mm}-  \frac{1}{(2i\rho)^2} V(r) m(r,\rho)  - \frac{1}{2i\rho} \int_r^\infty V(r') (m(r',\rho) - 1)dr' .
\end{align}
Consequently, 
\begin{align}
m(r,\rho) = 1 - \frac{1}{2i\rho} \int_r ^\infty V(r') dr'  + \frac{1}{(2i\rho)^2} F(r,\rho),
\end{align}
and by Lemma \ref{lem:m_bds}, we have the bounds
\begin{align} \label{fbds}
|\partial_\rho^{\ell} F(r,\rho) | \lesssim \langle r \rangle^{-4 + \ell}, \quad \ell = 0,1, 2, 3.
\end{align}
Hence, using the identity
\[
m \overline{m} - 1 = (m-1) (\overline{m} - 1) +  (\overline{m} - 1)  + (m-1),
\]
once again, we obtain
\begin{equation}  \label{m_exp}
\begin{split}
&m(r,\rho) \overline{m(r',\rho)} - 1 \\
&=- \frac{1}{2i\rho}  \left( \int_r ^\infty V(s) ds - \int_{r'} ^\infty V(s) ds \right)\\
& \hspace{8mm} + \frac{1}{(2i\rho)^2} \left( \int_r^\infty V(s) ds \right)^2+  \frac{1}{(2i\rho)^2} F(r,\rho) + \frac{1}{(2i\rho)^2} F(r',\rho) \\
& \hspace{8mm}- \frac{1}{(2i\rho)^{3}} \left(F(r,\rho)   \int_{r'} ^\infty V(s)  + F(r',\rho)\int_r ^\infty V(s) ds  \right) +  \frac{1}{(2i\rho)^{4}} F(r,\rho) F(r',\rho).
\end{split}
\end{equation}
Substituting \eqref{m_exp} into \eqref{equ:sample}, we obtain a contribution arising from 
\[
 \frac{1}{2i\rho}  \left( \int_r ^\infty V(s) ds - \int_{r'} ^\infty V(s) ds \right)
\]
and the other terms have explicit decay in both $r$ and $r'$ and contribute to $M_1(r,r')$. To estimate this term, we argue as follows: note that
\begin{align}
&\left |\chi_{|r-r'| \leq 1} \frac{1}{4} \int_0^\infty e^{i(r-r')\rho} \varphi(\rho)\left[ \frac{1}{2i\rho}  \left( \int_r ^\infty V(s) ds - \int_{r'} ^\infty V(s) ds \right) \right] d\rho \right| \\
& = \chi_{|r-r'| \leq 1} \left | \frac{1}{4} \int_{r}^{r'} V(s) ds \right| \left|\int_0^\infty e^{i(r-r')\rho} \frac{\varphi(\rho)}{2i \rho}  \right| \\
& \lesssim \chi_{|r-r'| \leq 1} \frac{|r -r'|}{\langle r \rangle^4} \left|\int_0^\infty \frac{ e^{i(r-r')\rho}}{r-r'} \partial_\rho \left( \frac{\varphi(\rho)}{2i \rho} \right)  \right|.
\end{align}
Using the decay assumption on $\partial_\rho \varphi$, we estimate
\[
\left |\chi_{|r-r'| \leq 1} \frac{1}{4} \int_0^\infty e^{i(r-r')\rho} \varphi(\rho)\left[ \frac{1}{2i\rho}  \left( \int_r ^\infty V(s) ds - \int_{r'} ^\infty V(s) ds \right) \right] d\rho \right| \lesssim \langle r\rangle^{-4},
\]
and hence we may suppose that $|r-r'| \geq 1$. Then we have
\begin{align}
&\frac{1}{4} \int_0^\infty e^{i(r-r')\rho} \varphi(\rho) \left[ \frac{1}{2i\rho}  \left( \int_r ^\infty V(s) ds - \int_{r'} ^\infty V(s) ds \right) \right]  d\rho \\
& = \frac{1}{4} \int_0^\infty \frac{1}{- (r-r')^3} \partial_\rho^3 e^{i(r-r')\rho} \varphi(\rho) \left[ \frac{1}{2i\rho}  \left( \int_r ^\infty V(s) ds - \int_{r'} ^\infty V(s) ds \right) \right] d\rho ,
\end{align}
and integrating by parts, we obtain
\begin{align}\label{equ:int_by_parts}
 \frac{1}{4} \left( \int_r ^\infty V(s) ds - \int_{r'} ^\infty V(s) ds \right)  \int_0^\infty \frac{e^{i(r-r')\rho}}{(r-r')^3}  \partial_\rho^3\biggl[ \frac{\varphi(\rho)}{2i\rho} \biggr] d\rho.
\end{align}
Applying the Leibniz rule, and using the decay assumptions for $\varphi(\rho)$ and $V$, we obtain a bound of
\begin{align}
\frac{\langle r \rangle^{-3} + \langle r' \rangle^{-3}}{\langle r' - r \rangle^3},
\end{align}
which yields \eqref{mbds3}.

\medskip
We may now prove Lemma \ref{lem:weighted_ests} and Lemma \ref{lem:weighted_ests_free}.

\begin{proof}[Proof of Lemma \ref{lem:weighted_ests}]
The proof relies on \eqref{mult_formula}. First we assume that $f \in \mathcal{S}$. We write
\[
\langle r \rangle^\alpha \varphi(H) f = r^{-1} \int_{r'} \langle r \rangle^{\alpha} M(r,r') r' f(r') dr'  + \langle r \rangle^{\alpha}\sum_{c_i = \pm} r^{-1} \int_{r'} \int_0^\infty e^{ i (c_1 r- c_2 r')\rho} \varphi_{c_1, c_2}(\rho) r' f(r') dr' d\rho.
\]
For the first term, we use \eqref{mbds1}, \eqref{mbds2} and \eqref{mbds3} to see that for $0 < \alpha < 1$, the kernel $\langle r \rangle^{\alpha} M(r,r')$ is $L^2$ bounded. Indeed,
\begin{align}\label{equ:m_ker_bds}
\|\langle r \rangle^{\alpha} M(r,r')\|_{L_{r,r'}^2(0,\infty) } \lesssim  \left( \int_r \int_{r'} \langle r \rangle^{2\alpha} \left[  \frac{\langle r \rangle^{-4} + \langle r' \rangle^{-4}}{ \langle r - r' \rangle^4} \right]\right)^{1/2} < \infty,
\end{align}
and hence  for any $M > 0$,
\begin{align}\label{whatweget}
\biggl\| \chi_{|r| < M} \langle r \rangle^{\alpha} r^{-1} \int_{r'} \langle r \rangle^{\alpha} M(r,r') r' f(r') dr' \biggr\|_{L^2(\bR^3)} \lesssim \|f\|_{L^2} \leq \|\langle r \rangle^{\alpha} f \|_{L^2},
\end{align}
and we obtain the desired bound by Fatou's Lemma and density. The case of $-1 < \alpha < 0$ is symmetric, using that 
\[
\|\langle r' \rangle^{\alpha} M(r,r')\|_{L_{r,r'}^2(0,\infty) } < \infty.
\]
For the second term we use the following fact, which is a special case of \cite[Theorem 2.7]{KenigPCMI}: let $a(\rho)$ be a smooth symbol with bounded derivatives, and let
\[
T_a f = \int e^{ir \rho} a(\rho) \widehat{f}(\rho) d\rho.
\]
Then we claim that for any $\alpha \in \bR$,
\[
T_a : L^2(\langle r \rangle^{\alpha} dr) \to L^2(\langle r \rangle^{\alpha} dr),
\]
with the norm of this operator depending only on semi-norms of $a$ up to a certain order $N(\alpha) > 0$. By interpolation and duality, it suffices to prove this result for $ \alpha = -2m,$ with $m \in \bN$, and furthermore, it suffices to prove that
\[
\frac{1}{(1+|r|^2)^m} T_a \bigl((1+|r|^2)^m f \bigr): L^2 \to L^2.
\]
We compute
\[
( (1+|r|^2)^m f)^{\wedge} = (1 - \Delta_\rho)^m \widehat{f}(\rho),
\]
then, integrating by parts, we obtain
\begin{align}
\frac{1}{(1+|r|^2)^m} T_a \bigl((1+|r|^2)^m f \bigr)& = \frac{1}{(1+|r|^2)^m} \int e^{i r \rho } a(\rho)  (1 - \Delta_\rho)^m \widehat{f}(\rho) d\rho \\
&=  \int(1 - \Delta_\rho)^m \left[ \frac{e^{i r \rho} a(\rho) }{(1+|r|^2)^m} \right] \widehat{f}(\rho) d\rho.
\end{align}
We obtain the desired result by Leibniz rule, using that $\partial_\rho^k a(\rho)$ is the symbol of an $L^2$-bounded multiplier by our assumptions. This completes the proof.
\end{proof}

\begin{proof}[Proof of Lemma \ref{lem:weighted_ests_free}]
This follows from arguments in the proof of Lemma \ref{lem:weighted_ests}, noting that if we only treat the free case, the argument yields the result even when the symbol $\varphi$ is smooth but does not decay.
\end{proof}

Next, we turn to proving Lemma \ref{lem:proj_bds} and Lemma \ref{lem:decay_sf}. We begin by establishing a formula for the projection operators $P_k$. As in our set-up for Proposition \ref{prop:kernel_ests}, for radial functions $f$, $g$, we obtain that
\begin{align}
\langle \psi_k(\sqrt{|H|}) f, g \rangle =   \int_{\rho} \int_{r'} \int_r  \psi_k(\rho) \widetilde{e}(r, \rho)   \overline{\widetilde{e}(r',\rho)} r' f(r')  r g(r) dr' \, d\rho dr .
\end{align}
For $k \geq k_0$ we define
\begin{align}\label{equ:gk_ker}
K_k(r,r') =  \int_{\rho} \psi_k(\rho) \widetilde{e}(r, \rho)   \overline{\widetilde{e}(r',\rho)} d\rho,
\end{align}
and note
\[
K_k(r,r') = K_k(0, r,r')
\]
for $K_k(0, r,r')$ as in Proposition \ref{prop:kernel_ests}. Then
\begin{align}\label{radial_ker}
\langle  \psi_k(\sqrt{|H|}) f, g \rangle = \int_r \int_{r'} K_k(r,r') r'f(r') dr' r g(r) dr =: \langle \widetilde{  \psi_k}(\sqrt{|H|}) \widetilde{f}, \widetilde{g} \rangle_{L^2((0,\infty), dr)},
\end{align} 
where $\widetilde{f}(r) =\sqrt{4\pi} r f(r)$, $\widetilde{g}(r) =\sqrt{4\pi} r g(r)$.  As in \eqref{mult_formula}, from the proof of Proposition \ref{prop:kernel_ests}, we can write
\begin{equation}\label{pk_formula}
P_k f(r) =   r^{-1} \int_{r'} G_k(r,r') r' f(r') dr'  + \sum_{c_i = \pm} r^{-1} \int_{r'} \int_0^\infty e^{ i (c_1 r- c_2 r')\rho} \psi_{k, c_1, c_2}(\rho) r' f(r') dr' d\rho,
\end{equation}
for an integral kernel $G_k$ which satisfies (see \eqref{rho_decay}):
\begin{align} \label{g_bds}
|G_k(r,r')| \lesssim  \sum_{c_2 = \pm 1} \frac{\langle r \rangle^{-2} + \langle r' \rangle^{-2}}{|k|  \langle r +  c_2 r' \rangle^2} ,
\end{align}
and where
\begin{align}
\psi_{k, +,+} = \psi_k(\rho), \quad \psi_{k, -,-} = \psi_k(\rho), \quad \psi_{k, +,-} = \frac{1}{2i} \psi_k(\rho) c_+(\rho), \quad \psi_{k, -,+} = \frac{1}{2i}\psi_k(\rho) \overline{c_+(\rho)}.
\end{align}
We note that $\psi_{k, \pm,\pm}(\rho)$ are all well-localized, smooth bump functions provided $\rho \geq 1/2$, and hence can be treated (essentially) as standard Fourier projections, thus the second term in the expansion for $P_k f$ can be estimated as in the free case. 

As in \eqref{equ:m_ker_bds}, the factor $\langle r \rangle^{-2} + \langle r' \rangle^{-2}$ compared to the $\langle r \rangle^{-1} + \langle r' \rangle^{-1}$ factor in the bounds from Proposition \ref{prop:kernel_ests} comes from integration by parts twice instead of three times, and this additional decay will be used in the proof of Lemma \ref{lem:decay_sf}. Specifically, we exploit the fact that for $0 < \alpha < 1$, we can estimate
\begin{align}\label{equ:gk_bds}
\|\langle r \rangle^{\alpha} G_{k}(r,r')\|_{L_{r,r'}^2(0,\infty) } \lesssim \frac{1}{|k|} \left( \int_r \int_{r'} \langle r \rangle^{2\alpha} \left[  \frac{\langle r \rangle^{-4} + \langle r' \rangle^{-4}}{ \langle r - r' \rangle^4} \right]\right)^{1/2} < \infty.
\end{align}

\medskip
To prove Lemma \ref{lem:proj_bds}, we follow the argument from \cite{Schlag07}, and as in that work, we will use the theory of $A_p$ weights. We recall their definition and some of their properties, see \cite[Chapter V.1]{Stein_book} for further discussion.

\begin{definition}\label{def:ap}
For a weight function $w$, the class of $A_p$ weights are those weights which satisfy
\begin{align}
\sup_{B \subseteq \mathbb{R}^n } \frac{1}{|B|} \int_B w(x) dx \left(\frac{1}{|B|} \int_B w^{-\frac{p'}{p}}(x) dx\right)^{\frac{p}{p'}}< \infty.
\end{align}
\end{definition}
\begin{remark}\label{rem:ap_weights}
For $n = 1$ and $w(r) = r^\alpha$, one can check that $w \in A_p$ if and only if $-1 < \alpha < p-1$. Furthermore $w \in A_p$ if and only if the Hardy-Littlewood Maximal function, given for $f \in L^1_{loc}$ by
\[
Mf(x) = \sup_{r >0} \frac{1}{|B(x,r)|} \int_{B(x,r)} |f(y)| dy,
\]
is bounded on $L^p(w)$.
\end{remark}

\begin{proof}[Proof of \protect{Lemma~\ref{lem:proj_bds}}]
By Proposition \ref{prop:kernel_ests}, for the kernel $K_k(r,r')$ defined in \eqref{equ:gk_ker}, we have
\begin{align}\label{equ:gk_ker_decay}
|K_k(r,r')| \lesssim \frac{1}{\langle r -r' \rangle^3}
\end{align}
for a constant independent of $k \geq 1$. To conclude bounds for $L^p(\bR^3)$, we estimate (following \cite{Schlag07}) that
\begin{align}
\|P_k f\|_{L^p(\bR^3)} &= \sup_{\|g\|_{L^{p'}(\bR^3)} \leq 1} |\langle  \psi_k(\sqrt{|H|}) f, g \rangle |\\
&=  \sup_{\| r^{\frac{2}{p'} -1} \widetilde{g}\|_{L^{p'}(0,\infty)} \leq 1} |\langle r^{1 - \frac{2}{p'}}  \widetilde{  \psi_k}(\sqrt{|H|}) \widetilde{f}, r^{\frac{2}{p'} -1}  \widetilde{g} \rangle_{L^2((0,\infty), dr)}|\\
& \leq \|r^{1 - \frac{2}{p'}}  \widetilde{  \psi_k}(\sqrt{|H|}) \widetilde{f}\|_{L^p(0,\infty)}\\
& = \|r^{\frac{2}{p} - 1}  \widetilde{  \psi_k}(\sqrt{|H|}) \widetilde{f}\|_{L^p(0,\infty)},
\end{align}
where the operator $ \widetilde{  \psi_k}(\sqrt{|H|})$ is defined in \eqref{radial_ker}. Since $r^{2 - p}$ is an $A_p$ weight if and only if
\[
-1<2- p < p-1 \qquad \Longleftrightarrow \qquad 3/2 < p < 3,
\]
we conclude our proof noting that
\[
\int |K_k(r,r') \widetilde{f}(r')| dr'  \lesssim \int \frac{1}{\langle r - r' \rangle^3} |\widetilde{f}(r')| dr' \lesssim M\widetilde{f}(r),
\] 
and appealing to Remark \ref{rem:ap_weights}.
\end{proof}

\begin{proof}[Proof of Lemma \ref{lem:decay_sf}]
Let $f \in \mathcal{S}$. Note that
\begin{align}
|k|^{s} \| \langle x \rangle^{\alpha} P_k f\|_{L^2(\bR^3)}= \| \langle x \rangle^{\alpha} |k|^s P_k |H|^{-\frac{s}{2}}|H|^{\frac{s}{2}}  f\|_{L^2(\bR^3)},
\end{align}
and further that for $k \geq 1$, the operator
\begin{align}
|k|^s \psi_k(\sqrt{|H|}) |H|^{-\frac{s}{2}}
\end{align}
is a distorted Fourier multiplier with the same localization and smoothness properties as $\psi_k(\sqrt{|H|})$.  
We use \eqref{pk_formula}, and we let
\[
\mathbf{f} = |H|^{\frac{s}{2}} P_{\geq k_0} f
\]
and for first term involving $G_k$ we use \eqref{g_bds} and \eqref{equ:gk_bds} to obtain for $ \alpha < 1$ and any $M > 0$ that
\begin{align}
\biggl\| \chi_{|r| < M} \langle r\rangle^{\alpha} r^{-1} \int_{r'} G_k(r,r') r' \mathbf{f}(r') dr'  \biggr\|_{L^2_x(\bR^3)} &\lesssim \biggl\| \langle r\rangle^{\alpha}  \int_{r'} G_k(r,r') r' \mathbf{f} (r') dr'  \biggr\|_{L^2_r(0, \infty)}\\
& \lesssim |k|^{-1} \| \widetilde{\mathbf{f}} \|_{L^2_r(0, \infty)}\\
& = |k|^{-1} \| |H|^{\frac{s}{2}} P_{\geq k_0} f \|_{L^2_x(\bR^3)},
\end{align}
and we obtain by Fatou's Lemma
\begin{align}
\biggl\| \langle r\rangle^{\alpha} r^{-1} \int_{r'} G_k(r,r') r' \mathbf{f}(r') dr'  \biggr\|_{L^2_x(\bR^3)} \lesssim  |k|^{-1} \| |H|^{\frac{s}{2}} P_{\geq k_0} f \|_{L^2_x(\bR^3)}.
\end{align}
Using that this expression is square summable over $k \geq k_0$, together with Corollary \ref{cor:equivalence} and Remark~\ref{rem:hs_bds} to handle the $P_0$ term, and the fact that the eigenfunction $Y$ is exponentially decaying, we obtain the upper bound
\begin{align}
 \| |\nabla|^s f \|_{L^2_x(\bR^3)} \lesssim   \| \langle x \rangle^{\alpha} |\nabla|^s f \|_{L^2_x(\bR^3)},
\end{align}
from which the bound follows for $f \in \dot H^{s,\alpha}$ by density.

For the mixed signs, the $c_1r$ and $-c_2r'$ terms in the exponential carry the same sign, and hence we may integrate by parts twice, using the localization and smoothness of the bump function, to eliminate the $\langle r \rangle^\alpha$ growth. We will treat one of these terms. 

Let $\widetilde{P}_k$ denote a distorted Fourier projection associated to a slight ``enlargement'' of $\psi_k$. When $|r + r'| \leq 1$, 
\[
\chi_{|r + r'| \leq 1}\left| \int_0^\infty e^{ i (r+r')\rho} \psi_{k, +,-}(\rho) d \rho \right| \leq \chi_{|r + r'| \leq 1},
\]
while for $|r + r' | > 1$,  we integrate by parts to obtain
\[
\chi_{|r + r'| > 1} \left| \int_0^\infty e^{ i (r+r')\rho} \psi_{k, +,-}(\rho) d\rho \right| = \chi_{|r + r'| > 1} \left| \int_0^\infty \frac{1}{( r+r' )^3} e^{ i (r+r')\rho} ( \partial_\rho^3 \psi_{k, +,-}(\rho) )d\rho \right|
\]
and using that $\partial_\rho^3 \psi_{k, +, -} \in L_\rho^1$  and $\alpha < 1$, we obtain
\[
\chi_{|r + r'| > 1} \left| \int_0^\infty e^{ i (r+r')\rho} \psi_{k, +,-}(\rho) d\rho \right|  \lesssim \chi_{|r + r'| > 1} \frac{1}{( r+r')^3} .
\]
Thus
\begin{align}
&\biggl\|\langle r\rangle^{\alpha} \int_{r'} \int_0^\infty e^{ i (r+r')\rho} \psi_{k, +,-}(\rho) d\rho r' \widetilde{P}_k\bigl( \mathbf{f}(r') \bigr)dr'  \biggr\|_{L^2_r(0,\infty)}\\
& \lesssim  \biggl\|\langle r\rangle^{\alpha} \int_{r'}  \left(\chi_{|r + r'| > 1} \frac{1}{( r+r')^3} + \chi_{|r + r'| \leq 1} \right) |r' \widetilde{P}_k\bigl( \mathbf{f}(r') \bigr)|  dr' \biggr\|_{L^2_r(0,\infty)} \\
& \lesssim \|r \widetilde{P}_k\bigl( \mathbf{f}(r)  \bigr)\|_{L^2_{r}(0,\infty)},
\end{align}
and hence
\begin{align}
&\sum_{k \geq k_0} \biggl\|\langle r\rangle^{\alpha} \int_{r'} \int_0^\infty e^{ i (r+r')\rho} \psi_{k, +,-}(\rho) \widetilde{P}_k\bigl(r' \mathbf{f}(r') \bigr)dr' d\rho \biggr\|_{L^2_r(0,\infty)}^2\\
& \lesssim \sum_{k \geq k_0} \|r \widetilde{P}_k\bigl( \mathbf{f}(r)  \bigr)\|_{L^2_{r}(0,\infty)}^2 \\
& = \|\mathbf{f} \|^2_{L^2_x(\bR^3)},
\end{align}
which once again yields the desired upper bound of 
\begin{align}
 \| |\nabla|^s f \|_{L^2_x(\bR^3)} \lesssim   \| \langle x \rangle^{\alpha} |\nabla|^s f \|_{L^2_x(\bR^3)}
\end{align}
by Corollary \ref{cor:equivalence}.

For the other two terms in the second expression in \eqref{pk_formula}, we have $\psi_{k,+,+} = \psi_k(\rho) = \psi_{k,-,-}$, thus we have precisely the standard Fourier projections associated to $\psi_k(\rho)$, which we denote by $\mathbf{P}_k$. We proceed using an interpolation argument: we need to establish bounds for 
\begin{align}
\biggl\| \langle r \rangle^\alpha \biggl(\sum_{k \geq k_0} |  \mathbf{P}_k \widetilde{\textbf{f}} |^2 \biggr)^{1/2} \biggr\|_{L^2_r(0, \infty)},
\end{align}
where once again we use the notation $\widetilde{g} = r g$. To do so, we establish bounds for this expression when $\alpha = 0$ and $\alpha =1$ and interpolate. The $\alpha =0$ case is clear, while for $\alpha = 1$, we have
\begin{align}
\biggl\| \langle r \rangle \biggl(\sum_{k \geq k_0} |  \mathbf{P}_k \widetilde{\textbf{f}} |^2 \biggr)^{1/2} \biggr\|_{L^2_r(0, \infty)}^2 & = \int_0^\infty  \langle r \rangle^2 \biggl(\sum_{k \geq k_0} |  \mathbf{P}_k \widetilde{\textbf{f}} |^2 \biggr) dr \\
&=  \int_0^\infty  (1+ |r|^2) \biggl(\sum_{k \geq k_0} |  \mathbf{P}_k \widetilde{\textbf{f}} |^2 \biggr) dr .
\end{align}
Using Plancherel's theorem, we obtain
\begin{align}
 \int_0^\infty  (1+ |r|^2) \biggl(\sum_{k \geq k_0} |  \mathbf{P}_k \widetilde{\textbf{f}} |^2 \biggr) dr & \leq    \int_{-\infty}^\infty  \biggl(\sum_{k \geq k_0} |  \mathbf{P}_k  \widetilde{\textbf{f}} |^2 \biggr) d\rho +  \int_{-\infty}^\infty   \biggl(\sum_{k \geq k_0} |  \partial_\rho \bigl[ \psi_k(\rho) \mathcal{F}(\widetilde{\textbf{f}})(\rho)\bigr] |^2 \biggr) d\rho,
\end{align}
and the first term can be estimated by $\|\textbf{f}\|_{L^2_x(\bR^3)}$. For the second term, by Leibniz rule, we obtain two more terms. One term can be bounded by $\|\textbf{f}\|_{L^2_x(\bR^3)}^2$ by noting that $\{\partial_\rho  \psi_k(\rho) \}$ is another finitely overlapping decomposition of frequency space, while we bound the other term by writing
\[
\int_{-\infty}^\infty   \biggl(\sum_{k \geq k_0} |   \psi_k(\rho) \partial_\rho \mathcal{F}(\widetilde{\textbf{f}}) (\rho) |^2 \biggr) d\rho = \int_{-\infty}^\infty   \biggl(\sum_{k \geq k_0} |  \mathbf{P}_k \bigl[ r \widetilde{\textbf{f}}\bigr] |^2 \biggr) dr \lesssim \||x| \textbf{f}\|_{L^2_x(\bR^3)},
\]
and hence for $0 \leq \alpha \leq 1$,
\begin{align}
\biggl\| \langle r \rangle^\alpha \biggl(\sum_{k \geq k_0} |  \mathbf{P}_k \widetilde{\textbf{f}} |^2 \biggr)^{1/2} \biggr\|_{L^2_r(0, \infty)} \lesssim \|\langle x \rangle^{\alpha} \textbf{f}\|_{L^2_x(\bR^3)},
\end{align}
Since
\[
\|\langle x \rangle^{\alpha} \textbf{f}\|_{L^2_x(\bR^3)} = \| \langle x\rangle^{\alpha}  |H|^\frac{s}{2} P_{\geq k_0} f \|_{L^2_x} = \| \langle x\rangle^{\alpha}  |H|^s P_{\geq k_0} \varphi(-\Delta) f \|_{L^2_x}
\]
by the assumptions on $f$, we may apply Corollary \ref{cor:weighted_coercive} to complete the proof. 
\end{proof}

\bibliographystyle{myamsplain}
\bibliography{refs}
\end{document}